\documentclass[11pt,reqno]{amsart}
\usepackage{graphicx}
\usepackage{color}
\usepackage{framed}
\usepackage{extarrows}
\usepackage{amsmath,amssymb,amsthm,amsfonts,
mathrsfs,stmaryrd}
\usepackage{cases,indentfirst}

\usepackage{soul}

\usepackage{multicol}
\usepackage[colorlinks=true, linkcolor=black,filecolor=black,
    urlcolor=black,citecolor=black]{hyperref}
\makeatletter

\newcommand{\Rmnum}[1]{\expandafter\@slowromancap\romannumeral #1@}
\makeatother
\newtheorem{theorem}{Theorem}[section]

\newtheorem{notation}{Notation}[section]
\newtheorem{lemma}{Lemma}[section]

\newtheorem{remark}{Remark}[section]
\usepackage[numbers,sort&compress]{natbib}
\theoremstyle{definition}

\providecommand{\norm}[1]{\left\Vert#1\right\Vert}

\providecommand{\sd}[1]{\mathcal{D}_{#1}}
\providecommand{\se}[1]{\mathcal{E}_{#1}}

\DeclareMathOperator{\diverge}{div}

\providecommand{\norm}[1]{\left\Vert#1\right\Vert}
\def\ls{\lesssim}

\def\dt{\partial_t}

\def\RRvert2{\right \vert\! \right\vert}
\def\Lvert3{\left \vert\!\left\vert\!\left\vert}
\def\Rvert3{\right \vert\!\right\vert\!\right\vert}

\def\nab{\nabla}

\def\dt{\partial_t}

\def\ls{\lesssim}
\def\p{\partial}

\def\a{\mathcal{A}}
\def\f{\mathcal{F}_{2N}}

\def\fj1{\mathcal{J}^{-1}}

\numberwithin{equation}{section}

\allowdisplaybreaks[4]

\makeatletter
\@namedef{subjclassname@2020}{%
  \textup{2020} Mathematics Subject Classification}
\makeatother

\title[Magneto-micropolar fluids without resistivity and spin viscosity]{Stability of vertically charged steady magnetic field in 3D incompressible magneto-micropolar fluids without magnetic and angular viscosity in a strip domain}

\author[Z. Feng]{Zefu Feng}
\address[Z. Feng]{School of Mathematical Sciences, Chongqing Normal University, China}
\email{zefufeng@mails.ccnu.edu.cn}

\author[G. Hong]{Guangyi Hong$ ^\ast$}\thanks{$ ^\ast $Corresponding  author.}
\address[G. Hong]{School of Mathematics, South China University of Technology, China}
\email{magyhong@scut.edu.cn}

\author[J. Wu]{Jiahong Wu}
\address[J. Wu]{Department of Mathematics, University of Notre Dame, USA}
\email{jwu29@nd.edu}

\author[K. Zhao]{Kun Zhao}
\address[K. Zhao]{College of Mathematical Sciences, Harbin Engineering University, China}
\email{kzhao@hrbeu.edu.cn}

\subjclass[2020]{35B35; 35B65; 35M33; 35Q35}

\keywords{Magneto-micropolar fluids; initial and boundary value problem; classical solution; global existence; stability}

\begin{document}

\begin{abstract}
This paper intends to understand the regularity and stability problem on the 3D incompressible magneto-micropolar equations with zero magnetic and angular viscosities in a strip domain. The magneto-micropolar system models the electrically conducting micropolar fluid in the presence of a magnetic field. The lack of magnetic diffusion and angular dissipation makes it impossible to prove even small data global well-posedness result, let alone general large data global regularity. This paper presents a steady-state setup around which any perturbations can be shown to be globally regular and stable. More precisely, any small perturbation near a steady magnetic field perpendicular to the horizontal boundary leads to a unique global classical solution. In addition, the solution is shown to converge to the steady state at an almost exponential rate as time goes to infinity. These appear to be the very first rigorous global results on the magneto-micropolar equations concerned here.
\end{abstract}

\maketitle

\tableofcontents

 \section{Introduction} \label{sec:introduction}

Consider the initial-boundary value problem (IBVP) for the 3D incompressible magneto-micropolar fluid with no magnetic and angular viscosity:
\begin{subequations}\label{ns_euler0c}
\begin{alignat}{6}
\partial_t\tilde{\mathbf{u}}  +   \tilde{\mathbf{u}}\cdot \nabla \tilde{\mathbf{u}}  +\nabla \Big(\tilde{p}+\frac{1}{2}\vert \tilde{\mathbf{b}}\vert^2 \Big)
 &=(\mu+\zeta)\Delta \tilde{\mathbf{u}}+\tilde{\mathbf{b}} \cdot \nabla \tilde{\mathbf{b}}+ \zeta \nabla \times \tilde{\mathbf{w}}, \quad &\mathbf{x}&\in\Omega,\ t>0, \label{ns_euler0c1} \\
  \partial_t\tilde{\mathbf{b}}  +   \tilde{\mathbf{u}} \cdot \nabla \tilde{\mathbf{b}}  &= \tilde{\mathbf{b}} \cdot \nabla \tilde{\mathbf{u}}, &\mathbf{x}&\in\Omega,\ t>0, \label{ns_euler0c2} \\
  \partial _{t}\tilde{\mathbf{w}}+ \tilde{\mathbf{u}}\cdot \nabla
  \tilde{\mathbf{w}} &= - 2 \zeta \tilde{\mathbf{w}} +  \zeta \nabla \times
  \tilde{\mathbf{u}}, &\mathbf{x}&\in\Omega,\ t>0, \label{ns_euler0c3} \\
\mathop{\mathrm{div}}\nolimits{\tilde{\mathbf{u}}}&=
\mathop{\mathrm{div}}\nolimits{\tilde{\mathbf{b}}}=0, &\mathbf{x}&\in\Omega,\ t>0, \label{ns_euler0c4} \\
 (\tilde{\mathbf{u}}, \tilde{\mathbf{b}}, \tilde{\mathbf{w}})\vert_{t=0}&=(\tilde{\mathbf{u}}_{0},\tilde{\mathbf{b}}_0, \tilde{\mathbf{w}}_{0}), \label{ns_euler0c5} \\
\tilde{\mathbf{u}}\vert_{\partial\Omega}&=\mathbf{0}, \label{ns_euler0c6}
\end{alignat}
\end{subequations}
where $ \Omega= \mathbb{R}^{2}\times (0,1) $. This paper aims at the global regularity and stability problem on \eqref{ns_euler0c}. Needless to say, the general
large data global regularity problem for \eqref{ns_euler0c} is out of reach.
The lack of magnetic diffusion also makes the small data global well-posedness appear impossible. The Sobolev norms of the magnetic field generally grows in time and this destroys the smallness framework.

\vskip .1in
This paper presents a scenario for which we do have global, regular and stable solutions. Constant magnetic fields form a special class of steady-state solutions to \eqref{ns_euler0c}. When the background magnetic field is perpendicular to the domain boundary, namely $x_3=0,1$, we are able to establish that any small perturbations near the background magnetic field generates a unique global solution that remains small for all time. Furthermore, as time $t\to \infty$, the perturbation converges to zero at an almost exponential rate.

\vskip .1in
The global regularity and stability problems on the perturbations near the aforementioned background is not trivial. If we follow the conventional approach
to perform energy estimates on \eqref{ns_euler0c}, we encounter two difficulties immediately. The first is the lack of diffusion in the equation of $\tilde{\mathbf{b}}$. The second is the coupling of $\tilde{\mathbf{u}}, \tilde{\mathbf{b}}$ with $\tilde{\mathbf{w}}$ through the curl operator. This paper develops a novel approach based on Lagrangian formulation. The discovering and exploiting of the hyperbolic structures for the system governing the perturbations, and defining a two-tier energy functional are essential ingredients of our proof.

\vskip .1in
We recall some background information. Micropolar fluids are characterized by nonsymmetric stress tensor and exhibit micro-rotational effects and micro-rotational inertia (cf.\,\cite{Eringen}). Physical examples of micropolar fluids include liquid crystals made up of dumbbell particles, animal blood, bubbly liquids, and ferrofluids, etc (cf.\,\cite{erdougan1970polar,Eringen}). Comparing with general microfluids, micropolar fluids are more manageable from the modeling perspective. The motion of an incompressible and electrically conducting micropolar fluid in the presence of a magnetic field in $\mathbb{R}^3$ can be modeled by the following system (cf.\,\cite{Ahmadi,Eringen}):
\begin{subequations}\label{MM}
\begin{alignat}{4}
\partial_t\tilde{\mathbf{u}}  +   \tilde{\mathbf{u}}\cdot \nabla \tilde{\mathbf{u}}  +\nabla \Big(\tilde{p}+\frac{1}{2}\vert \tilde{\mathbf{b}}\vert^2 \Big)
 &=(\mu+\zeta)\Delta \tilde{\mathbf{u}}+\tilde{\mathbf{b}} \cdot \nabla \tilde{\mathbf{b}}+\zeta \nabla \times \tilde{\mathbf{w}}, \label{MM1} \\
  \partial_t\tilde{\mathbf{b}}  +   \tilde{\mathbf{u}} \cdot \nabla \tilde{\mathbf{b}}  &= \tilde{\mathbf{b}} \cdot \nabla \tilde{\mathbf{u}} + \nu\Delta \tilde{\mathbf{b}}, \label{MM2} \\
  \partial _{t}\tilde{\mathbf{w}}+ \tilde{\mathbf{u}}\cdot \nabla
  \tilde{\mathbf{w}} &= - 2 \zeta \tilde{\mathbf{w}} + \zeta \nabla \times
  \tilde{\mathbf{u}} + \kappa_1\Delta \tilde{\mathbf{w}} +
  (\kappa_2+\kappa_3)\nabla(\nabla\cdot\tilde{\mathbf{w}}), \label{MM3} \\
\mathop{\mathrm{div}}\nolimits{\tilde{\mathbf{u}}}&=
\mathop{\mathrm{div}}\nolimits{\tilde{\mathbf{b}}}=0, \label{MM4}
\end{alignat}
\end{subequations}
where $\mathbf{x} \in \mathbb{R}^3$, $t>0$, which results from the coupling of the incompressible magnetohydrodynamics (MHD) equations and the angular momentum equation through vorticities. Here, $\tilde{\mathbf{u}}=(\tilde{u} _{1},\tilde{u} _{2}, \tilde{u} _{3}) $, $\tilde{p}$, $ \tilde{\mathbf{b}}=( \tilde{b} _{1}, \tilde{b} _{2}, \tilde{b} _{3}) $, $ \tilde{\mathbf{w}}=(\tilde{w} _{1},\tilde{w} _{2},\tilde{w} _{3}) $ denote the fluid velocity, pressure, magnetic field, and angular velocity, respectively, and $\mu $ represents the kinematic viscosity, $ \zeta $ the vortex viscosity, $\nu$ the magnetic viscosity (permeability), and $\kappa_1$, $\kappa_2$ and $\kappa_3$ the angular (micro-rotational) viscosities. Following conventional terminology, we shall term system \eqref{MM} as the magneto-micropolar equations.

\vskip .1in
Mathematical study of the magneto-micropolar equations has been carried out since the 1970s. We recall some results that are relevant to this work. For the fully dissipative model \eqref{MM} (i.e., $\mu,\nu,\zeta,\kappa_1,\kappa_2,\kappa_3>0$), the existence, uniqueness and regularity of solutions were studied by Rojas-Medar-Boldrini \cite{RB-1998} and Xiang-Yang \cite{XY-2012}. A blow-up criterion for the 2D Cauchy problem was established by Wang-Wang \cite{WW-2011}. For the 2D model with mixed partial viscosities, the global well-posedness of classical solutions was obtained by Cheng-Liu \cite{CL-2015} and Regmi-Wu \cite{wu-Regmi-2016}. Later, Yamazaki \cite{Y-2015} investigated the global regularity of the 2D system with zero angular viscosities ($\kappa_1=\kappa_2=\kappa_3=0$). Recently, Lin-Xiang \cite{LX-2020} proved the global well-posedness and long-time behavior of strong solutions around the equilibrium $(\mathbf{0}, \mathbf{e}_1, \mathbf{0})$ for the fully dissipative 2D model. More results on the magneto-micropolar equations can be found in \cite{Ma1,Sh0, Sh1, Yuan}.

\vskip .1in
To put things into broader perspective, we note that system \eqref{MM} reduces to the incompressible magnetohydrodynamics (MHD) equations when $ \zeta =0 $ and $ \mathbf{w} \equiv \mathbf{0} $. There is a rich literature on the mathematical studies of the incompressible MHD model. For the viscous and resistive system (i.e., $\mu>0$ and $\nu>0$), the local well-posedness of large-data solutions and global well-posedness of small-data solutions were established in the classical Sobolev space $H^s(\mathbb{R}^d)$ ($s\geq d$) by Duvaut-Lions \cite{DL-1972}. The global well-posedness result in the two-dimensional space was later improved by Sermange-Temam \cite{ST-1983} by removing the smallness restriction on the initial data. Cao et al. \cite{CRW-2013,CW-2011} further upgraded the result of \cite{ST-1983} to the 2D model with anisotropic dissipation and resistivity. For the viscous  and non-resistive system (i.e., $\mu>0$ and $\nu=0$), Lin-Xu-Zhang \cite{Lin-Xu-ZHANGPING-2015-JDE}  and Xu-Zhang \cite{XZ-2015} constructed global classical solutions near non-trivial steady magnetic fields to the Cauchy problem in two- and three-dimensional spaces, respectively (see \cite{ZhANG-Ren-2016-nonlinearity,Tan-Wang-SIMA} for similar results in horizontal strip domains). The long-time behaviors of such solutions were investigated in \cite{ZhangZHIFEI-JFA-2016} for the 2D Cauchy problem and in \cite{Tan-Wang-SIMA} for the 3D IBVP in a strip domain. In addition to the mentioned works on the incompressible MHD system, we would also like to refer the readers to \cite{wang-hu-2010-ARMA,Hu-Wang-2008-JDE,hu2014global,jiangsong-fanjishan,ST-1983,zhangjianwen-SIMA,Hong-SIMA} for related results on the compressible  MHD model.

\vskip .1in
In this paper, we are interested in the mathematical analysis of \eqref{MM} when the magnetic and angular viscosities are negligible, i.e., $\nu=\kappa_1=\kappa_2=\kappa_3=0$. Our work is motivated by recent studies on the 2D magneto-micropolar equations without angular viscosities \cite{Y-2015}, the 3D non-resistive MHD equations \cite{Tan-Wang-SIMA}, and the 3D compressible version of \eqref{ns_euler0c} \cite{Feng-hong-zhu-compre}. However, the problem studied here is considerably more complicated than its predecessors. On one side, comparing with the 2D micro-rotationally inviscid (i.e., $\kappa_1=\kappa_2=\kappa_3=0$) version of \eqref{MM}, the higher dimensionality and absence of the magnetic diffusion bring the difficulty of the mathematical analysis of \eqref{ns_euler0c} to a different level. On the other side, in addition to the lack of magnetic diffusion (the main difficulty of the non-resistive MHD), the coupling of the fluid and angular velocity fields through curls presents an anti-symmetric form in the system, which formally leads to the loss of derivatives of $\tilde{\mathbf{w}}$. Such a situation has been encountered in the recent study of the compressible version of \eqref{ns_euler0c} \cite{Feng-hong-zhu-compre}, where the global dynamics of classical solutions to an IBVP of the model is studied via Lagrangian formulation and the two-tier energy method developed in \cite{Guo-LOCAL-2013,GT-per,Tan-Wang-SIMA}. One of the main ideas in \cite{Feng-hong-zhu-compre} is to study an auxiliary system formed by the equations of the fluid momentum and the vorticity of the angular velocity. Through the auxiliary system an intrinsic coupling structure is captured and the desired estimates of $\tilde{\mathbf{u}} $ and $\tilde{\mathbf{w}} $ are established via elliptic theory and div-curl analysis. For the incompressible model, however, since the pressure function is determined by an elliptic equation involving both the fluid and angular velocity fields, rather than the density function alone in the compressible case, the crucial analysis in \cite{Feng-hong-zhu-compre} cannot be accessed for the problem considered in this paper. Collectively, the synergy of high dimensionality, absence of magnetic and angular viscosities, and strong coupling between the unknown functions presents a significant challenge to the mathematical analysis of \eqref{ns_euler0c}.

\subsection{Reformulation}

Due to the coupling of fluid velocity with magnetic field and lack of magnetic diffusion, the Eulerian formulation \eqref{ns_euler0c} is not convenient for implementing any effective energy scheme. Instead, we reformulate the model in Lagrangian coordinates by using the flow map:
\begin{equation}\label{flowmap}
\begin{aligned}
  \partial _{t} \tilde{\boldsymbol{\eta}}(\mathbf{x},t)&=\tilde{\mathbf{u}}\big(\tilde{\boldsymbol{\eta}}(\mathbf{x},t),t\big),\\
  \tilde{\boldsymbol{\eta}}(\mathbf{x},0)&=\mathbf{x}.
 \end{aligned}
 \end{equation}
Let $ \tilde{\boldsymbol{\eta}}(\mathbf{x},t) = \mathbf{x}+ {\boldsymbol{\eta}}(\mathbf{x},t) $, and let $ \mathcal{A}=[(\mathcal{I}_3+\nabla {\boldsymbol{\eta}})^{-1}]^{\mathrm{T}} $ with $ \mathcal{I}_3 $ being the $ 3\times 3 $ identity matrix. Define the Lagrangian unknowns:
 \begin{gather*}
(\mathbf{u}, \mathbf{w}, p, \mathbf{b})(\mathbf{x},t)=\Big(\tilde{\mathbf{u}},\tilde{\mathbf{w}},\tilde{p}+\frac{1}{2}|\tilde{\mathbf{b}}|^2,\tilde{\mathbf{b}}\Big)\big(\mathbf{x}+{\boldsymbol{\eta}}(\mathbf{x},t),t\big),\quad (\mathbf{x},t)\in \Omega \times \mathbb{R}^+.
\end{gather*}
Then the IBVP \eqref{ns_euler0c} can be reformulated in terms of $ ({\boldsymbol{\eta}}, \mathbf{u}, \mathbf{w}, p, \mathbf{b}) $ as
\begin{subequations}\label{lagrangianc}
\begin{alignat}{7}
\partial_t \boldsymbol{\eta} &= \mathbf{u}, \\
 \partial_t \mathbf{u} -(\mu+ \zeta)\Delta_{\mathcal{A}} \mathbf{u} +\nabla_{\mathcal{A}} p &= {\mathbf{b}} \cdot \nabla_{\mathcal{A}} {\mathbf{b}}  + \zeta \nabla _{\mathcal{A}} \times \mathbf{w}, \\
\partial _{t}\mathbf{w}+2 \zeta \mathbf{w}&= \zeta \nabla _{\mathcal{A}} \times \mathbf{u}, \\
 \partial_t \mathbf{b}&=\mathbf{b} \cdot\nabla_{\mathcal{A}} \mathbf{u}, \\
  \operatorname{div} _{\mathcal{A}} \mathbf{u}&=\operatorname{div} _{\mathcal{A}} \mathbf{b}=0, \label{1.4e}\\
(\boldsymbol{\eta}, \mathbf{u}, \mathbf{w}, \mathbf{b})\vert_{t=0}&=( \tilde{\boldsymbol{\eta}}_{0}-\mathbf{x}, \tilde{\mathbf{u}}_0, \tilde{\mathbf{w}}_0, \tilde{\mathbf{b}}_0) =: (\boldsymbol{\eta}_{0}, \mathbf{u} _{0}, \mathbf{w} _{0}, \mathbf{b} _{0}), \\
 \mathbf{u}|_{\partial\Omega}&=\mathbf{0},
\end{alignat}
\end{subequations}
where the actions $\nabla _{\mathcal{A}}$, $\operatorname{div}_{\mathcal{A}}$, $\nabla_{\mathcal{A}}\times$ and $\Delta_{\mathcal{A}}$ are given by
\begin{equation*}
\begin{aligned}
(\nabla _{\mathcal{A}} f)_i &:= \mathcal{A}_{ij}\partial_j f, \\
(\nabla_{\mathcal{A}}\times \mathbf{X})_i &:=\epsilon _{ijk} \mathcal{A}_{jl}\partial _{l}X _{k},
\end{aligned}
\qquad \begin{aligned}
\operatorname{div}_{\mathcal{A}} \mathbf{X} &:= \mathcal{A}_{ij}\partial_j X_i, \\
\Delta _{\mathcal{A}} f &:=\operatorname{div}_{\mathcal{A}}  \nabla _{\mathcal{A}} f,
\end{aligned}
\end{equation*}
for any appropriate $f$ and $\mathbf{X}$, and $\epsilon _{ijk}$ denotes the Levi-Cevita symbol, i.e.,
 \begin{equation*}
\epsilon_{ijk}=\left\{
\begin{aligned}
+&1, &\ &ijk=123,231,312,\\
-&1, &\ &ijk=321,213,132,\\
&0, &\ &i=j, \text{ or } j=k, \text{ or } k=i.
\end{aligned}
\right.
\end{equation*}

\subsection{Reduction}
The following reduction can be found in \cite{Tan-Wang-SIMA} for the non-resistive MHD system. For the reader's convenience, we present the key steps in the procedure. First, denote $ J:= \operatorname{det}(\mathcal{I}_3+\nabla \boldsymbol{\eta}) $. It is straightforward to check that
$\partial_t J =J \operatorname{div} _{\mathcal{A}} \mathbf{u}$, which together with \eqref{1.4e} yields $\partial _{t}J=0$. This, alongside $\boldsymbol{\eta}(\mathbf{x},0)=\mathbf{0}$, implies
\begin{align}\label{v1}
\operatorname{det}\big[\mathcal{I}_3+\nabla \boldsymbol{\eta}(\mathbf{x},t)\big] = \operatorname{det} \big[\mathcal{I}_3+\nabla\boldsymbol{\eta}(\mathbf{x},0)\big] =1.
\end{align}
Using \eqref{v1}, it can be shown that
 \begin{align}\label{imp2}
\mathbf{b} = (\mathcal{I}_3+\nabla\boldsymbol{\eta})\mathbf{b}_0.
 \end{align}
Second, for the sake of simplicity, we assume that the initial magnetic field $\mathbf{b}_0=  \mathbf{e}_3:=(0,0,1)$. It follows from \eqref{imp2} that $\mathbf{b} = (\mathbf{e}_3+ \partial_3\boldsymbol{\eta})$. This implies $\mathbf{b} \cdot\nabla_{\mathcal{A}} \mathbf{b} = b_j\a_{jk}\partial_k \mathbf{b}
 =\p_{3}^2\boldsymbol\eta$, with which we rewrite the model \eqref{lagrangianc} as
\begin{subequations}\label{reformulationc}
\begin{alignat}{6}
\partial_t\boldsymbol{\eta} &=\mathbf{u}, \\
 \partial_t \mathbf{u} -(\mu+\zeta)\Delta_{\mathcal{A}} {\mathbf{u}}+\nabla_{\mathcal{A}}p
& = \partial_3^2\boldsymbol{\eta}+\zeta\nabla_{\mathcal{A}}\times \mathbf{w}, \label{1.7b}\\
\partial _{t}\mathbf{w}+2 \zeta \mathbf{w}&= \zeta \nabla _{\mathcal{A}} \times \mathbf{u}, \label{1.7c}\\
\operatorname{div} _{\mathcal{A}} \mathbf{u}&=0, \\
(\boldsymbol{\eta}, \mathbf{u}, \mathbf{w}) \vert_{t=0}&=( \boldsymbol{\eta}_0, \mathbf{u}_0,\mathbf{w}_0), \\
\mathbf{u}|_{\partial\Omega}&= \mathbf{0}.
\end{alignat}
\end{subequations}
In the sequel, we focus on the reduced model \eqref{reformulationc} around the
steady state $(\mathbf{0},\mathbf{0},\mathbf{0})$. With the solution
$(\boldsymbol{\eta},\mathbf{u},\mathbf{w})$ of \eqref{reformulationc} at our
disposal, we can easily recover the magnetic field $\mathbf{b} = (\mathbf{e}_3+
\partial_3 \boldsymbol{\eta})$. Then
$(\boldsymbol{\eta},\mathbf{u},\mathbf{w},\mathbf{b})$ solves
\eqref{lagrangianc} by imposing the initial condition: $\mathbf{b}_0=
(\mathbf{e}_3+ \partial_3 \boldsymbol{\eta}_0) = \mathbf{e}_3$. We remark that our main results remain valid even when  $\mathbf{b}_0$ is a small perturbation from $\mathbf{e}_3$.

\vskip .1in
Before moving to the next section, we give a heuristic explanation of the difficulty of the problem. Formally, solving \eqref{1.7c} and assuming $\mathbf{w}_0=\mathbf{0}$ give us
\begin{align*}
\mathbf{w} = \zeta \mathrm{e}^{-2\zeta t} \int_0^t \mathrm{e}^{2\zeta \tau} (\nabla_{\mathcal{A}} \times \mathbf{u}) \mathrm{d}\tau.
\end{align*}
Substituting this into \eqref{1.7b} and expressing $\boldsymbol\eta$ in terms of $\mathbf{u}$, we obtain
\begin{align*}
 \partial_t \mathbf{u} -(\mu+\zeta)\Delta_{\mathcal{A}} {\mathbf{u}}+\nabla_{\mathcal{A}}p
 = \int_0^t\partial_3^2\mathbf{u}\mathrm{d}\tau + \zeta^2 \mathrm{e}^{-2\zeta t} \int_0^t \mathrm{e}^{2\zeta \tau} (\nabla_{\mathcal{A}} \times \nabla_{\mathcal{A}} \times \mathbf{u}) \mathrm{d}\tau.
\end{align*}
Heuristically, when $\mathbf{u}$ is slightly perturbed from $\mathbf{0}$, it holds $\nabla_{\mathcal{A}} \times \nabla_{\mathcal{A}} \times \mathbf{u} \approx -\Delta_{\mathcal{A}} \mathbf{u}$. Hence,
\begin{align*}
 \partial_t \mathbf{u} -(\mu+\zeta)\Delta_{\mathcal{A}} {\mathbf{u}}+\nabla_{\mathcal{A}}p
 \approx  \int_0^t\partial_3^2\mathbf{u}\mathrm{d}\tau - \zeta^2 \mathrm{e}^{-2\zeta t} \int_0^t \mathrm{e}^{2\zeta \tau} (\Delta_{\mathcal{A}} \mathbf{u}) \mathrm{d}\tau.
\end{align*}
We note that the combination of the first terms on both sides presents a (partially) wave structure, which is one of the mathematical challenges encountered in the study of the non-resistive MHD system (i.e., when $\zeta=0$). On top of that, when $\zeta>0$, the two Laplacians on both sides compete, rather than cooperate. Such an effect presents additional (significant) challenges to the mathematical analysis of the problem under consideration. We will give a brief explanation of the key ideas for overcoming the technical obstructions in the next section.

\vskip .1in
The organization of the rest of the paper is as follows. Section 2 contains the statement of our main results and a brief explanation of the key ideas utilized in the proof. In Section 3, we collect preparatory materials for the proof of the main results, including the Helmholtz projection and its properties, anisotropic estimates related to the Helmholtz projection of the curl of a vector field, classic estimates of Hodge type, and of the Stokes system, along with the local well-posedness of \eqref{ns_euler0c}. The next three sections are devoted to the {\it a priori} estimates of the non-spatial, horizontal, and vertical derivatives of the solution, respectively. The {\it a priori} estimates are then synthesized in Section 7 to close the entire energy scheme.

\section{Results and Ideas}\label{main-result}

This section contains the statement of our main results on the global dynamics of classical solutions to the reformulated problem \eqref{reformulationc} and a brief explanation of the key ideas for attacking the problem. We first introduce some notations that are frequently accessed throughout the paper.

\vskip .1in
For $k \geq 0$, $H^k$ denotes the usual Sobolev space $W^{k,2}(\Omega)$, with norm $\|\cdot\|_k:=\|\cdot\|_{W^{k,2}(\Omega)}$. In particular, $H^0(\Omega) = L^2(\Omega)$. With $\|\cdot\|_k$, define the \emph{anisotropic} Sobolev norm:
\begin{align}\label{ani-Sobolev}
\left\|f\right\|^2_{k,l}:=\sum_{ \alpha_1+\alpha_2 \le l}\|\partial_1^{\alpha_1}\partial_2^{\alpha_2} f\|^2_k,\quad k\ge0,\quad l\ge0,
\end{align}
where $\alpha_1$, $\alpha_2$ are nonnegative integers. By definition, for any two pairs $(k_1,l_1)$,  $(k_2,l_2)$ with $k_1+l_1=k_2+l_2$ and $k_2>k_1$, the norm $\|f\|_{k_2,l_2}$ contains more derivatives that $\|f\|_{k_1,l_1}$ does.

\vskip .1in
For a solution to \eqref{reformulationc} and an integer $ n \geq 2 $, define
\begin{align}
\mathcal{E}_{n}(t) := &\sum_{j=0}^{n} \|\partial ^{j}_t \mathbf{u}\| _{2n-2j}^{2} + \sum^{n-1}_{j=0}\|\nabla\partial^j_{t}p\|^2_{2n-2j-2} +\sum_{j=0}^{n}\|\partial^j_{t}\mathbf{w}\|^2_{2n-2j}+\|\mathcal{P}\partial^2_3\boldsymbol{\eta}\|^2_{2n-1} \notag\\
&\qquad +\|\boldsymbol{\eta}\|^2_{1,2n}+\|\boldsymbol{\eta}\|^2_{2n},\label{p_energy_defc}\\
 \mathcal{D}_{n}(t) := &\sum_{j=0}^{n}  \|\partial _{t}^{j} \mathbf{u}\|^2_{2n-2j+1} + \sum^{n-1}_{j=1}\|\nabla\partial^j_{t}p\|^2_{2n-2j-1} +\sum^n_{j=0}\sum^{1}_{l=0}\|\partial_t^{j+l} \mathbf{w}\|^2_{2n-2j} + \|\mathcal{P} \partial^2_3\boldsymbol{\eta}\|^2_{2n-1} \notag\\
&\qquad  + \|\partial _{3}\boldsymbol{\eta}\|^2_{0,2n} +
\|\boldsymbol{\eta}\|^2_{2n} +\|\nabla p\|^2_{2n-2}, \label{p_dissipation_defc}
\end{align}
where $ \mathcal{P} $ denotes the Helmholtz projection operator defined by \eqref{defi-projection}. It is worth mentioning that $\|\boldsymbol{\eta}\|^2_{1,2n}$ contains more derivatives of $\boldsymbol{\eta}$ than $\|\partial _{3}\boldsymbol{\eta}\|^2_{0,2n}$ does. Hence, the total dissipation (i.e., $\mathcal{D}_{n}$) does not cover the total energy (i.e., $\mathcal{E}_{n}$) of the solution. This is one of the distinctive features of the problem under consideration, which is different from many fully dissipative problems.

\vskip .1in
Moreover, for an integer $N\ge1$, we define
\begin{align}\label{FJ}
\mathcal{F}_{2N}(t) :=\|\boldsymbol{\eta}\|^2_{4N+1} \qquad \text{and} \qquad \mathcal{J}_{2N}(t):=\|\mathbf{u}\|^2_{4N+1}+\|\nabla p\|^2_{4N-1}+\|\mathbf{w}\|^2_{4N}.
\end{align}
With \eqref{p_energy_defc}, \eqref{p_dissipation_defc} and \eqref{FJ}, we define for any $\theta>0$,
\begin{align}\label{G_defca}
\mathscr{G}_{2N}(t) :=\sup_{0 \le \tau \le t} \big(\mathcal{E}_{2N} + \mathcal{F}_{2N} + (1+\tau)^{2N-4} \mathcal{E}_{N+2} \big)(\tau) + \int_0^t \Big(\mathcal{D}_{2N}  + \frac{\mathcal{J}_{2N}}{(1+\tau)^{1+\theta}}\Big)(\tau)\mathrm{d}\tau.
\end{align}
Finally, we denote by $C$ a generic positive constant which is independent of the initial data and time. For two positive quantities, $A\lesssim B$ means that $A\leq C B$ for some generic constant $C>0$.

\vskip .1in
With the above notations at our disposal, we now state the main results of this paper.

\begin{theorem}\label{thm-main-result}
Consider the initial-boundary value problem \eqref{reformulationc}. Suppose that the initial data are compatible with the boundary conditions, and that for some integer $N\ge4$ there exists a sufficiently small constant $\varepsilon_0>0$ such that $ \mathcal{E} _{2N} (0)+\mathcal{F}_{2N}(0) \le \varepsilon_0$. Then there exists a unique global solution $(\boldsymbol{\eta},\mathbf{u},p,\mathbf{w})$ to \eqref{reformulationc} satisfying
\begin{equation}\label{G-2N-esti-thm}
\mathscr{G}_{2N}(t) \lesssim \mathcal{E}_{2N} (0)+\mathcal{F}_{2N}(0),\quad \forall\ t>0.
\end{equation}
\end{theorem}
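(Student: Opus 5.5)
The proof follows the two-tier energy method of Guo--Tice and Tan--Wang, adapted to the micropolar coupling. We work under the a priori assumption $\mathscr{G}_{2N}(T)\le\delta$ for some small $\delta$. We will show that $\mathscr{G}_{2N}(t)\lesssim \mathcal{E}_{2N}(0)+\mathcal{F}_{2N}(0)+\delta^{\vartheta}\mathscr{G}_{2N}(t)$ for some $\vartheta>0$. A standard continuity argument, combined with the local well-posedness result recalled in Section 3, then gives the global solution, its uniqueness, and \eqref{G-2N-esti-thm}.

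\textbf{Step 1: temporal and horizontal derivatives.} Apply $\partial_t^j$ ($j\le n$) and $\partial_1^{\alpha_1}\partial_2^{\alpha_2}$ ($\alpha_1+\alpha_2\le 2n$, for $n=2N$ and $n=N+2$) to \eqref{reformulationc}, and test the momentum equation with the corresponding derivative of $\mathbf{u}$. Test the $\mathbf{w}$ equation with the corresponding derivative of $\mathbf{w}$ weighted by $\zeta^{-1}$ times a suitable constant, so that the antisymmetric curl terms cancel up to commutators in $\mathcal{A}-\mathcal{I}_3$. Because the vortex viscosity enters the momentum equation as $(\mu+\zeta)\Delta$, this cancellation leaves the net dissipation $\mu\|\nabla \mathbf{u}\|^2+\zeta\|\nabla\times \mathbf{u}-2\mathbf{w}\|^2$-type terms plus the damping $\zeta\|\mathbf{w}\|^2$. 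The magnetic term $\partial_3^2\boldsymbol\eta$ tested with $\partial_t\boldsymbol\eta$ produces the energy $\|\partial_3\boldsymbol\eta\|^2$. To obtain dissipation of $\partial_3\boldsymbol\eta$, we also test the equation with $\boldsymbol\eta$, as in the wave-structure argument of Tan--Wang. The pressure is handled via $\operatorname{div}_{\mathcal{A}}$ and the identity $J=1$. Nonlinear terms are bounded by $\sqrt{\mathcal{E}_{2N}}\mathcal{D}_{2N}$, or for top-order $\boldsymbol\eta$ terms by $\sqrt{\mathcal{D}_{2N}\mathcal{F}_{2N}\mathcal{J}_{2N}}$-type products.

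\textbf{Step 2: vertical derivatives and the full norms.} Recover the missing $\partial_3$ derivatives by elliptic arguments. Rewrite the momentum equation as a Stokes system for $(\mathbf{u},p)$ with forcing $\partial_3^2\boldsymbol\eta+\zeta\nabla\times\mathbf{w}-\partial_t\mathbf{u}+$ nonlinear terms. Apply the Helmholtz projection $\mathcal{P}$ to isolate $\mathcal{P}\partial_3^2\boldsymbol\eta$, and use the anisotropic estimates for $\mathcal{P}\nabla\times$ and the Hodge-type estimates from Section 3 to control $\mathbf{w}$ in terms of $\nabla\times\mathbf{u}$. The $\mathbf{w}$ equation is an ODE along each point, which gives $\|\partial_t^j\mathbf{w}\|_{2n-2j}$ from $\|\partial_t^j\nabla\mathbf{u}\|_{2n-2j}$ with exponential damping. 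For $\|\boldsymbol\eta\|_{2n}$, use $\operatorname{div}_{\mathcal{A}}\mathbf{u}=0$, which implies $\operatorname{div}\boldsymbol\eta$ is quadratic, together with $\mathcal{P}\partial_3^2\boldsymbol\eta$ and horizontal control. The top-order $\mathcal{F}_{2N}=\|\boldsymbol\eta\|^2_{4N+1}$ is estimated by transport: $\frac{d}{dt}\mathcal{F}_{2N}\lesssim\sqrt{\mathcal{F}_{2N}\mathcal{J}_{2N}}$. This grows at most like $(1+t)$ times data, which is why the weight $(1+\tau)^{-1-\theta}$ appears in front of $\mathcal{J}_{2N}$.

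\textbf{Step 3: decay at the low tier and closing.} For $\mathcal{E}_{N+2}$, derive $\frac{d}{dt}\widetilde{\mathcal{E}}_{N+2}+\mathcal{D}_{N+2}\le 0$ with $\widetilde{\mathcal{E}}_{N+2}\simeq\mathcal{E}_{N+2}$. Since $\mathcal{D}_{N+2}$ does not control $\mathcal{E}_{N+2}$ (the term $\|\boldsymbol\eta\|_{1,2n}$ is missing), interpolate between $\mathcal{D}_{N+2}$ and $\mathcal{E}_{2N}$ to get $\mathcal{E}_{N+2}\lesssim \mathcal{D}_{N+2}^{\vartheta}\mathcal{E}_{2N}^{1-\vartheta}$. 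This yields algebraic decay $(1+t)^{-(2N-4)}$, and $N\ge4$ is what makes this rate large enough. The decay is then used to control the terms involving $\mathcal{F}_{2N}$ in the high-tier estimate.

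\textbf{Main obstacle.} I expect the main obstacle to be the competing Laplacians from the curl coupling, which appears in Steps 1 and 2. Obtaining positive dissipation with the right weights, and showing that $\mathcal{P}\nabla_{\mathcal{A}}\times\mathbf{w}$ costs no derivative in the Stokes estimate, both depend on the anisotropic projection estimates of Section 3.
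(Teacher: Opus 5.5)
The two-tier skeleton and the interpolation-driven decay of $\mathcal{E}_{N+2}$ match the paper, but two steps of your proposal fail as written.

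\textbf{The bound on $\mathcal{F}_{2N}$.} You estimate $\mathcal{F}_{2N}=\|\boldsymbol\eta\|_{4N+1}^2$ by transport and accept growth like $(1+t)$. But $\mathscr{G}_{2N}$ in \eqref{G_defca} contains $\sup_\tau\mathcal{F}_{2N}(\tau)$ with no weight, so \eqref{G-2N-esti-thm} asserts that $\mathcal{F}_{2N}$ stays uniformly bounded. Transport only gives $\sqrt{\mathcal{F}_{2N}(t)}\le\sqrt{\mathcal{F}_{2N}(0)}+\int_0^t\|\mathbf{u}\|_{4N+1}$, where $\|\mathbf{u}\|_{4N+1}$ is merely square-integrable in time, so it cannot deliver that bound. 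It also leaves $\int_0^t\mathcal{J}_{2N}(1+\tau)^{-1-\theta}\,d\tau$ uncontrolled. In a plain Stokes estimate, $\|\nabla p\|_{4N-1}$ is bounded through $\|\partial_3^2\boldsymbol\eta\|_{4N-1}\le\sqrt{\mathcal{F}_{2N}}$. A linearly growing $\mathcal{F}_{2N}$ would then force $\theta>1$. At $N=4$ this is incompatible with the condition $N-2\ge 1+\theta$ that is needed to absorb the source $\sqrt{\mathcal{E}_{N+2}}\,\mathcal{J}_{2N}$ using the decay of $\mathcal{E}_{N+2}$.

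The missing idea is that the magnetic tension \emph{damps} $\boldsymbol\eta$. For $\boldsymbol\Theta=\mathbf{u}+(\mu+\zeta)^{-1}\boldsymbol\eta$, the term $\partial_3^2\boldsymbol\eta$ cancels in the Stokes problem \eqref{h}, leaving only $-\Delta_*\boldsymbol\eta$. Since $\partial_t\boldsymbol\eta=\mathbf{u}$, $\|\boldsymbol\Theta\|^2$ expands as $\|\mathbf{u}\|^2+(\mu+\zeta)^{-2}\|\boldsymbol\eta\|^2+(\mu+\zeta)^{-1}\frac{d}{dt}\|\boldsymbol\eta\|^2$. Stokes regularity plus induction on the number of vertical derivatives, seeded by $\|\boldsymbol\eta\|_{1,4N}$ (part of $\mathcal{E}_{2N}$), gives $\frac{d}{dt}\tilde{\mathcal{F}}_{2N}+\mathcal{F}_{2N}+\mathcal{J}_{2N}\lesssim\mathcal{E}_{2N}+\mathcal{D}_{2N}$. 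Gr\"onwall then bounds $\mathcal{F}_{2N}$ uniformly, and multiplying by $(1+t)^{-1-\theta}$ yields \eqref{grow2}. So the weight comes from the forcing $\mathcal{E}_{2N}$ being bounded but not integrable, not from growth of $\mathcal{F}_{2N}$.

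\textbf{The $\mathbf{u}$--$\mathbf{w}$ loop in the vertical estimates.} In your Step 2, $\|\mathbf{u}\|_{2n+1}$ comes from Stokes, whose forcing $\zeta\nabla\times\mathbf{w}$ needs $\|\mathbf{w}\|_{2n}$. In turn, $\|\mathbf{w}\|_{2n}$ comes from the damped ODE, which needs $\|\nabla\mathbf{u}\|_{2n}$. Both links are linear with $O(1)$ constants. The ODE gives at best $\int_0^t\|\mathbf{w}\|_{2n}^2\le\frac14\int_0^t\|\nabla\times\mathbf{u}\|_{2n}^2+\cdots$, while the Stokes constant on the strip is generic. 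Smallness of the data therefore cannot break the circle. This is the competition of Laplacians you name as the obstacle, but your proposal contains no mechanism that resolves it. The paper states explicitly that Stokes alone cannot do so, and resolves it in four steps:
\begin{enumerate}
\item After applying $\mathcal{P}$, it writes $\partial_3^2\mathbf{u}=\partial_t\partial_3^2\boldsymbol\eta$. The momentum equation and the curl of the $\mathbf{w}$ equation then become the damped ODE system \eqref{first-two-compo-ODE} for $\boldsymbol{\mathcal{Q}}=\mathcal{P}\partial_3^2\boldsymbol\eta$ and $\boldsymbol{\mathcal{V}}=\mathcal{P}(\nabla\times\mathbf{w})$. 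Its forcing involves only $\partial_t\mathbf{u}$, horizontal derivatives of $\mathbf{u}$, and nonlinear terms.
\item It recovers $\nabla\times\mathbf{w}=\boldsymbol{\mathcal{V}}+\nabla\phi$ from the ODE \eqref{phi-3-eq} for $\partial_3\phi$ together with Lemma \ref{key-lemma} for $\nabla_h\phi$. That is the actual role of the anisotropic estimate, not to make $\mathcal{P}\nabla_{\mathcal{A}}\times\mathbf{w}$ derivative-free inside a Stokes bound.
\item It recovers $\mathbf{w}$ itself from the damped equation for $\operatorname{div}\mathbf{w}$ and Lemma \ref{div-curl-in-lem}.
\item It bounds $\mathbf{u}$ not by Stokes but by the elliptic problem $-\zeta\Delta\mathbf{u}=\partial_t\mathbf{W}+2\zeta\mathbf{W}+\mathbf{G}^4$ from \eqref{u-tuoyuan}. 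Its right side carries one vertical derivative fewer, so a downward induction on the vertical order, seeded by the horizontal estimates, closes.
\end{enumerate}

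\textbf{A smaller omission in Step 1.} Testing with $\partial^\alpha\boldsymbol\eta$ produces $\zeta\int\partial^\alpha\mathbf{w}\cdot(\nabla\times\partial^\alpha\boldsymbol\eta)$. This involves $2n+1$ horizontal derivatives of $\boldsymbol\eta$, or of $\mathbf{w}$ after integrating by parts, and neither is dissipated. The paper handles it by testing the $\mathbf{w}$ equation with $\frac12\nabla\times\partial^\alpha\boldsymbol\eta$. This turns the term into a time derivative plus $\frac12\int\partial^\alpha\mathbf{w}\cdot(\nabla\times\partial^\alpha\mathbf{u})$, which the horizontal dissipation controls.
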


\begin{remark}
From \eqref{G_defca} and \eqref{G-2N-esti-thm}, we immediately have the decay rate:
\begin{align}\label{decay}
\sum _{l=0}^{1}\big( \|\partial _{t}^{l} \mathbf{u}\|_{2(N+2)-2l}^{2}+
\|\partial _{t}^{l} \mathbf{w}\|_{2(N+2)-2l}^{2} \big) +\|\boldsymbol{\eta}\|_{2(N+2)}^{2} \lesssim  \mathcal{E}_{2N} (0)(1+t)^{4-2N}
\end{align}
for $ N \geq 4 $. When $N$ is sufficiently large, the decay rate is almost exponential. Moreover, from \eqref{imp2} and \eqref{decay} we see that the magnetic field decays to $\mathbf{e}_3$.
\end{remark}

\begin{remark}
Theorem \ref{thm-main-result} established the stability of a non-trivial
background magnetic field in an incompressible magneto-micropolar fluid
with zero resistivity and zero angular viscosity. In obtaining Theorem
\ref{thm-main-result}, the background magnetic field is assumed to be
perpendicular to the horizontal strip domain. We remark that our main results
may still hold true even when the background magnetic field does not meet the
horizontal strip domain at the right angle. In addition, it is more physically
relevant to study the problem on bounded domains with physical boundaries,
which yet poses more technical obstructions to the mathematical analysis. The investigations are left for the future.
\end{remark}

\vspace{-4mm}
\begin{remark}\label{rem:3}
To the best of our knowledge, our paper appears to be the first to use the Helmholtz decomposition to deal with the micropolar flows. Even in the absence of a magnetic field, that is, for the micropolar equations, there are few results in the literature on the well-posedness of smooth solutions in the 3D case. There are only a few results in the 2D case. For example, see \cite{Dong-Zhang-JDE} for the Cauchy problem and \cite{Liu-Wang} for the initial-boundary value problem. It should be emphasized that for 2D case, both the angular velocity $ \mathbf{w} $ and the $ \mathop{\rm curl}\nolimits $ of $ \mathbf{u}$ are scalar functions, and this special feature plays a key role in establishing the a priori estimates. However, it seems that the methods in these works are not applicable for the 3D case. It is also worth noting that, due to the presence of $ \nabla \times \mathbf{w} $, our approach to estimating the vertical derivatives of the solutions differs significantly from that used in studies of the viscous non-resistive MHD equations. In those cases, the estimates can be directly obtained by using the regularity theory of the Stokes operator. But in our case, this is not sufficient. Our idea is to resort to the Helmholtz projection to deal with the linear coupling structure of the equations. A key component of our analysis is the development of a method to recover the estimate of $ \nabla \times \mathbf{w}$ from its Helmholtz projection via the intrinsic structure of the equations and establishing a crucial estimate on the projected \emph{curl} (see Lemma \ref{key-lemma}).
\end{remark}

\vspace{-4mm}
\begin{remark}
The stabilizing effect of the magnetic field is essential in our stability analysis and in establishing the large-time behavior of the solution. This paper adopts the Lagrangian coordinate framework, in which the nonlinear convection terms such as \( \boldsymbol{u} \cdot \nabla \mathbf{u} \) are absorbed into the time derivative. This naturally introduces higher-order derivatives of the flow map \( \boldsymbol{\eta} \) (e.g., \( \nabla^2 \boldsymbol{\eta} \)), which arise from terms like \( \Delta_{\mathcal{A}} \mathbf{u} \). However, since the flow map equation is inviscid, it is not possible to directly control space-time norms of \( \boldsymbol{\eta} \) using standard energy estimates. To overcome this difficulty, our analysis exploits the smoothing and stabilizing effects resulting from the coupling between the equation for \( \boldsymbol{\eta} \) and the momentum equation. In particular, the term \( \partial_3^2 \boldsymbol{\eta} \) in the momentum equation contributes directly to the dissipation estimates of \( \boldsymbol{\eta} \). Without the presence of the magnetic field, the key higher-order derivatives of \( \boldsymbol{\eta} \) would not be controllable, and the argument cannot be closed in the absence of the dissipation term \( \partial_3^2 \boldsymbol{\eta} \). Consequently, obtaining the desired large-time behavior would also be impossible.
\end{remark}

The primary technical device utilized in the proof of Theorem \ref{thm-main-result} is the two-tier energy method developed in \cite{GT-per}. Some of the ideas for studying the compressible version of \eqref{ns_euler0c} will also be adapted to the problem considered here. However, it was mentioned before that the major difference between the compressible and incompressible models lies in the fact that the incompressible pressure function is determined by an elliptic equation involving both the fluid and angular velocity fields, rather than the density function in the compressible case. To overcome the obstacle, we utilize the Helmholtz projection operator $ \mathcal{P}  $ to eliminate the pressure term from the fluid velocity equation, and then establish energy estimates for $ \mathcal{P} \partial _{3}^{2} \boldsymbol{\eta} $ and $ \mathcal{P}  (\nabla \times \mathbf{w}) $ (see Lemmas \ref{lem-eta-w} and \ref{lem-time-deri-curlw}). Nevertheless, this introduces two additional difficulties. First, from \eqref{1.7b} we see that the dissipation estimate of $\mathbf{u}$ to the highest order (i.e., $ \|\mathbf{u}\|^2_{2n+1} $) depends on the estimate of $\|\mathbf{w}\|_{2n}^2$, whose control in turn relies on $ \|\mathbf{u}\|^2_{2n+1} $, according to \eqref{1.7c}. Such an intertwining issue cannot be untangled solely via the Stokes equation. Second, direct energy estimates only give us control of $ \mathcal{P} (\nabla \times \mathbf{w})$, which is not enough to recover the estimates of $\mathbf{w}$ via the div-curl analysis, and it seems impossible to establish the desired estimates for $\mathbf{w}$ by utilizing the elliptic structure of the system.

\vskip .1in
To resolve the first issue, we derive the elliptic estimates for $\mathbf{u}$ through the equation of the vorticity of the angular velocity rather than using the Stokes equation for the fluid velocity field (see \eqref{u-tuoyuan}--\eqref{u-elli-1*} for details). For the second difficulty, we establish certain anisotropic estimates related to the Helmholtz projection operator $\mathcal{P}$ by its explicit representation formula in the strip domain. In particular, for the decomposition $ \nabla \times \mathbf{w} =\mathcal{P} (\nabla \times \mathbf{w})+ \nabla \phi$, we can show that
\begin{align}\label{KE}
 \|\nabla_h\phi\|^2_{k,\lambda}\lesssim\|\partial_3\phi\|^2_{k,\lambda}+\sigma\|\nabla\times\mathbf{w}\|^2_{0,k+\lambda}+  \|\mathbf{w}\|_{0}\|\mathbf{w}\|_{1}\quad \mbox{for any }\sigma>0,
\end{align}
where $\nabla_h$ denotes the horizontal gradient, $ 0 \leq k \leq 2n-1 $, and $ 0 \leq \lambda \leq 2n-k-1$. For details, please see Lemma \ref{key-lemma}. Since the estimates of $\partial _{3}\phi$ can be extracted from the third component of the equation of the vorticity of the angular velocity, \eqref{KE} enables us to gain the control of $\mathbf{w}$ through the ones of $ \mathcal{P} (\nabla \times \mathbf{w})$. This paves us the way to rennovate the arguments in the previous work \cite{Feng-hong-zhu-compre} to study the incompressible magneto-micropolar fluids.

\section{Preliminaries}\label{preliminaries}

This section contains prerequistes for the proof of Theorem \ref{thm-main-result}, including certain anisotropic estimates associated with the Helmholtz projection operator and the local well-posedness of the initial-boundary value problem \eqref{reformulationc}.

\subsection{Helmholtz Projection}

For $ \Omega= \mathbb{R}^{2}\times (0,1) $, the Helmholtz projection is defined by
\begin{align}\label{defi-projection}
\mathcal{P} :\, L ^{2}(\Omega) \rightarrow L _{\mathrm{div}}^{2}(\Omega),
\end{align}
where
\begin{gather*}
L _{\mathrm{div}}^{2}(\Omega)= \left\{ \mathbf{v} \in L ^{2}(\Omega)\vert\, \mathop{\mathrm{div}}\nolimits \mathbf{v}= 0\ \text{in }\Omega,\ \ \mathbf{v} \cdot \mathbf{n}=0 \ \text{on }\partial \Omega\right\}.
\end{gather*}
It is known that for any $ \mathbf{v}=(v _{1},v _{2},v _{3}) \in L ^{2}(\Omega) $, there exists a unique Helmholtz decomposition
\begin{align*}
\mathbf{v}= \mathcal{P} \mathbf{v}+ \nabla \varphi,
\end{align*}
where the potential function $ \varphi $ solves the following Neumann problem of the Poisson equation:
\begin{equation}\label{vfi-eq}
\begin{aligned}
  \Delta \varphi &= \mathop{\mathrm{div}}\nolimits \mathbf{v}, &\mathbf{x}& \in \Omega,\\
  \partial _{3}\varphi(x _{1},x _{2},d)&= v  _{3}(x _{1}, x_{2},d), \quad &d&=0,1.
\end{aligned}
\end{equation}
Moreover, it holds that
\begin{align*}
 \|\mathbf{v}\|_{0}^{2}= \|\mathcal{P} \mathbf{v}\|_{0}^{2}+\|\nabla \varphi\|_{0}^{2}.
 \end{align*}
 Denote $ \bar{\partial}:=\partial _{1},\, \partial _{2}\, \mbox{or\ }\partial _{t} $. It is clear that $ \bar{\partial}\mathcal{P} =\mathcal{P} \bar{\partial} $. Therefore for any $ k \geq 1 $, we have
 \begin{align}\label{bar-decom-high}
 \|\bar{\partial}^{k}\mathcal{P} \mathbf{v}\| _{0}\leq \|\bar{\partial}^{k}\mathbf{v}\|_{0}.
 \end{align}
However, $ \partial _{3}\mathcal{P} \mathbf{v} \neq \mathcal{P} \partial _{3} \mathbf{v} $ in general. Next, we prove an anisotropic estimate related to $ \mathcal{P}  $.

\begin{lemma}\label{lem-anisotropi-1}
For $ k \geq 2 $ and $ \lambda \geq 0 $, we have
\begin{align*}
\|\mathcal{P} \mathbf{v}\| _{k, \lambda} \lesssim \|\mathbf{v}\| _{k, \lambda},
\end{align*}
where the norm $\|\cdot\|_{k,\lambda}$ is defined by \eqref{ani-Sobolev}.
\end{lemma}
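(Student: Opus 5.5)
Since $\mathcal{P}\mathbf{v}=\mathbf{v}-\nabla\varphi$, with $\varphi$ solving the Neumann problem \eqref{vfi-eq}, the claim reduces to showing
\begin{align*}
\|\nabla\varphi\|_{k,\lambda}\lesssim\|\mathbf{v}\|_{k,\lambda}.
\end{align*}
Horizontal derivatives $\partial_1^{\alpha_1}\partial_2^{\alpha_2}$ commute with both the Laplacian and the Neumann boundary operator on the flat boundaries $x_3=0,1$. Applying $\partial_h^\alpha:=\partial_1^{\alpha_1}\partial_2^{\alpha_2}$ with $\alpha_1+\alpha_2\le\lambda$, I see that $\varphi_\alpha:=\partial_h^\alpha\varphi$ solves
\begin{align*}
\Delta\varphi_\alpha=\operatorname{div}(\partial_h^\alpha\mathbf{v})\ \text{in }\Omega,\qquad \partial_3\varphi_\alpha=\partial_h^\alpha v_3\ \text{on }\partial\Omega.
\end{align*}
It therefore suffices to prove the isotropic bound $\|\nabla\psi\|_k\lesssim\|\mathbf{g}\|_k$ for the solution $\psi$ of $\Delta\psi=\operatorname{div}\mathbf{g}$, $\partial_3\psi=g_3$ on $\partial\Omega$, with $\mathbf{g}=\partial_h^\alpha\mathbf{v}$, and then sum over $\alpha$.

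For the isotropic bound, I would proceed in three steps.
\begin{itemize}
\item \emph{Zeroth order.} Testing the weak formulation $\int_\Omega\nabla\psi\cdot\nabla\chi=\int_\Omega\mathbf{g}\cdot\nabla\chi$ with $\chi=\psi$ gives $\|\nabla\psi\|_0\le\|\mathbf{g}\|_0$. This is the orthogonality identity already recorded, and no Poincaré inequality is needed since only $\nabla\psi$ is measured.
\item \emph{Higher order.} Standard Neumann elliptic regularity in the strip (for instance via partial Fourier transform in $x_h$, or by the usual tangential difference quotients and recovering $\partial_3^2\psi$ from the equation) gives, for $k\ge1$,
\begin{align*}
\|\nabla\psi\|_k\lesssim\|\Delta\psi\|_{k-1}+\|\partial_3\psi\|_{H^{k-1/2}(\partial\Omega)}+\|\nabla\psi\|_0\lesssim\|\operatorname{div}\mathbf{g}\|_{k-1}+\|g_3\|_{H^{k-1/2}(\partial\Omega)}+\|\mathbf{g}\|_0.
\end{align*}
\item \emph{Closing.} Combining $\|\operatorname{div}\mathbf{g}\|_{k-1}\le\|\mathbf{g}\|_k$ with the trace theorem $\|g_3\|_{H^{k-1/2}(\partial\Omega)}\lesssim\|\mathbf{g}\|_k$ yields $\|\nabla\psi\|_k\lesssim\|\mathbf{g}\|_k$. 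The restriction $k\ge2$ is harmless here; it only ensures we sit in the classical strong-solution regime.
\end{itemize}

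The main obstacle is the uniformity of the elliptic constant on the unbounded strip. I would handle it by Fourier transforming in $x_h$, with dual variable $\xi$. For each $\xi$ one obtains the ODE
\begin{align*}
-|\xi|^2\hat\psi+\hat\psi''=\hat f,\qquad \hat\psi'(0)=\hat g_3(0),\quad \hat\psi'(1)=\hat g_3(1),
\end{align*}
and the key step is weighted $H^k(0,1)$ estimates that are uniform in $\xi$, with powers of $|\xi|$ accounting for tangential derivatives.

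The low frequencies $\xi\to0$ need care because the Neumann problem has a kernel of constants. Since only $\nabla\psi$ is estimated, however, the energy bound from the first step already controls $|\xi|\hat\psi$ and $\hat\psi'$ uniformly. Higher $x_3$-derivatives then follow by repeatedly using the equation $\hat\psi''=|\xi|^2\hat\psi+\hat f$. Alternatively, one can cite the known $H^k$ theory for the Neumann Laplacian in a slab, which is standard and presumably what the authors invoke.
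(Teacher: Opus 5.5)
Your argument is correct, but it differs from the paper's. You keep the Neumann problem \eqref{vfi-eq} for $\varphi$ and commute horizontal derivatives through it. You then prove the isotropic bound $\|\nabla\psi\|_k\lesssim\|\mathbf{g}\|_k$ from three ingredients: the energy identity, Neumann regularity, and the trace theorem. The energy identity gives $\|\nabla\psi\|_0\le\|\mathbf{g}\|_0$ without any Poincar\'e inequality, so the kernel of constants never enters.

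The paper instead differentiates once in $x_3$. From $\partial_3\mathcal{P}\mathbf{v}=\partial_3\mathbf{v}-\nabla\omega$ with $\omega:=\partial_3\varphi$, it notes that $\omega$ solves the \emph{Dirichlet} problem $\Delta\omega=\partial_3\operatorname{div}\mathbf{v}$ in $\Omega$, $\omega=v_3$ on $\partial\Omega$. Dirichlet regularity then gives $\|\omega\|_{k,\lambda}\lesssim\|\partial_3\operatorname{div}\mathbf{v}\|_{k-2,\lambda}+\|v_3\|_{H^{k+\lambda-1/2}_{\mathbf{x}_h}}\lesssim\|\mathbf{v}\|_{k,\lambda}$, which controls every derivative of $\mathcal{P}\mathbf{v}$ containing a $\partial_3$. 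Purely horizontal derivatives are handled by $\bar\partial\mathcal{P}=\mathcal{P}\bar\partial$ and the $L^2$-contractivity of $\mathcal{P}$.

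The paper's trick replaces the Neumann problem by a coercive one on the slab (Poincar\'e in $x_3$). As a result, neither the constant kernel nor low horizontal frequencies need comment. The price is the term $\|\partial_3\operatorname{div}\mathbf{v}\|_{k-2,\lambda}$, which is exactly why the lemma is stated for $k\ge 2$.

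Your route needs uniform Neumann regularity on the unbounded strip, which you only sketch. It can be made fully elementary in two steps:
\begin{itemize}
\item Do tangential energy estimates for $\partial_h^\beta\psi$. No trace theorem is needed, since the boundary datum enters only through $\int_\Omega\mathbf{g}\cdot\nabla\chi\,\mathrm{d}\mathbf{x}$.
\item Recover normal derivatives inductively from $\partial_3^2\psi=\operatorname{div}\mathbf{g}-\Delta_\ast\psi$.
\end{itemize}
In exchange, your argument gives the bound for every $k\ge 0$.
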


\begin{proof}
  Let $ \mathbf{v}= \mathcal{P} \mathbf{v}+ \nabla \varphi $, with $ \varphi $ satisfying the elliptic system \eqref{vfi-eq}. Then
\begin{gather}\label{deco-high-form}
 \partial _{3}\mathcal{P} \mathbf{v}= \partial _{3} \mathbf{v}- \nabla \omega,
\end{gather}
where $ \omega:= \partial _{3}\varphi $ satisfies
  \begin{equation}\label{omega-eq}
\begin{aligned}
  \Delta \omega &= \partial _{3}\mathop{\mathrm{div}}\nolimits \mathbf{v}, &\mathbf{x}& \in \Omega,\\
  \omega(x _{1},x _{2},d) &= v  _{3}(x _{1}, x_{2},d), \quad &d&=0,1.
\end{aligned}
\end{equation}
Thanks to the classical regularity theory for the elliptic system (cf.\,\cite{AW}), it holds that
\begin{align}\label{x1}
 \|\omega\|_{k,\lambda} \lesssim \|\partial _{3}\mathop{\mathrm{div}}\nolimits \mathbf{v}\|_{k-2,\lambda}+ \|v_{3}(x _{1}, x_{2},d)\|_{H_{\mathbf{x}_{h}}^{k+\lambda-1/2}} \lesssim \|\mathbf{v}\| _{k, \lambda},
\end{align}
for $ k \geq 2 $ and $ \lambda \geq 0 $, where $H_{\mathbf{x}_h}^s$ denotes the Hilbert space with respect to the horizontal variables.
Combining \eqref{bar-decom-high}, \eqref{deco-high-form} and \eqref{x1} implies for $ k \geq 2 $ and $ \lambda \geq 0 $ that
\begin{align}
\|\mathcal{P} \mathbf{v}\|_{k,\lambda} \lesssim \|\mathbf{v}\|_{k, \lambda}+ \|\omega\| _{k,\lambda} \lesssim \|\mathbf{v}\| _{k,\lambda}.
\end{align}
This completes the proof of the lemma.
\end{proof}

\subsection{Anisotropic Estimates of Projected Curl}

We now show the anisotropic estimates related to the Helmholtz projection of $ \mathbf{W}=\nabla \times \mathbf{w} $ for a suitably smooth vector field $\mathbf{w}= (w_1,w_2,w_3)$. These estimates are of great importance in recovering the estimates of $ \mathbf{W} $ from its Helmholtz projection. This is also one of the major differences between the incompressible model considered in this paper and the compressible one studied in \cite{Feng-hong-zhu-compre}. Precisely, we let
\begin{align}\label{x4}
 \mathbf{W}=\mathcal{P} \mathbf{W}+\nabla\phi,
\end{align}
where both $ \mathcal{P}\mathbf{W} $ and $ \nabla \phi $ belong to $ L ^{2}(\Omega) $, and $\phi$ solves
\begin{equation}\label{fip-elliptic}
\begin{aligned}
\Delta \phi &=0, &\mathbf{x}& \in \Omega,\\
  \partial _{3}\phi(x _{1},x _{2},d) &= (\partial_1w_2-\partial_2w_1)(x _{1}, x_{2},d)=:f(x _{1},x _{2},d), \quad &d&=0,1.
\end{aligned}
\end{equation}
We remark that here and in Lemma \ref{key-lemma}, we do not explicitly write the time-dependence of the functions, in order to simplify the presentation, since all the calculations and estimates are made with respect to the spatial variables. Then we have the following:

\begin{lemma}\label{key-lemma}
Let $ \nabla _{h}$ denote $\partial _{1}$ or $\partial _{2}$. Then it holds for $n \geq 3$ that
\begin{align}\label{phi1}
\|\nabla _{h}\phi\|_{0,2n-1}^{2} \lesssim \|\partial _{3}\phi\|_{0,2n-1}^{2}+  \sigma\|\mathbf{W}\|_{0,2n-1}^{2}+    \|\mathbf{w}\|_{0}\|\mathbf{w}\|_{1}
\end{align}
for any $ \sigma>0 $. More general, for any $ \sigma>0 $, it holds that
\begin{align}\label{phi2}
\|\nabla_h\phi\|^2_{k,\lambda}\lesssim\|\partial_3\phi\|^2_{k,\lambda}+\sigma\|\mathbf{W}\|^2_{0,k+\lambda}+  \|\mathbf{w}\|_{0}\|\mathbf{w}\|_{1},
\end{align}
where $ 0 \leq k \leq 2n-1 $ and $ 0 \leq \lambda \leq 2n-k-1 $.
\end{lemma}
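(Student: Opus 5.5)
The plan is to work entirely on the horizontal Fourier side. Since $\Omega=\mathbb{R}^2\times(0,1)$ and $\phi$ is harmonic, taking the Fourier transform in $\mathbf{x}_h=(x_1,x_2)$ (dual variable $\xi$) turns \eqref{fip-elliptic} into the ODE $\partial_3^2\hat\phi=|\xi|^2\hat\phi$ on $(0,1)$ with $\partial_3\hat\phi(\xi,d)=\hat f(\xi,d)$, $d=0,1$. Its explicit solution is
\begin{align*}
\hat\phi(\xi,x_3)=\frac{\hat f(\xi,1)\cosh(|\xi|x_3)-\hat f(\xi,0)\cosh(|\xi|(1-x_3))}{|\xi|\sinh|\xi|}.
\end{align*}
The horizontal derivatives $\partial_1^{\alpha_1}\partial_2^{\alpha_2}$ are multipliers commuting with everything. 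Moreover $\partial_3^{j}\hat\phi$ equals $|\xi|^{j}$ times the same $\cosh$ combination for $j$ even and $|\xi|^{j}$ times the corresponding $\sinh$ combination for $j$ odd. Hence both sides of \eqref{phi2} reduce to weighted integrals $\int |\xi|^{2m}\big(\int_0^1|\cdots|^2dx_3\big)d\xi$ with $m\le k+\lambda$. It therefore suffices to compare, for each fixed $\xi$, the two quantities $|\xi|^2\int_0^1|c(x_3)|^2dx_3$ and $|\xi|^2\int_0^1|s(x_3)|^2dx_3$, where $c=a e^{|\xi|x_3}+b e^{-|\xi|x_3}$ and $s=a e^{|\xi|x_3}-b e^{-|\xi|x_3}$. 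This covers \eqref{phi1} as the case $k=0$, $\lambda=2n-1$.

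\textbf{Step 1 (high frequencies).} The two quantities differ only by the cross term $4|\xi|^2\mathrm{Re}(a\bar b)$, because $\int_0^1 e^{|\xi|x_3}e^{-|\xi|x_3}dx_3=1$. Meanwhile $\|a e^{|\xi|x_3}\|_{L^2(0,1)}^2+\|be^{-|\xi|x_3}\|^2_{L^2(0,1)}\gtrsim (|a|^2e^{2|\xi|}+|b|^2)/|\xi|$. By Cauchy--Schwarz the cross term is bounded by $\delta(R)$ times the diagonal terms when $|\xi|\ge R$, with $\delta(R)\to0$ as $R\to\infty$. Fixing $R$ large therefore gives $|\xi|^2\|c\|^2\le 2|\xi|^2\|s\|^2$ pointwise in $|\xi|\ge R$. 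Multiplying by $|\xi|^{2m}$ and integrating yields the contribution $\lesssim\|\partial_3\phi\|_{k,\lambda}^2$ in \eqref{phi2}.

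\textbf{Step 2 (low frequencies).} For $|\xi|\le R$, the weights $|\xi|^{2m}$ are bounded by $R^{2m}$. From the formula, $|\xi|\hat\phi$ is bounded by $C(R)\big(|\hat f(\xi,0)|+|\hat f(\xi,1)|\big)/|\xi|$; for the $\sinh$ combinations the bound is even better. Since $f=\partial_1w_2-\partial_2w_1$, we have $|\hat f(\xi,d)|\le|\xi||\hat{\mathbf{w}}(\xi,d)|$, which cancels the $1/|\xi|$ singularity. The low-frequency part is thus bounded by $C(R)\int_{\mathbb{R}^2}|\hat{\mathbf{w}}(\xi,d)|^2d\xi=C\|\mathbf{w}(\cdot,d)\|^2_{L^2(\mathbb{R}^2)}$, and the trace inequality $\|g(\cdot,d)\|^2_{L^2}\lesssim\|g\|_0\|g\|_1$ gives $\lesssim\|\mathbf{w}\|_0\|\mathbf{w}\|_1$.

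The $\sigma\|\mathbf{W}\|^2_{0,k+\lambda}$ term is slack for an intermediate band, if one is needed. In case the constant in Step 1 degenerates near $|\xi|\sim R$, or the cutoff must be placed where the trace interpolation costs derivatives, I would bound the band $R_0\le|\xi|\le R$ by $|\xi|^{2m}|\hat f(\xi,d)|^2$. The trace of $f=W_3$ is then controlled by $\sigma\|\mathbf{W}\|_{0,k+\lambda}^2+C_\sigma$ times lower-order terms, using the trace inequality with Young's inequality. The main obstacle I expect is Step 1: making the cross-term comparison uniform in $\xi$ and in the parity of the vertical derivative order. This uniformity is what forces the frequency split, and it is where the $\sigma$-absorption must be arranged carefully.
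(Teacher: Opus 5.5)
Your proof is correct, but at high frequencies it takes a different route from the paper's; your low-frequency step is the same as the paper's Step 2.

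\textbf{How the paper handles $|\xi|\gg1$.} The paper also reduces to the cross term between $\||\xi|\hat\phi\|^2_{L^2_{x_3}}$ and $\|\partial_3\hat\phi\|^2_{L^2_{x_3}}$. It estimates that cross term through the Neumann data. Viewing $f$ as the trace of $W_3$ and using $\partial_3W_3=-\partial_1W_1-\partial_2W_2$, it shows that $|\hat f(\xi,0)e^{|\xi|}-\hat f(\xi,1)|$ and $|\hat f(\xi,1)e^{|\xi|}-\hat f(\xi,0)|$ are both $\lesssim|\xi|^{1/2}(e^{2|\xi|}-1)^{1/2}\|\hat{\mathbf W}(\xi,\cdot)\|_{L^2_{x_3}}$. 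The cross term is therefore at most a factor of order $|\xi|e^{-|\xi|}$ times $\|\hat{\mathbf W}(\xi,\cdot)\|^2_{L^2_{x_3}}$, which is $\le\sigma\|\mathbf W\|_0^2$ beyond some $\bar K(\sigma)$. The band $|\xi|\le\bar K(\sigma)$ is then treated as in Step 2, with a $\sigma$-dependent constant.

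\textbf{How your route handles it.} You absorb the cross term into the diagonal terms, which uses only that $\phi$ is harmonic in the slab. Your Step 1 is in fact sharp and uniform. Since $\int_0^1e^{2|\xi|x_3}\,dx_3\int_0^1e^{-2|\xi|x_3}\,dx_3=(\sinh|\xi|/|\xi|)^2$, one has exactly $\bigl|\|c\|^2-\|s\|^2\bigr|\le\rho\,(\|c\|^2+\|s\|^2)$ with $\rho=|\xi|/\sinh|\xi|<1$. Hence $\|c\|^2\le\frac{1+\rho}{1-\rho}\|s\|^2$, and $\rho$ is decreasing in $|\xi|$. So nothing degenerates near $|\xi|\sim R$, and a fixed absolute $R$ (say $R=3$) suffices. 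The fallback in your last paragraph and the worry about arranging the $\sigma$-absorption are therefore unnecessary.

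\textbf{What your route buys.} It gives a strictly stronger inequality: $\|\nabla_h\phi\|^2_{k,\lambda}\lesssim\|\partial_3\phi\|^2_{0,k+\lambda}+\|\mathbf w\|_0\|\mathbf w\|_1$, with no $\mathbf W$ term and an absolute constant. Used downstream, in the proofs of Lemmas \ref{lem-eta-w}, \ref{lem-time-deri-curlw} and \ref{lemma-energy-recover}, it would remove the steps where $\sigma$ must be chosen small and absorbed. The paper's argument relies on the curl structure of the boundary data at high frequency. Yours needs that structure only at low frequency, through $|\hat f(\xi,d)|\le|\xi|\,|\hat{\mathbf w}(\xi,d)|$.
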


\begin{proof}
{\bf Step 1.} From \eqref{fip-elliptic}, we deduce that
\begin{equation*}
\begin{aligned}
   \partial _{3}^{2}\hat{\phi}( {\boldsymbol \xi},x _{3}) &= \vert {\boldsymbol \xi}\vert ^{2}\hat{\phi}({\boldsymbol \xi},x _{3}), \quad &x&_3\in(0,1),\\
  \partial _{3}\hat{\phi}({\boldsymbol \xi},d)&=\hat{f}({\boldsymbol \xi},d), \quad &d&=0,1,
\end{aligned}
\end{equation*}
where $ \hat{\phi}({\boldsymbol \xi},x _{3}) $ denotes the Fourier transform of $ \phi $ with respect to $ \mathbf{x} _{h} := (x _{1}, x _{2}) $ and $ {\boldsymbol \xi}=( \xi_{1}, \xi_{2}) $. Then from  standard ODE theory, we have
\begin{align*}
\hat{\phi}({\boldsymbol \xi},x _{3})=\frac{1}{|{\boldsymbol \xi}|}\bigg(\frac{{\mathop{\mathrm{e}}}^{|{\boldsymbol \xi}|}\hat{f}({\boldsymbol \xi},0)-\hat{f}({\boldsymbol \xi},1)}{ 1-{\mathop{\mathrm{e}}}^{2|{\boldsymbol \xi}|}}{\mathop{\mathrm{e}}}^{\vert {\boldsymbol \xi}\vert(1- x _{3})}+\frac{\hat{f}({\boldsymbol \xi},1){\mathop{\mathrm{e}}}^{|{\boldsymbol \xi}|}-\hat{f}({\boldsymbol \xi},0)}{{\mathop{\mathrm{e}}}^{2|{\boldsymbol \xi}|}-1}{\mathop{\mathrm{e}}}^{\vert {\boldsymbol \xi}\vert x _{3}}\bigg),
\end{align*}
where $ \vert {\boldsymbol \xi}\vert= \sqrt{\xi_{1}^{2}+\xi_{2}^{2}} $. Now we estimate $|{\boldsymbol \xi}|\hat{\phi}$ by using frequency localization.

\vskip .1in
{\bf Step 2.} When $|{\boldsymbol \xi}|\leq 1$, a direct calculation shows that
\begin{align}\label{phi3}
\vert {\boldsymbol \xi}\vert\hat{\phi}({\boldsymbol \xi},x _{3})=\frac{\hat{f}({\boldsymbol \xi},0)}{1-{\mathop{\mathrm{e}}}^{2|{\boldsymbol \xi}|}}\big({\mathop{\mathrm{e}}}^{|{\boldsymbol \xi}|(2-x _{3})}+{\mathop{\mathrm{e}}}^{ \vert {\boldsymbol \xi}\vert x _{3}}\big) - \frac{\hat{f}({\boldsymbol \xi},1)}{1-{\mathop{\mathrm{e}}}^{2|{\boldsymbol \xi}|}}\big({\mathop{\mathrm{e}}}^{\vert {\boldsymbol \xi}\vert(1-x _{3})}+{\mathop{\mathrm{e}}}^{|{\boldsymbol \xi}|(1+x _{3})}\big).
\end{align}
Then we have
\begin{align*}
&\big\| |{\boldsymbol \xi}|\hat{\phi}\big\|^2_{L^2(\{|{\boldsymbol \xi}|\leq 1\})}\\
\leq &\int_{\{|{\boldsymbol \xi}|\leq 1\}}\frac{|\hat{f}({\boldsymbol \xi},0)|^2}{({\mathop{\mathrm{e}}}^{2|{\boldsymbol \xi}|}-1)^2}\int^1_{0}{\mathop{\mathrm{e}}}^{2|{\boldsymbol \xi}|(2-x _{3})}\mathrm{d} x _{3} \mathrm{d}{\boldsymbol \xi}+\int_{\{\vert {\boldsymbol \xi}\vert\leq1\}}\frac{|\hat{f}({\boldsymbol \xi},0)|^2}{({\mathop{\mathrm{e}}}^{2|{\boldsymbol \xi}|}-1)^2}\int^1_{0}{\mathop{\mathrm{e}}}^{2|{\boldsymbol \xi}|x _{3}}\mathrm{d} x _{3} \mathrm{d}{\boldsymbol \xi}
\\
&\quad+\int_{\{\vert {\boldsymbol \xi}\vert\leq 1\}}\frac{|\hat{f}({\boldsymbol \xi},1)|^2}{({\mathop{\mathrm{e}}}^{2|{\boldsymbol \xi}|}-1)^2}\int^1_{0}{\mathop{\mathrm{e}}}^{2 \vert {\boldsymbol \xi}\vert(1-x _{3})}\mathrm{d}x _{3} \mathrm{d}{\boldsymbol \xi}+\int_{\{\vert {\boldsymbol \xi}\vert\leq1\}}\frac{|\hat{f}({\boldsymbol \xi},1)|^2}{({\mathop{\mathrm{e}}}^{2 \vert {\boldsymbol \xi}\vert}-1)^2}\int^1_{0}{\mathop{\mathrm{e}}}^{2 \vert {\boldsymbol \xi}\vert(1+x _{3})}\mathrm{d} x _{3} \mathrm{d}{\boldsymbol \xi}
\\
\lesssim&\int_{\{|{\boldsymbol \xi}|\leq 1\}}\frac{|\hat{f}({\boldsymbol \xi},1)|^2+|\hat{f}({\boldsymbol \xi},0)|^2}{|{\boldsymbol \xi}|^2}\mathrm{d}{\boldsymbol \xi}\leq\int_{\{|{\boldsymbol \xi}|\leq 1\}}\big(|\hat{\mathbf{w}}(\cdot,1)|^2+|\hat{\mathbf{w}}(\cdot,0)|^2\big) \mathrm{d}{\boldsymbol \xi},
\end{align*}
where we used the definition of $f$ (see \eqref{fip-elliptic}) and $\hat{\mathbf{w}} = (\hat{w}_1,\hat{w}_2,\hat{w}_3)$ denotes the Fourier transform of $\mathbf{w}$ with respect to the horizontal variables. This implies for $|{\boldsymbol \xi}|\leq 1$ that
\begin{equation}\label{esti-small-xi}
\big\| |{\boldsymbol \xi}|\hat{\phi} \big\|^2_{L^2(\{|{\boldsymbol \xi}|\leq 1\})}\lesssim \|\mathbf{w}\|_0\|\mathbf{w}\|_1,
\end{equation}
where we used the fact:
\begin{align}
\|\mathbf{w}(\mathbf{x} _{h},d)\| _{L _{\mathbf{x} _{h}}^{2}} ^{2} \lesssim  \|\mathbf{w}\|_{0} ^{2} + \|\mathbf{w}\|_{0}\| \partial _{3} \mathbf{w} \|_{0},\quad d=0,1,
\end{align}
where $ \|\cdot\| _{L _{\mathbf{x} _{h}}^{2}} $ denotes the $ L ^{2} $ norm with respect to the horizontal variables. Obviously, the estimate \eqref{esti-small-xi} is also true when $\vert {\boldsymbol \xi}\vert\leq K$ for any finite constant $ K>0 $. Next, we derive an estimate for $|{\boldsymbol \xi}|\hat{\phi}$ when $|{\boldsymbol \xi}|\gg 1$.

\vskip .1in
{\bf Step 3.} Notice that $|{\boldsymbol \xi}|\hat{\phi}$ can be rewritten as
\begin{align*}
\vert {\boldsymbol \xi}\vert\hat{\phi}({\boldsymbol \xi},x _{3}) &=\frac{{\mathop{\mathrm{e}}}^{|{\boldsymbol \xi}|}\hat{f}({\boldsymbol \xi},0)-\hat{f}({\boldsymbol \xi},1)}{ 1-{\mathop{\mathrm{e}}}^{2|{\boldsymbol \xi}|}}{\mathop{\mathrm{e}}}^{\vert {\boldsymbol \xi}\vert(1- x _{3})}+\frac{\hat{f}({\boldsymbol \xi},1){\mathop{\mathrm{e}}}^{|{\boldsymbol \xi}|}-\hat{f}({\boldsymbol \xi},0)}{{\mathop{\mathrm{e}}}^{2|{\boldsymbol \xi}|}-1}{\mathop{\mathrm{e}}}^{\vert {\boldsymbol \xi}\vert x _{3}}.
\end{align*}
Another direct calculation shows that
\begin{align*}
\partial_3\hat{\phi}({\boldsymbol \xi},x _{3}) &=-\frac{{\mathop{\mathrm{e}}}^{|{\boldsymbol \xi}|}\hat{f}({\boldsymbol \xi},0)-\hat{f}({\boldsymbol \xi},1)}{ 1-{\mathop{\mathrm{e}}}^{2|{\boldsymbol \xi}|}}{\mathop{\mathrm{e}}}^{\vert {\boldsymbol \xi}\vert(1- x _{3})}+\frac{\hat{f}({\boldsymbol \xi},1){\mathop{\mathrm{e}}}^{|{\boldsymbol \xi}|}-\hat{f}({\boldsymbol \xi},0)}{{\mathop{\mathrm{e}}}^{2|{\boldsymbol \xi}|}-1}{\mathop{\mathrm{e}}}^{\vert {\boldsymbol \xi}\vert x _{3}}.
\end{align*}
Then it holds that
\begin{align}\label{pa-h-fip-pa-3-fip-relat}
 \big\| \vert {\boldsymbol \xi}\vert \hat{\phi}\big\| _{L ^{2}(\{|{\boldsymbol \xi}|\gg 1\})}^{2}=\|\partial _{3}\hat{\phi}\|_{L ^{2}(\{|{\boldsymbol \xi}|\gg 1\})}^{2}-4 R _{0},
\end{align}
where
\begin{align}\label{R-formula}
R _{0} =\int^1_{0}\int_{\{|{\boldsymbol \xi}|\gg1\}}\frac{{\mathop{\mathrm{e}}}^{|{\boldsymbol \xi}|}\big[\hat{f}({\boldsymbol \xi},0){\mathop{\mathrm{e}}}^{\vert {\boldsymbol \xi}\vert}-\hat{f}({\boldsymbol \xi},1)\big] \big[\hat{f}({\boldsymbol \xi},1){\mathop{\mathrm{e}}}^{\vert {\boldsymbol \xi}\vert}-\hat{f}({\boldsymbol \xi},0)\big]}{(1-{\mathop{\mathrm{e}}}^{2|{\boldsymbol \xi}|})^2}\mathrm{d}{\boldsymbol \xi} \mathrm{d}x _{3}.
\end{align}
Notice that
\begin{align*}
&\ \big| \hat{f}({\boldsymbol \xi},0){\mathop{\mathrm{e}}}^{|{\boldsymbol \xi}|}-\hat{f}({\boldsymbol \xi},1)\big| = \big| \int^1_{0}\partial _{3}\big(\hat{f}({\boldsymbol \xi},x _{3}){\mathop{\mathrm{e}}}^{|{\boldsymbol \xi}|(1-x _{3})}\big)\mathrm{d} x _{3}\big| \\
 = &\ \big| \int^1_{0}\big(\partial_3\hat{f}({\boldsymbol \xi},x _{3}){\mathop{\mathrm{e}}}^{|{\boldsymbol \xi}|(1-x  _{3})}-|{\boldsymbol \xi}|\hat{f}({\boldsymbol \xi},x _{3}){\mathop{\mathrm{e}}}^{|{\boldsymbol \xi}|(1-x _{3})}\big)\mathrm{d} x _{3}\big| \\
=&\ \big| \int^1_{0}\mathrm{i}\,{\mathop{\mathrm{e}}}^{|{\boldsymbol \xi}|(1-x _{3}) }(\xi_1\partial_3\hat{w}_2- \xi_2\partial_3\hat{w}_1)\mathrm{d}x _{3}+\int^1_{0}|{\boldsymbol \xi}|\hat{f}({\boldsymbol \xi},x _{3}){\mathop{\mathrm{e}}}^{|{\boldsymbol \xi}|(1-x _{3})}\mathrm{d} x _{3}\big|
\\
\leq &\ \big| \int^1_{0}\mathrm{i}\,\xi_1{\mathop{\mathrm{e}}}^{|{\boldsymbol \xi}|(1-x _{3})}(\partial_3\hat{w}_2-\mathrm{i}\,\xi_2\hat{w}_3)\mathrm{d}x _{3}\big| + \big|\int^1_0\mathrm{i}\, \xi_2{\mathop{\mathrm{e}}}^{|{\boldsymbol \xi}|(1-x _{3})}(\partial_3\hat{w}_1-\mathrm{i}\,\xi_1\hat{w}_3)\mathrm{d}x _{3}\big|
 \nonumber \\
 & \quad+ \big| \int^1_{0}|{\boldsymbol \xi}|\hat{f}({\boldsymbol \xi},x _{3}){\mathop{\mathrm{e}}}^{|{\boldsymbol \xi}|(1-x _{3})}\mathrm{d} x _{3} \big|
 \\
\leq &\ \Big(\int^1_{0}\xi^2_{1}{\mathop{\mathrm{e}}}^{2|{\boldsymbol \xi}|(1-x _{3})}\mathrm{d} x _{3} \Big)^{\frac{1}{2}}\big( \|\hat{W}_{1}\|_{L _{x _{3}}^{2}}+\|\hat{W}_{2}\|_{L _{x _{3}}^{2}}\big)
 + \|\hat{f}({\boldsymbol \xi}, \cdot)\| _{L _{x _{3}}^{2}}\Big(\int^1_{0}|{\boldsymbol \xi}|^2{\mathop{\mathrm{e}}}^{2|{\boldsymbol \xi}|(1-x _{3})}\mathrm{d} x _{3}\Big)^{\frac{1}{2}}
 \\
\lesssim &\ |{\boldsymbol \xi}|^{\frac{1}{2}}\big({\mathop{\mathrm{e}}}^{2|{\boldsymbol \xi}|}-1\big)^{\frac{1}{2}} \big(\|\hat{W}_1({\boldsymbol \xi},\cdot)\|_{L^2_{x _{3}}}+\|\hat{W}_2({\boldsymbol \xi},\cdot)\|_{L^2_{x _{3}}} + \|\hat{W}_3({\boldsymbol \xi},\cdot)\|_{L^2_{x _{3}}} \big),
\end{align*}
where $\hat{\mathbf{W}}=(\hat{W}_1,\hat{W}_2,\hat{W}_3)$ denotes the Fourier transform of $\mathbf{W}=\nabla\times \mathbf{w}$ with respect to the horizontal variables. Similarly, we can show that
\begin{align*}
&\ \big| \hat{f}({\boldsymbol \xi},1){\mathop{\mathrm{e}}}^{|{\boldsymbol \xi}|}-\hat{f}({\boldsymbol \xi},0)\big|
= \big|\int^1_{0}\partial _{3}\big(\hat{f}({\boldsymbol \xi},x _{3}){\mathop{\mathrm{e}}}^{|{\boldsymbol \xi}|x _{3}}\big)\mathrm{d} x _{3}\big| \nonumber\\
=&\ \big| \int^1_{0}\big(\partial_3\hat{f}({\boldsymbol \xi},x _{3}){\mathop{\mathrm{e}}}^{|{\boldsymbol \xi}|x _{3}}+|{\boldsymbol \xi}|\hat{f}({\boldsymbol \xi},x _{3}){\mathop{\mathrm{e}}}^{|{\boldsymbol \xi}|x _{3}}\big)\mathrm{d} x _{3}\big| \nonumber\\
\leq &\ |{\boldsymbol \xi}|^{\frac{1}{2}}\big({\mathop{\mathrm{e}}}^{2|{\boldsymbol \xi}|}-1\big)^{\frac{1}{2}}\big(\|\hat{W}_1({\boldsymbol \xi},\cdot)\|_{L^2_{x _{3}}}+\|\hat{W}_2({\boldsymbol \xi},\cdot)\|_{L^2_{x _{3}}}+\|\hat{W}_3({\boldsymbol \xi},\cdot)\|_{L^2_{x _{3}}}\big).
\end{align*}
Using the above estimates, we derive from \eqref{R-formula} that for any $ \sigma>0 $, there exists a constant $ \bar K >0$ such that for $ \vert {\boldsymbol \xi}\vert \geq \bar K $,
\begin{align}
\vert R _{0}\vert & \lesssim  \int _{0}^{1}\int_{\{|{\boldsymbol \xi}| \geq \bar K \}}\big(\|\hat{W}_1({\boldsymbol \xi},\cdot)\|_{L^2_{x _{3}}}^{2}+\|\hat{W}_2({\boldsymbol \xi},\cdot)\|_{L^2_{x _{3}}}^{2}+\|\hat{W}_3({\boldsymbol \xi},\cdot)\|_{L^2_{x _{3}}}^{2}\big) \frac{\vert {\boldsymbol \xi}\vert}{\mathrm{e} ^{2 \vert {\boldsymbol \xi}\vert}-1}\mathrm{d}{\boldsymbol \xi} \mathrm{d}x _{3}
 \nonumber \\
 & \leq \frac{\sigma}{4}\big(\|\hat{W}_1\|^2_{0}+\|\hat{W}_2\|^2_0+\|\hat{W}_3\| _{0}^2 \big)= \frac{\sigma}{4}\|\mathbf{W}\|_{0}^{2}.
\end{align}
This, alongside \eqref{pa-h-fip-pa-3-fip-relat}, implies that
\begin{align}\label{esti-xi-large}
\big\| |{\boldsymbol \xi}|\hat{\phi}\big\|^2_{L^2(\{|{\boldsymbol \xi}|\ge \bar K\})}\leq \|\partial_3\hat{\phi}\|^2_{L^2(\{|{\boldsymbol \xi}|\ge \bar K \})}+ \sigma\|\mathbf{W}\|_{0}^{2}.
\end{align}
Combining \eqref{esti-small-xi} and \eqref{esti-xi-large}, we then get for any $ \sigma>0 $ that
\begin{align}\label{zero-fip-tan-esti}
 \big\| |{\boldsymbol \xi}|\hat{\phi}\big\|_{0}^2 \lesssim \|\partial_3\hat{\phi}\|^2_{0}+\sigma\|\mathbf{W}\|_{0}^{2}+\|\mathbf{w}\|_0\|\mathbf{w}\|_1.
\end{align}
Repeating the derivations of \eqref{esti-small-xi} and \eqref{esti-xi-large}, we can show for any $ 1 \leq k \leq 2n-1 $ that
\begin{align*}
 \big\|\vert {\boldsymbol \xi}\vert ^{k+1}\hat{\phi}({\boldsymbol \xi},x _{3})\big\|_{0}^2& \lesssim \big\| \vert {\boldsymbol \xi}\vert ^{k}\partial_3\hat{\phi}({\boldsymbol \xi},x _{3})\big\|^2_{0}+\sigma\big\| \vert {\boldsymbol \xi}\vert ^{k} \hat{\mathbf{W}}\big\|_{0}^{2}+   \|\mathbf{w}\|_0\|\mathbf{w}\|_1
 \nonumber \\
 & \lesssim \|\partial _{3}\phi\| _{0,k}^{2}+ \sigma \|\mathbf{W}\|_{0,k}^{2}+   \|\mathbf{w}\|_0\|\mathbf{w}\|_1 .
\end{align*}
That is,
\begin{gather}\label{tan-esti-fip}
\|\nabla _{h}\phi\| _{0,k} ^{2}\lesssim \|\partial _{3}\phi\| _{0,k}^{2}+ \sigma \|\mathbf{W}\|_{0,k}^{2}+\|\mathbf{w}\|_0\|\mathbf{w}\|_1 .
\end{gather}
This, along with \eqref{zero-fip-tan-esti}, gives \eqref{phi1}.

\vskip .1in
{\bf Step 4.} For any $ 0 \leq k \leq 2n-1 $ and $ 0 \leq\lambda \leq 2n-k-1 $, we have
\begin{align*}
 \vert {\boldsymbol \xi}\vert ^{\lambda+1}\partial _{3}^{k}\hat{\phi}=(-1)^{k}\vert {\boldsymbol \xi}\vert ^{\lambda+k}\frac{{\mathop{\mathrm{e}}}^{|{\boldsymbol \xi}|}\hat{f}({\boldsymbol \xi},0)-\hat{f}({\boldsymbol \xi},1)}{ 1-{\mathop{\mathrm{e}}}^{2|{\boldsymbol \xi}|}}{\mathop{\mathrm{e}}}^{\vert {\boldsymbol \xi}\vert(1- x _{3})}+\vert {\boldsymbol \xi}\vert ^{\lambda+k}\frac{\hat{f}({\boldsymbol \xi},1){\mathop{\mathrm{e}}}^{|{\boldsymbol \xi}|}-\hat{f}({\boldsymbol \xi},0)}{{\mathop{\mathrm{e}}}^{2|{\boldsymbol \xi}|}-1}{\mathop{\mathrm{e}}}^{\vert {\boldsymbol \xi}\vert x _{3}}
\end{align*}
and
\begin{align*}
\vert {\boldsymbol \xi}\vert ^{\lambda+k}\partial _{3}\hat{\phi}=  - \vert {\boldsymbol \xi}\vert ^{\lambda+k}\frac{{\mathop{\mathrm{e}}}^{|{\boldsymbol \xi}|}\hat{f}({\boldsymbol \xi},0)-\hat{f}({\boldsymbol \xi},1)}{ 1-{\mathop{\mathrm{e}}}^{2|{\boldsymbol \xi}|}}{\mathop{\mathrm{e}}}^{\vert {\boldsymbol \xi}\vert(1- x _{3})}+
\vert {\boldsymbol \xi}\vert ^{\lambda+k}\frac{\hat{f}({\boldsymbol \xi},1){\mathop{\mathrm{e}}}^{|{\boldsymbol \xi}|}-\hat{f}({\boldsymbol \xi},0)}{{\mathop{\mathrm{e}}}^{2|{\boldsymbol \xi}|}-1}{\mathop{\mathrm{e}}}^{\vert {\boldsymbol \xi}\vert x _{3}}.
\end{align*}
Then it follows that
\begin{align}
\big\|\vert {\boldsymbol \xi}\vert ^{\lambda+1}\partial _{3}^{k}\hat{\phi}({\boldsymbol \xi},x _{3})\big\| _{L^2(\{|{\boldsymbol \xi}|\gg 1\})}^{2} \leq \big\|\vert {\boldsymbol \xi}\vert ^{\lambda+k}\partial _{3}\hat{\phi}({\boldsymbol \xi},x _{3})\big\| _{L^2(\{|{\boldsymbol \xi}|\gg 1\})}^{2}+4 R _{1},
\end{align}
where
\begin{align}\label{R-formula-1}
R _{1} =\int^1_{0}\int_{\{|{\boldsymbol \xi}|\gg1\}}\vert {\boldsymbol \xi}\vert ^{2\lambda+2k}\frac{{\mathop{\mathrm{e}}}^{|{\boldsymbol \xi}|} \big |\hat{f}({\boldsymbol \xi},0){\mathop{\mathrm{e}}}^{\vert {\boldsymbol \xi}\vert}-\hat{f}({\boldsymbol \xi},1)\big| \cdot \big| \hat{f}({\boldsymbol \xi},1){\mathop{\mathrm{e}}}^{\vert {\boldsymbol \xi}\vert}-\hat{f}({\boldsymbol \xi},0)\big|}{(1-{\mathop{\mathrm{e}}}^{2|{\boldsymbol \xi}|})^2}\mathrm{d}{\boldsymbol \xi} \mathrm{d}x _{3}.
\end{align}
Again, by following the derivations of \eqref{esti-small-xi} and \eqref{esti-xi-large}, we get for any $ \sigma>0 $ that
\begin{align*}
\big \|\vert {\boldsymbol \xi}\vert ^{\lambda+1}\partial _{3}^{k}\hat{\phi}({\boldsymbol \xi},x _{3})\big\| _{0}^{2} \lesssim \big\|\vert {\boldsymbol \xi}\vert ^{\lambda+k}\partial _{3}\hat{\phi}({\boldsymbol \xi},x _{3})\big\| _{0}^{2}+  \sigma \|\mathbf{W}\|_{0,\lambda+k}^{2}+   \|\mathbf{w}\|_0\|\mathbf{w}\|_1.
\end{align*}
This implies that
 \begin{align}
 \|\nabla _{h}\phi\| _{k,\lambda}^2 \lesssim \|\partial _{3}\phi\| _{0,\lambda+k}^{2}+\sigma \|\mathbf{W}\|_{0,\lambda+k}^{2}+   \|\mathbf{w}\|_0\|\mathbf{w}\|_1.
\end{align}
This completes the proof of the lemma.
\end{proof}

Since the Helmholtz projection $ \mathcal{P} $ commutes with $ \partial _{t} $, we have
\begin{align}
\partial _{t}^{j}\mathbf{W}= \mathcal{P} \partial _{t}^{j}\mathbf{W} + \nabla \partial _{t}^{j}\phi
\end{align}
for any $ j \geq 1 $. By using the arguments in the proof of Lemma \ref{key-lemma}, we have the following:

\begin{lemma}\label{lemma-anisotropic-3}
Let $ \nabla _{h}$ denote $\partial _{1}$ or $\partial _{2} $ and let $ n > 1$. Then it holds for any $ \sigma>0 $ that
\begin{align}\label{phi2-pa-t}
\|\nabla_h \partial _{t}^{j}\phi\|^2_{k,\lambda}\lesssim\|\partial_3 \partial _{t}^{j}\phi\|^2_{k,\lambda}+\sigma\| \partial _{t}^{j} \mathbf{W}\|^2_{0,k+\lambda}+  \| \partial _{t}^{j} \mathbf{w}\|_{0}\|\partial _{t}^{j} \mathbf{w}\|_{1},
\end{align}
where $ 1 \leq j \leq n $, $ 0 \leq k \leq 2n-2j + 1 $ and $ 0 \leq \lambda \leq 2n-2j-k + 1 $.
\end{lemma}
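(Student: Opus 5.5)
The plan is to reduce the statement to Lemma \ref{key-lemma}, using the fact that $\partial_t$ commutes with everything in the construction of $\phi$. Fix $1\le j\le n$ and set $\mathbf{w}^{(j)}:=\partial_t^j\mathbf{w}$ and $\mathbf{W}^{(j)}:=\nabla\times\mathbf{w}^{(j)}=\partial_t^j\mathbf{W}$.

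\textbf{Identifying the potential.} Since $\partial_t$ commutes with $\mathcal{P}$, with the Laplacian, and with traces on $x_3=0,1$, the function $\partial_t^j\phi$ solves the same Neumann problem as in \eqref{fip-elliptic}:
\begin{equation*}
\Delta\partial_t^j\phi=0 \ \text{in } \Omega,\qquad \partial_3\partial_t^j\phi(\cdot,d)=(\partial_1w^{(j)}_2-\partial_2w^{(j)}_1)(\cdot,d),\quad d=0,1.
\end{equation*}
The decomposition $\partial_t^j\mathbf{W}=\mathcal{P}\partial_t^j\mathbf{W}+\nabla\partial_t^j\phi$ is therefore exactly the decomposition \eqref{x4} applied to the curl of the field $\mathbf{w}^{(j)}$. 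Because all estimates in Lemma \ref{key-lemma} are pointwise in time and purely spatial, the explicit Fourier representation of Step 1 holds verbatim with $\hat f$ replaced by $\widehat{\partial_t^jf}$.

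\textbf{Repeating Steps 2--4 of Lemma \ref{key-lemma}.} I then rerun the three remaining steps for $\mathbf{w}^{(j)}$.
\begin{itemize}
\item Low frequencies $|\boldsymbol\xi|\le \bar K$: I bound $|\boldsymbol\xi|\widehat{\partial_t^j\phi}$ by the boundary traces of $\mathbf{w}^{(j)}$. The trace inequality then gives the error term $\|\partial_t^j\mathbf{w}\|_0\|\partial_t^j\mathbf{w}\|_1$.
\item High frequencies: I use the identity \eqref{pa-h-fip-pa-3-fip-relat} together with the integration-by-parts bound on $\hat f(\boldsymbol\xi,0)\mathrm{e}^{|\boldsymbol\xi|}-\hat f(\boldsymbol\xi,1)$ in terms of $\hat{\mathbf{W}}^{(j)}$. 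This makes the remainder $R_0$, and likewise $R_1$ with the weight $|\boldsymbol\xi|^{2\lambda+2k}$, at most $\sigma\|\partial_t^j\mathbf{W}\|_{0,k+\lambda}^2$, since the factor $|\boldsymbol\xi|/(\mathrm{e}^{2|\boldsymbol\xi|}-1)$ tends to $0$.
\item Mixed derivatives: for $\partial_3^k$, the formulas of Step 4 express $|\boldsymbol\xi|^{\lambda+1}\partial_3^k\widehat{\partial_t^j\phi}$ through $|\boldsymbol\xi|^{\lambda+k}\partial_3\widehat{\partial_t^j\phi}$.
\end{itemize}
Summing over the Fourier side then yields \eqref{phi2-pa-t}.

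\textbf{Index ranges and the main obstacle.} The only new point is the range of indices: $0\le k\le 2n-2j+1$ and $0\le\lambda\le 2n-2j-k+1$. These ranges only matter through the finiteness of the norms involved, which holds for the regularity class where $\partial_t^j\mathbf{w}$ carries $2n-2j$ derivatives. The argument itself places no restriction on $k$ or $\lambda$; the constants depend only on $k+\lambda\le 2n-2j+1$. I expect the main care to be needed in justifying the commutation of $\partial_t^j$ with the Neumann problem and its trace data. This requires enough time regularity of $\mathbf{w}$ so that $\partial_t^j f$ makes sense as a trace, which I would take from the local well-posedness class.
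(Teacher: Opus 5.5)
Your proposal is correct and is the paper's argument: the paper notes that $\mathcal{P}$ commutes with $\partial_t$, so $\partial_t^j\mathbf{W}=\mathcal{P}\partial_t^j\mathbf{W}+\nabla\partial_t^j\phi$ with $\partial_t^j\phi$ solving \eqref{fip-elliptic} for the field $\partial_t^j\mathbf{w}$, and then reruns the low/high-frequency Fourier analysis of Lemma \ref{key-lemma}, just as you describe. The one point worth stating explicitly, which the paper also leaves implicit, is that the cutoff $\bar K$ depends on $\sigma$, so the implied constant in front of $\|\partial_t^j\mathbf{w}\|_0\|\partial_t^j\mathbf{w}\|_1$ depends on $\sigma$; this is harmless because $\sigma$ is fixed small wherever the lemma is applied.
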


\subsection{Hodge Estimates and the Stokes System}

In the proof of our main result we will frequently utilize the standard Hodge type estimates:

\begin{lemma}\label{div-curl-in-lem}
Let $ k \geq 1 $, $ \lambda \geq 0 $, $ \Omega:=\mathbb{R}^{2}\times (0,1) $, and let $\mathbf{u}$ be a smooth vector field in $\mathbb{R}^3$. Then
\begin{align}\label{div-curl-esti}
 \|\mathbf{u}\|_{k,\lambda}\lesssim \|\mathop{\mathrm{div}}\nolimits \mathbf{u}\|_{k-1,\lambda}+\|\nabla \times \mathbf{u}\|_{k-1,\lambda}+\|\mathbf{u}\|_{0,k+\lambda}.
\end{align}

\end{lemma}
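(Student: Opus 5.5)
We argue by induction on $k$, the number of vertical derivatives counted by $\|\cdot\|_{k,\lambda}$, after first establishing the base case $k=1$.

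\textbf{Step 1: the case $k=1$.} We need to bound $\|\nabla\mathbf{u}\|_{0,\lambda}$, and since horizontal derivatives commute with $\operatorname{div}$ and $\nabla\times$, it suffices to treat $\lambda=0$ and then apply the result to $\partial_1^{\alpha_1}\partial_2^{\alpha_2}\mathbf{u}$. The horizontal derivatives $\partial_1\mathbf{u},\partial_2\mathbf{u}$ are already controlled by $\|\mathbf{u}\|_{0,1}\le\|\mathbf{u}\|_{0,1+\lambda}$. For the vertical derivative we use the pointwise identities
\begin{align*}
\partial_3 u_3 = \operatorname{div}\mathbf{u}-\partial_1u_1-\partial_2u_2,\qquad
\partial_3 u_1 = (\nabla\times\mathbf{u})_2+\partial_1 u_3,\qquad
\partial_3 u_2 = \partial_2u_3-(\nabla\times\mathbf{u})_1 .
\end{align*}
Hence $\|\partial_3\mathbf{u}\|_0\lesssim\|\operatorname{div}\mathbf{u}\|_0+\|\nabla\times\mathbf{u}\|_0+\|\mathbf{u}\|_{0,1}$, which gives \eqref{div-curl-esti} for $k=1$. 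No boundary condition enters, because the argument is purely algebraic and pointwise. This is why only $\|\mathbf{u}\|_{0,k+\lambda}$ appears on the right-hand side, rather than a trace term.

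\textbf{Step 2: induction on $k$.} Assume \eqref{div-curl-esti} holds for $k-1$ and every $\lambda$. Split $\|\mathbf{u}\|_{k,\lambda}$ into terms with at least one horizontal derivative and terms of the form $\partial_3^{k}\partial_h^{\alpha}\mathbf{u}$ with $|\alpha|\le\lambda$.

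\emph{Terms with a horizontal derivative.} These are bounded by $\|\mathbf{u}\|_{k-1,\lambda+1}$. The induction hypothesis bounds this by
\[
\|\operatorname{div}\mathbf{u}\|_{k-2,\lambda+1}+\|\nabla\times\mathbf{u}\|_{k-2,\lambda+1}+\|\mathbf{u}\|_{0,k+\lambda},
\]
and $\|\cdot\|_{k-2,\lambda+1}\le\|\cdot\|_{k-1,\lambda}$, since the definition sums over all $H^{k-2}$ derivatives and so loses nothing when one horizontal derivative is moved into the Sobolev index.

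\emph{Pure vertical terms.} Write $\partial_3^{k}\partial_h^\alpha\mathbf{u}=\partial_3^{k-1}\partial_h^\alpha(\partial_3\mathbf{u})$ and substitute the identities of Step 1. This expresses it as $\partial_3^{k-1}\partial_h^\alpha$ applied to $\operatorname{div}\mathbf{u}$, $\nabla\times\mathbf{u}$ and horizontal derivatives of $\mathbf{u}$. The first two pieces are bounded by $\|\operatorname{div}\mathbf{u}\|_{k-1,\lambda}+\|\nabla\times\mathbf{u}\|_{k-1,\lambda}$. The last piece is bounded by $\|\mathbf{u}\|_{k-1,\lambda+1}$, which was just estimated.

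\textbf{Main obstacle.} The only delicate point is bookkeeping the anisotropic indices, so that each trade of a vertical derivative for a horizontal one stays within $\|\mathbf{u}\|_{0,k+\lambda}$ at the bottom of the induction, using $(k-1)+(\lambda+1)=k+\lambda$. Smoothness of $\mathbf{u}$ justifies all differentiations, and no elliptic regularity or boundary theory is needed.
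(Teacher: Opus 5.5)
Your proof is correct and uses the same idea as the paper: the pointwise identities expressing $\partial_3 u_i$ through $\operatorname{div}\mathbf{u}$, $\nabla\times\mathbf{u}$ and horizontal derivatives of $\mathbf{u}$. The paper writes out only the case $k=1$, $\lambda=0$ and says the other cases are similar, so your induction on $k$ supplies the omitted bookkeeping; its key step is trading one vertical derivative for a horizontal one via $\|\cdot\|_{k-2,\lambda+1}\lesssim\|\cdot\|_{k-1,\lambda}$.
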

\begin{proof}
  We shall only consider the case $ k=1 $ and $ \lambda=0 $ since the other cases can be proved similarly. We shall show that
  \begin{align}\label{case-1-0}
  \|\mathbf{u}\| _{1} \lesssim \|\mathop{\mathrm{div}}\nolimits \mathbf{u}\|_{0}+\|\nabla \times \mathbf{u}\|_{0}+\|\mathbf{u}\|_{0,1}.
  \end{align}
  It then suffices to control the normal derivative of $ \mathbf{u} $. Indeed, it holds that
\begin{align*}
\begin{cases}
  \|\partial _{3}u _{1}\|_{0} \lesssim \|\partial _{3}u _{1}- \partial _{1}u _{3}\|_{0}+\|\partial _{1}u _{3}\| _{0} \lesssim \|\nabla \times \mathbf{u}\|_{0}+\|\mathbf{u}\| _{0,1},\\
\|\partial _{3}u _{2}\|_{0} \lesssim \|\partial _{3}u _{2}- \partial _{2}u _{3}\|_{0}+\|\partial _{2}u _{3}\| _{0} \lesssim \|\nabla \times \mathbf{u}\|_{0}+\|\mathbf{u}\| _{0,1},
\\
\|\partial _{3}u _{3}\| _{0} \lesssim \|\mathop{\mathrm{div}}\nolimits \mathbf{u}\|_{0}+\|\partial _{1}u _{1}\| _{0}+\|\partial _{2}u _{2}\| _{0} \lesssim \|\mathop{\mathrm{div}}\nolimits \mathbf{u}\|_{0}+\|\mathbf{u}\|_{0,1}.
\end{cases}
\end{align*}
This immediately gives \eqref{case-1-0}.
\end{proof}

We will also apply the classical regularity theory for the Stokes system (cf. \cite{L,T}):
\begin{equation}\label{stokes eq}
\begin{aligned}
-\mu\Delta \mathbf{u} +\nabla p &=\mathbf{f},  \quad &\mathbf{x}& \in \Omega, \\
\diverge{\mathbf{u}} &=g,  \quad  &\mathbf{x}& \in \Omega, \\
\mathbf{u}|_{\partial\Omega} &=\mathbf{0}.
\end{aligned}
\end{equation}

\begin{lemma}\label{i_linear_elliptic2}
Let $r\ge 2$. If $\mathbf{f} \in H^{r-2}(\Omega)$, $g\in H^{r-1}(\Omega)$
and  $(\mathbf{u},p) $ solves \eqref{stokes eq}, then
\begin{equation}\label{stokes es}
\norm{\mathbf{u}}_{r}+\norm{\nabla p}_{r-2}\lesssim\norm{\mathbf{f}}_{r-2}+\norm{g}_{r-1}+\norm{\mathbf{u}}_0.
\end{equation}
\end{lemma}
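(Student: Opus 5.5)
This is the classical regularity theory for the Stokes problem with Dirichlet data in the flat strip, and in the paper it can simply be quoted from \cite{L,T}. If one wants a self-contained argument adapted to $\Omega=\mathbb{R}^2\times(0,1)$, I would prove it by tangential energy estimates followed by recovery of normal derivatives from the equation. The point is that $\partial_1,\partial_2$ commute with the equations and preserve the boundary condition $\mathbf{u}|_{\partial\Omega}=\mathbf{0}$.

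\textbf{Step 1 (remove the divergence).} Construct $\mathbf{v}$ with $\operatorname{div}\mathbf{v}=g$, $\mathbf{v}|_{\partial\Omega}=\mathbf{0}$ and $\|\mathbf{v}\|_r\lesssim\|g\|_{r-1}$. On the strip this can be done with a Bogovskii-type operator applied slab by slab, or by a horizontal Fourier construction. The compatibility condition is not automatic: testing $\operatorname{div}\mathbf{u}=g$ against functions of $\mathbf{x}_h$ alone gives $\int_0^1 g(\mathbf{x}_h,x_3)\,\mathrm{d}x_3=-\operatorname{div}_h\int_0^1\mathbf{u}_h\,\mathrm{d}x_3$, so the vertical average of $g$ is a horizontal divergence. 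I take it as part of the hypothesis that $g$ arises as $\operatorname{div}\mathbf{u}$ of a field vanishing on $\partial\Omega$. Setting $\mathbf{u}=\mathbf{v}+\tilde{\mathbf{u}}$ reduces the problem to $g=0$ with $\mathbf{f}$ replaced by $\mathbf{f}+\mu\Delta\mathbf{v}$, which keeps the right-hand side of \eqref{stokes es}.

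\textbf{Step 2 (tangential estimates).} Test the equation with $\mathbf{u}$; the pressure term drops since $\operatorname{div}\mathbf{u}=0$ and $\mathbf{u}|_{\partial\Omega}=\mathbf{0}$. This gives $\mu\|\nabla\mathbf{u}\|_0^2\le(\mathbf{f},\mathbf{u})$. Apply $\partial_h^{\alpha}$ with $|\alpha|\le r-1$ and test with $-\partial_h^{2}\partial_h^{\alpha}\mathbf{u}$, integrating by parts only horizontally. This yields
\[
\|\nabla\partial_h^{\alpha}\mathbf{u}\|_0\lesssim\|\mathbf{f}\|_{r-2}+\|\mathbf{u}\|_0,
\]
so all purely tangential derivatives of $\mathbf{u}$, together with one additional derivative in any direction, are controlled.

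\textbf{Step 3 (normal derivatives by induction, and the pressure).} Raise the number of $\partial_3$ derivatives one at a time, using the equation in the following order.
\begin{itemize}
\item From $\partial_3u_3=-\partial_1u_1-\partial_2u_2$ one gets $\partial_3^2u_3=-\partial_3\nabla_h\cdot\mathbf{u}_h$, which is already controlled.
\item The third component of the equation then gives $\partial_3p=f_3+\mu(\Delta_hu_3+\partial_3^2u_3)$, so $\partial_3p$ is controlled.
\item For $\partial_hp$, note $\nabla\partial_hp=\partial_h\mathbf{f}+\mu\Delta\partial_h\mathbf{u}\in H^{-1}$ and $\partial_hp=f_h+\mu\Delta u_h\in H^{-1}$. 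Nečas' lemma, which is valid on the Lipschitz strip, then puts $\partial_hp$ in $L^2$ with the right bound, after normalizing $p$ by a constant.
\item Finally $\mu\partial_3^2\mathbf{u}_h=\nabla_hp-\mathbf{f}_h-\mu\Delta_h\mathbf{u}_h$ controls $\partial_3^2\mathbf{u}_h$.
\end{itemize}
Applying tangential derivatives and iterating on the number of $\partial_3$'s gives \eqref{stokes es} for all $r\ge 2$.

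\textbf{Expected obstacle.} The main difficulty is the horizontal pressure gradient at the lowest level. The equation does not determine $\nabla_hp$ pointwise from already-controlled quantities, because it involves the unknown $\partial_3^2\mathbf{u}_h$. That is why Step 3 uses the $H^{-1}$/Nečas argument. An alternative is horizontal Fourier analysis: solve the ODE system in $x_3$ for each frequency $\boldsymbol\xi$ and check uniform bounds for both small and large $|\boldsymbol\xi|$. In either route, the term $\|\mathbf{u}\|_0$ on the right of \eqref{stokes es} accounts for the low-frequency part.
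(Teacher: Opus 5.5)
Your opening remark is exactly what the paper does: the lemma is not proved there, only quoted from the classical Stokes regularity theory \cite{L,T}. The self-contained sketch you add is a genuinely different and more informative route, and Steps 2--3 are sound in outline:
\begin{itemize}
\item tangential derivatives commute with the system and preserve $\mathbf{u}|_{\partial\Omega}=\mathbf{0}$;
\item $\partial_3^2u_3$ comes from the divergence, and $\partial_3 p$ from the third component;
\item $\partial_h p$ comes from Ne\v{c}as' inequality applied to $\partial_h p$ rather than to $p$, which neatly avoids needing $p\in L^2$ on the unbounded strip;
\item $\partial_3^2\mathbf{u}_h$ then comes from the horizontal components.
\end{itemize}
What this buys over the citation is that it shows why the flat, uniformly local geometry of the strip causes no trouble, and where the term $\|\mathbf{u}\|_0$ is really needed.

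The one claim that is actually false is the lifting bound in Step 1. If $\operatorname{div}\mathbf{v}=g$ and $\mathbf{v}|_{\partial\Omega}=\mathbf{0}$, averaging in $x_3$ gives $\mathrm{i}\boldsymbol{\xi}\cdot\hat{\bar{\mathbf{v}}}_h=\hat{\bar g}$, hence $\|\mathbf{v}\|_0\ge\||\boldsymbol{\xi}|^{-1}\hat{\bar g}\|_{L^2}$. Take $g=\operatorname{div}\big(\psi(\mathbf{x}_h/L)\chi(x_3)\mathbf{e}_1\big)$ with $\psi\in C_c^\infty(\mathbb{R}^2)$, $\chi\in C_c^\infty(0,1)$ and $\int_0^1\chi\,\mathrm{d}x_3\neq0$. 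Then $\|g\|_{r-1}\lesssim 1$ uniformly in $L\ge1$, while this lower bound grows like $L$. So $\|\mathbf{v}\|_r\lesssim\|g\|_{r-1}$ fails even for compatible $g$. This is also why a Bogovskii construction on cubes cannot work from $g$ alone: the pieces must have zero mean, e.g.\ $\operatorname{div}(\chi_k\mathbf{u})$, which brings $\mathbf{u}$ in.

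What you can have is $\|\mathbf{v}\|_r\lesssim\|g\|_{r-1}+\|\mathbf{u}\|_0$. The low horizontal frequencies of $\bar g=\operatorname{div}_h\int_0^1\mathbf{u}_h\,\mathrm{d}x_3$ are paid for by $\|\mathbf{u}\|_0$ (note the sign there is $+$, not $-$). Since that term sits on the right of \eqref{stokes es}, the reduction survives, consistent with your own closing remark about low frequencies.

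Two smaller slips:
\begin{itemize}
\item In Step 2 the test function should be $(-1)^{|\alpha|}\partial_h^{2\alpha}\mathbf{u}$, with one horizontal derivative moved back onto $\mathbf{f}$. As written, testing with $-\partial_h^2\partial_h^{\alpha}\mathbf{u}$ for $|\alpha|=r-1$ would require $\partial_h^{\alpha}\mathbf{f}$, one derivative more than $\mathbf{f}\in H^{r-2}$ has.
\item No normalization of $p$ by a constant is needed when Ne\v{c}as is applied to $\partial_h p$.
\end{itemize}
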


\subsection{Local Well-posedness}

To wrap up this section, we remark that the third equation in \eqref{reformulationc} is an ODE, from which $ \mathbf{w} $ can be recovered in terms of $ \nabla \times \mathbf{u} $. Then the problem reduces to one in terms of $ (\boldsymbol{\eta}, \mathbf{u}) $. The local well-posedness for the reduced problem on $ (\boldsymbol{\eta}, \mathbf{u}) $ can be proved by similar arguments as in \cite{Tan-Wang-SIMA}.  The procedure is standard and we omit the details for brevity. The rest of this paper is devoted to building the uniform \emph{a priori} estimates of the local solution, based on which and standard continuity argument Theorem \ref{thm-main-result} will follow.

\section{Estimates on Non-spatial Derivatives} \label{sec:non_spatial_estimates}

This section contains the \emph{a priori} estimates of the non-spatial derivatives of the local solution. To facilitate the presentation, we first introduce some notations for convenience.

\begin{notation}\label{notation1}
For two positive quantities $A$ and $B$, we write $A \sim B$ if $C _{1}B \leq  A \leq C_2B $ for some generic constants $ C_1, C_2 >0 $. Let $\mathbb{Z}_+^{1+m} = \{ \alpha = (\alpha_0,\alpha_1,\dots,\alpha_m) : \alpha_i \in \mathbb{Z}_+, i=0,1,\dots,m\}$ be the collection of multi-indices for mixed temporal-spatial derivatives, where $\mathbb{Z}_+$ is the usual set of non-negative integers. For $\alpha \in \mathbb{Z}_+^{1+m}$, we write $\partial^\alpha = \partial _{t}^{\alpha_0} \partial_1^{\alpha_1}\dots \partial_m^{\alpha_m}$ and define the parabolic count of such an index as $\vert \alpha\vert = 2\alpha_0 + \alpha_1 + \dots + \alpha_m$.
\end{notation}

Throughout the rest of the paper, all the energy estimates are derived based on the following:

\vskip .1in
\noindent {\textit{\textbf{A priori}} {\bf assumption}: {\it For some $ T>0 $ within the lifespan of the local solution to \eqref{reformulationc}, assume
\begin{align}\label{a-priori}
 \mathscr{G}_{2N}(T) \le \delta
\end{align}
for some constant $\delta>0$, where $ \mathscr{G}_{2N}(T) $ is defined by \eqref{G_defca} and $ N \geq 4 $.}

\vskip .1in
We begin with the basic $L^2$-level estimate of the solution.

\begin{lemma}\label{Lem-L2}
Let $ (\boldsymbol{\eta}, \mathbf{u}, p, \mathbf{w}) $ be the local solution to \eqref{reformulationc} satisfying \eqref{a-priori} for some sufficiently small constant $\delta$. Then for any $ n \geq 2 $, there exist energy functionals $\bar{\mathcal{E}}_{0}$ and $\bar{\mathcal{D}}_{0}$, such that
\begin{align*}
\bar{\mathcal{E}}_{0}(t) \sim \big(\|\mathbf{u}\|_0^2+\|\partial_3\boldsymbol{\eta}\|^2_0+\|\mathbf{w}\|_{0}^{2}\big),\qquad
\bar{\mathcal{D}}_{0}(t) \sim \big(\|\mathbf{u}\|_{1}^{2}+\|\mathbf{w}\|_{0}^{2} + \|\partial_t \mathbf{w}\|_0^2\big),
\end{align*}
and
\begin{align}\label{con-lem-L2-diff}
 \frac{\mathrm{d}}{\mathrm{d}t}\bar{\mathcal{E}}_{0}+\bar{\mathcal{D}}_{0}\lesssim \sqrt{\mathcal{E}_{n}}\mathcal{D}_{n}.
\end{align}
\end{lemma}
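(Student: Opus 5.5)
We test the equations of \eqref{reformulationc} against $\mathbf{u}$ and $\mathbf{w}$ in the Lagrangian frame, using $J=1$ from \eqref{v1}. Multiply \eqref{1.7b} by $\mathbf{u}$ and \eqref{1.7c} by $\mathbf{w}$ and integrate over $\Omega$. Since $J=1$ and $\mathbf{u}|_{\partial\Omega}=\mathbf{0}$, the Piola identity $\partial_j(J\mathcal{A}_{ij})=0$ gives
\begin{align*}
-\int \Delta_{\mathcal{A}}\mathbf{u}\cdot\mathbf{u} = \|\nabla_{\mathcal{A}}\mathbf{u}\|_0^2, \qquad \int\nabla_{\mathcal{A}}p\cdot\mathbf{u}=-\int p\,\operatorname{div}_{\mathcal{A}}\mathbf{u}=0.
\end{align*}
For the magnetic term, integrate by parts in $x_3$ with $\partial_t\boldsymbol\eta=\mathbf{u}$: $\int\partial_3^2\boldsymbol\eta\cdot\mathbf{u}=-\frac12\frac{\mathrm{d}}{\mathrm{d}t}\|\partial_3\boldsymbol\eta\|_0^2$. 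The curl coupling is antisymmetric: $\int(\nabla_{\mathcal{A}}\times\mathbf{w})\cdot\mathbf{u}=\int\mathbf{w}\cdot(\nabla_{\mathcal{A}}\times\mathbf{u})$, with the boundary term vanishing because $\mathbf{u}=0$ there. Adding the two identities yields
\begin{align*}
\frac12\frac{\mathrm{d}}{\mathrm{d}t}\big(\|\mathbf{u}\|_0^2+\|\partial_3\boldsymbol\eta\|_0^2+\|\mathbf{w}\|_0^2\big)+(\mu+\zeta)\|\nabla_{\mathcal{A}}\mathbf{u}\|_0^2+2\zeta\|\mathbf{w}\|_0^2=2\zeta\int\mathbf{w}\cdot(\nabla_{\mathcal{A}}\times\mathbf{u}).
\end{align*}
Using $|\nabla_{\mathcal{A}}\times\mathbf{u}|^2\le|\nabla_{\mathcal{A}}\mathbf{u}|^2$ pointwise (a consequence of $\operatorname{div}_{\mathcal{A}}\mathbf{u}=0$ after integration, or simply $\le 2|\nabla_{\mathcal{A}} \mathbf{u}|^2$), the right side is at most $2\zeta\|\mathbf{w}\|_0\|\nabla_{\mathcal{A}}\times\mathbf{u}\|_0$.

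\emph{Main obstacle.} The right side must be absorbed without losing coercivity: $\zeta\|\mathbf{w}\|^2+\zeta\|\nabla\times\mathbf{u}\|^2$ competes exactly with $2\zeta\|\mathbf{w}\|^2+(\mu+\zeta)\|\nabla\mathbf{u}\|^2$. To make this rigorous, replace $\mathcal{A}$ by $\mathcal{I}_3$ and treat the difference as a cubic error bounded by $\|\nabla\boldsymbol\eta\|_{L^\infty}\|\mathbf{u}\|_1^2\lesssim\sqrt{\mathcal{E}_n}\mathcal{D}_n$. Since $\operatorname{div}\mathbf{u}=\operatorname{div}_{\mathcal{I}-\mathcal{A}}\mathbf{u}$ is quadratic and $\mathbf{u}=0$ on $\partial\Omega$, we have $\|\nabla\times\mathbf{u}\|_0^2=\|\nabla\mathbf{u}\|_0^2-\|\operatorname{div}\mathbf{u}\|_0^2\le\|\nabla\mathbf{u}\|_0^2$. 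Young's inequality then gives the bound $\zeta\|\mathbf{w}\|_0^2+\zeta\|\nabla\mathbf{u}\|_0^2$, leaving the strictly positive dissipation $\mu\|\nabla\mathbf{u}\|_0^2+\zeta\|\mathbf{w}\|_0^2$. Poincaré's inequality in $x_3$ on $(0,1)$, valid because $\mathbf{u}=0$ on the boundary, upgrades this to $\|\mathbf{u}\|_1^2$.

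\emph{Dissipation of $\partial_t\mathbf{w}$.} From \eqref{1.7c}, $\|\partial_t\mathbf{w}\|_0\lesssim\|\mathbf{w}\|_0+\|\nabla_{\mathcal{A}}\times\mathbf{u}\|_0\lesssim\|\mathbf{w}\|_0+\|\mathbf{u}\|_1$, so a small multiple of $\|\partial_t\mathbf{w}\|_0^2$ can be added to the dissipation for free. Set $\bar{\mathcal{E}}_0$ to be half the bracketed energy and $\bar{\mathcal{D}}_0:=c(\|\mathbf{u}\|_1^2+\|\mathbf{w}\|_0^2+\|\partial_t\mathbf{w}\|_0^2)$. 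The remaining errors from $\mathcal{A}-\mathcal{I}_3$, in the viscous, curl and divergence terms, are all of the form $\|\nabla\boldsymbol\eta\|_{L^\infty}(\|\mathbf{u}\|_1^2+\|\mathbf{w}\|_0^2)$. Sobolev embedding together with $\|\boldsymbol\eta\|_{2n}^2\le\mathcal{E}_n$ bounds them by $\sqrt{\mathcal{E}_n}\mathcal{D}_n$, which gives \eqref{con-lem-L2-diff}.
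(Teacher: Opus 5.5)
Your proof is correct and follows essentially the paper's route. You use the same tests of \eqref{1.7b} and \eqref{1.7c} against $\mathbf{u}$ and $\mathbf{w}$, absorb the antisymmetric curl coupling through $\|\nabla\times\mathbf{u}\|_0\le\|\nabla\mathbf{u}\|_0$, and finish with Poincar\'e; note that this inequality holds only after integration for fields vanishing on $\partial\Omega$, not pointwise as your parenthetical says, but your argument only uses the integrated version. The remaining differences are cosmetic: the paper completes the square exactly in Lagrangian form, $\zeta\|\mathbf{w}\|_0^2+\zeta\|\nabla_{\mathcal{A}}\times\mathbf{u}-\mathbf{w}\|_0^2$, before replacing $\nabla_{\mathcal{A}}$ by $\nabla$, and it gets $\|\partial_t\mathbf{w}\|_0^2$ by testing the $\mathbf{w}$-equation with $\partial_t\mathbf{w}$, whereas you linearize first and read $\|\partial_t\mathbf{w}\|_0\lesssim\|\mathbf{w}\|_0+\|\mathbf{u}\|_1$ directly off the ODE, which is equally valid.
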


\begin{proof}
Taking $L^2$ inner product of $\eqref{reformulationc}_2$ with $\mathbf{u}$ and using the boundary condition, we have
\begin{align}\label{u-0-ineq}
&\frac{1}{2} \frac{\mathrm{d}}{\mathrm{d}t}\big( \|\mathbf{u}\|_0^2 +\|\partial _{3}\boldsymbol{\eta}\|_0^2 \big)
+  (\mu+\zeta) \int_{\Omega}  \vert \nabla _{\mathcal{A}}  \mathbf{u} \vert^2\mathrm{d}\mathbf{x} -  \zeta \int _{\Omega} (\nabla _{\mathcal{A}} \times  \mathbf{w}) \cdot \mathbf{u}\mathrm{d}\mathbf{x}=0,
\end{align}
where we also used $\eqref{reformulationc}_1$. From $\eqref{reformulationc}_3$, we can show that
\begin{gather}\label{w-0-iden}
 \frac{1}{2}\frac{\mathrm{d}}{\mathrm{d}t}\|\mathbf{w} \|_{0}^{2} + 2 \zeta\int _{\Omega}  \vert  \mathbf{w}\vert ^{2}\mathrm{d}\mathbf{x} - \zeta \int _{\Omega}  (\nabla _{\mathcal{A}} \times  \mathbf{u}) \cdot  \mathbf{w} \mathrm{d}\mathbf{x} =0.
 \end{gather}
Let $\mathfrak{R}_{1}^{0}$ and $\mathfrak{R}_{2}^{0}$ denote the last terms on the left sides of \eqref{u-0-ineq} and \eqref{w-0-iden}, respectively. Combining \eqref{u-0-ineq} and \eqref{w-0-iden} gives rise to
 \begin{align}\label{E-0-COUPLED}
  \frac{1}{2}\frac{\mathrm{d}}{\mathrm{d}t}\big(\|\mathbf{u}\|_0^2 +\|\partial _{3}\boldsymbol{\eta}\|_0^2 + \|\mathbf{w}\|_{0}^{2} \big) + \mu \|\nabla _{\mathcal{A}} \mathbf{u} \|_0^2 + \zeta \int_{\Omega} \big( \vert \nabla _{\mathcal{A}} \mathbf{u} \vert^2 + 2 \vert \mathbf{w}\vert ^{2}\big) \mathrm{d}\mathbf{x} -\mathfrak{R}_{1}^{0}-\mathfrak{R}_{2}^{0}=0,
 \end{align}
 where, by virtue of $ \mathbf{u} \vert _{\partial \Omega}=\mathbf{0} $,  $ \operatorname{div}_\mathcal{A}\mathbf{u}=0 $ and integration by parts, we have
 \begin{align}\label{l2-diss-recover}
 &\ \zeta \int_{\Omega}\big( \vert \nabla _{\mathcal{A}}  \mathbf{u} \vert^2 + 2 \vert \mathbf{w}\vert ^{2} \big)\mathrm{d}\mathbf{x} -\mathfrak{R}_{1}^{0}-\mathfrak{R}_{2}^{0}\nonumber\\
   =&\ \zeta \int_{\Omega} \big( \vert \nabla _{\mathcal{A}} \times \mathbf{u} \vert^2 + 2 \vert \mathbf{w}\vert ^{2} \big)\mathrm{d}\mathbf{x} - 2\zeta\int _{\Omega}(\nabla _{\mathcal{A}}\times \mathbf{u}) \cdot \mathbf{w}\mathrm{d}\mathbf{x}
   =\zeta\|\mathbf{w}\|_0^2+\zeta\| \nabla _{\mathcal{A}}\times \mathbf{u}- \mathbf{w}\|_0^2.
 \end{align}
 Then we update \eqref{E-0-COUPLED} as
 \begin{align}\label{E-0-T-FINAL}
\frac{\mathrm{d}}{\mathrm{d}t} \big(\|\mathbf{u}\|_0^2 +\|\partial _{3} \boldsymbol{\eta}\|_0^2 + \|\mathbf{w} \|_{0}^{2} \big)
   + 2\mu \|\nabla _{\mathcal{A}}  \mathbf{u} \|_0^2 + 2\zeta\|\mathbf{w}\|_0^2 + 2\zeta\| \nabla _{\mathcal{A}}\times \mathbf{u}-2 \mathbf{w}\|_0^2=0.
 \end{align}
   Moreover, taking the $L^2$ inner product of $\eqref{reformulationc}_3$ with $\partial_t \mathbf{w} $ gives us
 \begin{align}\label{x2}
  \zeta\frac{\mathrm{d}}{\mathrm{d}t} \|\mathbf{w} \|_{0}^{2} + \|\partial _{t} \mathbf{w}\|_0^2 = \int _{\Omega} \zeta (\nabla _{\mathcal{A}} \times \mathbf{u}) \cdot \partial_t \mathbf{w}\mathrm{d}\mathbf{x}
    \leq  \frac{1}{2} \| \partial _{t} \mathbf{w}\|_0^{2} + \frac{\zeta^2}{2} \|\nabla _{\mathcal{A}}\times \mathbf{u}\|_0^{2}.
  \end{align}
  Since $ \|\nabla _{\mathcal{A}}\times \mathbf{u}\|_{0} \leq \|\nabla _{\mathcal{A}}\mathbf{u}\|_{0} $, we get from \eqref{x2}:
   \begin{align}\label{W-T-MULTI}
  \zeta\frac{\mathrm{d}}{\mathrm{d}t} \|\mathbf{w} \|_{0}^{2} + \frac12 \|\partial _{t} \mathbf{w}\|_0^2
    \leq \frac{\zeta^2}{2} \|\nabla _{\mathcal{A}} \mathbf{u}\|_0^{2}.
  \end{align}
Multiplying \eqref{W-T-MULTI} with $\frac{2\mu}{\zeta^2}$, and then adding the result to \eqref{E-0-T-FINAL}, we obtain
   \begin{align}\label{x3}
\frac{\mathrm{d}}{\mathrm{d}t} \Big(\|\mathbf{u}\|_0^2 +\|\partial _{3}\boldsymbol{\eta}\|_0^2 +\frac{2\mu+\zeta}{\zeta} \|\mathbf{w} \|_{0}^{2} \Big)
   + \mu \|\nabla _{\mathcal{A}}  \mathbf{u} \|_0^2 + 2\zeta\| \mathbf{w}\|_0^2 + \frac{\mu}{\zeta^2} \|\partial _{t} \mathbf{w}\|_0^2  \le 0.
  \end{align}
where we dropped the non-negative term $2\zeta\| \nabla _{\mathcal{A}}\times \mathbf{u}- \mathbf{w}\|_0^2$ from the left hand side. This implies the existence of an energy functional $ \bar{\mathcal{E}}_{0} $ such that
\begin{align*}
\bar{\mathcal{E}}_{0}(t) \sim \big(\|\mathbf{u}\|_0^2 +\|\partial _{3}\boldsymbol{\eta}\|_0^2+\|\mathbf{w}\|_{0}^{2}\big)
\end{align*}
and
  \begin{align}\label{barE-0-DIFF-CON}
   \frac{\mathrm{d}}{\mathrm{d}t}\bar{\mathcal{E}}_{0} + \big( \| \nabla _{\mathcal{A}} \mathbf{u}\|_0 ^{2} + \| \mathbf{w}\|_0^{2} + \|\partial _{t} \mathbf{w}\|_0^{2} \big) \lesssim 0.
  \end{align}
 In order to replace $ \nabla_{\mathcal{A}}  \mathbf{u}$ with $ \nabla \mathbf{u} $
in \eqref{barE-0-DIFF-CON}, we write
\begin{align}\label{L1}
 \vert \nabla _{\mathcal{A}} \mathbf{u}\vert^2 = \vert \nabla \mathbf{u}\vert^2+\left(\nabla _{\mathcal{A}}  \mathbf{u} + \nabla \mathbf{u}\right): \left(\nabla _{\mathcal{A}}  \mathbf{u} - \nabla \mathbf{u}\right),
\end{align}
where $:$ denotes the Frobenius inner product. Recall for each component $u_k$ $(k=1,2,3)$ of $\mathbf{u}$, $(\nabla_{\mathcal{A}} u_k)_i = \mathcal{A}_{ij}\partial_j u_k$. Hence, for each $u_k$, we have $(\nabla _{\mathcal{A}} u_k - \nabla u_k)_i = (\mathcal{A}_{ij} - \delta_{ij}) \partial_j u_k$. Since $ \mathcal{A}=[(\mathcal{I}_3+\nabla {\boldsymbol{\eta}})^{-1}]^{\mathrm{T}} $, under the {\it a priori} assumption \eqref{a-priori} and when $\delta$ is sufficiently small, $\mathcal{A}$ is well defined and the maximum norms of $(\mathcal{A}_{ij} - \delta_{ij})$ are bounded (according to    \eqref{p_energy_defc} and Sobolev embedding) by $\sqrt{\mathcal{E}_n}$. Moreover, for the same reason, the maximum norms of $(\mathcal{A}_{ij} + \delta_{ij})$ are uniformly bounded by some constant around 2. Utilizing such information, we know that
\begin{align}\label{L2}
\big| \int_{\Omega} \left(\nabla _{\mathcal{A}}  \mathbf{u} + \nabla \mathbf{u}\right): \left(\nabla _{\mathcal{A}}  \mathbf{u} - \nabla \mathbf{u}\right) \mathrm{d}\mathbf{x} \big| \lesssim  \sqrt{\mathcal{E}_n} \|\nabla\mathbf{u}\|_0^2 \lesssim \sqrt{\mathcal{E}_n} \mathcal{D}_n,
\end{align}
where we have used \eqref{p_dissipation_defc}. Integrating \eqref{L1} over $\Omega$ and using \eqref{L2}, we have
\begin{equation}\label{i_te_9c}
  \| \nabla _{\mathcal{A}}  \mathbf{u}\|_0^2 \gtrsim  \| \nabla \mathbf{u}\|_0^2 - \sqrt{\mathcal{E}_{n} } \mathcal{D}_{n}.
\end{equation}
 Inserting \eqref{i_te_9c} into \eqref{barE-0-DIFF-CON} and invoking the Poincar\'e inequality for $\mathbf{u}$, we get
\begin{align}\label{barE-0-DIFF-CON-final}
   \frac{\mathrm{d}}{\mathrm{d}t}\bar{\mathcal{E}}_{0}+ \big( \|\mathbf{u}\|_{1}^{2} + \| \mathbf{w}\|_0^{2} + \|\partial _{t} \mathbf{w}\|_0^{2} \big)  \lesssim \sqrt{\mathcal{E}_{n} } \mathcal{D}_{n},
\end{align}
This completes the proof of the lemma.
\end{proof}

Next, we derive the \emph{a priori} estimates of the temporal derivatives of the solution. For this, applying $\partial _{t}^j$ with $j=1,\dots,n$ to the system \eqref{reformulationc}, we have
\begin{equation}\label{linear_geometricc}
\begin{aligned}
  \partial _{t} ( \partial _{t}^j \boldsymbol{\eta}) &= \partial _{t}^j \mathbf{u}, \\
\partial _{t} ( \partial _{t}^j \mathbf{u}) -(\mu+\zeta)\Delta_{\mathcal{A}}\partial _{t}^j \mathbf{u}+\nabla_{\mathcal{A}}\partial^j_{t}p
 &=\partial_3^2\partial _{t}^j \boldsymbol{\eta} +  \zeta \nabla _{\mathcal{A}} \times \partial _{t}^{j} \mathbf{w}+ \mathbf{F}^{1,j},\\
\partial _{t} (\partial _{t}^{j} \mathbf{w})+2 \zeta \partial _{t}^{j} \mathbf{w} &=\zeta \nabla _{\mathcal{A}} \times \partial _{t}^{j} \mathbf{u} + \mathbf{F} ^{2,j} ,\\
\operatorname{div}_{\mathcal{A}}\partial^{j}_t \mathbf{u} &=F^{3,j},\\
\partial _{t}^j \mathbf{u} \vert_{\partial\Omega}&=\mathbf{0},
\end{aligned}
\end{equation}
where, for $i=1,2,3,$
\begin{align}
 \mathbf{F}^{1,j} & = \sum_{0 < \ell \le j}  C_j^\ell\big\{(\mu+\zeta)\operatorname{div}_{\mathcal{A}}(\nabla_{\partial^{\ell}_t\mathcal{A}}\partial^{j-\ell}_t\mathbf{u})+(\mu+\zeta)\operatorname{div}_{\partial^{\ell}_t\mathcal{A}}\partial^{j-\ell}_t(\nabla_{\mathcal{A}}\mathbf{u})-\nabla_{\partial^{\ell}_t\mathcal{A}}\partial^{j-\ell}_tp \notag\\
 &\qquad\qquad \quad +2 \zeta \nabla_{\partial^{\ell}_t\mathcal{A}}\times\partial _{t}^{j- \ell}\mathbf{ w}\big\}, \label{F_def_startc1}\\
\mathbf{F}^{2,j}& = 2\zeta\sum_{0 < \ell \le j}  C_j^\ell\nabla_{\partial^{\ell}_t\mathcal{A}}\times \partial^{j-\ell}_t\mathbf{u}, \label{F_def_startc2}\\
F^{3,j}&=\operatorname{div} \mathbf{Q}^{3,j} \quad \operatorname{with} \quad Q^{3,j}_i=-\sum_{0<\ell\leq j}C^{\ell}_j\partial^{\ell}_t\mathcal{A}_{mi}\partial^{j-\ell}_tu_m.\label{g}
\end{align}
We remark that the linear part of the system \eqref{linear_geometricc} for $ (\partial _{t}^{j}\boldsymbol{\eta}, \partial _{t}^{j}\mathbf{u}, \partial_t^jp, \partial _{t}^{j}\mathbf{w}) $ has essentially the same structure as \eqref{reformulationc} for $ (\boldsymbol{\eta},\mathbf{u}, p,\mathbf{w}) $. In order to obtain the estimates of the temporal derivatives of the solution, it is crucial to derive the estimates of $\mathbf{F}^{1,j}$, $\mathbf{F}^{2,j}$ and $F^{3,j}$. We have the following:

\begin{lemma}\label{lem-}
Let $ (\boldsymbol{\eta},\mathbf{u}, p,\mathbf{w}) $ be the local solution to \eqref{reformulationc} satisfying \eqref{a-priori} for some sufficiently small constant $\delta$. Then for $ n=N+2 $ or $ n=2N $ with $ N \geq 4 $, and for $ j=1,\dots, n $,
\begin{align}\label{p_F_e_001c}
\| \mathbf{F}^{1,j}\|_{0}^{2}+ \|\mathbf{F}^{2,j}\|_{0}^{2} +\|\mathbf{Q}^{3,j}\|^2_{0}+\|\partial_t\mathbf{Q}^{3,j}\|^2_{0}\lesssim \mathcal{E}_{N+2}\mathcal{D}_{n}
 \end{align}
 and
 \begin{align}\label{p_F_e_02}
 \|\mathbf{Q}^{3,j}\|^2_{0}\lesssim \mathcal{E}_{N+2}\mathcal{E}_n.
 \end{align}
\end{lemma}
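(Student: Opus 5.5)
The plan is to estimate each product term by H\"older and Sobolev embedding, always putting the factor with fewer derivatives in $L^\infty$ (bounded by $\sqrt{\mathcal{E}_{N+2}}$) and the factor with more derivatives in $L^2$ (bounded by $\sqrt{\mathcal{D}_n}$ or $\sqrt{\mathcal{E}_n}$). First I record the structure of the time derivatives of $\mathcal{A}$. Since $\mathcal{A}=[(\mathcal{I}_3+\nabla\boldsymbol{\eta})^{-1}]^{\mathrm{T}}$, we have $\partial_t\mathcal{A}=-\mathcal{A}(\nabla\mathbf{u})^{\mathrm{T}}\mathcal{A}$. Inductively, $\partial_t^\ell\mathcal{A}$ for $\ell\ge1$ is a finite sum of products of $\mathcal{A}$'s and of factors $\nabla\partial_t^{m}\mathbf{u}$ with $m\le \ell-1$; the sum of the $m+1$ over the factors equals $\ell$, and every term contains at least one such factor. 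Under \eqref{a-priori} with $\delta$ small, $\|\mathcal{A}\|_{L^\infty}$ and $\|\mathcal{A}\|_{C^k}$ are bounded for moderate $k$. So each $\partial_t^\ell\mathcal{A}$ is at least linear in $\mathbf{u}$, and $\nabla\partial_t^\ell \mathcal{A}$ involves $\nabla^2\partial_t^{m}\mathbf{u}$ plus terms containing $\nabla^2\boldsymbol{\eta}$.

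Next I expand each term of $\mathbf{F}^{1,j}$, $\mathbf{F}^{2,j}$ and $\mathbf{Q}^{3,j}$ as a multilinear product $\partial^{\ell}_t\mathcal{A}\cdot\nabla^{a}\partial_t^{j-\ell}X$ with $X\in\{\mathbf{u},\mathbf{w},p\}$ and $a\le 2$; after the product rule, derivatives may also fall on $\mathcal{A}$. In parabolic count, each term has total order at most $2j+1\le 2n+1$. I then split into two cases.

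\emph{Case 1: the $\mathcal{A}$-part carries low order.} Put it in $L^\infty$. By Sobolev embedding, $\|\nabla\partial_t^m\mathbf{u}\|_{L^\infty}\lesssim\|\partial_t^m\mathbf{u}\|_{3}\lesssim\sqrt{\mathcal{E}_{N+2}}$ as long as $2m+3\le 2(N+2)$, and the geometric factors are bounded. The remaining factor, for example $\nabla^2\partial_t^{j-\ell}\mathbf{u}$, $\nabla\partial_t^{j-\ell}p$ or $\nabla\partial_t^{j-\ell}\mathbf{w}$ with $\ell\ge1$, goes in $L^2$ and is bounded by $\sqrt{\mathcal{D}_n}$. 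Indeed $\|\partial_t^{j-\ell}\mathbf{u}\|_{2}\le\|\partial_t^{j-\ell}\mathbf{u}\|_{2n-2(j-\ell)+1}$ and $\|\partial_t^{j-\ell}\mathbf{w}\|_{1}\le\|\partial_t^{j-\ell}\mathbf{w}\|_{2n-2(j-\ell)}$, since $2(j-\ell)\le 2n-2$.

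\emph{Case 2: $\partial_t^\ell\mathcal{A}$ is of high order.} This occurs when $\ell$ is close to $j$ and $n=2N$. Then I swap roles: the highest factor $\nabla\partial_t^{\ell-1}\mathbf{u}$ (or its gradient) goes in $L^2$ and is controlled by $\|\partial_t^{\ell-1}\mathbf{u}\|_{2}\lesssim\sqrt{\mathcal{D}_n}$. The low factor $\nabla^a\partial_t^{j-\ell}X$ goes in $L^\infty$; because $j-\ell$ is small, it is bounded by $\sqrt{\mathcal{E}_{N+2}}$. Here I use $N\ge4$, so that $2(N+2)\ge 2\cdot 2+5$ leaves room for Sobolev embedding. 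The pressure is handled through $\|\nabla\partial_t^{j-\ell}p\|_{2n-2(j-\ell)-1}$, which lies in $\mathcal{D}_n$ for $j-\ell\ge1$ and is covered by $\|\nabla p\|_{2n-2}$ when $\ell=j$.

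For $\partial_t\mathbf{Q}^{3,j}$ the same scheme applies with one extra time derivative, giving parabolic order up to $2j+1$. The worst term is $\partial_t^{j+1}\mathcal{A}\,\mathbf{u}$, which involves $\nabla\partial_t^{j}\mathbf{u}$ in $L^2$, bounded by $\sqrt{\mathcal{D}_n}$ since $\|\partial_t^j\mathbf{u}\|_1$ appears there for $j\le n$, times $\|\mathbf{u}\|_{L^\infty}\lesssim\sqrt{\mathcal{E}_{N+2}}$. The other extreme is $\mathcal{A}\,\partial_t^{j}\mathbf{u}$ with $\ell=1$, which is analogous. For \eqref{p_F_e_02} I repeat the argument for $\mathbf{Q}^{3,j}$ with $\mathcal{E}_n$ in place of $\mathcal{D}_n$. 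This works because $\mathbf{Q}^{3,j}$ contains at most $\nabla\partial_t^{j-1}\mathbf{u}$, which is bounded in $L^2$ by $\|\partial_t^{j-1}\mathbf{u}\|_{2n-2j+2}\le\sqrt{\mathcal{E}_n}$.

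The main obstacle is the bookkeeping in Case 2 for $n=2N$: every high-order factor must genuinely appear in $\mathcal{D}_n$, and not only in $\mathcal{E}_n$ as $\boldsymbol{\eta}$-terms would. Terms containing $\nabla^2\boldsymbol{\eta}$ at top order must be avoided. They do not occur, since only time derivatives hit $\mathcal{A}$ and each such derivative converts $\boldsymbol{\eta}$ into $\mathbf{u}$. The $L^\infty$ factor must also only need $\mathcal{E}_{N+2}$, which requires checking the borderline indices $\ell\approx j/2$ explicitly. At those indices both factors have order about $j\le 2N$, which sits below $2(N+2)-3$ for $N\ge4$, so either placement works.
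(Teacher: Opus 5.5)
Your proposal is correct and follows essentially the same route as the paper: expand $\mathbf F^{1,j},\mathbf F^{2,j},\mathbf Q^{3,j}$ by Leibniz, then put the lower-order factor in $L^\infty$ (controlled by $\mathcal E_{N+2}$) and the higher-order factor in $L^2$ (controlled by $\mathcal D_n$, or by $\mathcal E_n$ for the second estimate); your two cases are exactly the paper's split into $0<\ell\le j/2$ and $j/2<\ell\le j$. Two small corrections: the role swap of your Case 2 is also needed when $n=N+2$ (e.g.\ $\ell=j=N+2$, where $\nabla^2\partial_t^{N+1}\mathbf u$ cannot be placed in $L^\infty$ using $\mathcal E_{N+2}$), and one spatial derivative does fall on $\mathcal A$ (through $\operatorname{div}_{\mathcal A}$ and $\operatorname{div}_{\partial_t^\ell\mathcal A}$), contrary to your remark that only time derivatives hit it, but this only produces low-order $\nabla^2\boldsymbol\eta$ factors that are harmlessly bounded in $L^\infty$.
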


\begin{proof} We shall give a detailed estimate of $\mathbf{F}^{1,j}$. The other terms can be treated similarly. First, we note that under the {\it a priori} assumption \eqref{a-priori}, the entries of $\mathcal{A}$ are uniformly bounded by some constant. By Sobolev embedding, we have the following estimates when $0 < \ell\leq \frac{j}{2}$ and $n\geq 3$:
\begin{align}\label{F1}
&\,\|\sum_{0 < \ell \le \frac{j}{2}}  C_j^\ell(\mu+\zeta)\operatorname{div}_{\mathcal{A}}(\nabla_{\partial^{\ell}_t\mathcal{A}}\partial^{j-\ell}_t\mathbf{u})\|_0^2\nonumber\\
\lesssim &\,\sum_{0<\ell\leq \frac{j}{2}}\big(\|\nabla\partial^{\ell}_t\mathcal{A}\partial_t^{j-\ell}\nabla\mathbf{u}\|_0^2+\|\partial_t^{\ell}\mathcal{A}\partial^{j-\ell}_t\nabla ^{2}\mathbf{u}\|_0^2\big)\nonumber\\
\lesssim &\,\sum_{0<\ell\leq \frac{j}{2}}\big(\|\partial^{\ell-1}_t\nabla^2\mathbf{u}\|^2_{L^{\infty}}\|\partial_t^{j-\ell}\nabla \mathbf{u}\|_0^2+\|\partial_t^{\ell-1}\nabla \mathbf{u}\|^2_{L^\infty}\|\partial^{j-\ell}_t\nabla^2\mathbf{u}\|_0^2\big)\nonumber\\
\lesssim &\, \sum_{0<\ell\leq \frac{j}{2}} \|\partial^{\ell-1}_t\mathbf{u}\|^2_{2n-2\ell+2}\|\partial_t^{j-\ell}\mathbf{u}\|^2_{2n-2j+2\ell+1}\leq\mathcal{E}_{N+2}\mathcal{D}_{n}.
\end{align}
Here, when computing the time derivative of $\mathcal{A}$, we used the equation $\partial_t \boldsymbol{\eta} =\mathbf{u}$, and for the last inequality in \eqref{F1}, we used the definitions in \eqref{p_energy_defc}--\eqref{p_dissipation_defc}. For $\frac{j}{2} < \ell \le j$, we have
\begin{align}\label{F2}
&\,\|\sum_{\frac{j}{2}<\ell\leq j}  C_j^\ell(\mu+\zeta)\operatorname{div}_{\mathcal{A}}(\nabla_{\partial^{\ell}_t\mathcal{A}}\partial^{j-\ell}_t\mathbf{u})\|_0^2\nonumber\\
\lesssim &\,\sum_{\frac{j}{2}<\ell\leq j} \big(\|\partial^{\ell}_t\nabla\mathcal{A}\partial_t^{j-\ell}\nabla\mathbf{u}\|_0^2+\|\partial_t^{\ell}\mathcal{A}\partial^{j-\ell}_t\nabla^2\mathbf{u}\|_0^2\big)\nonumber\\
\lesssim &\,\sum_{\frac{j}{2}<\ell\leq j} \big(\|\partial^{\ell-1}_t\nabla^2\mathbf{u}\|_0^2\|\partial_t^{j-\ell}\nabla \mathbf{u}\|^2_{L^{\infty}}+\|\partial_t^{\ell-1}\nabla \mathbf{u}\|_0^2\|\partial^{j-\ell}_t\nabla^2\mathbf{u}\|^2_{L^{\infty}}\big)\nonumber\\
\lesssim &\, \sum_{\frac{j}{2}<\ell\leq j} \|\partial^{\ell-1}_t\mathbf{u}\|^2_{2n-2\ell+3}\|\partial_t^{j-\ell}\mathbf{u}\|^2_{2n-2j+2\ell}\leq\mathcal{E}_{N+2}\mathcal{D}_{n}.
\end{align}
Combining \eqref{F1} and \eqref{F2}, we get the estimate of the first summation on the right of \eqref{F_def_startc1}:
\begin{align}\label{F3}
\|\sum_{0<\ell\leq j}  C_j^\ell(\mu+\zeta)\operatorname{div}_{\mathcal{A}}(\nabla_{\partial^{\ell}_t\mathcal{A}}\partial^{j-\ell}_t\mathbf{u})\|_0^2\leq \mathcal{E}_{N+2}\mathcal{D}_n.
\end{align}
The estimate of the second summation is similar to \eqref{F3}. We omit the details for brevity. For the third one, we can show that
\begin{align}\label{F4}
&\,\|\sum_{0<\ell\leq j} C^{\ell}_j \nabla_{\partial^{\ell}_t\mathcal{A}}\partial^{j-\ell}_tp \|_0^2 \lesssim
\|\sum_{0<\ell\leq \frac{j}{2}} \nabla_{\partial^{\ell}_t\mathcal{A}}\partial^{j-\ell}_tp\|_0^2 +  \|\sum_{\frac{j}{2}<\ell\leq j} \nabla_{\partial^{\ell}_t\mathcal{A}}\partial^{j-\ell}_tp\|_0^2 \nonumber\\
\lesssim &\, \sum_{0<\ell\leq \frac{j}{2}} \|\partial^{\ell-1}_t\nabla \mathbf{u}\|^2_{L^{\infty}}\|\partial^{j-\ell}_t\nabla p\|_0^2 + \sum_{\frac{j}{2}<\ell\leq j} \|\partial^{\ell-1}_t\nabla \mathbf{u}\|_0^2\|\partial^{j-\ell}_t\nabla p\|^2_{L^{\infty}} \nonumber\\
\lesssim &\, \sum_{0<\ell\leq \frac{j}{2}} \|\partial^{\ell-1}_t\mathbf{u}\|^2_{2n-2\ell+2}\|\partial^{j-\ell}_t \nabla p\|_0^2 + \sum_{\frac{j}{2}<\ell\leq j} \|\partial^{\ell-1}_t\nabla \mathbf{u}\|_0^2\|\partial^{j-\ell}_t\nabla p\|^2_{{2n-2j+2\ell-2}} \lesssim \mathcal{E}_{N+2}\mathcal{D}_n,
\end{align}
where we have used $ N \geq 4 $. The estimate of the last summation in \eqref{F_def_startc1} is similar to \eqref{F4}. Again we omit the details for brevity. This completes the proof of the lemma.
\end{proof}

With Lemma \ref{lem-} at our disposal, we deduce the estimates for $ (\partial _{t}^{j}\boldsymbol{\eta}, \partial _{t}^{j}\mathbf{u}, \partial _{t}^{j}\mathbf{w}) $.

\begin{lemma}\label{t-estimate}
Let $ (\boldsymbol{\eta},\mathbf{u}, p,\mathbf{w}) $ be the local solution to \eqref{reformulationc} satisfying \eqref{a-priori} for some sufficiently small constant $\delta$. Then for $ n=N+2 $ or $ n=2N $ with $ N \geq 4 $, there exist energy functionals $\bar{\mathcal{E}}_{n} $ and $ \bar{\mathcal{D}}_{n} $, such that
\begin{align}
\bar{\mathcal{E}}_{n}(t) &\sim \sum_{j=0}^{n} \big(\|\partial _{t}^j \mathbf{u}\|_0^2+ \|\partial _{t}^{j}\partial _{3}\boldsymbol{\eta} \|_0^2+\|\partial _{t}^{j}\mathbf{w}\|_{0}^{2}\big),\label{Ent}\\
\bar{\mathcal{D}}_n(t) &\sim \sum_{j=0}^{ n}\big(\|\partial _{t}^{j}\mathbf{u}\|_{1}^{2}+ \|\partial _{t}^{j}\mathbf{w}\|_{0}^{2} + \|\partial _{t}^{j+1}\mathbf{w}\|_{0}^{2}\big), \label{Dnt}
\end{align}
and
\begin{align}\label{barEN-differ-esti}
 \frac{\mathrm{d}}{\mathrm{d}t}\Big(\bar{\mathcal{E}}_{n}+\int_{\Omega}\nabla\partial^{n-1}_tp \cdot \mathbf{Q}^{3,n}\mathrm{d}\mathbf{x}\Big)+ \bar{\mathcal{D}}_{n} \lesssim \sqrt{\mathcal{E}_{N+2}}\mathcal{D}_{n}.
\end{align}
\end{lemma}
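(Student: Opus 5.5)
The plan is to repeat the $L^2$ argument of Lemma \ref{Lem-L2} on the differentiated system \eqref{linear_geometricc} for each $j=1,\dots,n$, and then sum over $j$ together with Lemma \ref{Lem-L2} (the case $j=0$).

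\textbf{Step 1: the energy identities.} Fix $j$. Take the $L^2$ inner product of the momentum equation in \eqref{linear_geometricc} with $\partial_t^j\mathbf{u}$, and of the $\mathbf{w}$-equation with $\partial_t^j\mathbf{w}$.

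\begin{itemize}
\item The transport term gives $\frac12\frac{\mathrm{d}}{\mathrm{d}t}\|\partial_3\partial_t^j\boldsymbol{\eta}\|_0^2$ after integrating by parts in $x_3$ and using $\partial_t\partial_t^j\boldsymbol{\eta}=\partial_t^j\mathbf{u}$.
\item The viscous term gives $(\mu+\zeta)\|\nabla_{\mathcal{A}}\partial_t^j\mathbf{u}\|_0^2$, up to commutators. These come from $\partial_j(J\mathcal{A}_{ij})=0$ (valid since $J=1$) and are harmless.
\item The curl coupling terms combine exactly as in \eqref{l2-diss-recover}. This requires the identity $\|\nabla_{\mathcal{A}}\mathbf{v}\|_0^2=\|\nabla_{\mathcal{A}}\times\mathbf{v}\|_0^2+\|\operatorname{div}_{\mathcal{A}}\mathbf{v}\|_0^2$ (plus commutator errors) for $\mathbf{v}=\partial_t^j\mathbf{u}$. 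Now $\operatorname{div}_{\mathcal{A}}\partial_t^j\mathbf{u}=F^{3,j}$ is no longer zero. However, $\|F^{3,j}\|_0^2\le\|\nabla\partial_t^j\mathbf{u}\|_0\|\mathbf{Q}\|$-type products, and these are bounded by $\sqrt{\mathcal{E}_{N+2}}\mathcal{D}_n$ via Lemma \ref{lem-}.
\end{itemize}

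Next, test the $\mathbf{w}$-equation with $\partial_t^{j+1}\mathbf{w}$, as in \eqref{x2}. This gives $\|\partial_t^{j+1}\mathbf{w}\|_0^2$ in the dissipation, at the cost of $\|\nabla_{\mathcal{A}}\partial_t^j\mathbf{u}\|_0^2$ and $\|\mathbf{F}^{2,j}\|_0^2$. Combining with the weight $2\mu/\zeta^2$ as before and applying \eqref{i_te_9c} and Poincar\'e yields the dissipation $\|\partial_t^j\mathbf{u}\|_1^2+\|\partial_t^j\mathbf{w}\|_0^2+\|\partial_t^{j+1}\mathbf{w}\|_0^2$.

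\textbf{Step 2: the forcing terms.} The terms $\int\mathbf{F}^{1,j}\cdot\partial_t^j\mathbf{u}$ and $\int\mathbf{F}^{2,j}\cdot\partial_t^j\mathbf{w}$ are handled by Cauchy--Schwarz and \eqref{p_F_e_001c}. Since $\mathcal{E}_{N+2}\lesssim\delta$, each is bounded by $\sqrt{\mathcal{E}_{N+2}}\mathcal{D}_n$.

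\textbf{Step 3: the pressure term (main obstacle).} The pressure term is $\int\nabla_{\mathcal{A}}\partial_t^jp\cdot\partial_t^j\mathbf{u}$. Integrating by parts using $J=1$ and the boundary condition turns it into $-\int\partial_t^jp\,F^{3,j}$. Writing $F^{3,j}=\operatorname{div}\mathbf{Q}^{3,j}$, this equals $\int\nabla\partial_t^jp\cdot\mathbf{Q}^{3,j}$ (we need $\mathbf{Q}^{3,j}\cdot\mathbf{n}=0$, which holds since $\partial_t^{j-\ell}\mathbf{u}=0$ on $\partial\Omega$).

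\begin{itemize}
\item For $j\le n-1$, $\|\nabla\partial_t^jp\|_0$ belongs to $\mathcal{D}_n$ (the $j=0$ case via $\|\nabla p\|_{2n-2}$), so Lemma \ref{lem-} bounds the term directly.
\item For $j=n$, no pressure of that order is available. Instead write $\partial_t^np=\partial_t(\partial_t^{n-1}p)$ and move the time derivative:
\[
\int\nabla\partial_t^np\cdot\mathbf{Q}^{3,n}=\frac{\mathrm{d}}{\mathrm{d}t}\int\nabla\partial_t^{n-1}p\cdot\mathbf{Q}^{3,n}-\int\nabla\partial_t^{n-1}p\cdot\partial_t\mathbf{Q}^{3,n}.
\]
The remainder is controlled by $\|\nabla\partial_t^{n-1}p\|_0\|\partial_t\mathbf{Q}^{3,n}\|_0\lesssim\sqrt{\mathcal{E}_{N+2}}\mathcal{D}_n$, again by \eqref{p_F_e_001c}. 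This produces the correction term in \eqref{barEN-differ-esti}.
\end{itemize}

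The delicate point is checking that $\nabla\partial_t^{n-1}p$ really lies in $\mathcal{D}_n$, with the appropriate index. If it does not, I would instead bound it through the Stokes estimate, Lemma \ref{i_linear_elliptic2}, applied to the $(n-1)$-th differentiated system.

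Summing over $j$ and defining $\bar{\mathcal{E}}_n$ and $\bar{\mathcal{D}}_n$ as the weighted sums then gives \eqref{Ent}--\eqref{barEN-differ-esti}.
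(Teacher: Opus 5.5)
Your proposal is correct and follows the paper's proof: test \eqref{linear_geometricc} with $\partial_t^j\mathbf{u}$, $\partial_t^j\mathbf{w}$ and $\partial_t^{j+1}\mathbf{w}$, control $\mathbf{F}^{1,j},\mathbf{F}^{2,j}$ by \eqref{p_F_e_001c}, rewrite the pressure term as $-\int_\Omega\nabla\partial_t^jp\cdot\mathbf{Q}^{3,j}\,\mathrm{d}\mathbf{x}$, and integrate by parts in time when $j=n$. The Stokes fallback is unnecessary because $\|\nabla\partial_t^{n-1}p\|_1^2$ is already part of $\mathcal{D}_n$, and no estimate of $F^{3,j}$ is needed either: with $J=1$ the identity $\|\nabla_{\mathcal{A}}\mathbf{v}\|_0^2=\|\nabla_{\mathcal{A}}\times\mathbf{v}\|_0^2+\|\operatorname{div}_{\mathcal{A}}\mathbf{v}\|_0^2$ is exact, so $\zeta\|\operatorname{div}_{\mathcal{A}}\partial_t^j\mathbf{u}\|_0^2$ enters with a favorable sign and the paper simply drops it (Lemma \ref{lem-} bounds only $\mathbf{Q}^{3,j}$, not its divergence).
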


\begin{proof}
Taking $L^2$ inner product of $\eqref{linear_geometricc}_2$ and $\eqref{linear_geometricc}_3$ with $\partial _{t}^{j} \mathbf{u}$ and $ \partial _{t}^{j} \mathbf{w} $ ($ j=0,1,\dots,n $), respectively, and using similar arguments as in \eqref{l2-diss-recover}, we can show that
\begin{align}\label{E-n-T-ineq}
&\ \frac{\mathrm{d}}{\mathrm{d}t}\big(\|\partial _{t}^j \mathbf{u}\|_0^2+ \|\partial _{t}^{j}\partial _{3}\boldsymbol{\eta} \|_0^2+ \|\partial _{t}^{j}\mathbf{w}\|_{0}^{2}\big) +  2\mu\| \nabla _{\mathcal{A}} \partial _{t}^j  \mathbf{u} \|_0^2 + 2\zeta\|\partial_t^j \mathbf{w}\|_0^2 \nonumber \\
\le &\ 2\int_\Omega \big(\partial _{t}^{j} \mathbf{u}\cdot \mathbf{F} ^{1,j}+\partial^j_{t}p\,F^{3,j}\big) \mathrm{d}\mathbf{x} +2\int _{\Omega}\partial _{t}^{j}\mathbf{w} \cdot \mathbf{F}^{2,j}\mathrm{d}\mathbf{x},
 \end{align}
 where we dropped two non-negative terms, namely, $2\zeta\|\operatorname{div}_{\mathcal{A}}\partial^{j}_t  \mathbf{u}\|_0^2$ and $2\zeta\| \nabla _{\mathcal{A}}\times \partial _{t}^j \mathbf{u}- \partial _{t}^{j} \mathbf{w}\|_0^{2}$. By virtue of \eqref{p_F_e_001c} and H\"older's inequality, we have
 \begin{align}
 2\int_\Omega \partial _{t}^{j} \mathbf{u}\cdot \mathbf{F}^{1,j}\mathrm{d}\mathbf{x} + 2\int _{\Omega}\partial _{t}^{j}\mathbf{w} \cdot \mathbf{F}^{2,j}\mathrm{d}\mathbf{x} \lesssim \|\partial _{t}^{j}\mathbf{u}\|_{0} \|\mathbf{F} ^{1,j}\|_{0}+ \|\partial _{t}^{j}\mathbf{w}\|_{0}\|\mathbf{F} ^{2,j}\|_{0}\lesssim \sqrt{\mathcal{D}_{n}}\sqrt{\mathcal{E}_{N+2}\mathcal{D}_{n}}.
 \end{align}
 The estimate involving  $F^{3,j}$ is a bit subtle. We first utilize the structure of $F^{3,j}$ and employ integration by parts in space. Indeed, by \eqref{g} and the boundary condition, we have
 \begin{align}
 2\int_{\Omega}\partial^j_{t}p\, F^{3,j}\mathrm{d}\mathbf{x}=2\int_{\Omega}\partial^j_{t}p\, \operatorname{div}\mathbf{Q}^{3,j}\mathrm{d}\mathbf{x}=-2\int_{\Omega}\nabla\partial^j_{t}p \cdot \mathbf{Q}^{3,j}\mathrm{d}\mathbf{x}.
 \end{align}
Then for $j<n$, by \eqref{p_F_e_001c}, we deduce that
 \begin{align}
 -2\int_{\Omega}\nabla\partial^j_{t}p \cdot \mathbf{Q}^{3,j}\mathrm{d}\mathbf{x}\leq 2\|\nabla\partial^j_{t}p\|_0\|\mathbf{Q}^{3,j}\|_0\lesssim\sqrt{\mathcal{D}_n}\sqrt{\mathcal{E}_{N+2}\mathcal{D}_n}.
 \end{align}
When $j=n$, we do not have a proper bound of $\nabla\partial^n_{t}p$. We integrate by parts in time to get
 \begin{align}
 -2\int_{\Omega}\nabla\partial^n_{t}p \cdot \mathbf{Q}^{3,n}\mathrm{d}\mathbf{x}=- \frac{\mathrm{d}}{\mathrm{d}t} \int_{\Omega} 2\nabla\partial^{n-1}_tp \cdot \mathbf{Q}^{3,n}\mathrm{d}\mathbf{x}+2\int_{\Omega}\nabla\partial^{n-1}_tp \cdot \partial_t \mathbf{Q}^{3,n}\mathrm{d}\mathbf{x},
 \end{align}
 where, thanks to \eqref{p_F_e_001c}, we have
 \begin{align}
 2\int_{\Omega}\nabla\partial^{n-1}_tp \cdot \partial_t \mathbf{Q}^{3,n}\mathrm{d}\mathbf{x}\lesssim\sqrt{\mathcal{D}_n}\sqrt{\mathcal{E}_{N+2}\mathcal{D}_n}.
 \end{align}
We thus update \eqref{E-n-T-ineq} as
 \begin{align}\label{u-t-j-ineq}
  &  \frac{\mathrm{d}}{\mathrm{d}t}\Big(\|\partial _{t}^j \mathbf{u}\|_0^2+ \|\partial _{t}^{j}\partial _{3}\boldsymbol{\eta}\|_0^2+\|\partial _{t}^{j}\mathbf{w}\|_{0}^{2} +2\int_{\Omega}\nabla\partial^{n-1}_tp \cdot \mathbf{Q}^{3,n}\mathrm{d}\mathbf{x}\Big) + 2\mu\| \nabla _{\mathcal{A}} \partial _{t}^j  \mathbf{u} \|_0^2 + 2\zeta\|\partial_t^j \mathbf{w}\|_0^2\notag \\
  \lesssim & \sqrt{\mathcal{E}_{N+2}}\mathcal{D}_{n}.
  \end{align}
On the other hand, taking the $L^2$ inner product of $\eqref{linear_geometricc}_3$ with $\partial _{t}^{j+1} \mathbf{w}$, we get
 \begin{align}\label{w-j-1-ineq}
 &\  \zeta\frac{\mathrm{d}}{\mathrm{d}t} \| \partial _{t}^{j}\mathbf{w}\|_0^{2} + \| \partial _{t}^{j+1}\mathbf{w}\|_0^{2}  =  \zeta \int _{\Omega} \partial _{t}^{j+1} \mathbf{w} \cdot (\nabla _{\mathcal{A}} \times \partial _{t}^{j} \mathbf{u}) \mathrm{d}\mathbf{x} + \int _{\Omega}\partial _{t}^{j+1}\mathbf{w} \cdot \mathbf{F}^{2,j}\mathrm{d}\mathbf{x}
  \nonumber \\
  \leq &\ \frac{1}{2} \| \partial _{t}^{j+1}\mathbf{w}\|_0^{2} + \frac{\zeta^2}{2} \|\nabla _{\mathcal{A}}\times\partial _{t}^{j}\mathbf{u}\|_0^{2} + \|\partial _{t}^{j+1}\mathbf{w}\|_{0}\|\mathbf{F}^{2,j}\|_{0}
   \nonumber \\
   \leq &\ \frac{1}{2} \| \partial _{t}^{j+1}\mathbf{w}\|_0^{2} + \frac{\zeta^2}{2} \| \nabla _{\mathcal{A}}\times\partial _{t}^{j}\mathbf{u}\|_0^{2} + C\sqrt{\mathcal{E}_{N+2}}\mathcal{D}_{n}.
 \end{align}
Manipulating \eqref{u-t-j-ineq} and \eqref{w-j-1-ineq} as in the proof of Lemma \ref{Lem-L2}, we conclude the existence of energy functionals ${\bar{\mathcal{E}}_{n}} $ and $ {\bar{\mathcal{D}}_{n}} $ fulfilling \eqref{Ent}--\eqref{barEN-differ-esti}. This completes the proof of the lemma.
\end{proof}

\section{Estimates on Horizontal Derivatives}

Due to the geometric configuration of $\Omega=\mathbb{R}^2\times (0,1)$, we separate the \emph{a priori} estimates of the spatial derivatives of the solution in two sections. This section contains the estimates in the horizontal directions. We first introduce further notations for convenience.

\begin{notation}\label{notation2}
Similarly to $\mathbb{Z}_+^{1+m}$ (see Notation \ref{notation1}), we use $\mathbb{Z}_+^m$ to denote the set of multi-indices for spatial derivatives only. We will write $\nabla_\ast$ for the horizontal gradient, $\operatorname{div}_\ast$ for the horizontal divergence and $\Delta_\ast$ for the horizontal Laplace operator. We also introduce the space-time energy:
\begin{align*}
\|\bar{\nabla}^l f\|_k^2 := \sum_{\substack{\alpha \in \mathbb{Z}_+^{1+3}, \vert \alpha\vert\le l} } \|\partial ^{\alpha}f\|_{k}^2, \quad l\ge0,\quad k \geq 0.
\end{align*}
\end{notation}

The following two subsections deal with the horizontal estimates of $(\mathbf{u},\mathbf{w})$ and $\boldsymbol{\eta}$, respectively.

\subsection{Horizontal estimates of $ (\mathbf{u}, \mathbf{w}) $} 

As in \cite{Feng-hong-zhu-compre}, we consider the linear perturbed formulation:
\begin{equation}\label{perturbc}
\begin{aligned}
\partial_t \boldsymbol{\eta} &=\mathbf{u}, \\
\partial_t \mathbf{u} -(\mu+\zeta)\Delta  \mathbf{u}+\nabla p &=\partial_3^2\boldsymbol{\eta}+ \zeta \nabla  \times  \mathbf{w}+ \mathbf{G}^1, \\
\partial _{t}\mathbf{w}+2 \zeta  \mathbf{w} &=\zeta \nabla  \times \mathbf{u} + \mathbf{G}^2,\\
\operatorname{div}\mathbf{u} &=G^3,\\
\mathbf{u}|_{\partial\Omega}&= \mathbf{0},
\end{aligned}
\end{equation}
where, for $i=1,2,3$,
\begin{align*}
\mathbf{G}^1 &= (\mu+\zeta)(\Delta_{\mathcal{A}} \mathbf{u}-\Delta\mathbf{u})-(\nabla_{\mathcal{A}}p-\nabla p)+\zeta(\nabla_{\mathcal{A}}\times \mathbf{w}-\nabla\times \mathbf{w}),\\
\mathbf{G}^2 &= \zeta(\nabla_{\mathcal{A}}\times \mathbf{u}-\nabla\times \mathbf{u}),\\
G^3 &= -(\operatorname{div}_{\mathcal{A}}\mathbf{u}-\operatorname{div}\mathbf{u}).
\end{align*}
We begin with the estimates of the space-time energy of $\mathbf{G}^1$, $\mathbf{G}^2$ and $G^3$.

\begin{lemma}\label{lem-G-TILDA-G}
Let $ (\boldsymbol{\eta},\mathbf{u}, p,\mathbf{w}) $ be the local solution to \eqref{reformulationc} satisfying \eqref{a-priori} for some sufficiently small constant $\delta$. Then for $ n=N+2 $ or $ n=2N $ with $ N \geq 1 $, it holds that
\begin{align}
\bullet\quad &\,\|\bar{\nabla}^{2n-2} \mathbf{G}^1\|_0^{2}+\|\bar{\nabla}^{2n-2}\mathbf{G}^2\|^2_{1}+\|\bar{\nabla}^{2n-2}G^3\|^2_{1} \lesssim  \mathcal{E}_{N+2}\mathcal{E}_{n}, \label{p_G_e_0c}\\
\bullet\quad &\,\|\bar{\nabla}^{4N-1}\mathbf{G}^1\|_0^{2}+\|\bar{\nabla}^{4N-1}\mathbf{G}^2\|^2_1+ \|\bar{\nabla}^{4N-1}G^3\| ^{2}_1 \lesssim \mathcal{E}_{N+2}(\mathcal{D}_{2N}+\mathcal{J}_{2N}+\mathcal{F}_{2N}), \label{p_G_e_001c}\\
\bullet\quad &\,\|\bar{\nabla}^{2N+3}\mathbf{G}^1\|^2_{1}+\|\bar{\nabla}^{2N+3}\mathbf{G}^2\|^2_{2}+\|\bar{\nabla}^{2N+3}G^3\|^2_{2} \lesssim\mathcal{E}_{2N}\mathcal{D}_{N+2}. \label{p_G_e_002c}
\end{align}
\end{lemma}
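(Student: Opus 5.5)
The plan is to write each of $\mathbf{G}^1,\mathbf{G}^2,G^3$ as a sum of products in which one factor is $\mathcal{A}-\mathcal{I}_3$ (or a derivative of $\mathcal{A}$) and the other is a derivative of $\mathbf{u}$, $p$ or $\mathbf{w}$. These products are then estimated by the Leibniz rule and Sobolev embedding, with the same low--high splitting used for $\mathbf{F}^{1,j}$ in Lemma \ref{lem-}.

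\textbf{Structure of the nonlinear terms.} Since $\mathcal{A}=[(\mathcal{I}_3+\nabla\boldsymbol{\eta})^{-1}]^{\mathrm T}$ and $\|\nabla\boldsymbol{\eta}\|_{L^\infty}$ is small under \eqref{a-priori}, we may write $\mathcal{A}-\mathcal{I}_3=\mathcal{R}(\nabla\boldsymbol{\eta})$ with $\mathcal{R}$ analytic near $0$ and $\mathcal{R}(0)=0$. This gives
\begin{align*}
\mathbf{G}^1&=(\mu+\zeta)\big[(\mathcal{A}-\mathcal{I}_3)\nabla^2\mathbf{u}+\nabla\mathcal{A}\,\nabla\mathbf{u}+(\mathcal{A}\mathcal{A}^{\mathrm T}-\mathcal{I}_3)\nabla^2\mathbf{u}\big]-(\mathcal{A}-\mathcal{I}_3)\nabla p+\zeta(\mathcal{A}-\mathcal{I}_3)\nabla\mathbf{w},\\
\mathbf{G}^2&=\zeta(\mathcal{A}-\mathcal{I}_3)\nabla\mathbf{u},\qquad G^3=-(\mathcal{A}-\mathcal{I}_3):\nabla\mathbf{u},
\end{align*}
each up to index contractions. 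A time derivative falling on $\mathcal{A}$ is converted through $\partial_t\boldsymbol{\eta}=\mathbf{u}$ into a factor $\nabla\partial_t^{\ell-1}\mathbf{u}$, exactly as in \eqref{F1}. Each factor then has either the form $\partial^\alpha\nabla\boldsymbol{\eta}$ with $\alpha$ purely spatial, or the form $\partial^\alpha\nabla\mathbf{u}$, $\partial^\alpha\nabla p$, $\partial^\alpha\mathbf{w}$.

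\textbf{Proof of \eqref{p_G_e_0c}.} Apply $\partial^\alpha$ with $|\alpha|\le 2n-2$ (parabolic count), plus one more spatial derivative for $\mathbf{G}^2$ and $G^3$, and distribute it by Leibniz. The factor carrying fewer derivatives goes in $L^\infty$ and is bounded via $H^2\hookrightarrow L^\infty$ by $\mathcal{E}_{N+2}$. This needs roughly $(2n-2)/2+3\le 2(N+2)$ derivatives, which holds for $n\le 2N$. The other factor goes in $L^2$ and is bounded by $\mathcal{E}_n$. The only delicate pieces are the maximal orders $\nabla^{2n}\mathbf{u}$, $\nabla^{2n-1}p$, $\nabla^{2n-1}\mathbf{w}$ and $\nabla^{2n}\boldsymbol{\eta}$. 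All of these lie in $\mathcal{E}_n$: the first three through $\|\mathbf{u}\|_{2n}$, $\|\nabla p\|_{2n-2}$ and $\|\mathbf{w}\|_{2n}$, and the last through $\|\boldsymbol{\eta}\|_{2n}$.

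\textbf{Proof of \eqref{p_G_e_002c}.} This is the same argument, with the roles of the two norms reversed: the low-order factor is taken in $\mathcal{E}_{2N}$ and the high-order factor in $\mathcal{D}_{N+2}$. The count must be checked for $2N+3$ derivatives plus one or two extra. For the $\boldsymbol{\eta}$-factor this requires $\|\boldsymbol{\eta}\|_{2N+6}\lesssim\|\boldsymbol{\eta}\|_{2(N+2)}$-type bounds available in $\mathcal{D}_{N+2}$. When the count exceeds what $\mathcal{D}_{N+2}$ controls for $\boldsymbol{\eta}$, I will instead place $\boldsymbol{\eta}$ in $\mathcal{E}_{2N}$ (which contains $\|\boldsymbol{\eta}\|_{4N}$) and put the velocity or pressure factor in $\mathcal{D}_{N+2}$. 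That is possible whenever $N\ge4$, since $4N$ far exceeds $2N+5$.

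\textbf{Proof of \eqref{p_G_e_001c}, the main obstacle.} Here the derivative count is $4N-1$, plus one for $\mathbf{G}^2$ and $G^3$. Two terms reach the top order.
\begin{itemize}
\item[(i)] When all derivatives fall on $\nabla\mathbf{u}$, $\nabla p$ or $\mathbf{w}$, we meet $\|\mathbf{u}\|_{4N+1}$, $\|\nabla p\|_{4N-1}$ and $\|\mathbf{w}\|_{4N}$. The time-derivative parts of these lie in $\mathcal{D}_{2N}$, but the purely spatial top orders exceed $\mathcal{D}_{2N}$ (e.g.\ $\mathbf{w}$ only up to $4N$ is fine, while $\mathbf{u}$ lies in $H^{4N+1}$ through $\mathcal{D}_{2N}$). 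Whatever is missing is absorbed by $\mathcal{J}_{2N}$, multiplied by an $L^\infty$ norm of $\mathcal{A}-\mathcal{I}_3$ that is bounded by $\sqrt{\mathcal{E}_{N+2}}$.
\item[(ii)] When all derivatives fall on $\mathcal{A}$, we obtain $\nabla^{4N+1}\boldsymbol{\eta}$ multiplied by $\|\nabla\mathbf{u}\|_{L^\infty}$, $\|\nabla p\|_{L^\infty}$ or $\|\mathbf{w}\|_{L^\infty}$. This $\boldsymbol{\eta}$ factor is controlled only by $\mathcal{F}_{2N}=\|\boldsymbol{\eta}\|_{4N+1}^2$, while the $L^\infty$ factor is bounded by $\mathcal{E}_{N+2}$.
\end{itemize}
Intermediate splittings are handled as before, with the lower factor in $\mathcal{E}_{N+2}$ and the higher factor in $\mathcal{D}_{2N}$. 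The bookkeeping I expect to be most error-prone is verifying, for every mixed split, that the lower-order factor stays within the $2(N+2)$ derivatives available in $\mathcal{E}_{N+2}$ after using Sobolev embedding. This is where $N\ge4$ enters, since it gives $2N+4\ge (4N+1)/2+2$.
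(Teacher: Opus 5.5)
Your overall route is the paper's. The paper gives no more detail than ``expand $\partial^\alpha\mathbf{G}^i$ by Leibniz and argue as in Lemma~\ref{lem-}'', and your treatment of \eqref{p_G_e_0c} and \eqref{p_G_e_001c} is sound.

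\textbf{Where it fails.} The step that breaks is the allocation you state for \eqref{p_G_e_002c}: ``low-order factor in $\mathcal{E}_{2N}$, high-order factor in $\mathcal{D}_{N+2}$''.
\begin{itemize}
\item The norm $\|\bar{\nabla}^{2N+3}\mathbf{G}^1\|_1$ puts up to $2N+4$ spatial derivatives on $\mathbf{G}^1$. It therefore contains, schematically, the leading terms $(\mathcal{A}-\mathcal{I}_3)\nabla^{2N+6}\mathbf{u}$, $(\mathcal{A}-\mathcal{I}_3)\nabla^{2N+5}p$ and $(\mathcal{A}-\mathcal{I}_3)\nabla^{2N+5}\mathbf{w}$.
\item $\|\bar{\nabla}^{2N+3}\mathbf{G}^2\|_2$ and $\|\bar{\nabla}^{2N+3}G^3\|_2$ again contain $(\mathcal{A}-\mathcal{I}_3)\nabla^{2N+6}\mathbf{u}$.
\item With time derivatives one gets $\partial_t^j\nabla^{2N+6-2j}\mathbf{u}$ and $\partial_t^j\nabla^{2N+5-2j}p$.
\end{itemize}
But $\mathcal{D}_{N+2}$ controls only $\|\partial_t^j\mathbf{u}\|_{2N+5-2j}$, $\|\nabla p\|_{2N+2}$, $\|\nabla\partial_t^j p\|_{2N+3-2j}$ ($j\ge1$) and $\|\mathbf{w}\|_{2N+4}$. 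So in each of these terms the high-order factor sits one or two derivatives beyond the dissipation.

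Your fallback only reverses roles when $\boldsymbol{\eta}$ carries the top derivatives. Along the way it asserts $\|\boldsymbol{\eta}\|_{2N+6}\lesssim\|\boldsymbol{\eta}\|_{2(N+2)}$, which is false. For the leading velocity, pressure and micro-rotation terms your plan would still put the top-order factor in $\mathcal{D}_{N+2}$, and the estimate fails there.

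\textbf{The fix.} Use the opposite allocation uniformly: put the higher-order factor in $L^2$, bounded by $\mathcal{E}_{2N}$, and the lower-order factor in $L^\infty$, bounded by $\mathcal{D}_{N+2}$.
\begin{itemize}
\item This works because here $\mathcal{D}_{N+2}$ contains $\|\boldsymbol{\eta}\|_{2N+4}^2$, the damping produced by $\partial_3^2\boldsymbol{\eta}$. Hence $\|\mathcal{A}-\mathcal{I}_3\|_{L^\infty}^2\lesssim\|\boldsymbol{\eta}\|_3^2\lesssim\mathcal{D}_{N+2}$.
\item The same holds for the other low-order pieces $\nabla^a\mathcal{A}$, $\partial_t^\ell\mathcal{A}\sim\nabla\partial_t^{\ell-1}\mathbf{u}$, $\partial^\beta\nabla p$ and $\partial^\beta\mathbf{w}$. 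After Sobolev embedding each needs only about $N+6$ derivatives, well within $\mathcal{D}_{N+2}$.
\item You do note that $\mathcal{D}_{N+2}$ contains $\|\boldsymbol{\eta}\|_{2N+4}$, but you spend it on the wrong factor.
\item On the energy side, $\mathcal{E}_{2N}$ contains $\|\partial_t^j\mathbf{u}\|_{4N-2j}$, $\|\nabla\partial_t^j p\|_{4N-2j-2}$, $\|\partial_t^j\mathbf{w}\|_{4N-2j}$ and $\|\boldsymbol{\eta}\|_{4N}$. These absorb all the top orders once $4N\ge 2N+6$, i.e.\ $N\ge3$.
\end{itemize}
That last inequality is where a lower bound on $N$ actually enters. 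Your stated reason, $2N+4\ge(4N+1)/2+2$, holds for every $N$.

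\textbf{A minor point on \eqref{p_G_e_001c}.} The only top-order term that genuinely needs $\mathcal{J}_{2N}$ is $(\mathcal{A}-\mathcal{I}_3)\nabla^{4N}p$. $\mathcal{D}_{2N}$ already contains $\|\mathbf{u}\|_{4N+1}$ and $\|\mathbf{w}\|_{4N}$, but only $\|\nabla p\|_{4N-2}$.
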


To prove this lemma, we apply the space-time differential operator to $ \mathbf{G}^1 $, $ \mathbf{G}^2 $ and $G^3$, and then expand by the Leibniz rule. The resulting quantities resemble similar structures as those in \eqref{F_def_startc1}--\eqref{g}. The proof then follows from similar arguments as in the proof of Lemma \ref{lem-}. We omit the tedious details here for brevity. Using Lemma \ref{lem-G-TILDA-G}, we can show the following:

\begin{lemma}\label{horizontal-uw}
Let $ (\boldsymbol{\eta},\mathbf{u}, p,\mathbf{w}) $ be the local solution to \eqref{reformulationc} satisfying \eqref{a-priori} for some sufficiently small constant $\delta$. Then for $ n=N+2 $ or $ n=2N $ with $ N \geq 4 $, there exist energy functionals $ \bar{\mathcal{E}}_{n}^{\ast} $ and $  \bar{\mathcal{D}}_{n}^{\ast} $, such that
\begin{align}
\bar{\mathcal{E}}^{\ast}_{n}(t) &\sim \sum^{n-1}_{j=0}\big(  \| \partial^j_{t} \mathbf{u}\| _{0,2n-2j}^2
 + \|\partial _{t}^{j} \partial _{3} \boldsymbol{\eta} \|_{0,2n-2j}^2 +\| \partial^{j}_{t}\mathbf{w}\|_{0,2n-2j}^{2} \big), \label{En*}\\
\bar{\mathcal{D}}_n^{\ast}(t) &\sim \sum^{n-1}_{j=0} \big( \|\partial _{t}^{j}\mathbf{u}\|_{1,2n-2j}^2+ \|\partial^{j}_t\mathbf{w}\|_{0,2n-2j}^{2} + \|\partial^{j+1}_t\mathbf{w}\|_{0,2n-2j}^{2} \big), \label{Dn*}
\end{align}
and the following hold:
 \begin{equation}\label{p_u_e_00c}
\frac{\mathrm{d}}{\mathrm{d}t}{\bar{\mathcal{E}}_{2N}^{\ast}}+ {\bar{\mathcal{D}}_{2N}^{\ast}} \lesssim  \sqrt{ \mathcal{E}_{N+2}  } (\mathcal{D}_{2N}+\mathcal{J}_{2N}+\mathcal{F}_{2N})
\end{equation}
and
\begin{equation}\label{p_u_e_00c1}
\frac{\mathrm{d}}{\mathrm{d}t}{\bar{\mathcal{E}}_{N+2}^{\ast}}+ {\bar{\mathcal{D}}_{N+2}^{\ast}}  \lesssim  \sqrt{ \mathcal{E}_{2N}  }\mathcal{D}_{N+2}.
\end{equation}
 \end{lemma}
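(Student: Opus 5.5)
We work with the linearized system \eqref{perturbc} and apply horizontal/temporal derivatives $\partial^\alpha=\partial_t^{j}\partial_1^{\alpha_1}\partial_2^{\alpha_2}$ with $0\le j\le n-1$ and $\vert\alpha\vert\le 2n$. Since $\partial^\alpha$ commutes with the flat operators $\Delta$, $\nabla$, $\nabla\times$, $\operatorname{div}$ and preserves the boundary condition $\partial^\alpha\mathbf{u}|_{\partial\Omega}=\mathbf{0}$, the differentiated system has the same linear structure as \eqref{perturbc}. The only changes are that the forcing becomes $\partial^\alpha\mathbf{G}^i$ and that $\operatorname{div}\partial^\alpha\mathbf{u}=\partial^\alpha G^3$.

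First take the $L^2$ inner product of the momentum equation with $\partial^\alpha\mathbf{u}$ and of the $\mathbf{w}$-equation with $\partial^\alpha\mathbf{w}$. Using $\partial_t\boldsymbol{\eta}=\mathbf{u}$, the term $\int\partial_3^2\partial^\alpha\boldsymbol{\eta}\cdot\partial^\alpha\mathbf{u}$ becomes $-\frac12\frac{\mathrm d}{\mathrm dt}\|\partial_3\partial^\alpha\boldsymbol{\eta}\|_0^2$. The curl coupling is handled exactly as in \eqref{l2-diss-recover}: the symmetric combination $\zeta(\|\nabla\partial^\alpha\mathbf{u}\|_0^2+2\|\partial^\alpha\mathbf{w}\|_0^2)-2\zeta\int\nabla\times\partial^\alpha\mathbf{u}\cdot\partial^\alpha\mathbf{w}$ is bounded below by $\zeta\|\partial^\alpha\mathbf{w}\|_0^2$. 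This uses the identity $\|\nabla v\|_0^2=\|\nabla\times v\|_0^2+\|\operatorname{div}v\|_0^2$ for $v$ vanishing on $\partial\Omega$, whose divergence part now contributes the extra term $\|\partial^\alpha G^3\|_0^2$. We then test the $\mathbf{w}$-equation with $\partial_t\partial^\alpha\mathbf{w}$, as in \eqref{W-T-MULTI}, to gain $\|\partial_t\partial^\alpha\mathbf{w}\|_0^2$. Summing over $\alpha$ with suitable weights defines $\bar{\mathcal{E}}^\ast_n$ and $\bar{\mathcal{D}}^\ast_n$ with the equivalences \eqref{En*}--\eqref{Dn*}. Poincaré's inequality upgrades $\|\nabla\partial^\alpha\mathbf{u}\|_0$ to the $\|\cdot\|_{1,2n-2j}$ norm.

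Next we treat the forcing terms.
\begin{itemize}
\item The pressure term gives $\int\partial^\alpha p\,\partial^\alpha G^3$.
\item The nonlinear terms are $\int\partial^\alpha\mathbf{G}^1\cdot\partial^\alpha\mathbf{u}$ and $\int\partial^\alpha\mathbf{G}^2\cdot\partial^\alpha\mathbf{w}$ (and the $\partial_t\partial^\alpha\mathbf{w}$ analogue).
\end{itemize}
When $\vert\alpha\vert\le 2n-1$ we can estimate directly. At top order $\vert\alpha\vert=2n$ (purely horizontal, $j=0$) we first move one horizontal derivative off $\mathbf{G}^1$ by integration by parts in $x_h$, which is allowed because there is no boundary in the horizontal directions. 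This gives $\|\bar\nabla^{2n-1}\mathbf{G}^1\|_0\|\mathbf{u}\|_{1,2n}$, and similarly for the pressure term, where $\partial^\alpha p\,\partial^\alpha G^3$ becomes $\partial^{\alpha-e_i}\nabla_h p\cdot\partial^{\alpha+e_i}G^3$-type terms controlled by $\|\bar\nabla^{2n-1}G^3\|_1$.

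For $n=2N$ we then invoke \eqref{p_G_e_001c}, which gives the bound $\sqrt{\mathcal{D}_{2N}}\sqrt{\mathcal{E}_{N+2}(\mathcal{D}_{2N}+\mathcal{J}_{2N}+\mathcal{F}_{2N})}$ and hence \eqref{p_u_e_00c}. For $n=N+2$ we use \eqref{p_G_e_002c}: since $2(N+2)-1=2N+3$, the bound $\|\bar\nabla^{2N+3}\mathbf{G}^1\|_1^2\lesssim\mathcal{E}_{2N}\mathcal{D}_{N+2}$ absorbs everything and yields \eqref{p_u_e_00c1}.

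The main obstacle is the top-order pressure term. $\nabla p$ appears only in $\mathcal{D}_n$ up to $\|\nabla p\|_{2n-2}$, so there is no $2n$-order horizontal control on $p$. We will try the following: split $\partial^\alpha=\partial_h\partial^{\alpha'}$, integrate by parts to get $\int\nabla\partial^{\alpha'}p\cdot\partial_h\partial^{\alpha}\mathbf{Q}$-type expressions using the divergence structure $G^3=\operatorname{div}\mathbf{Q}$ (as in \eqref{g}), and bound $\partial^{\alpha'}\nabla p$ by $\|\nabla p\|_{2n-2}$ together with $\mathcal{J}_{2N}$ at the very top. We accept the $\mathcal{J}_{2N}$ and $\mathcal{F}_{2N}$ losses in \eqref{p_u_e_00c}; they are precisely why those quantities appear on the right-hand side. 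The temporal terms with $j\ge1$ are handled exactly as in Lemma \ref{t-estimate}, including the time integration by parts if needed.
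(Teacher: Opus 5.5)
Your overall scheme matches the paper's: the same $\partial^\alpha$-energy identity for \eqref{perturbc}, the same handling of the curl coupling and of $\partial_3^2\boldsymbol{\eta}$, the extra test against $\partial_t\partial^\alpha\mathbf{w}$, and one horizontal integration by parts on $\mathbf{G}^1$ at top order. The gap is the top-order pressure term $\int_\Omega\partial^\alpha p\,\partial^\alpha G^3\,\mathrm{d}\mathbf{x}$ with $j=0$, $|\alpha|=2n$, in the case $n=N+2$.

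Direct H\"older needs $\|\nabla_\ast p\|_{2n-1}=\|\nabla p\|_{2N+3}$. This norm is not in $\mathcal{D}_{N+2}$, which only contains $\|\nabla p\|_{2N+2}$. Nor is $\mathcal{J}_{2N}$ available on the right of \eqref{p_u_e_00c1}. If you bound $\|\nabla p\|_{2N+3}$ by $\sqrt{\mathcal{E}_{2N}}$ instead, you get $\mathcal{E}_{2N}\sqrt{\mathcal{D}_{N+2}}$, not $\sqrt{\mathcal{E}_{2N}}\mathcal{D}_{N+2}$.

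Your proposed remedy does not close this. After using $G^3=\operatorname{div}\mathbf{Q}$ and one horizontal integration by parts, $\nabla\partial^{\alpha'}p$ with $|\alpha'|=2n-1$ still carries $2n$ derivatives of $p$. So the claim that it is bounded by $\|\nabla p\|_{2n-2}$ is a miscount; the divergence form only relocates derivatives. Your other sentence, which shifts one derivative onto $\partial^{\alpha+e_i}G^3$, is actually the right move for $n=N+2$. However, you pair it with $\|\bar{\nabla}^{2n-1}G^3\|_1$, which controls only $2n$ horizontal derivatives of $G^3$, not $2n+1$.

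The paper treats the two cases differently.
\begin{itemize}
\item For $n=N+2$, it writes $\alpha=\beta+\gamma+(\alpha-\beta-\gamma)$ with $\beta,\gamma$ horizontal of length one and integrates by parts to get $\int_\Omega\partial^\alpha p\,\partial^\alpha G^3\,\mathrm{d}\mathbf{x}=-\int_\Omega\partial^{\alpha-\beta-\gamma}\partial^\gamma p\,\partial^{\alpha+\beta}G^3\,\mathrm{d}\mathbf{x}\le\|\nabla_\ast p\|_{2n-2}\|\bar{\nabla}^{2N+3}G^3\|_2$. This works precisely because \eqref{p_G_e_002c} controls $G^3$ with one extra derivative (the $\|\cdot\|_2$ norm).
\item For $n=2N$, this extra integration by parts is unavailable, since \eqref{p_G_e_001c} only gives $\|\bar{\nabla}^{4N-1}G^3\|_1$. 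Instead one uses H\"older directly with $\|\nabla_\ast p\|_{4N-1}\le\sqrt{\mathcal{J}_{2N}}$, which is exactly where $\mathcal{J}_{2N}$ enters \eqref{p_u_e_00c}.
\end{itemize}

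Two smaller points. First, top order $|\alpha|=2n$ also includes $j\ge1$, not only the purely horizontal case. Those terms are harmless because $\|\nabla\partial_t^jp\|_{2n-2j-1}$ for $1\le j\le n-1$ lies in $\mathcal{D}_n$, so no time integration by parts is needed. Second, the divergence structure of $G^3$ is genuinely useful only when $\alpha$ has no horizontal component, so that $\partial_t^j p$ appears without any spatial derivative.
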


 \begin{proof}
Given $\alpha=(j,\alpha _{1},\alpha _{2})\in \mathbb{Z}_+^{1+2}$ with $ 1 \leq \vert \alpha\vert=2 j+ \alpha _{1}+\alpha _{2} \leq 2n $ and $ j=0,1,\cdots,n-1 $, applying $ \partial ^{\alpha} $ to $\eqref{perturbc}_2 $ and $ \eqref{perturbc}_3 $, and then taking $
 L^2$ inner products of the resulting equations with $\partial ^\alpha \mathbf{u}$ and $ \partial ^{\alpha}\mathbf{w} $, respectively, we deduce that
\begin{align} \label{p_u_e_1110c}
&\frac{1}{2} \frac{\mathrm{d}}{\mathrm{d}t}  \big(\|\partial ^{\alpha}\mathbf{u}\|_0^2+ \| \partial ^{\alpha}\partial _{3}\boldsymbol{\eta} \|_0^2 + \| \partial ^{\alpha}\mathbf{w}\|_0^{2}\big) + \mu  \| \nabla \partial ^{\alpha}\mathbf{u}\|_0^2 +\zeta\|\partial^\alpha\mathbf{w}\|_0^2
   \nonumber \\
   \le & \int_\Omega   \partial^{\alpha}  \mathbf{u}  \cdot  \partial^{\alpha}  \mathbf{G}^1 \mathrm{d}\mathbf{x} +\int _{\Omega}\partial ^{\alpha}\mathbf{w} \cdot \partial ^{\alpha}\mathbf{G}^2 \mathrm{d}\mathbf{x} +\int_{\Omega}\partial^{\alpha}p\, \partial^{\alpha}G^3 \mathrm{d}\mathbf{x},
\end{align}
where again we dropped two non-negative terms $\zeta\| \operatorname{div} \partial ^{\alpha} \mathbf{u} \|_0^2$ and $\zeta \|\nabla \times \partial ^{\alpha}\mathbf{u}-\partial ^{\alpha}\mathbf{w}\|_0^{2}$. Next, we estimate the integrals on the right of \eqref{p_u_e_1110c}. Since $\vert \alpha\vert \geq 1$, we may write $\alpha = \gamma +(\alpha-\gamma)$ for some $\gamma \in \mathbb{Z}_+^{2}$ with $\vert \gamma\vert=1$.

\vskip .1in
{\bf Case 1: $n=2N$.} By integration by parts and \eqref{p_G_e_001c}, we have
\begin{align}\label{i_de_4c}
  \int_\Omega     \partial^ \alpha  \mathbf{u} \cdot   \partial^{\alpha}  \mathbf{G}^1 \mathrm{d}\mathbf{x}  &=- \int_\Omega     \partial^{\alpha+\gamma}   \mathbf{u} \cdot   \partial^{\alpha-\gamma}  \mathbf{G}^1 \mathrm{d}\mathbf{x}
 \le \left\|\partial^{\alpha+\gamma}   \mathbf{u}\right\|_{0}  \|\partial^{\alpha-\gamma}   \mathbf{G}^1 \|_{0}\nonumber \\
 &\le \left\|\partial ^{\alpha}\mathbf{u}\right\| _{1}  \|\bar{\nabla}^{2n-1}\mathbf{G}^1\|_0
\lesssim \sqrt{ \mathcal{D}_{2N} } \sqrt{ \mathcal{E}_{N+2}(\mathcal{D}_{2N}+\mathcal{J}_{2N}+\mathcal{F}_{2N})} .
\end{align}
Similarly, by \eqref{p_G_e_001c},
\begin{gather}\label{ide4d}
\int_\Omega     \partial^{\alpha}   \mathbf{w} \cdot   \partial^{\alpha} \mathbf{G}^2 \mathrm{d}\mathbf{x} \le  \|\partial _{t}^{j}\mathbf{w}\|_{0,2n-2j} \|\bar{\nabla}^{2n-1} \mathbf{G}^2\|_{1} \lesssim \sqrt{ \mathcal{D}_{2N} } \sqrt{ \mathcal{E}_{N+2} (\mathcal{D}_{2N}+\mathcal{J}_{2N}+\mathcal{F}_{2N})}.
\end{gather}
For $G^3$, it follows from \eqref{p_G_e_001c} that
\begin{align}\label{ide4e}
\int_{\Omega}\partial^{\alpha}p\, \partial^{\alpha}G^3 \mathrm{d}\mathbf{x} \leq\|\partial^{\alpha-\gamma}\partial^{\gamma}p\| _{0}\|\partial^{\alpha}G^3\| _{0}
  &\leq  \|\nabla _{\ast}\partial _{t}^{j} p\|_{2n-2j-1}\|\bar{\nabla}^{4N-1}G ^{3}\|_{1}\nonumber\\
&\lesssim (\sqrt{\mathcal{J}_{2N}}+\sqrt{\mathcal{D}_{2N}})\sqrt{\mathcal{E}_{N+2}(\mathcal{D}_{2N}+\mathcal{J}_{2N}+\mathcal{F}_{2N})}.
\end{align}
Inserting \eqref{i_de_4c}--\eqref{ide4e} into \eqref{p_u_e_1110c}, taking the sum over $\alpha$, and then manipulating the result as in the proof of Lemma \ref{Lem-L2} and applying Poincar\'e's inequality, we can show that
\begin{align}\label{E-STAR-DIFF-1}
&\ \frac{\mathrm{d}}{\mathrm{d}t} \bar{\mathcal{E}}^{\ast}_{2N} + \sum^{2N-1}_{j=0}\big( \|\nabla _{\ast}\partial _{t}^{j}\mathbf{u}\|_{1,4N-2j-1}^2+\|\nabla _{\ast}\partial^{j}_t\mathbf{w}\|_{0,4N-2j-1}^{2} \big)
\nonumber\\
 \lesssim &\ \sqrt{ \mathcal{E}_{N+2} }(\mathcal{D}_{2N}+\mathcal{J}_{2N}+\mathcal{F}_{2N}),
\end{align}
where the energy functional $\bar{\mathcal{E}}^{\ast}_{2N}$ satisfies \eqref{En*}. Moreover, applying $ \partial ^{\alpha} $ to $ \eqref{perturbc}_3 $ and then taking $L^2$ inner product of  the result with $\partial ^\alpha \partial _{t}\mathbf{w}$ with $ \alpha \in \mathbb{Z}_+^{1+2} $ ($ 1 \leq \vert \alpha\vert \leq 4N $), we can show by \eqref{p_G_e_001c} that
\begin{align*}
&\  \zeta \frac{\mathrm{d}}{\mathrm{d}t}\|\partial ^{\alpha}\mathbf{w}\|_{0}^{2}+\|\partial ^{\alpha}\partial _{t}\mathbf{w}\|_{0}^{2}
 \nonumber \\
 \leq &\ \frac{1}{2}\|\partial ^{\alpha}\partial _{t}\mathbf{w}\|_{0}^{2}+ \frac{\zeta^2}{2} \|\nabla \times \partial ^{\alpha}\mathbf{u}\|_{0}^{2}+C\sqrt{ \mathcal{E}_{N+2}} (\mathcal{D}_{2N}+\mathcal{J}_{2N}+\mathcal{F}_{2N}).
\end{align*}
This, alongside \eqref{E-STAR-DIFF-1} and the proof of Lemma \ref{Lem-L2}, implies the existence of an energy functional $ \bar{\mathcal{D}}_{2N}^{\ast} $ satisfying \eqref{Dn*} such that
\begin{align*}
\frac{\mathrm{d}}{\mathrm{d}t} \bar{\mathcal{E}}_{2N}^{\ast} + \bar{\mathcal{D}}_{2N}^{\ast}\lesssim  \sqrt{ \mathcal{E}_{N+2} }(\mathcal{D}_{2N}+\mathcal{J}_{2N}+\mathcal{F}_{2N}),
\end{align*}
which gives \eqref{p_u_e_00c}.

\vskip .1in
{\bf Case 2: $n=N+2$.} By \eqref{p_G_e_002c}, we have
\begin{align}
&\int_{\Omega}\partial^{\alpha}\mathbf{u} \cdot\partial^{\alpha}\mathbf{G}^1 \mathrm{d}\mathbf{x} + \int_{\Omega}\partial^{\alpha}\mathbf{w} \cdot\partial^{\alpha}\mathbf{G}^2 \mathrm{d}\mathbf{x}
 \nonumber \\
 \leq & \|\partial _{t}^{j}\mathbf{u}\|_{0,2n-2j} \|\bar{\nabla}^{2N+3}\mathbf{G}^1\|_{1} + \| \partial _{t}^{j} \mathbf{w}\|_{0,2n-2j}\| \bar{\nabla}^{2N+3}\mathbf{G}^2\|_{1} \notag\\
\lesssim&\sqrt{\mathcal{D}_{N+2}}\sqrt{\mathcal{E}_{2N}\mathcal{D}_{N+2}}.\label{n-2-ets-G1}
\end{align}
For $G^3$, when $1\leq|\alpha|\leq 2(N+2)-1$, it follows from \eqref{p_G_e_002c} that
\begin{align}
\int_{\Omega}\partial^{\alpha}p\,\partial^{\alpha}G^3 \mathrm{d}\mathbf{x} \leq\|\partial^{\alpha-\gamma}\partial^{\gamma}p\|_0\|\partial^{\alpha}G^3\|_0 &\leq \|\nabla_{*}\partial _{t}^{j}p\|_{2(N+2)-2j-2}\| \bar{\nabla}^{2N+3}G^3\|_{1}\nonumber\\
&\lesssim\sqrt{\mathcal{D}_{N+2}}\sqrt{\mathcal{E}_{2N}\mathcal{D}_{N+2}}.
\end{align}
When $|\alpha|=2(N+2)$, we may write $\alpha=\beta+\gamma+(\alpha-\beta-\gamma)$ for some $\beta$, $\gamma\in \mathbb{Z}_+ ^2$ with $|\beta|=|\gamma|=1$. Then we integrate by parts to get
\begin{align}\label{n+2+esti-G3}
\int_{\Omega}\partial^{\alpha}p\, \partial ^{\alpha} G^3 \mathrm{d}\mathbf{x} =-\int_{\Omega}\partial^{\alpha-\beta-\gamma}\partial^{\gamma}p\,\partial^{\alpha+\beta}G^3 \mathrm{d}\mathbf{x} &\leq \|\nabla_{*}\partial _{t}^{j}p\|_{2(N+2)-2j-2}\|\bar{\nabla}^{2N+3} G^3\|_{2}\nonumber\\
&\lesssim\sqrt{\mathcal{D}_{N+2}}\sqrt{\mathcal{E}_{2N}\mathcal{D}_{N+2}}.
\end{align}
Combining the estimates \eqref{p_u_e_1110c} and \eqref{n-2-ets-G1}--\eqref{n+2+esti-G3}, we can show that
\begin{align}\label{E-STAR-DIFF-2}
&\frac{\mathrm{d}}{\mathrm{d}t}\bar{\mathcal{E}}^{\ast}_{N+2} +  \sum^{N+1}_{j=0}\big( \|\nabla _{\ast}\partial _{t}^{j}\mathbf{u}\|_{1,2N-2j+3}^2+\|\nabla _{\ast}\partial^{j}_t\mathbf{w}\|_{0,2N-2j+3}^{2} \big)\lesssim \sqrt{ \mathcal{E}_{2N} }\mathcal{D}_{N+2},
\end{align}
where the energy functional satisfies \eqref{En*} and again the arguments in the the proof of Lemma \ref{Lem-L2} are applied. We also have for $ \alpha \in \mathbb{Z}_+ ^{1+2} $ with $ \vert \alpha\vert=2(N+2)$ that
\begin{align*}
 \zeta \frac{\mathrm{d}}{\mathrm{d}t}\|\partial ^{\alpha}\mathbf{w}\|_{0}^{2}+\|\partial ^{\alpha}\partial _{t}\mathbf{w}\|_{0}^{2}\leq \frac{1}{2}\|\partial ^{\alpha}\partial _{t}\mathbf{w}\|_{0}^{2}+\frac{\zeta^2}{2} \|\nabla \times \partial ^{\alpha}\mathbf{u}\|_{0}^{2}+C\sqrt{\mathcal{E}_{2N}}\mathcal{D}_{N+2}.
\end{align*}
This, alongside \eqref{E-STAR-DIFF-2}, implies the existence of an energy functional $ \bar{\mathcal{D}}_{N+2}^{\ast} $ satisfying \eqref{Dn*}, such that
\begin{align}
\frac{\mathrm{d}}{\mathrm{d}t} \bar{\mathcal{E}}_{N+2}^{\ast}+\bar{\mathcal{D}}_{N+2}^{\ast}\lesssim  \sqrt{ \mathcal{E}_{2N} }\mathcal{D}_{N+2},
\end{align}
which gives \eqref{p_u_e_00c1}. This completes the proof of the lemma.
 \end{proof}

\subsection{Horizontal dissipation of $ \boldsymbol{\eta} $}
\label{sub:estimates_on_horizontal_derivatives_of_gs} 

Lemma \ref{horizontal-uw} does not provide the dissipation estimates of $\boldsymbol{\eta}$ in the horizontal directions (see \eqref{Dn*}). In this subsection, we follow the spirit of \cite{Tan-Wang-SIMA} to recover such estimates. Note that since $\det(\mathcal{I}_3+\nabla\boldsymbol{\eta})=1$ (see \eqref{v1}), it follows from \cite{Tan-Wang-SIMA} that
\begin{equation}\label{Psi}
\operatorname{div}\boldsymbol{\eta}=\operatorname{div}\boldsymbol{\Psi}\ \ \mbox{with}\ \
\boldsymbol{\Psi}=\left(\begin{array}{c}\eta_1\partial_2\eta_2+\eta_1\partial_3\eta_3+\eta_1(\partial_2\eta_2\partial_3\eta_3-\partial_3\eta_2\partial_2\eta_3)
\\[1mm]
-\eta_1\partial_1\eta_2+\eta_2\partial_3\eta_3+\eta_1(\partial_3\eta_2\partial_1\eta_3-\partial_1\eta_2\partial_3\eta_3)
\\[1mm]
-\eta_1\partial_1\eta_3-\eta_2\partial_2\eta_3+\eta_1(\partial_1\eta_2\partial_2\eta_3-\partial_2\eta_2\partial_1\eta_3)\end{array}\right).
\end{equation}
With \eqref{Psi}, the proof of the following lemma is similar to that of Lemma \ref{lem-G-TILDA-G} (see also \cite{Tan-Wang-SIMA}).

\begin{lemma}\label{lem-G-TILD-G-technical}
Let $ (\boldsymbol{\eta},\mathbf{u}, p,\mathbf{w}) $ be the local solution to \eqref{reformulationc} satisfying \eqref{a-priori} for some sufficiently small constant $\delta$. Then it holds that
\begin{align}
\bullet\quad &\,\|\operatorname{div}\boldsymbol{\eta}\|_{4N}^2 \lesssim\mathcal{E}_{N+2}(\mathcal{D}_{2N}+\mathcal{J}_{2N}+\mathcal{F}_{2N}), \label{pgn1}\\
\bullet\quad &\,\|\operatorname{div}\boldsymbol{\eta}\|_{2(N+2)+1}^2 \lesssim\mathcal{E}_{2N}\mathcal{D}_{N+2}, \label{pgn2}\\
\bullet\quad &\, \|\boldsymbol\Psi\| _{0}^{2} \lesssim \mathcal{E}_{3}\mathcal{D}_{3}. \label{pgn3}
\end{align}
\end{lemma}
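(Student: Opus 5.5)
The plan is to use the identity $\operatorname{div}\boldsymbol{\eta}=\operatorname{div}\boldsymbol{\Psi}$ from \eqref{Psi}, or more directly the structure behind it. Because $\det(\mathcal{I}_3+\nabla\boldsymbol{\eta})=1$, expanding the determinant gives
\begin{align*}
\operatorname{div}\boldsymbol{\eta}=-\big(\text{sum of }2\times2\text{ minors of }\nabla\boldsymbol{\eta}\big)-\det\nabla\boldsymbol{\eta}.
\end{align*}
So $\operatorname{div}\boldsymbol{\eta}$ is a sum of quadratic and cubic products of first derivatives of $\boldsymbol{\eta}$. This pointwise representation is the one I would use for \eqref{pgn1} and \eqref{pgn2}, since it involves no antiderivative. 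The divergence form $\operatorname{div}\boldsymbol{\Psi}$ is only needed for \eqref{pgn3}, where the potential $\boldsymbol{\Psi}$ itself, one derivative lower, has to be controlled.

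For \eqref{pgn1}, I would apply $\partial^\beta$ with $|\beta|\le 4N$ to each product $\partial_i\eta_a\partial_j\eta_b$ (and the cubic terms), expand by Leibniz, and split according to which factor carries more derivatives.
\begin{itemize}
\item When the low factor has at most roughly $2N$ derivatives, it goes into $L^\infty$ and is controlled by $\sqrt{\mathcal{E}_{N+2}}$, since $\mathcal{E}_{N+2}$ contains $\|\boldsymbol{\eta}\|_{2(N+2)}$. The high factor has at most $4N+1$ derivatives of $\boldsymbol{\eta}$ in $L^2$, which is bounded by $\sqrt{\mathcal{F}_{2N}}$.
\item Where possible I would instead use $\|\boldsymbol{\eta}\|_{4N}\lesssim\sqrt{\mathcal{D}_{2N}}$. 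However, the top order $4N+1$ is only available through $\mathcal{F}_{2N}$, which is why $\mathcal{F}_{2N}$ appears on the right-hand side. $\mathcal{J}_{2N}$ is not needed here, but including it is harmless.
\end{itemize}
The cubic terms carry an extra $L^\infty$ factor bounded by $\sqrt{\mathcal{E}_{N+2}}\lesssim 1$.

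For \eqref{pgn2} the roles swap. The total derivative count is $2N+5$, so the worst factor has $2N+6$ derivatives of $\boldsymbol{\eta}$. This is at most $4N$ because $N\ge4$ (indeed $N\ge3$ suffices), so it is controlled in $L^2$ by $\|\boldsymbol{\eta}\|_{4N}\lesssim\sqrt{\mathcal{E}_{2N}}$. The low factor goes into $L^\infty$ via $\|\boldsymbol{\eta}\|_{2(N+2)}\lesssim\sqrt{\mathcal{D}_{N+2}}$, since $\mathcal{D}_{n}$ contains $\|\boldsymbol{\eta}\|_{2n}$. The Sobolev embedding requires roughly $(2N+6)/2+3\le 2N+4$, which holds for $N\ge4$. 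Each product is then at most $\mathcal{E}_{2N}\mathcal{D}_{N+2}$.

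For \eqref{pgn3}, each entry of $\boldsymbol{\Psi}$ has the form $\eta_a\,\partial\eta_b$ or $\eta_a\,\partial\eta_b\,\partial\eta_c$. I would put the undifferentiated $\eta_a$ in $L^\infty$, bounded by $\|\boldsymbol{\eta}\|_2\lesssim\sqrt{\mathcal{E}_3}$, and $\partial\eta_b$ in $L^2$, bounded by $\|\boldsymbol{\eta}\|_1\lesssim\sqrt{\mathcal{D}_3}$. Extra factors $\partial\eta_c$ are bounded in $L^\infty$ by the smallness of $\mathcal{E}_3$.

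The main obstacle is the derivative bookkeeping at top order in \eqref{pgn1}. There one must check that every highest-order term lands either in $\mathcal{F}_{2N}$ paired with an $L^\infty$ factor of $\mathcal{E}_{N+2}$, or in $\mathcal{D}_{2N}$, and never in a norm of order $4N+1$ paired with a large factor.
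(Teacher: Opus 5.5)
Your proposal is correct and is essentially the paper's argument. The paper only says the lemma follows from \eqref{Psi} by the same Leibniz/Sobolev product estimates as Lemma \ref{lem-G-TILDA-G}, and expanding $\operatorname{div}\boldsymbol{\Psi}$ gives exactly your sum of $2\times 2$ minors plus $\det\nabla\boldsymbol{\eta}$: the terms with an undifferentiated $\eta_a$ cancel, and in fact $\operatorname{div}\boldsymbol{\Psi}=-\operatorname{div}\boldsymbol{\eta}$, a harmless sign slip in the paper. Your bookkeeping is the intended one: at top order in \eqref{pgn1} you pair $\sqrt{\mathcal{F}_{2N}}$ with an $L^\infty$ factor from $\mathcal{E}_{N+2}$, at top order in \eqref{pgn2} you pair $\|\boldsymbol{\eta}\|_{2N+6}\le\|\boldsymbol{\eta}\|_{4N}$ with an $L^\infty$ factor from $\mathcal{D}_{N+2}$, and you bound $\boldsymbol{\Psi}$ as $L^\infty\times L^2$.
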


Then we have the following estimates that recover the horizontal dissipation of $\boldsymbol{\eta}$.

\begin{lemma}
Let $ (\boldsymbol{\eta},\mathbf{u}, p,\mathbf{w}) $ be the local solution to \eqref{reformulationc} satisfying \eqref{a-priori} for some sufficiently small constant $\delta$. Then the following estimates hold:
\begin{align}\label{p_u_e_00'132c}
\frac{\mathrm{d}}{\mathrm{d}t} \widetilde{\bar{\mathcal{E}}^\sharp_{2N}} +\bar{\mathcal{D}}_{2N}^\sharp \lesssim \sqrt{ \mathcal{E}_{N+2}} (\mathcal{D}_{2N}+\mathcal{J}_{2N}+\mathcal{F}_{2N}) +\bar{\mathcal{D}}_{2N}^{\ast},
\end{align}
and
\begin{align}\label{p_u_e_00'132c2}
\frac{\mathrm{d}}{\mathrm{d}t} \widetilde{\bar{\mathcal{E}}^\sharp_{N+2}} +\bar{\mathcal{D}}_{N+2}^\sharp \lesssim \sqrt{ \mathcal{E}_{2N}} \mathcal{D}_{N+2}+\bar{\mathcal{D}}_{N+2}^{\ast},
\end{align}
where the energy functionals satisfy
$$
\widetilde{\bar{\mathcal{E}}^\sharp_{n}} := \bar{\mathcal{E}}^\sharp_{n}+\sum_{\alpha\in \mathbb{Z}_+^2, \, |\alpha| \le 2n}\int_\Omega \big( 2\partial^\alpha \mathbf{u} \cdot \partial^\alpha \boldsymbol{\eta}+ \partial^{\alpha}\mathbf{w} \cdot(\nabla\times\partial^{\alpha}\boldsymbol{\eta})\big)\mathrm{d}\mathbf{x},
$$
and
\begin{equation}\label{Esharp}
\bar{\mathcal{E}}^\sharp_{n}(t) \sim \|\nabla  \boldsymbol{\eta}\|_{0,2n}^2 \qquad \text{and} \qquad \bar{\mathcal{D}}_n^\sharp(t) \sim  \big(\|\partial _{3}\boldsymbol{\eta}\|_{0,2n }^2
  +  \|\boldsymbol{\eta}\|_{0,2n}^2\big).
\end{equation}
\end{lemma}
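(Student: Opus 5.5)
We test the linearized momentum equation $\eqref{perturbc}_2$ with horizontal derivatives of $\boldsymbol{\eta}$, following \cite{Tan-Wang-SIMA}. This is the standard way to extract the dissipation of $\boldsymbol{\eta}$ from the magnetic term $\partial_3^2\boldsymbol{\eta}$. For $\alpha\in\mathbb{Z}_+^2$ with $|\alpha|\le 2n$, we apply $\partial^\alpha$ to $\eqref{perturbc}_2$ and take the $L^2$ inner product with $\partial^\alpha\boldsymbol{\eta}$. Using $\partial_t\boldsymbol{\eta}=\mathbf{u}$ gives
\begin{align*}
\int_\Omega \partial^\alpha\partial_t\mathbf{u}\cdot\partial^\alpha\boldsymbol{\eta}\,\mathrm{d}\mathbf{x}=\frac{\mathrm{d}}{\mathrm{d}t}\int_\Omega\partial^\alpha\mathbf{u}\cdot\partial^\alpha\boldsymbol{\eta}\,\mathrm{d}\mathbf{x}-\|\partial^\alpha\mathbf{u}\|_0^2 .
\end{align*}
The viscous term yields $\frac{\mu+\zeta}{2}\frac{\mathrm{d}}{\mathrm{d}t}\|\nabla\partial^\alpha\boldsymbol{\eta}\|_0^2$ after integration by parts, since $\boldsymbol{\eta}|_{\partial\Omega}=0$ follows from $\mathbf{u}|_{\partial\Omega}=0$ and $\boldsymbol{\eta}_0=0$ on the boundary. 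The magnetic term gives the good quantity $\|\partial_3\partial^\alpha\boldsymbol{\eta}\|_0^2$. Summing over $\alpha$, this accounts for $\bar{\mathcal{E}}^\sharp_n\sim\|\nabla\boldsymbol{\eta}\|_{0,2n}^2$ and for the $\|\partial_3\boldsymbol{\eta}\|_{0,2n}^2$ part of $\bar{\mathcal D}^\sharp_n$. The $\|\boldsymbol{\eta}\|_{0,2n}^2$ part then follows from Poincar\'e's inequality in $x_3$.

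The remaining linear terms are handled as follows. The term $\|\partial^\alpha\mathbf{u}\|_0^2$ is bounded by $\bar{\mathcal D}^*_n$; this is why $\bar{\mathcal D}^*_n$ appears on the right-hand side (for $\alpha=0$ we use Lemma \ref{t-estimate}/Lemma \ref{Lem-L2}-type dissipation, or simply note it is controlled by $\mathcal D_n$). The pressure term is $-\int\partial^\alpha p\,\operatorname{div}\partial^\alpha\boldsymbol{\eta}$. By Lemma \ref{lem-G-TILD-G-technical}, $\operatorname{div}\boldsymbol{\eta}$ is quadratic, so this term is bounded by $\|\nabla_\ast p\|$-type norms times $\|\operatorname{div}\boldsymbol{\eta}\|_{4N}$ or $\|\operatorname{div}\boldsymbol{\eta}\|_{2(N+2)+1}$. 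This produces $\sqrt{\mathcal E_{N+2}}(\mathcal D_{2N}+\mathcal J_{2N}+\mathcal F_{2N})$, respectively $\sqrt{\mathcal E_{2N}}\mathcal D_{N+2}$. At the top order $|\alpha|=2n$ we first move one horizontal derivative across, as in \eqref{n+2+esti-G3}. At $\alpha=0$ we instead use $\operatorname{div}\boldsymbol\Psi$ and \eqref{pgn3}.

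The coupling term $\zeta\int\nabla\times\partial^\alpha\mathbf{w}\cdot\partial^\alpha\boldsymbol{\eta}=\zeta\int\partial^\alpha\mathbf{w}\cdot\nabla\times\partial^\alpha\boldsymbol{\eta}$ is the main obstacle. It is of the same order as the good terms and cannot be absorbed directly, since $\|\partial^\alpha\mathbf w\|_0$ controls only $\mathbf{w}$ while $\nabla\times\partial^\alpha\boldsymbol\eta$ has one more derivative than $\partial_3\boldsymbol{\eta}$ gives. The remedy is the correction $\int\partial^\alpha\mathbf{w}\cdot(\nabla\times\partial^\alpha\boldsymbol{\eta})$ built into $\widetilde{\bar{\mathcal E}^\sharp_n}$. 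Its time derivative is
\begin{align*}
\int\partial^\alpha\partial_t\mathbf w\cdot\nabla\times\partial^\alpha\boldsymbol\eta+\int\partial^\alpha\mathbf w\cdot\nabla\times\partial^\alpha\mathbf u .
\end{align*}
We substitute $\eqref{perturbc}_3$ in the first integral, writing $\partial_t\mathbf w=-2\zeta\mathbf w+\zeta\nabla\times\mathbf u+\mathbf G^2$. The $-2\zeta\partial^\alpha\mathbf{w}\cdot\nabla\times\partial^\alpha\boldsymbol\eta$ piece then cancels the troublesome coupling term, up to the choice of the factors 2 in the functional, which I would tune. The leftover pieces are all products containing $\partial^\alpha\mathbf u$, $\nabla\partial^\alpha\mathbf u$, $\partial^\alpha\mathbf w$ or $\partial^\alpha\partial_t\mathbf w$. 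Via Cauchy--Schwarz, each is bounded by $\bar{\mathcal D}^*_n$ plus a small multiple of $\|\nabla\boldsymbol\eta\|_{0,2n}$-dissipation, for example $\|\nabla\times\partial^\alpha\boldsymbol\eta\|_0^2\lesssim\|\partial_3\boldsymbol\eta\|_{0,2n}^2+\|\boldsymbol\eta\|_{0,2n+1}^2$. I expect the precise bookkeeping here to be delicate: the horizontal part $\|\boldsymbol\eta\|_{0,2n+1}$ is not dissipated, so I would try to integrate by parts to put all derivatives on $\mathbf{u},\mathbf{w}$, which lie in $\bar{\mathcal D}^*_n$.

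The nonlinear contributions $\mathbf G^1,\mathbf G^2$ are estimated by Lemma \ref{lem-G-TILDA-G}, exactly as in Lemma \ref{horizontal-uw}. Finally, $\widetilde{\bar{\mathcal E}^\sharp_n}\sim\bar{\mathcal E}^\sharp_n$ is not needed here, so the cross terms in the functional need not be small.
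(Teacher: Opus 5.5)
\textbf{There is a genuine gap at the step you flagged as delicate.} Your setup matches the paper: testing $\partial^\alpha$ of the momentum equation against $\partial^\alpha\boldsymbol\eta$, trading $\partial_t\mathbf u$ for $\|\partial^\alpha\mathbf u\|_0^2$ and a time derivative, treating the pressure via $\operatorname{div}\boldsymbol\eta$ and $\boldsymbol\Psi$, and using $\partial_t\mathbf w=-2\zeta\mathbf w+\zeta\nabla\times\mathbf u+\mathbf G^2$ inside $\frac{\mathrm d}{\mathrm dt}\int\partial^\alpha\mathbf w\cdot\nabla\times\partial^\alpha\boldsymbol\eta$ to cancel the coupling term.

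The problem is what remains after that substitution:
\[
\zeta\int_\Omega\nabla\times\partial^\alpha\mathbf u\cdot\nabla\times\partial^\alpha\boldsymbol\eta\,\mathrm d\mathbf x .
\]
Neither of your proposed treatments of this term can work.
\begin{itemize}
\item Cauchy--Schwarz needs $\|\boldsymbol\eta\|_{0,2n+1}$, i.e.\ horizontal derivatives of $\partial^\alpha\boldsymbol\eta$ with $|\alpha|=2n$. This is not in $\bar{\mathcal D}^\sharp_n$. Paired with $\|\mathbf u\|_{1,2n}$ it produces a linear term $\sqrt{\bar{\mathcal D}^\ast_n}\,\|\boldsymbol\eta\|_{0,2n+1}$ with no small prefactor.
\item There is no ``$\|\nabla\boldsymbol\eta\|_{0,2n}$-dissipation'' available to absorb a small multiple of it.
\item Integrating by parts to move all derivatives onto $\mathbf u$ needs $\|\mathbf u\|_{2,2n}$ or $\|\mathbf u\|_{1,2n+1}$. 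That is $2n+2$ derivatives, one more than even $\mathcal D_n$ provides.
\end{itemize}

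\textbf{The missing observation} is that this term is not an error term at all. Since $\mathbf u=\partial_t\boldsymbol\eta$,
\[
\zeta\int_\Omega\nabla\times\partial^\alpha\mathbf u\cdot\nabla\times\partial^\alpha\boldsymbol\eta\,\mathrm d\mathbf x=\frac{\zeta}{2}\frac{\mathrm d}{\mathrm dt}\|\nabla\times\partial^\alpha\boldsymbol\eta\|_0^2 ,
\]
so it has to be moved into the energy. With the $2:1$ weights of the functional, it contributes $-\frac{\zeta}{2}\|\nabla\times\partial^\alpha\boldsymbol\eta\|_0^2$ alongside $(\mu+\zeta)\|\nabla\partial^\alpha\boldsymbol\eta\|_0^2$. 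Because $\boldsymbol\eta|_{\partial\Omega}=0$, the identity $\|\nabla\partial^\alpha\boldsymbol\eta\|_0^2=\|\nabla\times\partial^\alpha\boldsymbol\eta\|_0^2+\|\operatorname{div}\partial^\alpha\boldsymbol\eta\|_0^2$ holds. The resulting quadratic form is therefore $\frac{2\mu+\zeta}{2}\|\nabla\partial^\alpha\boldsymbol\eta\|_0^2+\frac{\zeta}{2}\|\operatorname{div}\partial^\alpha\boldsymbol\eta\|_0^2$, which is still coercive.

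This coercivity check is exactly what gives $\bar{\mathcal E}^\sharp_n\sim\|\nabla\boldsymbol\eta\|_{0,2n}^2$. That equivalence is part of the statement, so your closing remark that no equivalence is needed skips something you must prove. Once the term is absorbed, the only genuine leftover is $\int\partial^\alpha\mathbf w\cdot\nabla\times\partial^\alpha\mathbf u\le\|\mathbf w\|_{0,2n}\|\mathbf u\|_{1,2n}\lesssim\bar{\mathcal D}^\ast_n$, plus the $\mathbf G^2$ term.

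A smaller point: the $\mathbf G^1,\mathbf G^2$ terms are not handled exactly as in Lemma~\ref{horizontal-uw}, because the test function is now $\partial^\alpha\boldsymbol\eta$ rather than $\partial^\alpha\mathbf u$. At top order for $n=2N$ you need $\|\boldsymbol\eta\|_{0,4N+1},\|\boldsymbol\eta\|_{1,4N}\lesssim\sqrt{\mathcal F_{2N}}$, and this is where $\mathcal F_{2N}$ enters the right-hand side.
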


\begin{proof}
{\bf Step 1.} Let $ n $ be $ 2N $ or $ N+2 $. Applying $ \partial ^{\alpha} $ to $ \eqref{perturbc}_2 $ with $ \alpha \in \mathbb{Z}_+^{2} $ and $ \vert \alpha\vert \leq 2n $, and then taking $L^2$ inner product of the resulting equation with $ \partial ^{\alpha}\boldsymbol{\eta} $, we get by virtue of $ \boldsymbol{\eta} \vert _{\partial \Omega}=0 $ and integration by parts that
\begin{align} \label{p_u_e_111'c}
&\,\int_\Omega \partial _{t} (\partial^\alpha \mathbf{u}) \cdot \partial^\alpha \boldsymbol{\eta}
   \mathrm{d}\mathbf{x}+ \frac{\mu+\zeta}{2}\frac{\mathrm{d}}{\mathrm{d}t} \|\nabla \partial ^{\alpha}\boldsymbol{\eta}\|_0^{2} +  \|\partial ^{\alpha}\partial _{3}\boldsymbol{\eta} \|_0^{2} \nonumber \\
=&\, \zeta \int _{\Omega}\partial ^{\alpha}\mathbf{w} \cdot (\nabla \times \partial ^{\alpha}\boldsymbol{\eta}) \mathrm{d}\mathbf{x}+ \int_\Omega   \partial^{\alpha} \boldsymbol{\eta}  \cdot  \partial ^{\alpha} \mathbf{G}^1 \mathrm{d}\mathbf{x} +\int_{\Omega}\partial^{\alpha}p\,\partial^{\alpha} \operatorname{div}\boldsymbol{\eta} \mathrm{d}\mathbf{x}.
\end{align}
Since $\partial_t\boldsymbol{\eta} = \mathbf{u}$, we rewrite the first term on the left side of \eqref{p_u_e_111'c}  as
\begin{align}\label{uetajiaochaguji}
\int_\Omega  \partial _{t} (\partial^\alpha \mathbf{u}) \cdot \partial^\alpha \boldsymbol{\eta} \mathrm{d}\mathbf{x}
=\frac{\mathrm{d}}{\mathrm{d}t}\int_\Omega \partial^\alpha \mathbf{u} \cdot \partial^\alpha \boldsymbol{\eta} \mathrm{d}\mathbf{x}- \|\partial ^{\alpha}\mathbf{u}\|_0^2.
\end{align}
Then we get from \eqref{p_u_e_111'c} that
\begin{align}\label{eta-horizon-ine}
&\, \frac{\mu+\zeta}{2}\frac{\mathrm{d}}{\mathrm{d}t} \|\nabla \partial ^{\alpha}\boldsymbol{\eta}\|_0^{2} + \frac{\mathrm{d}}{\mathrm{d}t}\int_\Omega \partial^\alpha \mathbf{u} \cdot \partial^\alpha \boldsymbol{\eta} \mathrm{d}\mathbf{x}
 +    \| \partial ^{\alpha}\partial _{3}\boldsymbol{\eta} \|_0^{2}
  \nonumber \\
  = &\, \zeta \int _{\Omega}\partial ^{\alpha}\mathbf{w} \cdot (\nabla \times \partial ^{\alpha}\boldsymbol{\eta}) \mathrm{d}\mathbf{x}+ \int_\Omega   \partial^{\alpha} \boldsymbol{\eta}  \cdot  \partial ^{\alpha} \mathbf{G}^1 \mathrm{d}\mathbf{x}+ \| \partial ^{\alpha}\mathbf{u}\|_0^{2} + \int_{\Omega}\partial^{\alpha}p\,\partial^{\alpha} \operatorname{div}\boldsymbol{\eta}  \mathrm{d}\mathbf{x}.
  \end{align}
Applying $\partial^{\alpha}$ to $ \eqref{perturbc}_{3} $ and taking $L^2$ inner product of the result with $\frac{1}{2}\nabla\times\partial^{\alpha}\boldsymbol{\eta}$, we obtain
\begin{align}\label{jiaochaguji*}
   \zeta\int _{\Omega}\partial ^{\alpha}\mathbf{w} \cdot(\nabla\times \partial ^{\alpha}\boldsymbol{\eta})\mathrm{d}\mathbf{x} &= \frac{\zeta}{4}\frac{\mathrm{d}}{\mathrm{d}t} \|\nabla\times\partial^{\alpha}\boldsymbol{\eta}\|_0^2-\frac{1}{2}\frac{\mathrm{d}}{\mathrm{d}t}\int _{\Omega}\partial ^{\alpha} \mathbf{w} \cdot(\nabla \times\partial ^{\alpha}\boldsymbol{\eta})\mathrm{d}\mathbf{x}\nonumber\\
  &\quad+\frac{1}{2}\int _{\Omega}\partial^{\alpha}\mathbf{w} \cdot(\nabla \times \partial ^{\alpha} \mathbf{u})  \mathrm{d}\mathbf{x}+\frac{1}{2}\int_{\Omega}\partial^{\alpha}\mathbf{G}^2\cdot(\nabla\times\partial^{\alpha}\boldsymbol{\eta})\mathrm{d}\mathbf{x}.
\end{align}
Replacing the first term on the right side of \eqref{eta-horizon-ine} by the right of \eqref{jiaochaguji*}, we have
\begin{align}\label{v2}
 &\, \frac{\mathrm{d}}{\mathrm{d}t} \Big(\frac{2\mu+\zeta}{2}\|\nabla \partial^{\alpha}\boldsymbol{\eta}\|_0^{2} + \frac{\zeta}{2} \| \operatorname{div} \partial ^{\alpha}\boldsymbol{\eta}\|_0^{2}  + 2\int_\Omega \partial^\alpha \mathbf{u} \cdot \partial^\alpha \boldsymbol{\eta} \mathrm{d}\mathbf{x} + \int _{\Omega}\partial ^{\alpha} \mathbf{w} \cdot(\nabla \times\partial ^{\alpha}\boldsymbol{\eta})\mathrm{d}\mathbf{x}\Big)
 +  2 \| \partial ^{\alpha}\partial _{3}\boldsymbol{\eta} \|_0^{2}
  \nonumber \\
  =&\, \int _{\Omega}\partial^{\alpha}\mathbf{w} \cdot(\nabla \times \partial ^{\alpha} \mathbf{u})\mathrm{d}\mathbf{x} + 2\|\partial ^{\alpha}\mathbf{u}\|_0 ^{2} + 2\int_\Omega   \partial^{\alpha} \boldsymbol{\eta}  \cdot  \partial ^{\alpha} \mathbf{G}^1 \mathrm{d}\mathbf{x} + \int_{\Omega}\partial^{\alpha}\mathbf{G}^2\cdot(\nabla\times\partial^{\alpha}\boldsymbol{\eta}) \mathrm{d}\mathbf{x}
   \nonumber \\
   &\, + 2\int_{\Omega}\partial^{\alpha}p\,\partial^{\alpha}\operatorname{div}\boldsymbol{\eta} \mathrm{d}\mathbf{x}=:\sum _{i=1}^{5}J _{i},
\end{align}
where we used the identity: $\|\nabla \partial ^{\alpha}\boldsymbol{\eta}\| _{0}^2=\|\nabla\times\partial ^{\alpha}\boldsymbol{\eta}\| _{0}^2+\|\operatorname{div} \partial ^{\alpha}\boldsymbol{\eta}\|_{0}^2$, due to $ \boldsymbol{\eta}=0 $ on $ \partial \Omega $. For the dominating terms, $J_1$ and $J_2$, recalling \eqref{Dn*}, we have
\begin{align}\label{v3}
 J_1 + J_2 \lesssim \|\mathbf{w}\|^2_{0,2n}+\|\mathbf{u}\|^2_{1,2n} \lesssim \bar{\mathcal{D}}^{*}_n.
 \end{align}

\vskip .1in
{\bf Step 2.} We consider the case $ n=2N $. When $|\alpha|=0$, it is clear from \eqref{p_dissipation_defc} and \eqref{p_G_e_001c} that
\begin{align}\label{v4}
J_3 \le 2\|\boldsymbol{\eta}\|_0\|\mathbf{G}^1\|_0 &\lesssim \sqrt{\mathcal{D}_{2}} \sqrt{\mathcal{E}_{3}(\mathcal{D}_{2}+\mathcal{J}_{2}+\mathcal{F}_{2})} \notag\\
&\lesssim \sqrt{\mathcal{E}_{N+2}}(\mathcal{D}_{2N}+\mathcal{J}_{2N}+\mathcal{F}_{2N}).
\end{align}
When $|\alpha|\neq0$, note that the highest order of derivative of $\mathbf{G}^1$ appearing in \eqref{p_G_e_001c} is $4N-1$. Since $|\alpha|\le 4N$, we cannot directly apply H\"older's inequality to bound $J_3$ when $|\alpha|=4N$. Instead, we integrate by parts (with respect to the horizontal variables) once and revoke  \eqref{FJ} to get
\begin{align}\label{v5}
J_3  \le  2\|\boldsymbol{\eta}\| _{0,4N+1} \|\mathbf{G}^1\|_{0,4N-1}
 &\lesssim \sqrt{ \mathcal{F}_{2N}}\sqrt{ \mathcal{E}_{N+2}(\mathcal{D}_{2N}+\mathcal{J}_{2N}+\mathcal{F}_{2N})} \notag\\
 &\lesssim \sqrt{\mathcal{E}_{N+2}}(\mathcal{D}_{2N}+\mathcal{J}_{2N}+\mathcal{F}_{2N}).
\end{align}
Since the estimate of $\mathbf{G}^2$ is up to the order of $4N$ (see \eqref{p_G_e_001c}), by H\"older's inequality and \eqref{FJ}, we can show that
\begin{align}\label{v6}
 J_4\leq \|\boldsymbol{\eta}\|_{1,4N} \|\mathbf{G}^2\|_{0,4N} &\lesssim \sqrt{\mathcal{F}_{2N} }\sqrt{\mathcal{E}_{N+2}(\mathcal{D}_{2N}+\mathcal{J}_{2N}+\mathcal{F}_{2N})}\notag\\
  &\lesssim \sqrt{\mathcal{E}_{N+2}}(\mathcal{D}_{2N}+\mathcal{J}_{2N}+\mathcal{F}_{2N}).
\end{align}
For $J_5$, when $ \vert \alpha\vert =0$, we note that none of the energy functionals in \eqref{p_energy_defc}--\eqref{FJ} contains the zeroth order derivative of $p$. To overcome the obstacle, we utilize \eqref{Psi} and \eqref{pgn3} to get
\begin{align}\label{v7}
J_5=2\int _{\Omega}p\, \operatorname{div}\boldsymbol{\eta} \mathrm{d}\mathbf{x} &=2\int _{\Omega}p\, \operatorname{div}\boldsymbol{\Psi} \mathrm{d}\mathbf{x} =- 2\int _{\Omega}\nabla p \cdot \boldsymbol{\Psi}  \mathrm{d}\mathbf{x} \notag\\
&\leq 2\|\nabla p\|_{0}\|\boldsymbol{\Psi}\|_{0} \lesssim \sqrt{\mathcal{D}_{2}}\sqrt{\mathcal{E}_{3}\mathcal{D}_{3}} \lesssim \sqrt{\mathcal{E}_{N+2}}(\mathcal{D}_{2N}+\mathcal{J}_{2N}+\mathcal{F}_{2N}).
\end{align}
When $ \left\vert \alpha\right\vert \neq 0 $, by using \eqref{pgn1} and \eqref{FJ}, we have
\begin{align}\label{v8}
J_5=2\int_{\Omega}\partial^{\alpha}p\,\partial^{\alpha} \operatorname{div}\boldsymbol{\eta} \mathrm{d}\mathbf{x}\leq 2\|\nabla_*p\|_{4N-1}\|\operatorname{div}\boldsymbol{\eta}\|_{4N} &\lesssim\sqrt{\mathcal{J}_{2N}}\sqrt{\mathcal{E}_{N+2}(\mathcal{D}_{2N}+\mathcal{J}_{2N}+\mathcal{F}_{2N})}\notag\\
&\lesssim \sqrt{\mathcal{E}_{N+2}}(\mathcal{D}_{2N}+\mathcal{J}_{2N}+\mathcal{F}_{2N}).
\end{align}
The combination of \eqref{v3}--\eqref{v8} gives us
\begin{align}\label{v9}
J_1+J_2+J_3+J_4+J_5\lesssim \sqrt{\mathcal{E}_{N+2}}(\mathcal{D}_{2N}+\mathcal{J}_{2N}+\mathcal{F}_{2N}) + \bar{\mathcal{D}}^{*}_n.
\end{align}
Substituting \eqref{v9} into \eqref{v2}, taking the summation over $\alpha$ with $0\le |\alpha|\le 4N$ and noticing $\boldsymbol{\eta}$ satisfies the Poincar\'e inequality: $\|\partial^\alpha \boldsymbol{\eta}\|_0 \lesssim \|\partial^\alpha\partial_3\boldsymbol{\eta}\|_0$, we arrive at \eqref{p_u_e_00'132c}.

\vskip .1in
{\bf Step 3.} When $ n=N+2 $, we use \eqref{p_dissipation_defc} and \eqref{p_G_e_002c} to get
$$
J_3 \le \left\{ \begin{aligned}
&2\|\boldsymbol{\eta}\|_0\|\mathbf{G}^1\|_0 \lesssim \sqrt{\mathcal{D}_{2}} \sqrt{\mathcal{E}_{2}\mathcal{D}_{3}} \lesssim \sqrt{\mathcal{E}_{2N}} \mathcal{D}_{N+2},  &|\alpha|&=0,\\
&2 \|\boldsymbol{\eta}\|_{2(N+2)} \|\mathbf{G}^1\|_{2(N+2)}\lesssim \sqrt{ \mathcal{E}_{2N}} \mathcal{D}_{N+2}, &|\alpha|&\neq 0.
\end{aligned}
\right.
$$
It follows from integration by parts and \eqref{p_G_e_002c} that
\begin{align*}
 J_4=\int_{\Omega}\partial^{\alpha}\boldsymbol{\eta}\cdot\nabla\times\partial^{\alpha}\mathbf{G}^2 \mathrm{d}\mathbf{x}\leq \|\boldsymbol{\eta}\|_{2(N+2)} \|\mathbf{G}^2\|_{2(N+2)+1} \lesssim \sqrt{ \mathcal{E}_{2N}} \mathcal{D}_{N+2}.
\end{align*}
Moreover, by \eqref{v7}, integration by parts, \eqref{pgn2} and \eqref{p_dissipation_defc}, we have
\begin{align*}
J_5 \leq \left\{ \begin{aligned}
&2\|\nabla p\|_{0}\|\boldsymbol{\Psi}\|_{0} \lesssim \sqrt{\mathcal{D}_{2}}\sqrt{\mathcal{E}_{3}\mathcal{D}_{3}} \lesssim \sqrt{\mathcal{E}_{2N}} \mathcal{D}_{N+2}, &|\alpha|&=0,\\
&2\|\nabla_*p\|_{2(N+2)-2}\|\operatorname{div}\boldsymbol{\eta}\|_{2(N+2)+1}\lesssim \sqrt{\mathcal{E}_{2N}} \mathcal{D}_{N+2}, &|\alpha|&\neq 0.
\end{aligned}
\right.
\end{align*}
These estimates, alongside \eqref{v2} and \eqref{v3}, give rise to \eqref{p_u_e_00'132c2}. The proof is complete.
\end{proof}


\section{Estimates on Vertical Derivatives}
\label{sub:estimates_on_the_vertical_derivatives}

After establishing the temporal and horizontal estimates, we now deal with the evolution of the vertical derivatives of the solution by the ODE and elliptic structures of \eqref{perturbc}. We stress that this section contains the most significant technical differences than the previous works \cite{Feng-hong-zhu-compre,Tan-Wang-SIMA}.  The major difference lies in obtaining the estimates of the angular velocity field. Due to the deep coupling between the fluid and angular velocity fields, accessing such estimates is highly non-trivial. To achieve the goal, we derive a set of auxiliary equations for various components of the solution (see \eqref{first-two-compo-ODE}, \eqref{phi-3-eq}, \eqref{div-eq-w}, \eqref{u-tuoyuan}), apply the key estimates in Section 3, and carry out extensive combinations of individual energy estimates. The combined estimates are finally assembled by induction to produce the desired energetic quantities necessary for closing the entire energy scheme in the next section.

\vskip .1in
We begin with applying $\nabla\times$ to $\eqref{perturbc}_3$ to get
\begin{align}\label{xuanduw}
  \mathbf{W}_{t} + 2 \zeta \mathbf{W} =  \zeta \nabla \mathop{\mathrm{div}}\nolimits \mathbf{u} -  \zeta \Delta \mathbf{u}+ \boldsymbol{\mathcal{G}},
  \end{align}
  where $ \mathbf{W}:=\nabla \times \mathbf{w} $ and $ \boldsymbol{\mathcal{G}}:=(\mathcal{G}_{1}, \mathcal{G}_{2}, \mathcal{G}_{3})=\nabla \times \mathbf{G}^{2} $, where $\mathbf{G}^2$ is the same as in \eqref{perturbc}. Define the projections:
\begin{align}\label{x5}
\boldsymbol{\mathcal{Q}}:=\mathcal{P} (\partial _{3}^{2}\boldsymbol\eta) \qquad \text{and} \qquad \boldsymbol{\mathcal{V}}:=\mathcal{P} \mathbf{W},
\end{align}
where $\mathcal{P} $ is the Helmholtz projection operator defined by \eqref{defi-projection}. Denote $ \boldsymbol{\mathcal{Q}}=(\boldsymbol{\mathcal{Q}} _{\ast}, \mathcal{Q}_{3}) $ and $ \boldsymbol{\mathcal{V}}=(\boldsymbol{\mathcal{V}}_{\ast}, \mathcal{V}_{3}) $. Notice that $ \mathcal{P} (\nabla f)=0 $ and $ \mathcal{P} (\partial _{h} \mathbf{f})=\partial _{h}\mathcal{P} \mathbf{f} $ for $ \partial _{h}=\partial _{1},\partial _{2} $ or $ \partial _{t} $, and for any smooth functions $f$ and $\mathbf{f}$. Applying the operator $\mathcal{P} $ to the equations $\eqref{perturbc}_2$ and \eqref{xuanduw}, we get
   \begin{subequations}\label{first-two-compo-ODE}
 \begin{alignat}{2}
  (\mu+\zeta)\partial _{t}\boldsymbol{\mathcal{Q}}+ \boldsymbol{\mathcal{Q}}+ \zeta\boldsymbol{\mathcal{V}}&=\boldsymbol{\mathcal{H}},\label{pa-3-2eta-eq}
 \\
  \partial _{t}\boldsymbol{\mathcal{V}} +2 \zeta \boldsymbol{\mathcal{V}} + \zeta\partial _{t}\boldsymbol{\mathcal{Q}}&=\tilde{\boldsymbol{\mathcal{G}}},\label{eq-w-star}
 \end{alignat}
 \end{subequations}
 where the quantities on the right hand sides are given by
 \begin{align}
 \boldsymbol{\mathcal{H}}&=\partial_t\mathcal{P} \mathbf{u} -(\mu+\zeta)\Delta_{*}\mathcal{P} \mathbf{u} - \mathcal{P} \mathbf{G}^1, \label{HG1}\\
 \tilde{\boldsymbol{\mathcal{G}}}&=-  \zeta \Delta _{\ast}\mathcal{P} \mathbf{u} + \mathcal{P} \boldsymbol{\mathcal{G}}.\label{HG2}
 \end{align}
The following lemma provides the estimates of the vertical derivatives of $ \boldsymbol{\eta} $, $ \nabla \times \mathbf{w} $ and $ \mathop{\mathrm{div}}\nolimits \mathbf{w} $.

 \begin{lemma}\label{lem-eta-w}
Let $ (\boldsymbol{\eta},\mathbf{u}, p,\mathbf{w}) $ be the local solution to \eqref{reformulationc} satisfying \eqref{a-priori} for some sufficiently small constant $\delta$. There exists an energy functional
 \begin{align}\label{y1}
 \mathfrak{E}_{n}(t) \sim\big( \|\boldsymbol{\mathcal{Q}}\|_{2n-1}^2+\|\boldsymbol{\mathcal{V}}\|^2_{2n-1}+\|\partial_3\phi\|^2_{2n-1}
+ \|\operatorname{div} \mathbf{w}\|_{2n-1}^{2}+\|\boldsymbol\eta\|^2_{2n}\big),
 \end{align}
 where $\phi$ denotes the solution to \eqref{fip-elliptic}, such that for any integer $ N \geq 4 $,
 \begin{align}
 \bullet \ \ &\,\frac{\mathrm{d}}{\mathrm{d}t}\mathfrak{E}_{2N}+
\sum^{1}_{l= 0}\big(\|\partial^l_{t}\boldsymbol{\mathcal{Q}}\|^2_{4N-1}+\|\partial^l_{t} \mathbf{W}\|^2_{4N-1}+\|\partial_t^{l}\mathbf{w}\|^2_{4N}\big)+\|\boldsymbol{\eta}\|^2_{4N}+\|\mathbf{u}\|^2_{4N+1}+\|\nabla p\|^2_{4N-2}\nonumber\\
       \lesssim &\, \|\partial _{t}\mathbf{u}\|_{4N-1}^2 + \bar{\mathcal{D}}_{2N}^\ast+\bar{\mathcal{D}}_{2N}^\sharp + { \mathcal{E}_{N+2}  }(\mathcal{D}_{2N} +  \mathcal{J}_{2N} +\mathcal{F}_{2N}) , \label{con-lem-eta-w}
      \\
        \bullet \ \ &\,\frac{\mathrm{d}}{\mathrm{d}t}\mathfrak{E}_{N+2}+
\sum^{1}_{l= 0}\big(\|\partial^l_{t}\boldsymbol{\mathcal{Q}}\|^2_{2N+3}+\|\partial^l_{t}\mathbf{W}\|^2_{2N+3}+\|\partial_t^{l}\mathbf{w}\|^2_{2N+4}\big)+\|\boldsymbol\eta\|^2_{2N+4}+\|\mathbf{u}\|^2_{2N+5}+\|\nabla p\|^2_{2N+2}\nonumber\\
       \lesssim &\,  \|\partial _{t}\mathbf{u}\|_{2N+3}^2 +\bar{\mathcal{D}}_{N+2}^\ast+\bar{\mathcal{D}}_{N+2}^\sharp + { \mathcal{E}_{2N}  }\mathcal{D}_{N+2}.\label{con-lem-eta-w1}
 \end{align}
 \end{lemma}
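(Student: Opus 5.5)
The plan is to treat \eqref{first-two-compo-ODE} as a damped linear ODE system in $(\boldsymbol{\mathcal{Q}},\boldsymbol{\mathcal{V}})$, pointwise in $\mathbf{x}$, and obtain a coercive energy identity for it. Then I would recover $\partial_3\phi$, $\operatorname{div}\mathbf{w}$, $\mathbf{w}$, $\mathbf{u}$, $p$ and $\boldsymbol{\eta}$ from auxiliary equations and elliptic estimates, by induction on the number of vertical derivatives.

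\textbf{Step 1: the ODE system for $(\boldsymbol{\mathcal{Q}},\boldsymbol{\mathcal{V}})$.} Substituting $\zeta\partial_t\boldsymbol{\mathcal{Q}}=\frac{\zeta}{\mu+\zeta}(\boldsymbol{\mathcal{H}}-\boldsymbol{\mathcal{Q}}-\zeta\boldsymbol{\mathcal{V}})$ into \eqref{eq-w-star} gives
\[
\partial_t\boldsymbol{\mathcal{V}}+\Big(2\zeta-\frac{\zeta^2}{\mu+\zeta}\Big)\boldsymbol{\mathcal{V}}-\frac{\zeta}{\mu+\zeta}\boldsymbol{\mathcal{Q}}=\tilde{\boldsymbol{\mathcal{G}}}-\frac{\zeta}{\mu+\zeta}\boldsymbol{\mathcal{H}}.
\]
The coefficient of $\boldsymbol{\mathcal{V}}$ is $\zeta(2\mu+\zeta)/(\mu+\zeta)>0$. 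Together with \eqref{pa-3-2eta-eq}, this is a linear ODE with constant coefficients whose matrix has trace and determinant of the right sign. Hence its eigenvalues have negative real part, and there is a positive definite quadratic form $\mathcal{Q}\cdot\mathcal{Q}+a\mathcal{V}\cdot\mathcal{V}+b\mathcal{Q}\cdot\mathcal{V}$ (for suitable constants $a,b$) that decays at a rate controlled by itself.

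I would apply $\partial^\beta$ to this system for all spatial multi-indices with $|\beta|\le 2n-1$. Vertical derivatives do not commute with $\mathcal{P}$, so the equations are differentiated after projection, as identities of functions. Pairing with the quadratic form yields $\frac{d}{dt}(\|\boldsymbol{\mathcal{Q}}\|_{2n-1}^2+\|\boldsymbol{\mathcal{V}}\|_{2n-1}^2)+\|\boldsymbol{\mathcal{Q}}\|_{2n-1}^2+\|\boldsymbol{\mathcal{V}}\|_{2n-1}^2\lesssim\|\boldsymbol{\mathcal{H}}\|_{2n-1}^2+\|\tilde{\boldsymbol{\mathcal{G}}}\|_{2n-1}^2$. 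The $\partial_t$ versions then follow directly from the equations.

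By Lemma \ref{lem-anisotropi-1}, $\|\boldsymbol{\mathcal{H}}\|_{2n-1}$ is bounded by $\|\partial_t\mathbf{u}\|_{2n-1}+\|\Delta_*\mathbf{u}\|_{2n-1}+\|\mathbf{G}^1\|_{2n-1}$. The term $\|\Delta_*\mathbf{u}\|_{2n-1}$ is a problem: it carries one horizontal derivative but $2n$ total derivatives. I would write it as $\|\nabla_*\mathbf{u}\|_{1,\cdot}$-type norms and bound it by $\bar{\mathcal{D}}^\ast_n$ plus a small multiple of $\|\mathbf{u}\|_{2n+1}^2$, using interpolation inside the anisotropic norms. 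Similarly, $\mathcal{P}\boldsymbol{\mathcal{G}}=\mathcal{P}\nabla\times\mathbf{G}^2$ is controlled by $\|\mathbf{G}^2\|_{2n}$, i.e. by the nonlinear bounds of Lemma \ref{lem-G-TILDA-G}.

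\textbf{Step 2: recovering $\mathbf{W}$ and $\mathbf{w}$.} Write $\mathbf{W}=\boldsymbol{\mathcal{V}}+\nabla\phi$ by \eqref{x4}. The third component of \eqref{xuanduw} involves $\partial_3\phi$ only through $W_3=\partial_1w_2-\partial_2w_1$. Projecting the third component, and using that $\phi$ is harmonic, gives a damped ODE for $\partial_3\phi$ of the form $\partial_t\partial_3\phi+2\zeta\partial_3\phi=-\zeta\Delta_* u_3$ plus nonlinear and $\mathcal{V}_3$ terms, which I would estimate in $H^{2n-1}$. Lemma \ref{key-lemma} (with small $\sigma$) then gives $\nabla_h\phi$, and hence $\|\mathbf{W}\|_{2n-1}$ up to $\sigma\|\mathbf{W}\|$, which is absorbed.

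Taking the divergence of $\eqref{perturbc}_3$ gives $\partial_t\operatorname{div}\mathbf{w}+2\zeta\operatorname{div}\mathbf{w}=\operatorname{div}\mathbf{G}^2$, a purely damped ODE. With Lemma \ref{div-curl-in-lem} and the horizontal bound $\|\mathbf{w}\|_{0,2n}\lesssim\bar{\mathcal{D}}^\ast_n$, this yields $\|\mathbf{w}\|_{2n}$. The $\partial_t\mathbf{w}$ bound then follows from $\eqref{perturbc}_3$.

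\textbf{Step 3: recovering $\mathbf{u}$, $p$ and $\boldsymbol{\eta}$.} For $\mathbf{u}$ and $p$, I would not use Stokes with $\nabla\times\mathbf{w}$ as a source, since that would be circular. Instead I would read \eqref{xuanduw} as the elliptic equation $-\zeta\Delta\mathbf{u}=\mathbf{W}_t+2\zeta\mathbf{W}-\zeta\nabla G^3-\boldsymbol{\mathcal{G}}$, with Dirichlet data. Combined with the already controlled $\partial_t\mathbf{W}$ and $\mathbf{W}$ in $H^{2n-1}$, this gives $\|\mathbf{u}\|_{2n+1}$. Stokes (Lemma \ref{i_linear_elliptic2}) then gives $\nabla p$, with $\partial_3^2\boldsymbol{\eta}$ on the right side.

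For $\boldsymbol{\eta}$, I would use the decomposition $\partial_3^2\boldsymbol{\eta}=\boldsymbol{\mathcal{Q}}+\nabla\varphi$, together with $\operatorname{div}\boldsymbol{\eta}$ from Lemma \ref{lem-G-TILD-G-technical} and the horizontal dissipation $\bar{\mathcal{D}}^\sharp_n$. An induction on the number of $\partial_3$ derivatives then yields $\|\boldsymbol{\eta}\|_{2n}$.

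\textbf{Main obstacle.} The hardest step is closing the circular dependence between $\|\mathbf{u}\|_{2n+1}$, $\|\mathbf{W}\|_{2n-1}$ and the $\Delta_*\mathcal{P}\mathbf{u}$ terms in $\boldsymbol{\mathcal{H}}$ and $\tilde{\boldsymbol{\mathcal{G}}}$. Each top-order quantity must appear with a small coefficient or with at least one horizontal derivative, so that it falls into $\bar{\mathcal{D}}^\ast_n$. I would handle this by weighting the individual energy inequalities with carefully chosen constants, and by inducting downward in the horizontal order $\lambda$ and upward in the vertical order $k$.
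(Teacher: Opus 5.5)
Your plan works, but it reaches the result by a partly different route from the paper. The shared skeleton is the projected system \eqref{first-two-compo-ODE}, the damped equations \eqref{phi-3-eq} for $\partial_3\phi$ and \eqref{div-eq-w} for $\operatorname{div}\mathbf{w}$, Lemmas \ref{key-lemma} and \ref{lemma-anisotropic-3} to pass from $\partial_3\phi$ to $\nabla\phi$, the div--curl bound, and the curl equation $-\zeta\Delta\mathbf{u}=\partial_t\mathbf{W}+2\zeta\mathbf{W}+\mathbf{G}^4$ as the elliptic source of $\|\mathbf{u}\|_{2n+1}$. The two routes differ in two places.

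(i) The paper never works isotropically. It runs everything in the mixed norms $\|\cdot\|_{k,2n-k-1}$. There the bad term at level $k$ is $\|\mathbf{u}\|_{k+1,2n-k}$, which by \eqref{u-elli-1*} is bounded by $\|\partial_t^l\mathbf{W}\|_{k-1,2n-k}$, i.e.\ by the dissipation produced at level $k-1$. Chaining the levels and ending at $\|\mathbf{u}\|_{1,2n}\lesssim\bar{\mathcal{D}}^\ast_n$ closes the loop without interpolation. Your substitute, $\|\nabla_\ast\mathbf{u}\|_{2n}^2\le\epsilon\|\mathbf{u}\|_{2n+1}^2+C_\epsilon\|\mathbf{u}\|_{1,2n}^2$ (Fourier in $\mathbf{x}_h$ plus one-dimensional interpolation in $x_3$), is valid and shortens the bookkeeping. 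It must be applied to $\Delta_\ast\mathcal{P}\mathbf{u}$, which has order $2n+1$ with two horizontal derivatives, not ``one horizontal, $2n$ total''. It must also be applied to the linear term $\zeta\partial_3\operatorname{div}_\ast\mathbf{u}_\ast$ in the $\partial_3\phi$ equation, which your version of that equation omits.

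(ii) The paper obtains both the energy and the dissipation of $\|\boldsymbol{\eta}\|_{2n}$ from the Stokes problem \eqref{h} for $\boldsymbol{\Theta}=\mathbf{u}+\boldsymbol{\eta}/(\mu+\zeta)$. Its forcing contains only $\Delta_\ast\boldsymbol{\eta}$, so each step lowers the vertical order by two and the chain ends at $\|\boldsymbol{\eta}\|_{0,2n}\lesssim\bar{\mathcal{D}}^\sharp_n$. No boundary traces appear, and the same device is reused one derivative higher for $\mathcal{F}_{2N}$ in Lemma \ref{Lemm-E-2N}.

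Your $\boldsymbol{\eta}$ step via $\partial_3^2\boldsymbol{\eta}=\boldsymbol{\mathcal{Q}}+\nabla\varphi$ can be made to work, but it leaves out the one nontrivial point. $\varphi$ solves a Neumann problem whose interior datum $\partial_3^2\operatorname{div}\boldsymbol{\eta}$ is small, but whose boundary datum $\partial_3^2\eta_3|_{\partial\Omega}$ is linear and not small. So $\operatorname{div}\boldsymbol{\eta}$ alone does not control $\nabla\varphi$ on the strip. The fix is to write $\partial_3^2\eta_3=\partial_3\operatorname{div}\boldsymbol{\eta}-\operatorname{div}_\ast\partial_3\boldsymbol{\eta}_\ast$, with $\boldsymbol{\eta}_\ast=(\eta_1,\eta_2)$. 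Because the second term is a tangential divergence, a trace bound of the form $\|\operatorname{div}_\ast\partial_3\boldsymbol{\eta}_\ast(\cdot,d)\|^2_{H^{2n-5/2}_{\mathbf{x}_h}}\lesssim\|\partial_3\boldsymbol{\eta}\|_{0,2n-1}\|\partial_3\boldsymbol{\eta}\|_{1,2n-2}$ holds. Combined with Young's inequality this gives $\|\boldsymbol{\eta}\|_{2n}^2\lesssim\|\boldsymbol{\mathcal{Q}}\|_{2n-2}^2+\|\operatorname{div}\boldsymbol{\eta}\|_{2n-1}^2+\bar{\mathcal{D}}^\sharp_n$ directly, so no induction on $\partial_3$-derivatives is needed.

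This route has two bookkeeping consequences. First, $\|\nabla p\|_{2n-2}$ can only be read off the Stokes system after this $\boldsymbol{\eta}$ bound, not before. Second, your functional contains no $\|\boldsymbol{\eta}\|_{2n}^2$, so you must add $\epsilon'\|\boldsymbol{\eta}\|_{2n}^2$ to obtain \eqref{y1}. Its time derivative is at most $\epsilon'(\|\boldsymbol{\eta}\|_{2n}^2+\|\mathbf{u}\|_{2n}^2)$, which the dissipation absorbs. Lemma \ref{lemma-energy-recover} relies on this term being present.
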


\begin{proof}
Let $n=2N$ or $n=N+2$. For any $ 0 \leq k \leq 2n-1 $, let $ \alpha \in \mathbb{Z}_+^{2} $ and $ \beta \in \mathbb{Z}_+^{3} $ be such that $ \left\vert \beta\right\vert \leq k$ and $ \left\vert \alpha\right\vert \leq 2n-1-k $.

\vskip .1in
{\bf Step 1.} Applying $\partial^{\beta}\partial^{\alpha}$ to \eqref{pa-3-2eta-eq} and  testing against $ \partial^{\beta}\partial^{\alpha} \partial _{t}\boldsymbol{\mathcal{Q}}$, we have
  \begin{align}\label{3.91}
   &\,\frac{1 }{2}\frac{\mathrm{d}}{\mathrm{d}t} \|  \partial^{\beta} \partial^{\alpha}\boldsymbol{\mathcal{Q}} \| _{0} ^{2}+(\mu+\zeta)  \| \partial^{\beta} \partial^{\alpha}\partial _{t}\boldsymbol{\mathcal{Q}} \| _{0}^{2} +  \zeta \int _{\Omega}  \partial^{\beta} \partial^{\alpha}\partial _{t}\boldsymbol{\mathcal{Q}} \cdot \partial^{\beta} \partial^{\alpha}\boldsymbol{\mathcal{V}} \mathrm{d}\mathbf{x}
  \nonumber\\
  =&\, \int_{\Omega} \partial^{\beta} \partial^{\alpha}\boldsymbol{\mathcal{H}}\cdot  \partial^{\beta} \partial^{\alpha}\partial _{t}\boldsymbol{\mathcal{Q}} \mathrm{d}\mathbf{x}.
  \end{align}
  Next, we apply $\partial^{\beta}\partial^{\alpha}$ to  \eqref{eq-w-star} and test  against $\partial^{\beta}\partial^{\alpha} \boldsymbol{\mathcal{V}} $ to get
  \begin{align}\label{W*guji}
 \frac{1}{2}\frac{\mathrm{d}}{\mathrm{d}t} \| \partial^{\beta} \partial^{\alpha}\boldsymbol{\mathcal{V}} \| _{0}^{2}+2 \zeta \| \partial^{\beta} \partial^{\alpha} \boldsymbol{\mathcal{V}} \| _{0}^{2} +  \zeta\int _{\Omega} \partial^{\beta} \partial^{\alpha}\partial _{t}\boldsymbol{\mathcal{Q}} \cdot  \partial^{\beta} \partial^{\alpha}\boldsymbol{\mathcal{V}} \mathrm{d}\mathbf{x} = \int _{\Omega} \partial^{\beta} \partial^{\alpha}\tilde{\boldsymbol{\mathcal{G}}}\cdot  \partial^{\beta} \partial^{\alpha}\boldsymbol{\mathcal{V}} \mathrm{d}\mathbf{x}.
  \end{align}
  Combining \eqref{3.91}, \eqref{W*guji}, we obtain
    \begin{align*}\displaystyle
  &\,\frac{1}{2}\frac{\mathrm{d}}{\mathrm{d}t}  \big(  \| \partial^{\beta} \partial^{\alpha} \boldsymbol{\mathcal{Q}}\|_{0} ^{2} +\|\partial^{\beta} \partial^{\alpha}\boldsymbol{\mathcal{V}} \|_{0}^{2} \big) +\mu \| \partial^{\beta} \partial^{\alpha}\partial _{t}\boldsymbol{\mathcal{Q}} \|_{0}^{2} + \zeta\|\partial^{\beta} \partial^{\alpha}\boldsymbol{\mathcal{V}}\|_{0}^2\\
  &\,\quad+\zeta  \int _{\Omega}\left( \vert \partial ^{\beta}\partial ^{\alpha}\partial _{t}\boldsymbol{\mathcal{Q}}\vert ^{2}+2 \partial^{\beta} \partial^{\alpha}\partial _{t}\boldsymbol{\mathcal{Q}} \cdot \partial^{\beta} \partial^{\alpha}\boldsymbol{\mathcal{V}}+\vert  \partial^{\beta} \partial^{\alpha}\boldsymbol{\mathcal{V}}\vert ^{2}\right) \mathrm{d}\mathbf{x}
   \nonumber \\
   \le &\,\int _{\Omega} \big( \partial^{\beta} \partial^{\alpha}\boldsymbol{\mathcal{H}} \cdot  \partial^{\beta} \partial^{\alpha}\partial _{t}\boldsymbol{\mathcal{Q}} + \partial^{\beta} \partial^{\alpha}\tilde{\boldsymbol{\mathcal{G}}} \cdot \partial^{\beta} \partial^{\alpha}\boldsymbol{\mathcal{V}} \big) \mathrm{d}\mathbf{x},
  \end{align*}
  that is,
   \begin{align*}
  &\,\frac{1}{2}\frac{\mathrm{d}}{\mathrm{d}t}  \big(  \| \partial^{\beta} \partial^{\alpha} \boldsymbol{\mathcal{Q}}\|_{0} ^{2} +\|\partial^{\beta} \partial^{\alpha}\boldsymbol{\mathcal{V}} \|_{0}^{2} \big) +\mu \| \partial^{\beta} \partial^{\alpha}\partial _{t}\boldsymbol{\mathcal{Q}} \|_{0}^{2} + \zeta\|\partial^{\beta} \partial^{\alpha}\boldsymbol{\mathcal{V}}\|_{0}^2
   \nonumber \\
   &\,\quad+\zeta \|\partial ^{\beta}\partial ^{\alpha}\boldsymbol{\mathcal {V}}+\partial ^{\beta}\partial ^{\alpha}\partial _{t}\boldsymbol{\mathcal{Q}}\|_{0}^{2}
   \nonumber \\
   \le &\,\int _{\Omega} \big( \partial^{\beta} \partial^{\alpha}\boldsymbol{\mathcal{H}} \cdot  \partial^{\beta} \partial^{\alpha}\partial _{t}\boldsymbol{\mathcal{Q}} + \partial^{\beta} \partial^{\alpha}\tilde{\boldsymbol{\mathcal{G}}} \cdot \partial^{\beta} \partial^{\alpha}\boldsymbol{\mathcal{V}} \big) \mathrm{d}\mathbf{x}.
  \end{align*}
  Then we get
  
    \begin{align}\label{w*chongjifen}
  &\,\frac{1}{2}\frac{\mathrm{d}}{\mathrm{d}t}  \big(  \| \partial^{\beta} \partial^{\alpha} \boldsymbol{\mathcal{Q}}\|_{0} ^{2} +\|\partial^{\beta} \partial^{\alpha}\boldsymbol{\mathcal{V}} \|_{0}^{2} \big) +\mu \| \partial^{\beta} \partial^{\alpha}\partial _{t}\boldsymbol{\mathcal{Q}} \|_{0}^{2} + \zeta\|\partial^{\beta} \partial^{\alpha}\boldsymbol{\mathcal{V}}\|_{0}^2
   \nonumber \\
   \le &\,\int _{\Omega} \big( \partial^{\beta} \partial^{\alpha}\boldsymbol{\mathcal{H}} \cdot  \partial^{\beta} \partial^{\alpha}\partial _{t}\boldsymbol{\mathcal{Q}} + \partial^{\beta} \partial^{\alpha}\tilde{\boldsymbol{\mathcal{G}}} \cdot \partial^{\beta} \partial^{\alpha}\boldsymbol{\mathcal{V}} \big) \mathrm{d}\mathbf{x}.
  \end{align}
  where the non-negative term $\zeta\| \partial^{\beta} \partial^{\alpha}\boldsymbol{\mathcal{V}}+ \partial^{\beta} \partial^{\alpha} \partial _{t}\boldsymbol{\mathcal{Q}}\|_{0}^2$ was dropped. Applying Cauchy's inequality, we can update \eqref{w*chongjifen} as
   \begin{align}\label{w*chongjifen-update}
   \frac{\mathrm{d}}{\mathrm{d}t}  \big( \| \partial^{\beta} \partial^{\alpha} \boldsymbol{\mathcal{Q}}\|_{0} ^{2} + \| \partial^{\beta} \partial^{\alpha}\boldsymbol{\mathcal{V}} \|_{0}^{2}\big) + \mu\| \partial^{\beta} \partial^{\alpha}\partial _{t}\boldsymbol{\mathcal{Q}} \|_{0}^{2}+ \zeta \| \partial^{\beta} \partial^{\alpha}\boldsymbol{\mathcal{V}}\| _{0}^{2} \lesssim \| \partial^{\beta} \partial^{\alpha}\boldsymbol{\mathcal{H}}\|_{0}^2+\| \partial^{\beta} \partial^{\alpha}\tilde{\boldsymbol{\mathcal{G}}}\|_{0}^2.
    \end{align}

\vskip .1in
{\bf Step 2.} Applying $\partial^{\beta}\partial^{\alpha}$ to \eqref{pa-3-2eta-eq} and \eqref{eq-w-star}, and then testing against $ \partial^{\beta}\partial^{\alpha}\boldsymbol{\mathcal{Q}} $ and $\partial^{\beta}\partial^{\alpha}\partial _{t}\boldsymbol{\mathcal{V}}$, respectively, and applying Cauchy's inequality, we obtain
\begin{align*}
&\,\frac{\mu+\zeta}{2}\frac{\mathrm{d}}{\mathrm{d}t}\| \partial^{\beta} \partial^{\alpha}\boldsymbol{\mathcal{Q}}\| _{0}^2+ \| \partial^{\beta} \partial^{\alpha}\boldsymbol{\mathcal{Q}}\| _{0}^2=\int_{\Omega} \big(\partial^\beta \partial^{\alpha}\boldsymbol{\mathcal{H}} - \zeta\partial^{\beta} \partial^{\alpha} \boldsymbol{\mathcal{V}}\big) \cdot \partial^\beta \partial^{\alpha}\boldsymbol{\mathcal{Q}} \mathrm{d}\mathbf{x} \nonumber\\
\leq &\, \frac{1 }{2}\| \partial^{\beta} \partial^{\alpha}\boldsymbol{\mathcal{Q}}\| _{0}^2+\zeta^2\| \partial^{\beta} \partial^{\alpha}\boldsymbol{\mathcal{V}}\| _{0}^2
+\| \partial^{\beta} \partial^{\alpha}\boldsymbol{\mathcal{H}}\| _{0}^2
\end{align*}
and
\begin{align*}
&\,  \zeta \frac{\mathrm{d}}{\mathrm{d}t} \|\partial ^{\beta}\partial ^{\alpha}\boldsymbol{\mathcal{V}}\|_{0}^{2}+\|\partial ^{\beta}\partial ^{\alpha}\partial _{t}\boldsymbol{\mathcal{V}} \|_{0}^{2}=\int _{\Omega} \big( \partial^{\beta} \partial^{\alpha}\tilde{\boldsymbol{\mathcal{G}}}- \zeta\partial^{\beta} \partial^{\alpha}\partial _{t}\boldsymbol{\mathcal{Q}} \big) \cdot  \partial^{\beta} \partial^{\alpha}\partial _{t}\boldsymbol{\mathcal{V}} \mathrm{d}\mathbf{x}
\nonumber \\
 \leq &\,\frac{1}{2}\|\partial ^{\beta}\partial ^{\alpha}\partial _{t}\boldsymbol{\mathcal{V}}\|_{0}^{2}+ \frac{\zeta^2}{2}\|\partial ^{\beta}\partial ^{\alpha}\partial _{t}\boldsymbol{\mathcal{Q}}\|_{0}^{2}+\frac{1}{2}\|\partial ^{\beta}\partial ^{\alpha}\tilde{\boldsymbol{\mathcal{G}}}\|_{0}^{2}.
\end{align*}
After rearranging terms and combining the results, we have
\begin{align}\label{coueta}
&\,\frac{\mathrm{d}}{\mathrm{d}t}\big((\mu+\zeta)\| \partial^{\beta} \partial^{\alpha}\boldsymbol{\mathcal{Q}}\| _{0}^2 + 2\zeta \|\partial ^{\beta}\partial ^{\alpha}\boldsymbol{\mathcal{V}}\|_{0}^{2}\big) +  \| \partial^{\beta} \partial^{\alpha}\boldsymbol{\mathcal{Q}}\| _{0}^2 + \|\partial ^{\beta}\partial ^{\alpha}\partial _{t}\boldsymbol{\mathcal{V}} \|_{0}^{2} \notag\\
\lesssim &\, \|\partial ^{\beta}\partial ^{\alpha}\partial _{t}\boldsymbol{\mathcal{Q}}\|_{0}^{2} + \| \partial^{\beta} \partial^{\alpha}\boldsymbol{\mathcal{V}}\| _{0}^2 + \| \partial^{\beta} \partial^{\alpha}\boldsymbol{\mathcal{H}}\| _{0}^2 + \|\partial ^{\beta}\partial ^{\alpha}\tilde{\boldsymbol{\mathcal{G}}}\|_{0}^{2}.
\end{align}
By properly combining \eqref{w*chongjifen-update}--\eqref{coueta}, there exist two energy functionals $\mathcal{M}_0^1$ and $\mathcal{M}_0^2$, such that
\begin{equation}\label{M1}
\begin{aligned}
\mathcal{M}_0^1 & \sim \big(\| \partial^{\beta} \partial^{\alpha}\boldsymbol{\mathcal{Q}} \|_{0}^{2}+\| \partial^{\beta} \partial^{\alpha}\boldsymbol{\mathcal{V}}\|_{0} ^{2}\big),\\
\mathcal{M}_0^2 & \sim \big(\| \partial^{\beta} \partial^{\alpha}\partial _{t}\boldsymbol{\mathcal{Q}} \|_{0}^{2} + \| \partial^{\beta} \partial^{\alpha}\boldsymbol{\mathcal{Q}}\| _{0}^2 + \|\partial ^{\beta}\partial ^{\alpha}\partial _{t}\boldsymbol{\mathcal{V}} \|_{0}^{2} +  \| \partial^{\beta} \partial^{\alpha}\boldsymbol{\mathcal{V}}\| _{0}^{2}\big),
\end{aligned}
\end{equation}
and
\begin{align}\label{w-star-eta-star-combined}
\frac{\mathrm{d}}{\mathrm{d}t}  \mathcal{M}_0^1 +\mathcal{M}_0^2 \lesssim \| \partial^{\beta} \partial^{\alpha}\boldsymbol{\mathcal{H}}\|_{0}^2+\| \partial^{\beta} \partial^{\alpha}\tilde{\boldsymbol{\mathcal{G}}}\|_{0}^2.
\end{align}
Recall that $ \alpha \in \mathbb{Z}_+^{2} $ and $ \beta \in \mathbb{Z}_+^{3} $ with $ \vert \alpha\vert \leq 2n-1-k  $ and $ \vert \beta\vert \leq k $. After summing up such $ \alpha $ and $ \beta $, we get from \eqref{w-star-eta-star-combined} that
\begin{align}\label{M2}
\frac{\mathrm{d}}{\mathrm{d}t} \mathcal{M}_{k,2n-k-1}^1 + \mathcal{M}_{k,2n-k-1}^2     \lesssim \|\boldsymbol{\mathcal{H}}\|^2_{k,2n-k-1}+\|\tilde{\boldsymbol{\mathcal{G}}}\|^2_{k,2n-k-1},
\end{align}
    where
\begin{equation}\label{M3}
\begin{aligned}
\mathcal{M}_{k,2n-k-1}^1 &\sim \big(\|\boldsymbol{\mathcal{V}}\|^2_{k,2n-k-1}+\|\boldsymbol{\mathcal{Q}}\|^2_{k,2n-k-1}\big),\\
\mathcal{M}_{k,2n-k-1}^2 &\sim \big(\|\partial_t\boldsymbol{\mathcal{Q}}\|^2_{k,2n-k-1} + \|\boldsymbol{\mathcal{Q}}\|^2_{k,2n-k-1} + \|\partial_t\boldsymbol{\mathcal{V}}\|^2_{k,2n-k-1} + \|\boldsymbol{\mathcal{V}}\|^2_{k,2n-k-1}\big)
\end{aligned}
\end{equation}
for $ 0 \leq k \leq 2n-1 $. Recalling \eqref{HG1}, \eqref{HG2} and Lemma \ref{lem-anisotropi-1}, we have
\begin{align*}
\|\boldsymbol{\mathcal{H}}\|^2_{k,2n-k-1}+\|\tilde{\boldsymbol{\mathcal{G}}}\|^2_{k,2n-k-1}
\lesssim \|\partial_t \mathbf{u}\|^2_{k,2n-k-1}+\|\mathbf{u}\|^2_{k+1,2n-k}+\|\mathbf{G}^1\|^2_{k,2n-k-1}+\|\mathbf{G}^2\|^2_{k+1,2n-k-1},
\end{align*}
by which we update \eqref{M2} as
\begin{align}\label{w-star-eta-star-combined-sumup}
&\,\frac{\mathrm{d}}{\mathrm{d}t} \mathcal{M}_{k,2n-k-1}^1 + \mathcal{M}_{k,2n-k-1}^2 \notag\\
\lesssim &\,\|\partial_t \mathbf{u}\|^2_{k,2n-k-1}+\|\mathbf{u}\|^2_{k+1,2n-k}+\|\mathbf{G}^1\|^2_{k,2n-k-1}+\|\mathbf{G}^2\|^2_{k+1,2n-k-1}.
\end{align}

\vskip .1in
{\bf Step 3.} Notice that $W_3 = (\mathcal{P} \mathbf{W})_3 +\partial _{3}\phi = \mathcal{V}_3+\partial_3\phi$ (see \eqref{x4}, \eqref{x5}). This, along with \eqref{xuanduw} and \eqref{eq-w-star}, implies that $\partial_3\phi$ solves
 \begin{align}\label{phi-3-eq}
(\partial _{3}\phi)_{t}+2\zeta \partial _{3}\phi= \zeta\partial_3 \mathop{\mathrm{div}} _{\ast}\nolimits \mathbf{u} _{\ast} -  \zeta \Delta _{\ast}u _{3}+\mathcal{G}_{3} + \zeta \partial _{t}\mathcal{Q}_{3}-(- \zeta \Delta _{\ast}\mathcal{P} \mathbf{u} + \mathcal{P} \boldsymbol{\mathcal{G}})_{3},
\end{align}
where $\mathbf{u} _{\ast} = (u_1,u_2)$, from which we can derive that
\begin{align}\label{phi*}
\frac{\mathrm{d}}{\mathrm{d}t}\|\partial _{3}\phi\|_{k,2n-k-1}^{2}+ \sum _{l=0}^{1}\|\partial_t^{l}\partial _{3}\phi\|_{k,2n-k-1}^{2} \lesssim \|\partial _{t}\boldsymbol{\mathcal{Q}}\|_{k,2n-k-1}^{2}+\|\mathbf{u}\|^2_{k+1,2n-k}+\|\boldsymbol{\mathcal{G}}\|^2_{k,2n-k-1}.
\end{align}
For $ \operatorname{div} \mathbf{w} $, from $\eqref{perturbc}_3$ we get
\begin{align}\label{div-eq-w}
\partial_t\operatorname{div} \mathbf{w} + 2 \zeta \operatorname{div} \mathbf{w}  = \operatorname{div} \mathbf{G}^2.
\end{align}
It thus follows from a direct calculation that
\begin{align}\label{divw}
&  \frac{\mathrm{d}}{\mathrm{d}t}\|\operatorname{div}\mathbf{w} \|_{k,2n-k-1}^{2}+ \sum _{l=0}^{1}\|\partial^l_t\operatorname{div}\mathbf{w} \|_{k,2n-k-1}^{2}
 \lesssim \|\operatorname{div}\mathbf{G}^2\|^2_{k,2n-k-1}  ,
\end{align}
where $ 0 \leq k \leq 2n-1 $. Moreover, by taking $ \lambda=2n-k-1 $ in \eqref{phi2} and $ j=1 $, $ \lambda= 2n-k-1$ in \eqref{phi2-pa-t}, we get
\begin{align}\label{key-fip-esti1}
\|\nabla _{h}\phi\|_{k,2n-k-1}^{2} \lesssim \|\partial _{3}\phi\|_{k,2n-k-1}^{2}+\sigma \|\mathbf{W} \|_{0,2n-1}^{2}+  \|\mathbf{w} \|_{0}\|\mathbf{w} \|_{1}
\end{align}
and
\begin{align}\label{key-fip-esti1-pat1}
 \|\nabla _{h}\partial _{t}\phi\|_{k,2n-k-1}^{2} \lesssim \|\partial _{3}\partial _{t}\phi\|_{k,2n-k-1}^{2}+\sigma \|\partial _{t} \mathbf{W} \|_{0,2n-1}^{2}+  \| \partial _{t} \mathbf{w} \|_{0}\| \partial _{t} \mathbf{w} \|_{1}.
\end{align}
for any $ \sigma>0 $.

\vskip .1in
{\bf Step 4.}
With the estimates \eqref{w-star-eta-star-combined-sumup}, \eqref{phi*}, \eqref{divw}, \eqref{key-fip-esti1} and \eqref{key-fip-esti1-pat1} at our disposal, we can show that, after proper combinations, there exists an energy functional $ \mathfrak{E}_{k,2n-k-1} $ such that
\begin{align}\label{E-kn-defi}
 \mathfrak{E}_{k,2n-k-1} \ \sim \ \|\boldsymbol{\mathcal{Q}}\|_{k,2n-k-1}^{2}+\|\boldsymbol{\mathcal{V}}\|_{k,2n-k-1} ^{2}+\|\partial _{3}\phi\|_{k,2n-k-1}^{2}+\|\operatorname{div} \mathbf{w}\|_{k,2n-k-1}^{2}
\end{align}
and
\begin{align}\label{QV-first-sumup}
 &\ \frac{\mathrm{d}}{\mathrm{d}t} \mathfrak{E}_{k,2n-k-1}+\sum _{l=0}^{1}\big( \|\partial _{t}^{l}\boldsymbol{\mathcal{Q}}\|_{k,2n-k-1} ^{2}+\|\partial _{t}^{l}\boldsymbol{\mathcal{V}}\|_{k,2n-k-1}^{2}+\|\partial _{t}^{l}\nabla\phi\|_{k,2n-k-1}^{2}+\|\partial _{t}^{l}\operatorname{div} \mathbf{w} \|_{k,2n-k-1}^{2}\big)
  \nonumber \\
  \lesssim &\ \|\partial _{t} \mathbf{u}  \|^2_{k,2n-k-1} + \|\mathbf{u} \|^2_{k+1,2n-k} +\|\mathbf{G}^1\|^2_{k,2n-k-1} + \|\mathbf{G}^2\|^2_{k+1,2n-k-1} +\sigma \sum _{l=0}^{1}\| \partial _{t}^{l} \mathbf{W} \|_{0,2n-1}^{2}
     \nonumber \\
   &\quad+  \left( \|\mathbf{w} \|_{0}\|\mathbf{w} \|_{1}+\|\partial _{t} \mathbf{w} \|_{0}\|\partial _{t} \mathbf{w} \|_{1} \right)
\end{align}
for any $ \sigma>0 $, where we used the definition of $\boldsymbol{\mathcal{G}} = \nabla\times \mathbf{G}^2$. Notice that by \eqref{x4} and \eqref{x5},
\begin{align}\label{M4}
\sum _{l=0}^{1}\|\partial _{t}^{l}\mathbf{W}\|_{k, 2n-k-1} \lesssim \sum _{l=0}^{1}\|\partial _{t}^{l}\boldsymbol{\mathcal{V}}\|_{k,2n-k-1}+ \sum _{l=0}^{1}\|\partial _{t}^{l}\nabla \phi\| _{k,2n-k-1},
\end{align}
and by virtue of \eqref{div-curl-esti},
\begin{align}\label{M5}
 \sum _{l=0}^{1}\|\partial _{t}^{l}\mathbf{w}\|_{k+1, 2n-k-1} \lesssim \sum _{l=0}^{1}\|\partial _{t}^{l}\mathbf{W}\|_{k, 2n-k-1} +\sum _{l=0}^{1}\|\partial _{t}^{l}\mathop{\mathrm{div}}\nolimits \mathbf{w}\|_{k, 2n-k-1}+ \sum^{1}_{l=0}\| \partial _{t}^{l} \mathbf{w}\|_{0,2n}.
\end{align}
Utilizing \eqref{M4} and \eqref{M5}, we update \eqref{QV-first-sumup} as
\begin{align}\label{QV}
   &\ \frac{\mathrm{d}}{\mathrm{d}t} \mathfrak{E}_{k,2n-k-1}+ \sum _{l=0}^{1}\big( \|\partial _{t}^{l}\boldsymbol{\mathcal{Q}}\|_{k,2n-k-1} ^{2} + \|\partial _{t}^{l}\mathbf{W}\|_{k,2n-k-1}^{2}+\|\partial _{t}^{l}\mathbf{w}\| _{k+1,2n-k-1}^{2} \big)  \nonumber \\
 \lesssim &\ \|\partial _{t} \mathbf{u} \|^2_{k,2n-k-1}+\|\mathbf{u}\|^2_{k+1,2n-k}+\|\mathbf{G}^1\|^2_{k,2n-k-1}+\|\mathbf{G}^2\|^2_{k+1,2n-k-1}+\sigma \sum _{l=0}^{1}\| \partial _{t}^{l} \mathbf{W}\|_{0,2n-1}^{2}
   \nonumber \\
   &\quad+  \left( \|\mathbf{w}\|_{0}\|\mathbf{w}\|_{1}+\|\partial _{t} \mathbf{w}\|_{0}\|\partial _{t} \mathbf{w}\|_{1} \right)+\sum^{1}_{l=0}\|\partial _{t} ^{l} \mathbf{w}\|_{0,2n}^{2},
\end{align}
where $ 0 \leq k \leq 2n-1 $.

\vskip .1in
{\bf Step 5.}
 When $1\leq k\leq 2n-1$, acting $\nabla_{\mathcal{A}}\times$ on $\eqref{reformulationc}_3$, we get
\begin{equation}\label{u-tuoyuan}
\begin{aligned}
-\zeta\Delta \mathbf{u} &=\partial _{t} \mathbf{W} + 2\zeta \mathbf{W} + \mathbf{G}^4,\\
\mathbf{u}|_{\partial\Omega}&=\mathbf{0},
\end{aligned}
\end{equation}
where the last term on the right hand side is defined by
\begin{align*}
\mathbf{G}^4 = \zeta(\Delta_{\mathcal{A}}\mathbf{u}-\Delta \mathbf{u}) + 2\zeta(\nabla_{\mathcal{A}}\times \mathbf{w}-\nabla\times \mathbf{w}) + (\nabla_{\mathcal{A}}\times \partial_t\mathbf{w} - \nabla\times \partial_t\mathbf{w}).
\end{align*}
We remark that the calculations in this step appeared neither in the case when the angular velocity is absent \cite{Tan-Wang-SIMA}, nor the compressible case \cite{Feng-hong-zhu-compre}. The main purpose is, when later combined with an iteration scheme, to recover the dissipation mechanism of the velocity field $\mathbf{u}$. By similar arguments as in the proof of Lemma \ref{lem-}, we can show that
\begin{align}\label{G-4-esti}
\|\bar{\nabla}_{0}^{4N-1} \mathbf{G}^{4}\|_0^2 \lesssim \mathcal{E}_{N+2}(\mathcal{D}_{2N}+\mathcal{J}_{2N}+\mathcal{F}_{2N})\quad \text{and} \quad
\|\bar{\nabla}_{0}^{2(N+2)-1} \mathbf{G}^{4}\|_0^2 \lesssim \mathcal{E}_{2N}\mathcal{D}_{N+2}.
\end{align}
Thanks to the regularity theory for the elliptic system, it holds for $1\leq k\leq 2n$ that
\begin{align}\label{u-elli-1*}
\|\mathbf{u}\|_{k+1 , 2n-k}^2 \lesssim \| \partial _{t} \mathbf{W}\|^2_{k-1,2n-k} + \|\mathbf{W}\|^2_{k-1,2n-k} +\|\mathbf{G}^4\|^2_{k-1,2n-k}.
\end{align}
To proceed, we let $\boldsymbol{\Theta}=\mathbf{u}+\frac{1 }{\mu+\zeta}\boldsymbol{\eta}$. Then $\Theta$ solves (see \eqref{perturbc})
\begin{equation}\label{h}
 \begin{aligned}
 -(\mu+\zeta)\Delta \boldsymbol{\Theta}+\nabla p&= -\Delta_*\boldsymbol{\eta} - \partial _{t} \mathbf{u} + \zeta \mathbf{W}+\mathbf{G}^1,\\
 \operatorname{div} \boldsymbol{\Theta}&=G^3+\frac{1 }{\mu+\zeta}\operatorname{div}\boldsymbol{\eta},\\
\boldsymbol{\Theta}|_{\partial\Omega}&=\mathbf{0}.
 \end{aligned}
 \end{equation}
 To facilitate the subsequent presentation, we introduce the notation:
  \begin{equation}\label{n1}
 \mathcal{Y}_n :=  \|\bar{\nab}^{2n-1} \mathbf{G}^1\|_{0}^2  +  \| \bar{\nab}^{2n-1} \mathbf{G}^2\|_{1}^2 + \| \bar{\nab}^{2n-1} G^3\|_{1}^2 + \|\bar{\nab}^{2n-1} \mathbf{G}^4\|_0^2 + \| \operatorname{div}\boldsymbol\eta\|_{2n-1}^2.
\end{equation}
 By the regularity theory for the Stokes system (cf. Lemma \ref{i_linear_elliptic2}), we have for $1\leq k\leq 2n-1$ that
 \begin{align}\label{est-eta}
 &\ \|\boldsymbol{\Theta}\|^2_{k+1,2n-k-1}+\|\nabla p\|^2_{k-1,2n-k-1}
  \nonumber \\
\lesssim &\ \|\Delta_{*}\boldsymbol{\eta}\|^2_{k-1,2n-k-1}+\|\partial_t\mathbf{u}\|^2_{k-1,2n-k-1}+\|\mathbf{W}\|^2_{k-1,2n-k-1} +\|\mathbf{G}^1\|^2_{k-1,2n-k-1} \nonumber\\
 &\quad+\|G^3\|^2_{k,2n-k-1}+\|\operatorname{div}\boldsymbol{\eta} \|^2_{k,2n-k-1}\nonumber\\
 \lesssim &\ \|\boldsymbol\eta\|^2_{k-1,2n-k+1}+\|\partial_t \mathbf{u}\|^2_{k-1,2n-k-1}+\|\mathbf{W}\|^2_{k-1,2n-k-1} +\mathcal{Y}_n.
 \end{align}
Notice that since $\partial_t\boldsymbol{\eta}=\mathbf{u}$, it holds that
 \begin{align}
 \|\boldsymbol{\Theta}\|^2_{k+1,2n-k-1}&=\Big\|\mathbf{u}+\frac{1 }{\mu+\zeta}\boldsymbol{\eta}\Big\|^2_{k+1,2n-k-1}
 \nonumber\\
 &=\|\mathbf{u}\|^2_{k+1,2n-k-1}+\frac{1}{(\mu+\zeta)^2}\|\boldsymbol{\eta}\|^2_{k+1,2n-k-1}+\frac{1 }{(\mu+\zeta)}\frac{\mathrm{d}}{\mathrm{d}t}\|\boldsymbol{\eta}\|^2_{k+1,2n-k-1}.
 \end{align}
Then we update \eqref{est-eta} as
 \begin{align}\label{3.115}
 &\ \frac{\mathrm{d}}{\mathrm{d}t}\|\boldsymbol{\eta}\|^2_{k+1,2n-k-1}+\|\boldsymbol{\eta}\|^2_{k+1,2n-k-1}+\|\mathbf{u}\|^2_{k+1,2n-k-1}+\|\nabla p\|^2_{k-1,2n-k-1}
 \nonumber\\
\lesssim &\ \|\boldsymbol{\eta}\|^2_{k-1,2n-k+1}+\|\partial_t\mathbf{u}\|^2_{k-1,2n-k-1} +\|\mathbf{W}\|^2_{k-1,2n-k-1} +\mathcal{Y}_n
\end{align}
for $ 1 \leq k \leq 2n-1 $. Combining \eqref{QV} and \eqref{3.115}, and invoking \eqref{u-elli-1*} and \eqref{n1}, we obtain
\begin{align}\label{QV-combine}
&\ \frac{\mathrm{d}}{\mathrm{d}t}\big(\mathfrak{E}_{k,2n-k-1}+\|\boldsymbol\eta\|^2_{k+1,2n-k-1}\big) + \sum _{l=0}^{1}\big( \|\partial _{t}^{l}\boldsymbol{\mathcal{Q}}\|_{k,2n-k-1} ^{2} + \|\partial _{t}^{l}\mathbf{W}\|_{k,2n-k-1}^{2}+\|\partial _{t}^{l}\mathbf{w}\| _{k+1,2n-k-1}^{2}\big)\nonumber \\
    &\qquad+\|\boldsymbol\eta\|^2_{k+1,2n-k-1} +\|\mathbf{u}\|_{k+1,2n-k}^{2}  +\|\nabla p\|^2_{k-1,2n-k-1}\nonumber \\
   \lesssim &\ \|\partial _{t} \mathbf{u} \|^2_{k,2n-k-1} + \|\boldsymbol{\eta}\|^2_{k-1,2n-k+1} +\sigma\sum^{1}_{l=0} \| \partial _{t}^{l} \mathbf{W}\|_{0,2n-1}^{2} + \sum^{1}_{l=0}\big(\| \partial _{t} ^{l} \mathbf{W}\|^2_{k-1,2n-k} + \|\partial _{t} ^{l}  \mathbf{w}\|_{0,2n}^{2}\big) \nonumber \\
    &\qquad+  \left( \|\mathbf{w}\|_{0}\|\mathbf{w}\|_{1}+\|\partial _{t} \mathbf{w}\|_{0}\|\partial _{t} \mathbf{w}\|_{1} \right) +\mathcal{Y}_n
   \end{align}
   for any $ \sigma>0 $, where $ 1 \leq k \leq 2n-1 $.

\vskip .1in
   {\bf Step 6.} Let $k=2n-1$ in \eqref{QV-combine}. Then we get
   \begin{align}\label{QV-combine-1}
&\ \frac{\mathrm{d}}{\mathrm{d}t} \big(\mathfrak{E}_{2n-1,0}+\|\boldsymbol\eta\|^2_{2n}\big) + \sum _{l=0}^{1}\big( \|\partial _{t}^{l}\boldsymbol{\mathcal{Q}}\|_{2n-1} ^{2} + \|\partial _{t}^{l}\mathbf{W}\|_{2n-1}^{2}+\|\partial _{t}^{l}\mathbf{w}\| _{2n}^{2}\big) \notag\\
&\qquad +\|\boldsymbol\eta\|^2_{2n} +\|\mathbf{u}\|_{2n,1}^{2}  +\|\nabla p\|^2_{2n-2}\nonumber \\
   \lesssim &\ \|\partial _{t} \mathbf{u} \|^2_{2n-1} + \|\boldsymbol{\eta}\|^2_{2n-2,2} +\sigma\sum^{1}_{l=0} \| \partial _{t}^{l} \mathbf{W}\|_{0,2n-1}^{2} + \sum^{1}_{l=0}\big(\| \partial _{t} ^{l} \mathbf{W}\|^2_{2n-2,1} + \|\partial _{t} ^{l}  \mathbf{w}\|_{0,2n}^{2}\big) \nonumber \\
    &\qquad+  \left( \|\mathbf{w}\|_{0}\|\mathbf{w}\|_{1}+\|\partial _{t} \mathbf{w}\|_{0}\|\partial _{t} \mathbf{w}\|_{1} \right) +\mathcal{Y}_n.
   \end{align}
Choosing $\sigma>0$ sufficiently small and releasing a small portion of $\|\mathbf{w}\|_1^2+\|\partial_t\mathbf{w}\|_1^2$ from the right hand side of \eqref{QV-combine-1} by Cauchy's inequality,  we have
      \begin{align}\label{QV-combine-2}
&\ \frac{\mathrm{d}}{\mathrm{d}t}\big(\mathfrak{E}_{2n-1,0}+\|\boldsymbol\eta\|^2_{2n}\big)+ \sum _{l=0}^{1}\big( \|\partial _{t}^{l}\boldsymbol{\mathcal{Q}}\|_{2n-1} ^{2} + \|\partial _{t}^{l}\mathbf{W}\|_{2n-1}^{2}+\|\partial _{t}^{l}\mathbf{w}\| _{2n}^{2}\big) \notag\\
&\qquad +\|\boldsymbol\eta\|^2_{2n} +\|\mathbf{u}\|_{2n,1}^{2}  +\|\nabla p\|^2_{2n-2}\nonumber \\
   \lesssim &\ \|\partial _{t} \mathbf{u} \|^2_{2n-1} + \|\boldsymbol{\eta}\|^2_{2n-2,2} + \sum^{1}_{l=0}\big(\| \partial _{t} ^{l} \mathbf{W}\|^2_{2n-2,1} + \|\partial _{t} ^{l}  \mathbf{w}\|_{0,2n}^{2}\big)  +\mathcal{Y}_n.
   \end{align}
Taking $k=2n-2$ in \eqref{QV-combine} and repeating the preceding argument, we have
\begin{align}\label{QV-combine-2a}
&\ \frac{\mathrm{d}}{\mathrm{d}t}\big(\mathfrak{E}_{2n-2,1}+\|\boldsymbol\eta\|^2_{2n-1,1}\big) + \sum _{l=0}^{1}\big( \|\partial _{t}^{l}\boldsymbol{\mathcal{Q}}\|_{2n-2,1} ^{2} + \|\partial _{t}^{l}\mathbf{W}\|_{2n-2,1}^{2}+\|\partial _{t}^{l}\mathbf{w}\| _{2n-1,1}^{2}\big)\nonumber \\
    &\qquad+\|\boldsymbol\eta\|^2_{2n-1,1} +\|\mathbf{u}\|_{2n-1,2}^{2}  +\|\nabla p\|^2_{2n-3,1}\nonumber \\
   \lesssim &\ \|\partial _{t} \mathbf{u} \|^2_{2n-2,1} + \|\boldsymbol{\eta}\|^2_{2n-3,3}  + \sum^{1}_{l=0}\big(\| \partial _{t} ^{l} \mathbf{W}\|^2_{2n-3,2} + \|\partial _{t} ^{l}  \mathbf{w}\|_{0,2n}^{2}\big) +\mathcal{Y}_n.
   \end{align}
By properly combining \eqref{QV-combine-2} and \eqref{QV-combine-2a} (to absorb  $\|\partial _{t}^{l}\mathbf{W}\|_{2n-2,1}^{2}$ on the right hand side of \eqref{QV-combine-2} by the same quantity on the left hand side of \eqref{QV-combine-2a}), we obtain
\begin{align}\label{QV-combine-2b}
&\ \frac{\mathrm{d}}{\mathrm{d}t} \mathcal{N}_1 + \sum _{l=0}^{1}\big( \|\partial _{t}^{l}\boldsymbol{\mathcal{Q}}\|_{2n-1} ^{2} + \|\partial _{t}^{l}\mathbf{W}\|_{2n-1}^{2}+\|\partial _{t}^{l}\mathbf{w}\| _{2n}^{2}\big) +\|\boldsymbol\eta\|^2_{2n} +\|\mathbf{u}\|_{2n,1}^{2}  +\|\nabla p\|^2_{2n-2}\nonumber \\
   \lesssim &\ \|\partial _{t} \mathbf{u} \|^2_{2n-1} + \|\boldsymbol{\eta}\|^2_{2n-2,2}  + \sum^{1}_{l=0}\big(\| \partial _{t} ^{l} \mathbf{W}\|^2_{2n-3,2} + \|\partial _{t} ^{l}  \mathbf{w}\|_{0,2n}^{2}\big) +\mathcal{Y}_n,
   \end{align}
   where the energy functional $\mathcal{N}_1$ satisfies
   $$
   \mathcal{N}_1 \sim \big(\mathfrak{E}_{2n-1,0}+\|\boldsymbol\eta\|^2_{2n} + \mathfrak{E}_{2n-2,1}+\|\boldsymbol\eta\|^2_{2n-1,1} \big) \sim \big(\mathfrak{E}_{2n-1,0}+\|\boldsymbol\eta\|^2_{2n}\big).
   $$
   Taking $k=2n-3$ in \eqref{QV-combine} and repeating the preceding argument as in obtaining \eqref{QV-combine-2} give us
\begin{align}\label{QV-combine-3}
&\ \frac{\mathrm{d}}{\mathrm{d}t}\big(\mathfrak{E}_{2n-3,2}+\|\boldsymbol\eta\|^2_{2n-2,2}\big) + \sum _{l=0}^{1}\big( \|\partial _{t}^{l}\boldsymbol{\mathcal{Q}}\|_{2n-3,2} ^{2} + \|\partial _{t}^{l}\mathbf{W}\|_{2n-3,2}^{2}+\|\partial _{t}^{l}\mathbf{w}\| _{2n-2,2}^{2}\big)\nonumber \\
    &\qquad+\|\boldsymbol\eta\|^2_{2n-2,2} +\|\mathbf{u}\|_{2n-2,3}^{2}  +\|\nabla p\|^2_{2n-4,2}\nonumber \\
   \lesssim &\ \|\partial _{t} \mathbf{u} \|^2_{2n-3,2} + \|\boldsymbol{\eta}\|^2_{2n-4,4}  + \sum^{1}_{l=0}\big(\| \partial _{t} ^{l} \mathbf{W}\|^2_{2n-4,3} + \|\partial _{t} ^{l}  \mathbf{w}\|_{0,2n}^{2}\big) +\mathcal{Y}_n.
   \end{align}
   Combining \eqref{QV-combine-2b} and \eqref{QV-combine-3} (to absorb  $\|\boldsymbol\eta\|^2_{2n-2,2}$ and $\|\partial _{t}^{l}\mathbf{W}\|_{2n-3,2}^{2}$ on the right hand side of \eqref{QV-combine-2b} by the same quantities on the left hand side of \eqref{QV-combine-3}), we obtain
   \begin{align}\label{QV-combine-4}
&\ \frac{\mathrm{d}}{\mathrm{d}t} \mathcal{N}_2 + \sum _{l=0}^{1}\big( \|\partial _{t}^{l}\boldsymbol{\mathcal{Q}}\|_{2n-1} ^{2} + \|\partial _{t}^{l}\mathbf{W}\|_{2n-1}^{2}+\|\partial _{t}^{l}\mathbf{w}\| _{2n}^{2}\big) +\|\boldsymbol\eta\|^2_{2n} +\|\mathbf{u}\|_{2n,1}^{2}  +\|\nabla p\|^2_{2n-2}\nonumber \\
   \lesssim &\ \|\partial _{t} \mathbf{u} \|^2_{2n-1} + \|\boldsymbol{\eta}\|^2_{2n-4,4}  + \sum^{1}_{l=0}\big(\| \partial _{t} ^{l} \mathbf{W}\|^2_{2n-4,3} + \|\partial _{t} ^{l}  \mathbf{w}\|_{0,2n}^{2}\big) +\mathcal{Y}_n,
   \end{align}
   where the energy functional $\mathcal{N}_2$ satisfies
   $$
   \mathcal{N}_2 \sim \big(\mathcal{N}_1+\mathfrak{E}_{2n-3,2}+\|\boldsymbol\eta\|^2_{2n-2,2}\big) \sim \big(\mathfrak{E}_{2n-1,0}+\|\boldsymbol\eta\|^2_{2n}\big).
   $$

\vskip .1in
{\bf Step 7.} By repeating the above arguments in the preceding step for $k=2n-4,2n-5,\dots,1$, we can show that
    \begin{align}\label{QV-combine-5}
&\ \frac{\mathrm{d}}{\mathrm{d}t} \mathcal{N}_n + \sum _{l=0}^{1}\big( \|\partial _{t}^{l}\boldsymbol{\mathcal{Q}}\|_{2n-1} ^{2} + \|\partial _{t}^{l}\mathbf{W}\|_{2n-1}^{2}+\|\partial _{t}^{l}\mathbf{w}\| _{2n}^{2}\big) +\|\boldsymbol\eta\|^2_{2n} +\|\mathbf{u}\|_{2n,1}^{2}  +\|\nabla p\|^2_{2n-2}\nonumber \\
   \lesssim &\ \|\partial _{t} \mathbf{u} \|^2_{2n-1} + \|\boldsymbol{\eta}\|^2_{0,2n}  + \sum^{1}_{l=0}\big(\| \partial _{t} ^{l} \mathbf{W}\|^2_{0,2n-1} + \|\partial _{t} ^{l}  \mathbf{w}\|_{0,2n}^{2}\big) +\mathcal{Y}_n,
   \end{align}
   where the energy functional $\mathcal{N}_{n}$ satisfies
$$
\mathcal{N}_{n} \sim \sum_{k=1}^{2n-1} \big(\mathfrak{E}_{k,2n-k-1}+\|\boldsymbol\eta\|^2_{k+1,2n-k-1}\big) \sim \big(\mathfrak{E}_{2n-1,0}+\|\boldsymbol\eta\|^2_{2n}\big).
$$
Taking $k=0$ in \eqref{QV}, we can show that
\begin{align}\label{QV0}
   &\ \frac{\mathrm{d}}{\mathrm{d}t} \mathfrak{E}_{0,2n-1}+ \sum _{l=0}^{1}\big( \|\partial _{t}^{l}\boldsymbol{\mathcal{Q}}\|_{0,2n-1} ^{2} + \|\partial _{t}^{l}\mathbf{W}\|_{0,2n-1}^{2}+\|\partial _{t}^{l}\mathbf{w}\| _{1,2n-1}^{2}\big)
 \nonumber \\
 \lesssim &\ \|\partial_t \mathbf{u}\|^2_{0,2n-1}+\|\mathbf{u}\|^2_{1,2n}+ \sum^{1}_{l=0}\|\partial^l_t \mathbf{w}\|_{0,2n}^{2} + \mathcal{Y}_n ,
\end{align}
provided $ \sigma $ is suitably small. Using \eqref{QV0} to absorb $\|\partial _{t}^{l}\mathbf{W}\|_{0,2n-1}^{2}$ on the right hand side of \eqref{QV-combine-5}, we obtain
    \begin{align}\label{QV-combine-6}
&\ \frac{\mathrm{d}}{\mathrm{d}t} \mathfrak{E}_n + \sum _{l=0}^{1}\big( \|\partial _{t}^{l}\boldsymbol{\mathcal{Q}}\|_{2n-1} ^{2} + \|\partial _{t}^{l}\mathbf{W}\|_{2n-1}^{2}+\|\partial _{t}^{l}\mathbf{w}\| _{2n}^{2}\big) +\|\boldsymbol\eta\|^2_{2n} +\|\mathbf{u}\|_{2n,1}^{2}  +\|\nabla p\|^2_{2n-2}\nonumber \\
   \lesssim &\ \|\partial_{t} \mathbf{u} \|^2_{2n-1} + \|\mathbf{u}\|^2_{1,2n} +\|\boldsymbol{\eta}\|^2_{0,2n}  + \sum^{1}_{l=0} \|\partial _{t} ^{l}  \mathbf{w}\|_{0,2n}^{2} +\mathcal{Y}_n \notag\\
   \lesssim &\ \|\partial_{t} \mathbf{u} \|^2_{2n-1} + \bar{\mathcal{D}}_n^\ast+\bar{\mathcal{D}}_n^\sharp+\mathcal{Y}_n,
   \end{align}
   where we invoked \eqref{Dn*}, \eqref{Esharp} and the energy functional $\mathfrak{E}_{n}$ satisfies
$$
\mathfrak{E}_{n} \sim \sum_{k=0}^{2n-1} \big(\mathfrak{E}_{k,2n-k-1}+\|\boldsymbol\eta\|^2_{k+1,2n-k-1}\big) \sim \big(\mathfrak{E}_{2n-1,0}+\|\boldsymbol\eta\|^2_{2n}\big).
$$
In view of \eqref{E-kn-defi} we see that
 \begin{align*}
 \mathfrak{E}_{n} \sim \big( \|\boldsymbol{\mathcal{Q}}\|_{2n-1}^2+\|\boldsymbol{\mathcal{V}}\|^2_{2n-1}+\|\partial_3\phi\|^2_{2n-1}
+ \|\operatorname{div} \mathbf{w}\|_{2n-1}^{2}+\|\boldsymbol\eta\|^2_{2n}\big),
 \end{align*}
which is \eqref{y1}. Moreover, taking $ k=2n $ in \eqref{u-elli-1*}, we get
\begin{align*}
 \|\mathbf{u}\|_{2n+1 }^2  \lesssim \|\mathbf{W}\|^2_{2n-1 }+\| \partial _{t} \mathbf{W}\|^2_{2n-1}
+\|\mathbf{G}^4\|^2_{2n-1}.
\end{align*}
This along with \eqref{QV-combine-6} further implies that
\begin{align}\label{QV1-final}
&\ \frac{\mathrm{d}}{\mathrm{d}t} \mathfrak{E}_n +\sum^{1}_{l= 0}\big(\|\partial _{t}^{l}\boldsymbol{\mathcal{Q}}\|_{2n-1} ^{2}+\|\partial^l_{t}\mathbf{W}\|^2_{2n-1}+\|\partial_t^{l}\mathbf{w}\|^2_{2n}\big)+\|\boldsymbol\eta\|^2_{2n}+\|\mathbf{u}\|^2_{2n+1}+\|\nabla p\|^2_{2n-2}
\nonumber\\
\lesssim &\ \|\partial _{t}\mathbf u\|_{2n-1}^2+ \bar{\mathcal{D}}_n^\ast+\bar{\mathcal{D}}_n^\sharp+\mathcal{Y}_n.
\end{align}
For the estimate of $\mathcal{Y}_n$ (defined by \eqref{n1}), by Lemma \ref{lem-G-TILDA-G}, Lemma \ref{lem-G-TILD-G-technical} and \eqref{G-4-esti}, we can show that
\begin{gather}\label{esti-Y-2N}
 \mathcal{Y}_{2N}\lesssim  { \mathcal{E}_{N+2}  }(\mathcal{D}_{2N} +  \mathcal{J}_{2N} +\mathcal{F}_{2N}) \quad \text{and} \quad \mathcal{Y}_{N+2}\lesssim   {\mathcal{E}}_{2N}  {\mathcal{D}}_{N+2}.
\end{gather}
This, alongside \eqref{QV1-final}, completes the proof of the lemma.
\end{proof}

The next lemma is devoted to the estimation of $\|\partial^j_t \mathbf{W}\|_{2n-2j-1}$ and $\|\partial^j_{t}\operatorname{div} \mathbf{w}\|_{2n-2j-1}$ with $j=1,2,\dots,n-1$.

\begin{lemma}\label{lem-time-deri-curlw}
Let $ (\boldsymbol{\eta},\mathbf{u},p,\mathbf{w}) $ be the local solution to \eqref{reformulationc} satisfying \eqref{a-priori} for some sufficiently small constant $\delta$. There exists an energy functional
\begin{align}\label{y2}
\bar{\mathfrak{E}}_{n}(t) \sim \sum _{j=1}^{n-1}\big(\|\partial^j_{t}\boldsymbol{\mathcal{Q}}\|^2_{2n-2j-1}+\| \partial _{t}^{j}\boldsymbol{\mathcal{V}}\|^2_{2n-2j-1}+\|\partial^j_{t}\partial_3\phi\|^2_{2n-2j-1}+ \|\partial _{t}^{j} \operatorname{div} \mathbf{w}\|_{2n-2j-1}^{2} \big),
\end{align}
such that for any integer $ N \geq 4 $,
\begin{align}
\bullet \ \ &\ \frac{\mathrm{d}}{\mathrm{d}t}\bar{\mathfrak{E}}_{2N}+\sum^{2N-1}_{j=1} \sum^{1}_{l=0}\big(\|\partial^   {j+l}_t\mathbf{W}\|^2_{4N-2j-1}+\|\partial^   {j+l}_t\mathbf{w}\|^2_{4N-2j}\big)+ \sum^{2N}_{j=1}\| \partial _{t}^{j}\mathbf{u}\| _{4N-2j+1}^{2}\nonumber\\
\lesssim &\ \bar{\mathcal{D}}_{2N}+ \bar{\mathcal{D}}^\ast_{2N} + { \mathcal{E}_{N+2}  }(\mathcal{D}_{2N} +  \mathcal{J}_{2N} +\mathcal{F}_{2N}),\label{E2N}\\
\bullet \ \ &\ \frac{\mathrm{d}}{\mathrm{d}t}\bar{\mathfrak{E}}_{N+2}
 +\sum^{N+1}_{j=1} \sum^{1}_{l=0}\big(\|\partial^   {j+l}_t\mathbf{W}\|^2_{2(N+2)-2j-1}+\|\partial^   {j+l}_t\mathbf{w}\|^2_{2(N+2)-2j}\big)+\sum^{N+2}_{j=1}\| \partial _{t}^{j}\mathbf{u}\| _{2(N+2)-2j+1}^{2} \nonumber\\
\lesssim &\ \bar{\mathcal{D}}_{N+2}+ \bar{\mathcal{D}}^\ast_{N+2} + { \mathcal{E}_{2N}  }\mathcal{D}_{N+2}.\label{EN+2}
\end{align}
 \end{lemma}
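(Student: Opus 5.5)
The plan is to repeat the argument of Lemma \ref{lem-eta-w} for time derivatives: apply $\partial_t^j$, $1\le j\le n-1$, throughout and use $2n-2j-1$ spatial derivatives instead of $2n-1$. Since $\mathcal{P}$ commutes with $\partial_t$, applying $\partial_t^j$ to \eqref{first-two-compo-ODE} gives the same coupled system for $(\partial_t^j\boldsymbol{\mathcal{Q}},\partial_t^j\boldsymbol{\mathcal{V}})$. Its right-hand sides are $\partial_t^j\boldsymbol{\mathcal{H}}$ and $\partial_t^j\tilde{\boldsymbol{\mathcal{G}}}$.

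\textbf{Step 1: damped estimates for the projected quantities.} For $0\le k\le 2n-2j-1$, apply $\partial^\beta\partial^\alpha$ with $|\beta|\le k$ and $|\alpha|\le 2n-2j-1-k$, and run Steps 1--2 of Lemma \ref{lem-eta-w}. This yields a functional $\mathcal{M}^1\sim\|\partial_t^j\boldsymbol{\mathcal{Q}}\|^2_{k,\cdot}+\|\partial_t^j\boldsymbol{\mathcal{V}}\|^2_{k,\cdot}$ whose dissipation also contains the $\partial_t^{j+1}$ versions. The right side is bounded by Lemma \ref{lem-anisotropi-1}:
\begin{align*}
\|\partial_t^{j+1}\mathbf{u}\|^2_{k,2n-2j-k-1}+\|\partial_t^j\mathbf{u}\|^2_{k+1,2n-2j-k}+\|\partial_t^j\mathbf{G}^1\|^2_{k,\cdot}+\|\partial_t^j\mathbf{G}^2\|^2_{k+1,\cdot}.
\end{align*}
Differentiating \eqref{phi-3-eq} and \eqref{div-eq-w} in time gives damped ODE estimates for $\partial_t^j\partial_3\phi$ and $\partial_t^j\operatorname{div}\mathbf{w}$, as in \eqref{phi*} and \eqref{divw}. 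Lemma \ref{lemma-anisotropic-3}, used with $j$ and $j+1$, turns $\partial_t^j\partial_3\phi$ into $\partial_t^j\nabla_h\phi$ up to $\sigma\|\partial_t^{j}\mathbf{W}\|^2_{0,\cdot}$ and the low-order products $\|\partial_t^j\mathbf{w}\|_0\|\partial_t^j\mathbf{w}\|_1$. Then \eqref{M4}, \eqref{M5} and Lemma \ref{div-curl-in-lem} recover $\|\partial_t^{j+l}\mathbf{W}\|_{k,\cdot}$ and $\|\partial_t^{j+l}\mathbf{w}\|_{k+1,\cdot}$. The horizontal terms $\|\partial_t^{j+l}\mathbf{w}\|_{0,2n-2j}$ produced along the way are controlled by $\bar{\mathcal{D}}^\ast_n$ for $j+l\le n-1$, and by $\bar{\mathcal{D}}_n$ when $j+l=n$.

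\textbf{Step 2: velocity dissipation.} Apply $\partial_t^j$ to \eqref{u-tuoyuan}. This gives a Dirichlet Laplacian problem for $\partial_t^j\mathbf{u}$ with forcing $\partial_t^{j+1}\mathbf{W}+2\zeta\partial_t^j\mathbf{W}+\partial_t^j\mathbf{G}^4$, so that, as in \eqref{u-elli-1*},
\begin{align*}
\|\partial_t^j\mathbf{u}\|^2_{k+1,2n-2j-k}\lesssim\sum_{l=0}^1\|\partial_t^{j+l}\mathbf{W}\|^2_{k-1,2n-2j-k}+\|\partial_t^j\mathbf{G}^4\|^2_{k-1,\cdot}.
\end{align*}
For $k=0$ we use $\bar{\mathcal{D}}^\ast_n$ instead. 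In contrast with Lemma \ref{lem-eta-w}, no Stokes/$\boldsymbol\Theta$ step is needed, because $\partial_t^j\boldsymbol\eta=\partial_t^{j-1}\mathbf{u}$ carries no separate energy.

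\textbf{Step 3: downward induction.} Order the combination as in Steps 6--7 of Lemma \ref{lem-eta-w}: start from $k=2n-2j-1$ and step down in $k$, absorbing the $\mathbf{W}$ terms with one fewer vertical derivative by the next level's dissipation; finish at $k=0$, where $\sigma$ is taken small. The loss term $\|\partial_t^{j+1}\mathbf{u}\|^2_{2n-2j-1}$ equals $\|\partial_t^{j+1}\mathbf{u}\|^2_{2n-2(j+1)+1}$, which is the dissipation of $\partial_t^{j+1}\mathbf{u}$ obtained at level $j+1$. I would therefore run a second, downward induction in $j$, from $j=n-1$ to $j=1$. At the top, $\|\partial_t^n\mathbf{u}\|_1^2$ comes from $\bar{\mathcal{D}}_n$, and $\|\partial_t^n\mathbf{u}\|_1$ is exactly the $j=n$ term of the $\mathbf{u}$-sum. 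Taking weighted sums with geometrically decaying weights in $j$ then closes the scheme and defines $\bar{\mathfrak{E}}_n$ as in \eqref{y2}.

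The nonlinear terms are space–time derivatives of $\mathbf{G}^i$ and $\operatorname{div}\boldsymbol\eta$. They are bounded via Lemma \ref{lem-G-TILDA-G} and \eqref{G-4-esti} by $\mathcal{E}_{N+2}(\mathcal{D}_{2N}+\mathcal{J}_{2N}+\mathcal{F}_{2N})$ or by $\mathcal{E}_{2N}\mathcal{D}_{N+2}$, since their parabolic order is at most $2n-1$. I expect the main obstacle to be the bookkeeping of the double induction, in $k$ and in $j$, so that every right-hand loss of the form $\partial_t^{j+1}\mathbf{u}$, $\partial_t^{j+1}\mathbf{W}$ or the low-frequency $\phi$ term is absorbed at an adjacent level without circularity. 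The subtle point is the case $j+1=n$, where $\partial_t^n\mathbf{W}$ is not in any energy. I would handle it with the $\partial_t^{j+1}$ dissipation from Step 1 together with the ODE \eqref{linear_geometricc}$_3$ for $\partial_t^n\mathbf{w}$, which bounds $\partial_t^{n}\mathbf{w}$ in $H^1$ by $\partial_t^{n-1}\mathbf{w}$, $\partial_t^{n-1}\mathbf{u}$ and $\mathbf{F}^{2,n-1}$.
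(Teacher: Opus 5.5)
Your proposal is correct and is essentially the paper's proof. It uses the same $\partial_t^j$-differentiated projected system for $(\boldsymbol{\mathcal{Q}},\boldsymbol{\mathcal{V}})$, the same damped ODEs for $\partial_t^j\partial_3\phi$ and $\partial_t^j\operatorname{div}\mathbf{w}$, Lemma~\ref{lemma-anisotropic-3} with div--curl, and the elliptic problem \eqref{u-tuoyuan1*} for $\partial_t^j\mathbf{u}$. The only difference is bookkeeping: the paper inducts upward in $k$ with all $j$ summed at each step, absorbing $\|\partial_t^{j+1}\mathbf{u}\|^2_{k,2n-2j-k-1}$ by the level-$(j+1,k-1)$ dissipation, while you close each $j$ in $k$ first and then weight in $j$, which is the same triangular absorption.

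One small correction: the case $j+1=n$ is not subtle. The $l=1$ dissipation of Step~1, together with Lemma~\ref{lemma-anisotropic-3} at $j=n$, already yields $\|\partial_t^n\mathbf{W}\|_1$ and $\|\partial_t^n\mathbf{w}\|_2$, with $\|\partial_t^n\mathbf{w}\|_{0,2}$ supplied by $\bar{\mathcal{D}}^\ast_n$ rather than $\bar{\mathcal{D}}_n$. So the ODE detour is unnecessary, and pushed to the needed $H^2$ level it would be circular, because $\|\partial_t^{n-1}\mathbf{u}\|_3$ is itself obtained from $\|\partial_t^n\mathbf{W}\|_1$.
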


\begin{proof}
{\bf Step 1.} Applying $\partial^j_{t}$ to \eqref{pa-3-2eta-eq} and \eqref{eq-w-star}, we have
\begin{subequations}\label{t-first-two-compo-ODE}
 \begin{alignat}{2}
  (\mu+\zeta)\partial^{j+1} _{t}\boldsymbol{\mathcal{Q}}+\boldsymbol{\mathcal{Q}}+ \zeta\partial^{j}_t \boldsymbol{\mathcal{V}} &=\partial^j_{t}\boldsymbol{\mathcal{H}},\label{t-pa-3-2eta-eq}
 \\
   \partial^{j+1}_{t}\boldsymbol{\mathcal{V}} +2 \zeta \partial^{j}_t \boldsymbol{\mathcal{V}} +\zeta\partial^{j+1} _{t}\boldsymbol{\mathcal{Q}} &=\partial^j_{t}\tilde{\boldsymbol{\mathcal{G}}}. \label{t-eq-w-star}
 \end{alignat}
 \end{subequations}
By similar arguments as in deriving \eqref{w-star-eta-star-combined-sumup}, we get for $j=1,\dots,n-1$ and $0\leq k\leq 2n-2j-1$ that
\begin{align}\label{w-combined-sumup}
    \frac{\mathrm{d}}{\mathrm{d}t} \mathcal{R}_1 + \mathcal{R}_2 \lesssim \|\partial^{j+1}_t \mathbf{u}\|^2_{k,2n-2j-k-1}+\|\partial^j_{t}\mathbf{u}\|^2_{k+1,2n-2j-k}+\mathcal{Y} _{n},
    \end{align}
    where $ \mathcal{Y} _{n} $ is defined by \eqref{n1} and the energy functionals satisfy
    $$
    \begin{aligned}
    \mathcal{R}_1 & \sim \|(\partial^j_{t}\boldsymbol{\mathcal{Q}},\partial^j_{t}\boldsymbol{\mathcal{V}})\|^2_{k,2n-2j-k-1},\\
    \mathcal{R}_2 & \sim \|(\partial^{j+1}_{t}\boldsymbol{\mathcal{Q}},\partial^j_{t}\boldsymbol{\mathcal{Q}},\partial^{j+1} _{t}\boldsymbol{\mathcal{V}},\partial^j_{t}\boldsymbol{\mathcal{V}})\|^2_{k,2n-2j-k-1}.
    \end{aligned}
    $$
    Here, for simplicity, we used the short notation for the sum of the square of the norms of the functions. From  \eqref{phi-3-eq} we get
\begin{align*}
\partial_t (\partial^{j}_t\partial _{3}\phi)+2 \zeta (\partial^j_{t}\partial _{3}\phi)= \partial _{t}^{j} \big( \zeta\partial_3 \mathop{\mathrm{div}} _{\ast}\nolimits \mathbf{u} _{\ast} - \zeta \Delta _{\ast}u _{3}+ \mathcal{G}_{3}\big) + \zeta \partial^{j+1}_{t} \mathcal{Q}_{3} - \big( \zeta \Delta _{\ast}(\partial^j_{t}\mathcal{P} \mathbf{u})-\partial^j_{t}\mathcal{P} \boldsymbol{\mathcal{G}}\big)_{3},
\end{align*}
from which we get a similar estimate for $ \partial _{t}^{j}\partial _{3}\phi $ as \eqref{phi*}:
\begin{align}\label{phitj}
&\ \frac{\mathrm{d}}{\mathrm{d}t}\|\partial^j_{t}\partial _{3}\phi\|_{k,2n-2j-k-1}^{2}+ \sum _{l=0}^{1}\|\partial^{j+l}_{t}\partial _{3}\phi\|_{k,2n-2j-k-1}^{2}\nonumber\\
 \lesssim &\ \|\partial^{j+1}_{t}\boldsymbol{\mathcal{Q}}\|_{k,2n-2j-k-1}^{2}+\|\partial^j_{t}\mathbf{u}\|^2_{k+1,2n-2j-k}+\|\partial^j_{t}\boldsymbol{\mathcal{G}}\|^2_{k+1,2n-2j-k-1}.
\end{align}
Recall the equation for $ \mathop{\mathrm{div}}\nolimits \mathbf{w} $ in \eqref{div-eq-w}:
\begin{align*}
\partial_t ^{j+1}\operatorname{div} \mathbf{w} + 2 \zeta \partial_t^j\operatorname{div} \mathbf{w} = \partial _{t}^{j} \operatorname{div} \mathbf{G}^2,\ \ j=1,\dots,n-1,
\end{align*}
from which we derive that
\begin{align}\label{divwtj}
  \frac{\mathrm{d}}{\mathrm{d}t}\|\partial^j_{t}\operatorname{div}\mathbf{w}\|_{k,2n-2j-k-1}^{2}+\sum _{l=1}^{1}\|\partial^{j+l}_{t}\operatorname{div}\mathbf{w}\|_{k,2n-2j-k-1}^{2}
 \lesssim \| \partial^j_{t} \operatorname{div} \mathbf{G}^2\|^2_{k,2n-2j-k-1}.
\end{align}
Also note that by \eqref{phi2-pa-t}, for $l=0,1$,
\begin{align*}
\|\partial_{t} ^{j+l}\nabla_{h}\phi\|_{k,2n-2j-k-1}^{2} &\lesssim \|\partial_{t} ^{j+l}\partial _{3}\phi\|_{k,2n-2j-k-1}^{2}+\sigma \|\partial_{t} ^{j+l}\mathbf{W}\|_{0,2n-2j-1}^{2}+   \| \partial_{t} ^{j+l}\mathbf{w}\|_{0}\|\partial_{t} ^{j+l}\mathbf{w}\|_{1}.
\end{align*}
By repeating the arguments in Step 4 of the proof of Lemma \ref{lem-eta-w},  we can show that there exists an energy functional $ \bar{\mathfrak{E}}_{k,2n-2j-k-1}$ $(j=1,\dots, n-1,\,0\leq k\leq 2n-2j-1)$ such that
\begin{gather*}
\bar{\mathfrak{E}}_{k,2n-2j-k-1} \sim \|(\partial _{t}^{j}\boldsymbol{\mathcal{Q}},\partial _{t}^{j}\boldsymbol{\mathcal{V}},\partial _{t}^{j}\partial _{3}\phi, \partial _{t}^{j} \operatorname{div}\mathbf{w})\|_{k,2n-2j-k-1}^{2},
\end{gather*}
and
\begin{align}\label{M6}
&\ \frac{\mathrm{d}}{\mathrm{d}t} \bar{\mathfrak{E}}_{k,2n-2j-k-1} + \sum^{1}_{l=0}\big( \|\partial _{t}^   {j+l}\mathbf{W}\|_{k,2n-2j-k-1}^2 +\|\partial _{t}^   {j+l}\mathbf{w}\|_{k+1,2n-2j-k-1}^{2}\big) \nonumber \\
  \lesssim &\ \|\partial^{j+1}_t\mathbf{u}\|^2_{k,2n-2j-k-1}+\|\partial^j_{t}\mathbf{u}\|^2_{k+1,2n-2j-k}+ \sigma \sum _{l=0}^{1} \| \partial _{t}^   {j+l} \mathbf{W}\|_{0,2n-2j-1}^{2} \nonumber \\
   &\quad + \sum _{l=0}^{1} \|\partial _{t} ^{j+l} \mathbf{w}\|_{0,2n-2j}^{2} + \sum _{l=0}^{1} \|\partial _{t}^{j+l} \mathbf{w}\|_{0}\|\partial _{t}^{j+l} \mathbf{w}\|_{1} + \mathcal{Y}_n,
\end{align}
where $\sigma>0$ is an arbitrary constant. Here we dropped the norm of $\boldsymbol{\mathcal{Q}}$ from the left hand side of \eqref{M6}, since it does not change the sign of the inequality, and in the sequel it does not affect the energy estimates. Choosing small $\sigma>0$ and releasing a small portion of $\|\partial _{t}^{j+l} \mathbf{w}\|_{1}^2$ by Cauchy's inequality, we update \eqref{M6} as
\begin{align}\label{tim-sumup0}
&\ \frac{\mathrm{d}}{\mathrm{d}t} \bar{\mathfrak{E}}_{k,2n-2j-k-1} + \sum^{1}_{l=0}\big( \|\partial _{t}^   {j+l}\mathbf{W}\|_{k,2n-2j-k-1}^2 +\|\partial _{t}^   {j+l}\mathbf{w}\|_{k+1,2n-2j-k-1}^{2}\big) \nonumber \\
  \lesssim &\ \|\partial^{j+1}_t\mathbf{u}\|^2_{k,2n-2j-k-1}+\|\partial^j_{t}\mathbf{u}\|^2_{k+1,2n-2j-k} + \bar{\mathcal{D}}_{n}^{\ast} + \mathcal{Y}_n,
\end{align}
where we also used \eqref{Dn*} for $\|\partial _{t} ^{j} \mathbf{w}\|_{0,2n-2j}^{2} + \|\partial _{t} ^{j+1} \mathbf{w}\|_{0,2n-2j}^{2}$.

\vskip .1in
{\bf Step 2.} When $ k=0 $ and $ j=1,\dots,n-1 $, we notice by \eqref{Dnt} and \eqref{Dn*} that
\begin{align*}
 \|\partial _{t}^{j+1} \mathbf{u} \|_{0,2n-2j-1}^{2} + \|\partial _{t}^{j}\mathbf{u}\|_{1,2n-2j}^{2} \lesssim \bar{\mathcal{D}}_n+\bar{\mathcal{D}}_{n}^{\ast}.
\end{align*}
We then get from \eqref{tim-sumup0} that
\begin{align}\label{w-eta-t-0-induction}
\frac{\mathrm{d}}{\mathrm{d}t} \bar{\mathfrak{E}}_{0,2n-2j-1} + \sum^{1}_{l=0}\big( \|\partial _{t}^   {j+l}\mathbf{W}\|_{0,2n-2j-1}^2 +\|\partial _{t}^   {j+l}\mathbf{w}\|_{1,2n-2j-1}^{2}\big) \lesssim \bar{\mathcal{D}}_{n}+\bar{\mathcal{D}}_{n}^{\ast}+\mathcal{Y}_n.
\end{align}
When $k=1$, clearly we have
\begin{align}\label{M7}
\|\partial _{t}^{j+1} \mathbf{u} \|_{1,2n-2j-2}^{2} \lesssim \bar{\mathcal{D}}_n +\bar{\mathcal{D}}_{n}^{\ast}
\end{align}
for $ j=1,\dots,n-1 $. By \eqref{M7}, we deduce from \eqref{tim-sumup0} that
\begin{align}\label{M8}
&\ \frac{\mathrm{d}}{\mathrm{d}t} \bar{\mathfrak{E}}_{1,2n-2j-2}
 + \sum^{1}_{l=0}\big(\|\partial _{t}^   {j+l}\mathbf{W}\|_{1,2n-2j-2}^2 +\|\partial _{t}^   {j+l}\mathbf{w}\|_{2,2n-2j-2}^{2}\big)\nonumber \\
   \lesssim &\ \|\partial^j_{t}\mathbf{u}\|^2_{2,2n-2j-2}+ \bar{\mathcal{D}}_{n}+\bar{\mathcal{D}}_{n}^{\ast}+\mathcal{Y}_n.
\end{align}
From \eqref{u-tuoyuan}, we get
\begin{equation}\label{u-tuoyuan1*}
\begin{aligned}
-\zeta\Delta \partial^j_{t} \mathbf{u} &= \partial^{j+1}_t\mathbf{W} + 2\zeta \partial^j_{t} \mathbf{W}+\partial^j_t \mathbf{G}^4,\\
\partial^j_{t}\mathbf{u}|_{\partial\Omega} &=\mathbf{0},
\end{aligned}
\end{equation}
where $ j=1,\dots, n-1 $. This, alongside the classical regularity theory for the elliptic system, implies for $1\leq k\leq 2n-2j$ that
\begin{align}\label{u-elli-1}
\|\partial^j_{t}\mathbf{u}\|_{k+1 , 2n-2j-k}^2
 & \lesssim \|\partial^{j+1}_{t}\mathbf{W}\|_{k-1 , 2n-2j-k}^2+
\|\partial^j _{t}\mathbf{W}\|_{k-1 , 2n-2j-k}^2
+\|\partial^j_t\mathbf{G}^4\|_{k-1,2n-2j-k}^2.
\end{align}
When $k=1$, we obtain
\begin{align}\label{M9}
\|\partial^j_{t}\mathbf{u}\|_{2, 2n-2j-1}^2
 & \lesssim \|\partial^{j+1}_{t}\mathbf{W}\|_{0 , 2n-2j-1}^2+
\|\partial^j _{t}\mathbf{W}\|_{0, 2n-2j-1}^2
+\|\partial^j_t\mathbf{G}^4\|_{0,2n-2j-1}^2.
\end{align}
By properly combining \eqref{M8} and \eqref{M9} (to absorb $\|\partial^j_{t}\mathbf{u}\|^2_{2,2n-2j-2}$ on the right side of \eqref{M8} by $\|\partial^j_{t}\mathbf{u}\|^2_{2,2n-2j-1}$ on the left side of \eqref{M9}), we can show that
\begin{align}\label{w-eta-t-1-induction}
&\ \frac{\mathrm{d}}{\mathrm{d}t} \bar{\mathfrak{E}}_{1,2n-2j-2}
 + \sum^{1}_{l=0}\big(\|\partial _{t}^   {j+l}\mathbf{W}\|_{1,2n-2j-2}^2 +\|\partial _{t}^   {j+l}\mathbf{w}\|_{2,2n-2j-2}^{2}\big)+\|\partial^j_{t}\mathbf{u}\|_{2 , 2n-2j-1}^2 \nonumber \\
    \lesssim &\ \sum^{1}_{l=0}\|\partial^{j+l}_t\mathbf{W}\|^2_{0,2n-2j-1}+\bar{\mathcal{D}}_{n}+\bar{\mathcal{D}}_{n}^{\ast}+\mathcal{Y}_n.
\end{align}
Combining \eqref{w-eta-t-1-induction} with \eqref{w-eta-t-0-induction} implies that there exists an energy functional $ \bar{\mathfrak{E}}_{1,n} $
such that
\begin{align*}
\bar{\mathfrak{E}}_{1,n} \sim \sum _{j=1}^{n-1}(\bar{\mathfrak{E}}_{0,2n-2j-1} +\bar{\mathfrak{E}}_{1,2n-2j-2})
\end{align*}
and
\begin{align}\label{w-eta-t-k1-induction-con}
&\ \frac{\mathrm{d}}{\mathrm{d}t}\bar{\mathfrak{E}}_{1,n}
 + \sum^{n-1}_{j=1}\sum^{1}_{l=0}\big(\|\partial _{t}^{j+l}\mathbf{W}\|_{1,2n-2j-2}^2 +\|\partial _{t}^{j+l}\mathbf{w}\|_{2,2n-2j-2}^{2}\big)+\sum^{n-1}_{j=1}\|\partial^j_{t}\mathbf{u}\|_{2 , 2n-2j-1}^2
  \nonumber \\
  \lesssim &\ \bar{\mathcal{D}}_n+ \bar{\mathcal{D}}_n^\ast+\mathcal{Y}_{n}.
\end{align}

\vskip .1in
{\bf Step 3.} Similarly, when $k\geq2$, we deduce from \eqref{tim-sumup0} and \eqref{u-elli-1} that
 \begin{align}\label{w-eta-tj-induction}
 &\ \frac{\mathrm{d}}{\mathrm{d}t} \bar{\mathfrak{E}}_{k,2n-2j-k-1}
+\sum^{1}_{l=0}\big(\|\partial^   {j+l}_t\mathbf{W}\|^2_{k,2n-2j-k-1}+\|\partial^   {j+l}_t\mathbf{w}\|^2_{k+1,2n-2j-k-1}\big)+ \|\partial^j_{t}\mathbf{u}\|_{k+1 , 2n-2j-k}^2\nonumber\\
\lesssim &\ \sum^1_{l=0}\|\partial_{t}^{j+l} \mathbf{W}\|_{k-1 , 2n-2j-k}^2 +\|\partial^{j+1}_t\mathbf{u}\|^2_{k,2n-2j-k-1} + \bar{\mathcal{D}}_n^\ast + \mathcal{Y}_n
    \end{align}
    for $j=1,\dots,n-2$ and $2\le k \le 2n-2j-1$. Taking $ k=2 $ in \eqref{w-eta-tj-induction} and summing over $ 1\le j\le n-2 $, we get
\begin{align}\label{w-eta-tk2-induction}
&\ \frac{\mathrm{d}}{\mathrm{d}t} \sum_{j=1}^{n-2} \bar{\mathfrak{E}}_{2,2n-2j-3} + \sum_{j=1}^{n-2}\sum^{1}_{l=0}\big(\|\partial _{t}^{j+l}\mathbf{W}\|_{2,2n-2j-3}^2 +\|\partial _{t}^{j+l}\mathbf{w}\|_{3,2n-2j-3}^{2}\big)+ \sum_{j=1}^{n-2}\|\partial^j_{t}\mathbf{u}\|_{3, 2n-2j-2}^2
 \nonumber \\
 \lesssim &\ \sum_{j=1}^{n-2}\sum^1_{l=0}\|\partial_{t}^{j+l} \mathbf{W}\|_{1, 2n-2j-2}^2 + \sum_{j=1}^{n-2}\|\partial^{j+1}_t\mathbf{u}\|^2_{2,2n-2j-3} + \bar{\mathcal{D}}_n^\ast +\mathcal{Y}_n.
\end{align}
By absorbing the first two terms on the right hand side of \eqref{w-eta-tk2-induction} by the corresponding terms on the left hand side of \eqref{w-eta-t-k1-induction-con}, we deduce that there exists an energy functional $ \bar{\mathfrak{E}}_{2,n} $ such that
\begin{gather*}
 \bar{\mathfrak{E}}_{2,n} \sim \Big(\bar{\mathfrak{E}}_{1,n} +\sum_{j=1}^{n-2}\bar{\mathfrak{E}}_{2,2n-2j-3} \Big) \sim \sum _{j=1}^{n-1}(\bar{\mathfrak{E}}_{0,2n-2j-1} +\bar{\mathfrak{E}}_{1,2n-2j-2}) +\sum_{j=1}^{n-2}\bar{\mathfrak{E}}_{2,2n-2j-3}
\end{gather*}
and
\begin{align}\label{M10}
&\ \frac{\mathrm{d}}{\mathrm{d}t} \bar{\mathfrak{E}}_{2,n}+  \sum^{n-1}_{j=1}\sum^{1}_{l=0}\big(\|\partial _{t}^{j+l}\mathbf{W}\|_{1,2n-2j-2}^2 +\|\partial _{t}^{j+l}\mathbf{w}\|_{2,2n-2j-2}^{2}\big)+\sum^{n-1}_{j=1}\|\partial^j_{t} \mathbf{u}\|_{2 , 2n-2j-1}^2\nonumber\\
&\qquad\quad\ +\sum^{n-2}_{j=1}\sum^{1}_{l=0}\big(\|\partial _{t}^{j+l}\mathbf{W}\|_{2,2n-2j-3}^2 +\|\partial _{t}^{j+l}\mathbf{w}\|_{3,2n-2j-3}^{2}\big)+ \sum^{n-2}_{j=1}\|\partial^j_{t} \mathbf{u}\|_{3 , 2n-2j-2}^2                                              \nonumber \\
 \lesssim &\ \bar{\mathcal{D}}_n+ \bar{\mathcal{D}}_n^\ast+\mathcal{Y}_{n}.
\end{align}
Note that by virtue of \eqref{u-elli-1},
\begin{align*}
\|\partial _{t}^{n-1}\mathbf{u}\|_{3}^{2} \lesssim  \|\partial^{n-1}_{t}\mathbf{W}\|_{1 }^2+ \|\partial^{n} _{t}\mathbf{W}\|_{1}^2 +\|\partial^{n-1}_t \mathbf{G}^4\|_{1}^2,
\end{align*}
the left hand side of which corresponds to $ \|\partial^j_{t}\mathbf{u}\|_{3 , 2n-2j-2}^2 $ when $j=n-1$, and the first two terms on the right hand side are contained in the first summation on the left hand side of \eqref{M10}. Hence, we can increase the upper bound of the last summation on the right hand side of \eqref{M10} to $n-1$, i.e.,
\begin{align}\label{M11}
&\ \frac{\mathrm{d}}{\mathrm{d}t} \bar{\mathfrak{E}}_{2,n}+  \sum^{n-1}_{j=1}\sum^{1}_{l=0}\big(\|\partial _{t}^{j+l}\mathbf{W}\|_{1,2n-2j-2}^2 +\|\partial _{t}^{j+l}\mathbf{w}\|_{2,2n-2j-2}^{2}\big)+\sum^{n-1}_{j=1}\|\partial^j_{t} \mathbf{u}\|_{2 , 2n-2j-1}^2\nonumber\\
&\qquad\quad\ +\sum^{n-2}_{j=1}\sum^{1}_{l=0}\big(\|\partial _{t}^{j+l}\mathbf{W}\|_{2,2n-2j-3}^2 +\|\partial _{t}^{j+l}\mathbf{w}\|_{3,2n-2j-3}^{2}\big)+ \sum^{n-1}_{j=1}\|\partial^j_{t} \mathbf{u}\|_{3 , 2n-2j-2}^2 \nonumber \\
 \lesssim &\ \bar{\mathcal{D}}_n+ \bar{\mathcal{D}}_n^\ast+\mathcal{Y}_{n}.
\end{align}

\vskip .1in
{\bf Step 4.} By repeating the arguments in the preceding step for $k$ up to $2n-3$, we can show that there exists an energy functional $\bar{\mathfrak{E}}_n$ satisfying \eqref{y2} and
\begin{align}\label{M12}
&\ \frac{\mathrm{d}}{\mathrm{d}t} \bar{\mathfrak{E}}_{n} +  \sum^{n-1}_{j=1}\sum^{1}_{l=0}\big(\|\partial _{t}^{j+l}\mathbf{W}\|_{2n-2j-1}^2 +\|\partial _{t}^{j+l}\mathbf{w}\|_{2n-2j}^{2}\big)+\sum^{n-1}_{j=1}\|\partial^j_{t} \mathbf{u}\|_{2n-2j+1}^2\nonumber\\
 \lesssim &\ \bar{\mathcal{D}}_n+ \bar{\mathcal{D}}_n^\ast+\mathcal{Y}_{n}.
\end{align}
Then \eqref{E2N} and \eqref{EN+2} follow from \eqref{M12} and \eqref{esti-Y-2N}. This completes the proof of the lemma.
\end{proof}

\section{Synthesis and Finalization}
\label{sub:synthesis}

In this section, we assemble the \emph{a priori} estimates established in the previous sections to close the entire energy scheme in terms of $ \mathcal{E}_{n} $ and $ \mathcal{D}_{n} $ by virtue of the Stokes equations. To this end, we first prove the following result on the control of $ \mathcal{E}_{n} $. The proof is similar to those in \cite{Tan-Wang-SIMA,Feng-hong-zhu-compre}.

\begin{lemma}\label{lemma-energy-recover}
Let $ (\boldsymbol{\eta},\mathbf{u}, p,\mathbf{w}) $ be the local solution to \eqref{reformulationc} satisfying \eqref{a-priori} for some sufficiently small constant $\delta$. Then for any $ n = 2N $ or $n=N+2$ with $N\ge 4$, it holds that
\begin{align}\label{energy-control}
\mathcal{E}_{n}  \lesssim \bar{\mathcal{E}}_{n}+\bar{\mathcal{E}}^{\ast}_{n} + \bar{\mathcal{E}}_{n}^{\sharp} +\mathfrak{E}_n  +\bar{\mathfrak{E}}_{n}\ + (\mathcal{E}_{n} )^{2},
\end{align}
where $\mathcal{E}_{n}$, $\bar{\mathcal{E}}_{n}$, $\bar{\mathcal{E}}^{\ast}_{n}$, $\bar{\mathcal{E}}_{n}^{\sharp}$, $\mathfrak{E}_n$, $\bar{\mathfrak{E}}_{n}$ are defined by \eqref{p_energy_defc}, \eqref{Ent}, \eqref{En*}, \eqref{Esharp}, \eqref{y1}, \eqref{y2}, respectively.
\end{lemma}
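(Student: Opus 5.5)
We will bound, one after another, each term of $\mathcal{E}_n$ in \eqref{p_energy_defc}. The terms involving $\boldsymbol{\eta}$ and $\mathbf{w}$ come essentially directly from the energy functionals already built. The velocity and pressure terms are recovered by elliptic (Stokes) estimates combined with a downward induction in the number of time derivatives.

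\textbf{The $\boldsymbol{\eta}$ and $\mathcal{P}\partial_3^2\boldsymbol{\eta}$ terms.} By \eqref{y1}, $\mathfrak{E}_n$ controls $\|\boldsymbol{\eta}\|_{2n}^2$ and $\|\boldsymbol{\mathcal{Q}}\|_{2n-1}^2=\|\mathcal{P}\partial_3^2\boldsymbol{\eta}\|_{2n-1}^2$. By \eqref{Esharp}, $\bar{\mathcal{E}}^\sharp_n$ controls $\|\nabla\boldsymbol{\eta}\|_{0,2n}^2$, and together with Poincaré's inequality this gives $\|\boldsymbol{\eta}\|_{1,2n}^2$.

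\textbf{The $\mathbf{w}$ terms.} For $0\le j\le n-1$ we apply the Hodge estimate \eqref{div-curl-esti} to $\partial_t^j\mathbf{w}$, which gives
\[
\|\partial_t^j\mathbf{w}\|_{2n-2j}\lesssim\|\partial_t^j\mathbf{W}\|_{2n-2j-1}+\|\partial_t^j\operatorname{div}\mathbf{w}\|_{2n-2j-1}+\|\partial_t^j\mathbf{w}\|_{0,2n-2j}.
\]
The last term is contained in $\bar{\mathcal{E}}^\ast_n$. For the first term we write $\partial_t^j\mathbf{W}=\partial_t^j\boldsymbol{\mathcal{V}}+\nabla\partial_t^j\phi$. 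The quantities $\partial_t^j\boldsymbol{\mathcal{V}}$ and $\partial_t^j\partial_3\phi$ are controlled by $\mathfrak{E}_n$ (for $j=0$) and by $\bar{\mathfrak{E}}_n$ (for $j\ge1$). The horizontal part $\nabla_h\partial_t^j\phi$ is handled by Lemma \ref{key-lemma} and Lemma \ref{lemma-anisotropic-3}, and it produces two extra terms:
\begin{itemize}
\item a term $\sigma\|\partial_t^j\mathbf{W}\|_{0,2n-2j-1}^2$, which is absorbed for $\sigma$ small;
\item a term $\|\partial_t^j\mathbf{w}\|_0\|\partial_t^j\mathbf{w}\|_1$, which Cauchy's inequality splits into a small multiple of $\|\partial_t^j\mathbf{w}\|_1^2$ (absorbed) plus $\|\partial_t^j\mathbf{w}\|_0^2\lesssim\bar{\mathcal{E}}_n$.
\end{itemize}
The term $\partial_t^j\operatorname{div}\mathbf{w}$ is controlled by $\mathfrak{E}_n$ and $\bar{\mathfrak{E}}_n$. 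For $j=n$ only $\|\partial_t^n\mathbf{w}\|_0^2$ is needed, and it lies in $\bar{\mathcal{E}}_n$. All of these quantities are linear, so no quadratic terms appear at this stage.

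\textbf{The velocity and pressure terms.} We go downward in $j$. For $j=n$, $\|\partial_t^n\mathbf{u}\|_0^2\lesssim\bar{\mathcal{E}}_n$. For $j\le n-1$ we write $\partial_t^j$ of \eqref{perturbc} as a Stokes system:
\[
-(\mu+\zeta)\Delta\partial_t^j\mathbf{u}+\nabla\partial_t^jp=-\partial_t^{j+1}\mathbf{u}+\partial_3^2\partial_t^j\boldsymbol{\eta}+\zeta\partial_t^j\mathbf{W}+\partial_t^j\mathbf{G}^1,\qquad \operatorname{div}\partial_t^j\mathbf{u}=\partial_t^jG^3 .
\]
Lemma \ref{i_linear_elliptic2} with $r=2n-2j$ then gives
\[
\|\partial_t^j\mathbf{u}\|_{2n-2j}+\|\nabla\partial_t^jp\|_{2n-2j-2}\lesssim\|\partial_t^{j+1}\mathbf{u}\|_{2n-2j-2}+\|\partial_t^j\partial_3^2\boldsymbol{\eta}\|_{2n-2j-2}+\|\partial_t^j\mathbf{W}\|_{2n-2j-2}+\|\partial_t^j\mathbf{u}\|_0+\text{nonlinear terms}.
\]
The first term on the right is known from the previous induction step, and the $\mathbf{W}$ term comes from the previous paragraph. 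For $j\ge1$ we have $\partial_t^j\partial_3^2\boldsymbol{\eta}=\partial_3^2\partial_t^{j-1}\mathbf{u}$, which costs only $2n-2j+2$ derivatives of $\partial_t^{j-1}\mathbf{u}$. This is fine if the induction is organized appropriately; if not, we can use that the gradient part of $\partial_3^2\boldsymbol{\eta}$ can be moved into the pressure, so that only $\mathcal{P}\partial_3^2\partial_t^j\boldsymbol{\eta}=\partial_t^j\boldsymbol{\mathcal{Q}}$ enters, and this is controlled by $\bar{\mathfrak{E}}_n$. For $j=0$ the same projection trick lets $\boldsymbol{\mathcal{Q}}$ play the role of $\partial_3^2\boldsymbol{\eta}$, at the price of requiring $\|\mathcal{P}\partial_3^2\boldsymbol{\eta}\|_{2n-2}$, which is bounded. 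The nonlinear terms $\mathbf{G}^1,G^3$ are bounded by Lemma \ref{lem-G-TILDA-G}, i.e.\ \eqref{p_G_e_0c}, by $\mathcal{E}_{N+2}\mathcal{E}_n\lesssim(\mathcal{E}_n)^2$; for $n=N+2$ this is immediate, and for $n=2N$ we use $\mathcal{E}_{N+2}\lesssim\mathcal{E}_{2N}$.

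\textbf{Main obstacle.} The delicate point is the Stokes step, specifically handling the coupling term $\zeta\partial_t^j\mathbf{W}$ and the $\partial_3^2\boldsymbol{\eta}$ term at the top regularity without losing a derivative. We plan to resolve it by rewriting $\partial_3^2\boldsymbol{\eta}$ as $\boldsymbol{\mathcal{Q}}$ plus a gradient absorbed into the pressure, and by using the full $H^{2n-2j-1}$ control of $\partial_t^j\mathbf{W}$ obtained above.
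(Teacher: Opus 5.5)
Your proof is correct and follows the same structure as the paper's. The $\mathbf{w}$ terms are recovered exactly as in the paper's Step 1: the Hodge estimate, the splitting $\partial_t^j\mathbf{W}=\partial_t^j\boldsymbol{\mathcal{V}}+\nabla\partial_t^j\phi$, Lemmas \ref{key-lemma} and \ref{lemma-anisotropic-3}, and absorption of the $\sigma$ and $\|\partial_t^j\mathbf{w}\|_0\|\partial_t^j\mathbf{w}\|_1$ terms. The velocity and pressure terms come from the Stokes estimate \eqref{fes3} iterated downward in $j$, with the nonlinear terms bounded as you say.

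The only divergence is the term $\partial_3^2\partial_t^j\boldsymbol{\eta}$ for $j\ge1$, and there your derivative count is off. In fact $\|\partial_3^2\partial_t^{j-1}\mathbf{u}\|_{2n-2j-2}\le\|\partial_t^{j-1}\mathbf{u}\|_{2n-2j}$, which is two derivatives \emph{below} the norm $\|\partial_t^{j-1}\mathbf{u}\|_{2n-2j+2}$ that the Stokes estimate at level $j-1$ places on the left. That gap is exactly how the paper closes in \eqref{fes4}: it sums over $j$ and interpolates $\|\partial_t^{j}\mathbf{u}\|_{2n-2j-2}$ between the left-hand side and $\|\partial_t^j\mathbf{u}\|_0\lesssim\bar{\mathcal{E}}_n$, so no projection is needed.

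Your alternative, moving the gradient part $\nabla\varphi_j$ of $\partial_3^2\partial_t^j\boldsymbol{\eta}$ into the pressure, also works for the velocity. Since $\|\partial_t^j\boldsymbol{\mathcal{Q}}\|_{2n-2j-2}$ is controlled by $\bar{\mathfrak{E}}_n$, the downward induction closes step by step without interpolation. However, Stokes then bounds $\nabla(\partial_t^jp-\varphi_j)$ rather than $\nabla\partial_t^jp$. You therefore need a final pass using $\|\nabla\varphi_j\|_{2n-2j-2}\lesssim\|\partial_3^2\partial_t^j\boldsymbol{\eta}\|_{2n-2j-2}\le\|\partial_t^{j-1}\mathbf{u}\|_{2n-2j}$, which is available once all the velocity bounds are in hand. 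For $j=0$ no projection is needed at all, since $\|\partial_3^2\boldsymbol{\eta}\|_{2n-2}\le\|\boldsymbol{\eta}\|_{2n}\lesssim\mathfrak{E}_n$.
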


\begin{proof}
Let $n$ be either $2N$ or $N+2$.

\vskip .1in
{\bf Step 1.} Recalling \eqref{phi2-pa-t}, we have
\begin{align}\label{aa}
\|\nabla_h \partial _{t}^{j}\phi\|^2_{k,\lambda}\lesssim\|\partial_3 \partial _{t}^{j}\phi\|^2_{k,\lambda}+ \sigma\| \partial _{t}^{j} \mathbf{W}\|^2_{0,k+\lambda}+\| \partial _{t}^{j} \mathbf{w}\|_{0}\|\partial _{t}^{j} \mathbf{w}\|_{1}
\end{align}
for any $ \sigma>0 $, where $ 1 \leq j \leq n-1 $, $ 0 \leq k \leq 2n-2j-1 $ and $ 0 \leq \lambda \leq 2n-2j-k-1 $. Then it follows from \eqref{aa} and the definition of the anisotropic Sobolev norm, \eqref{ani-Sobolev}, that
\begin{align}\label{a}
\| \nabla  \partial _{t}^{j}\phi\|^2_{k,\lambda}\lesssim\|\partial_3 \partial _{t}^{j}\phi\|^2_{k,\lambda}+ \sigma\| \partial _{t}^{j} \mathbf{W}\|^2_{0,k+\lambda}+\| \partial _{t}^{j} \mathbf{w}\|_{0,2n-2j-1}\|\partial _{t}^{j} \mathbf{w}\|_{1,2n-2j-1}.
\end{align}
Also notice by \eqref{x4} and \eqref{x5},
\begin{align}\label{b}
 \|\partial _{t}^{j}\mathbf{W}\|^2_{k, 2n-2j-k-1} \lesssim \|\partial _{t}^{j}\boldsymbol{\boldsymbol{\mathcal{V}}}\|^2_{k,2n-2j-k-1}+ \|\partial _{t}^{j}\nabla \phi\|^2_{k,2n-2j-k-1},
\end{align}
and by \eqref{div-curl-esti},
\begin{align}\label{c}
\|\partial _{t}^{j}\mathbf{w}\|^2_{k+1, 2n-2j-k-1} \lesssim \|\partial _{t}^{j}\mathbf{W}\|^2_{k, 2n-2j-k-1} +\|\partial _{t}^{j}\mathop{\mathrm{div}}\nolimits \mathbf{w}\|^2_{k, 2n-2j-k-1}+ \| \partial _{t}^{j} \mathbf{w}\|^2_{0,2n-2j}.
\end{align}
Taking $\lambda=2n-2j-k-1$ in \eqref{a}, combining the result with \eqref{b}, and choosing sufficiently small $\sigma>0$, we can show that
\begin{align}\label{d}
&\,\|\partial^j_{t}\partial _{3}\phi\|_{k,2n-2j-k-1}^{2}\nonumber\\
\gtrsim&\, \|\partial^j_{t}\nabla\phi\|_{k,2n-2j-k-1}^{2} - \sigma\|\partial^j_t \mathbf{W}\|_{0,2n-2j-1}^2 - \varepsilon\|\partial^j_{t}\mathbf{w}\|_{1,2n-2j-1}^2 - C_{ \sigma,\varepsilon}\|\partial^j_t\mathbf{w}\|_{0,2n-2j-1}^2
\nonumber\\
\gtrsim&\, \|\partial^j_t\mathbf{W}\|^2_{k,2n-2j-k-1} - \|\partial _{t}^{j}\boldsymbol{\mathcal{V}}\|^2_{k,2n-2j-k-1} - \varepsilon\|\partial^j_{t}\mathbf{w}\|_{1,2n-2j-1}^2-C_{ \sigma,\varepsilon}\|\partial^j_t\mathbf{w}\|_{0,2n-2j-1}^2,
\end{align}
where $\varepsilon>0$ is a constant to be determined. After arranging terms, we obtain
\begin{align}\label{e}
&\,\|\partial _{t}^{j}\boldsymbol{\mathcal{V}}\|^2_{k,2n-2j-k-1} + \|\partial^j_{t}\partial _{3}\phi\|_{k,2n-2j-k-1}^{2} \nonumber\\
\gtrsim&\, \|\partial^j_t\mathbf{W}\|^2_{k,2n-2j-k-1} - \varepsilon\|\partial^j_{t}\mathbf{w}\|_{1,2n-2j-1}^2-C_{ \sigma,\varepsilon}\|\partial^j_t\mathbf{w}\|_{0,2n-2j-1}^2.
\end{align}
Using \eqref{c}, we derive from \eqref{e} that
\begin{align}\label{f}
&\,\|\partial _{t}^{j}\boldsymbol{\mathcal{V}}\|^2_{k,2n-2j-k-1} + \|\partial^j_{t}\partial _{3}\phi\|_{k,2n-2j-k-1}^{2} + \|\partial _{t}^{j}\mathop{\mathrm{div}}\nolimits \mathbf{w}\|^2_{k, 2n-2j-k-1} \nonumber\\
\gtrsim&\, \|\partial _{t}^{j}\mathbf{w}\|^2_{k+1, 2n-2j-k-1} - \| \partial _{t}^{j} \mathbf{w}\|^2_{0,2n-2j}  - \varepsilon\|\partial^j_{t}\mathbf{w}\|_{1,2n-2j-1}^2-C_{ \sigma,\varepsilon}\|\partial^j_t\mathbf{w}\|_{0,2n-2j-1}^2.
\end{align}
Since by definition,
\begin{align*}
- \varepsilon\|\partial^j_{t}\mathbf{w}\|_{1,2n-2j-1}^2 \ge -\varepsilon \|\partial _{t}^{j}\mathbf{w}\|^2_{k+1, 2n-2j-k-1},
\end{align*}
and\begin{align*}
- C_{ \sigma,\varepsilon} \|\partial^j_t\mathbf{w}\|_{0,2n-2j-1}^2 \ge - C_{ \sigma,\varepsilon} \| \partial _{t}^{j} \mathbf{w}\|^2_{0,2n-2j},
\end{align*}
by choosing sufficiently small $\varepsilon > 0$, we get from \eqref{f} that
\begin{align}\label{fa}
&\,\|\partial _{t}^{j}\mathbf{w}\|^2_{k+1, 2n-2j-k-1} \notag\\
\lesssim &\, \|\partial _{t}^{j}\boldsymbol{\mathcal{V}}\|^2_{k,2n-2j-k-1} + \|\partial^j_{t}\partial _{3}\phi\|_{k,2n-2j-k-1}^{2} + \|\partial _{t}^{j}\mathop{\mathrm{div}}\nolimits \mathbf{w}\|^2_{k, 2n-2j-k-1} + \|\partial^j_t\mathbf{w}\|_{0,2n-2j}^2
\end{align}
for $1\le  j \le n-1 $ and $ 0 \leq k \leq 2n-2j-1 $. It follows from \eqref{fa}, \eqref{y2} and \eqref{En*} that
 \begin{align}\label{w0}
 \sum _{j=1}^{n-1}\|\partial _{t}^{j}\mathbf{w}\|^2_{2n-2j} & \lesssim \sum _{j=1}^{n-1}\big( \|\partial _{t}^{j}\boldsymbol{\mathcal{V}}\|^2_{2n-2j-1} + \|\partial^j_{t}\partial _{3}\phi\|_{2n-2j-1}^{2} + \|\partial _{t}^{j}\mathop{\mathrm{div}}\nolimits \mathbf{w}\|^2_{2n-2j-1} + \|\partial^j_t\mathbf{w}\|_{0,2n-2j}^2 \big)
  \nonumber \\
 & \lesssim \bar{\mathcal{E}}^{\ast}_{n}+\bar{\mathfrak{E}}_n.
 \end{align}
 When $j=0$,  by \eqref{div-curl-esti}, we have
 \begin{align}\label{w1}
 \|\mathbf{w}\|_{2n}^2 \lesssim \|\mathop{\mathrm{div}}\nolimits  \mathbf{w}\|_{2n-1}^2 + \|\mathbf{W}\|_{2n-1}^2 + \|\mathbf{w}\|_{0,2n}^2.
 \end{align}
 According to \eqref{x4}, \eqref{x5} and \eqref{phi2}, it holds that
 \begin{align}
 \|\mathbf{W}\|_{2n-1}^2 \lesssim  \|\boldsymbol{\mathcal{V}}\|^2_{2n-1}+ \|\nabla\phi\|_{2n-1}^2 &\lesssim \|\boldsymbol{\mathcal{V}}\|^2_{2n-1}+\|\partial_3\phi\|_{2n-1}^2 + \sigma\|\mathbf{W}\|_{0,2n-1}^2 +  \|\mathbf{w}\|_0\|\mathbf{w}\|_1 \notag\\
 &\lesssim \|\boldsymbol{\mathcal{V}}\|^2_{2n-1}+\|\partial_3\phi\|_{2n-1}^2 + \sigma\|\mathbf{W}\|_{2n-1}^2 +  \|\mathbf{w}\|_0\|\mathbf{w}\|_1.
 \end{align}
 Choosing $\sigma>0$ sufficiently small, we have
  \begin{align}\label{w2}
 \|\mathbf{W}\|_{2n-1}^2 &\lesssim \|\boldsymbol{\mathcal{V}}\|^2_{2n-1}+\|\partial_3\phi\|_{2n-1}^2 + \|\mathbf{w}\|_0\|\mathbf{w}\|_1\notag\\
 &\lesssim \|\boldsymbol{\mathcal{V}}\|^2_{2n-1}+\|\partial_3\phi\|_{2n-1}^2 + C_\varepsilon\|\mathbf{w}\|_0^2 + \varepsilon\|\mathbf{w}\|_{2n}^2.
 \end{align}
 Choosing $\varepsilon>0$ sufficiently small, and then substituting \eqref{w2} into \eqref{w1} give us
  \begin{align}\label{w3}
 \|\mathbf{w}\|_{2n}^2 \lesssim \|\mathop{\mathrm{div}}\nolimits  \mathbf{w}\|_{2n-1}^2 + \|\boldsymbol{\mathcal{V}}\|^2_{2n-1}+\|\partial_3\phi\|_{2n-1}^2 + \|\mathbf{w}\|_0^2 + \|\mathbf{w}\|_{0,2n}^2.
 \end{align}
 Note that the first three terms on the right of \eqref{w3} are contained in $\mathfrak{E}_n$, $\|\mathbf{w}\|_0^2$ is contained in $\bar{\mathcal{E}}_n$ and $\|\mathbf{w}\|_{0,2n}^2$ is contained in $\bar{\mathcal{E}}^{\ast}_{n}$. Hence, we have
   \begin{align}\label{w4}
 \|\mathbf{w}\|_{2n}^2 \lesssim \mathfrak{E}_n + \bar{\mathcal{E}}_n + \bar{\mathcal{E}}^{\ast}_{n}.
 \end{align}
 Lastly, when $j=n$, we know by definition, $\|\partial_t^n \mathbf{w}\|_0^2$ is contained in $\bar{\mathcal{E}}_n$. Combining \eqref{w0} and \eqref{w4}, we end up with
  \begin{align}\label{w5}
 \sum _{j=0}^{n}\|\partial _{t}^{j}\mathbf{w}\|^2_{2n-2j} \lesssim \bar{\mathcal{E}}_n + \bar{\mathcal{E}}^{\ast}_{n}+ \mathfrak{E}_n + \bar{\mathfrak{E}}_n.
 \end{align}

\vskip .1in
 {\bf Step 2.} To simplify the presentation, we let
\begin{equation}\label{n112}
 \mathcal{X}_n =   \| \bar{\nab}^{2n-2}_0  \mathbf{G}^1\|^2_{0} + \| \bar{\nab}^{2n-2}_0  G^3\|^2_{1}.
\end{equation}
Applying $\partial _{t}^{j}$, $j=1,\dots,n-1$, to the second and  the fourth equations in \eqref{perturbc}, we get
  \begin{equation}\label{stokesp1}
\begin{aligned}
-\mu\Delta \partial^{j}_t \mathbf{u}+\nabla \partial^{j}_t p &=-\partial^{j+1}_t \mathbf{u}+ \p_{3}^2 \partial^j_{t} \boldsymbol{\eta} + \zeta\partial^j_{t}\mathbf{W}+\partial^j_{t} \mathbf{G}^1,\\
\diverge \partial^{j}_t \mathbf{u} &=\partial^{j}_t G^3,\\
\dt^{j} \mathbf{u}|_{\partial\Omega} & =\mathbf{0}.
\end{aligned}
\end{equation}
Employing Lemma \ref{i_linear_elliptic2} to the problem \eqref{stokesp1}, we obtain for $ j=0,1,\dots,n-1 $ that
\begin{align}\label{fes3}
 &\ \|\partial _{t}^{j} \mathbf{u}\|_{2n-2j}^2 +\|\nabla\partial^j_{t}p\|^2_{2n-2j-2} \nonumber \\
  \lesssim &\
 \|\partial _{t}^{j+1} \mathbf{u}\|_{2n-2j-2 }^2+ \|\partial ^{2}_3\partial _{t}^{j}\boldsymbol{\eta}\|_{2n-2j-2 }^2+ \|\partial _{t}^{j}\mathbf{W}\|^2_{2n-2j-2} + \|\partial _{t}^{j}\mathbf{G}^1\|_{2n-2j-2}^2\notag\\
 &\quad +\|\partial^j_{t}G^3\|^2_{2n-2j-1}+\|\partial^j_{t}\mathbf{u}\|^2_{0}\nonumber
  \\
 \lesssim &\ \|\partial _{t}^{j+1}\mathbf{u}\|_{2n-2j-2 }^2 +
  \|\partial _{t}^{j}\boldsymbol{\eta}\|_{2n-2j  }^2 + \|\partial _{t}^{j}\mathbf{W}\|^2_{2n-2j-2} +\bar{\mathcal{E}}_n+ \mathcal{X}_n.
\end{align}
 By an induction on \eqref{fes3} and the facts $\partial_t \boldsymbol{\eta}=\mathbf{u}$ and $\|\boldsymbol\eta\|_{2n}^2 \lesssim \mathfrak{E}_n$, we can show that
 \begin{align}\label{fes4}
&\ \sum_{j=0}^{n} \|\partial _{t}^{j}\mathbf{u}\|_{2n-2j }^2+\sum_{j=0}^{n-1}\|\nabla\partial^j_{t}p\|^2_{2n-2j-2}
 \nonumber \\
 \lesssim &\ \|\partial _{t}^{n}\mathbf{u}\|^2_{0}  +\sum_{j=0}^{n-1} \|\partial _{t}^{j}\boldsymbol{\eta}\|_{2n-2j }^2+\sum_{j=0}^{n-1} \|\partial _{t}^{j}\mathbf{W}\|_{2n-2j-2}^2 +\bar{\mathcal{E}}_n +\mathcal{X}_n\nonumber
\\
\lesssim &\ \|\boldsymbol\eta\|_{2n  }^2 +\sum_{j=1}^{n-1} \|\partial _{t}^{j-1}\mathbf{u}\|_{2n-2j }^2    +\sum_{j=0}^{n-1} \|\partial _{t}^{j}\mathbf{W}\|_{2n-2j-2}^2+ \bar{\mathcal{E}}_n + \mathcal{X}_n\nonumber
\\
\lesssim &\ \sum_{j=0}^{n-2} \|\partial _{t}^{j}\mathbf{u}\|_{2n-2j-2}^2 +\bar{\mathcal{E}}_n + \bar{\mathcal{E}}_n^\ast +\mathfrak{E}_n+\bar{\mathfrak{E}}_n + \mathcal{X}_n,
\end{align}
where we used \eqref{w5} for the estimate of $\|\partial _{t}^{j}\mathbf{W}\|_{2n-2j-2}^2=\|\partial _{t}^{j}\nabla\times \mathbf{w}\|_{2n-2j-2}^2$. By interpolation and Young's inequalities, we get from \eqref{fes4} that
\begin{align*}
  \sum_{j=0}^{n} \|\partial _{t}^{j}\mathbf{u}\|_{2n-2j }^2 +\sum_{j=0}^{n-1}\|\nabla\partial^j_{t}p\|^2_{2n-2j-2}
 &\lesssim  \sum_{j=0}^{n-2} \|\partial _{t}^{j}\mathbf{u}\|_{0}^2+ \bar{\mathcal{E}}_n + \bar{\mathcal{E}}_n^\ast +\mathfrak{E}_n+ \bar{\mathfrak{E}}_n +\mathcal{X}_n \notag\\
 &\lesssim   \bar{\mathcal{E}}_n  + \bar{\mathcal{E}}_n^\ast +\mathfrak{E}_n+ \bar{\mathfrak{E}}_n +\mathcal{X}_n.
 \end{align*}
 This, combined with \eqref{Esharp}, \eqref{y1}, \eqref{w5} and the estimate $ \mathcal{X}_n \lesssim (\mathcal{E}_{n} )^2$ from \eqref{p_G_e_0c}, gives \eqref{energy-control}.
\end{proof}

Now we are ready to recover the energy estimates in terms of $ \mathcal{E}_{n} $ and $ \mathcal{D}_{n} $.

\begin{lemma}\label{lem-combined-esti}
Let $ (\boldsymbol{\eta},\mathbf{u}, p,\mathbf{w}) $ be the local solution to \eqref{reformulationc} satisfying \eqref{a-priori} for some sufficiently small constant $\delta$. Then for $n=N+2$ or $n=2N$ with $N\ge4$, there exists an energy functional $\tilde{ \mathcal{E}}_{n}\sim \mathcal{E}_{n}$ such that
\begin{align}
\bullet\quad &\,\frac{\mathrm{d}}{\mathrm{d}t}\tilde{ \mathcal{E}}_{2N}+ {\mathcal{D}}_{2N}  \lesssim \sqrt{ \mathcal{E}_{N+2}}({\mathcal{J}}_{2N}+\mathcal{F}_{2N}),\label{sys2nc}\\
\bullet\quad &\,\frac{\mathrm{d}}{\mathrm{d}t}\tilde{ \mathcal{E}}_{N+2}+ {\mathcal{D}}_{N+2}  \lesssim 0,\label{sysn+2c}
\end{align}
provided $ \delta $ is sufficiently small, where $\mathcal{D}_n$ is defined by \eqref{p_dissipation_defc} and $\mathcal{J}_{2N}$, $\mathcal{F}_{2N}$ are defined by \eqref{FJ}.
\end{lemma}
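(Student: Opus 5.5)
The plan is to build $\tilde{\mathcal{E}}_n$ as a weighted linear combination of the functionals from the previous sections. Concretely, take
\[
\tilde{\mathcal{E}}_n := \bar{\mathcal{E}}_n + \int_\Omega \nabla\partial_t^{n-1}p\cdot \mathbf{Q}^{3,n}\,\mathrm{d}\mathbf{x} + \bar{\mathcal{E}}^\ast_n + c_1\widetilde{\bar{\mathcal{E}}^\sharp_n} + c_2\mathfrak{E}_n + c_3\bar{\mathfrak{E}}_n ,
\]
with constants $1\gg c_1\gg c_3\gg c_2>0$ chosen hierarchically. The right-hand sides of the later estimates contain dissipation produced only by the earlier ones:
\begin{itemize}
\item \eqref{p_u_e_00'132c} has $\bar{\mathcal{D}}^\ast_n$ on its right;
\item \eqref{E2N} has $\bar{\mathcal{D}}_n+\bar{\mathcal{D}}^\ast_n$;
\item \eqref{con-lem-eta-w} has $\|\partial_t\mathbf{u}\|^2_{2n-1}+\bar{\mathcal{D}}^\ast_n+\bar{\mathcal{D}}^\sharp_n$.
\end{itemize}
The term $\|\partial_t\mathbf{u}\|^2_{2n-1}$ is controlled by the $j=1$ term on the left of \eqref{E2N}, and the $\bar{\mathcal{D}}$-terms by Lemmas \ref{t-estimate}, \ref{horizontal-uw} and the $\sharp$-lemma. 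Adding \eqref{barEN-differ-esti}, \eqref{p_u_e_00c}, $c_1\times$\eqref{p_u_e_00'132c}, $c_3\times$\eqref{E2N} and $c_2\times$\eqref{con-lem-eta-w}, with small weights, absorbs every linear right-hand term into the left. This leaves a full dissipation functional on the left and nonlinear remainders of the form $\sqrt{\mathcal{E}_{N+2}}(\mathcal{D}_{2N}+\mathcal{J}_{2N}+\mathcal{F}_{2N})$ on the right. Note that $\mathcal{E}_{N+2}\le\sqrt{\mathcal{E}_{N+2}}$ for small $\delta$.

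Next I would check that the accumulated dissipation dominates $\mathcal{D}_n$, term by term against \eqref{p_dissipation_defc}:
\begin{itemize}
\item $\|\partial_t^j\mathbf{u}\|_{2n-2j+1}$ for $j\ge1$ comes from \eqref{E2N}, and for $j=0$ from \eqref{con-lem-eta-w};
\item the $\mathbf{w}$ norms come from \eqref{con-lem-eta-w} ($j=0$), \eqref{E2N} ($j\ge1$) and $\bar{\mathcal{D}}_n$ ($j=n$, at order $0$);
\item $\|\mathcal{P}\partial_3^2\boldsymbol\eta\|_{2n-1}=\|\boldsymbol{\mathcal{Q}}\|_{2n-1}$, $\|\boldsymbol\eta\|_{2n}$ and $\|\nabla p\|_{2n-2}$ come from \eqref{con-lem-eta-w};
\item $\|\partial_3\boldsymbol\eta\|_{0,2n}$ comes from $\bar{\mathcal{D}}^\sharp_n$.
\end{itemize}
For $\|\nabla\partial_t^jp\|_{2n-2j-1}$ with $1\le j\le n-1$, I would apply the Stokes estimate (Lemma \ref{i_linear_elliptic2}) to \eqref{stokesp1} with $r=2n-2j+1$. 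This bounds it by
\[
\|\partial_t^{j+1}\mathbf{u}\|_{2n-2j-1}+\|\partial_t^{j-1}\mathbf{u}\|_{2n-2j+1}+\|\partial_t^j\mathbf{W}\|_{2n-2j-1}+\text{nonlinear terms},
\]
using $\partial_3^2\partial_t^j\boldsymbol\eta=\partial_3^2\partial_t^{j-1}\mathbf{u}$; all of these are already dissipated.

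The energy side uses Lemma \ref{lemma-energy-recover}: $\mathcal{E}_n\lesssim\tilde{\mathcal{E}}_n+\mathcal{E}_n^2$, so $\mathcal{E}_n\lesssim\tilde{\mathcal{E}}_n$ for $\delta$ small. The reverse bound holds because each functional is bounded by $\mathcal{E}_n$, with two caveats:
\begin{itemize}
\item the cross terms $\int\partial^\alpha\mathbf{u}\cdot\partial^\alpha\boldsymbol\eta$ and $\int\partial^\alpha\mathbf{w}\cdot\nabla\times\partial^\alpha\boldsymbol\eta$ in $\widetilde{\bar{\mathcal{E}}^\sharp_n}$ are bounded by $\mathcal{E}_n$ and carry the small weight $c_1$, so they do not destroy positivity;
\item the pressure correction satisfies $\int\nabla\partial_t^{n-1}p\cdot\mathbf{Q}^{3,n}\lesssim\sqrt{\mathcal{E}_n}\sqrt{\mathcal{E}_{N+2}\mathcal{E}_n}$ by \eqref{p_F_e_02}, so it is small.
\end{itemize}
This gives \eqref{sys2nc}: the $\sqrt{\mathcal{E}_{N+2}}\mathcal{D}_{2N}$ part is absorbed, and the $\mathcal{J}_{2N}+\mathcal{F}_{2N}$ part remains.

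For $n=N+2$ the same combination yields a right side $\sqrt{\mathcal{E}_{2N}}\mathcal{D}_{N+2}$, using \eqref{p_u_e_00c1}, \eqref{p_u_e_00'132c2}, \eqref{con-lem-eta-w1} and \eqref{EN+2}. Lemma \ref{t-estimate} gives $\sqrt{\mathcal{E}_{N+2}}\mathcal{D}_{N+2}$, which is also small. Since $\mathcal{E}_{2N}\le\delta$, this is absorbed and \eqref{sysn+2c} follows. I expect the main obstacle to be the bookkeeping of the weights, not the analysis itself. In particular, the $\sharp$-estimate generates $\bar{\mathcal{D}}^\ast_n$ with an $O(1)$ constant and needs $c_1$ small. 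Meanwhile \eqref{con-lem-eta-w} needs $\bar{\mathcal{D}}^\sharp_n$ and $\|\partial_t\mathbf{u}\|_{2n-1}^2$, so $c_2$ must be small relative to both $c_1$ and $c_3$. One must also verify that no quantity appearing on a right-hand side lies outside every left-hand side, e.g.\ $\|\partial_t^{n}\mathbf{u}\|_1$, which $\bar{\mathcal{D}}_n$ supplies.
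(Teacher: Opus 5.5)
Your proposal is correct and follows essentially the paper's argument. The paper likewise takes a small-weight combination of \eqref{barEN-differ-esti}, \eqref{p_u_e_00c} and \eqref{p_u_e_00'132c} with the vertical estimates; it uses weights $1,1,\epsilon,\epsilon^2$, the last applied to the pre-combined \eqref{fes6}. It recovers $\|\nabla\partial_t^j p\|_{2n-2j-1}$ from the Stokes estimate on \eqref{stokesp1} with $r=2n-2j+1$, and obtains $\tilde{\mathcal E}_n\sim\mathcal E_n$ from Lemma \ref{lemma-energy-recover}.

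One justification needs tightening. Because $c_2\ll c_1$, the coercive part of $\tilde{\mathcal E}_n$ controls only $c_2\mathcal E_n$. So the cross terms in $c_1\widetilde{\bar{\mathcal E}^\sharp_n}$ must be bounded by $\frac12\bar{\mathcal E}^\sharp_n+C\bar{\mathcal E}^\ast_n$, as in \eqref{R12}--\eqref{R13}, not merely by $\mathcal E_n$.
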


\begin{proof}
{\bf Step 1.} Applying Lemma \ref{i_linear_elliptic2} with $r=2n-2j+1\ge 3$ to \eqref{stokesp1} and using \eqref{n1} along with \eqref{Dnt}, we can show that
\begin{align}\label{ffes1}
 &\ \|\partial_t^{j}  \mathbf{u}\|_{2n-2j+1}^2 + \|\nabla \partial_t^{j}  p\| _{2n-2j-1}^2\nonumber
 \\
  \lesssim &\ \|\partial_{t}^{j+1} \mathbf{u}\|_{2n-2j-1 }^2+ \|\partial_{3}^2 \partial_t^{j }\boldsymbol{\eta}\|_{2n-2j-1 }^2+\|\partial^j_{t}\mathbf{W}\|^2_{2n-2j-1} + \|\partial_{t}^j \mathbf{G}^1  \|_{2n-2j-1}^2 + \|\partial_t^{j}  G^3\|_{2n-2j}^2+ \|\partial_t^{j}  \mathbf{u} \|_{0}^2\nonumber
\\
\lesssim &\ \|\partial_t^{j+1} \mathbf{u} \|_{2n-2j-1 }^2 + \|\partial_t^{j }\boldsymbol{\eta}\|_{2n-2 j +1 }^2+\|\partial^j_{t}\mathbf{W}\|^2_{2n-2j-1} +\bar{\mathcal{D}}_n +  \mathcal{Y}_n .
\end{align}
Since $\dt \boldsymbol{\eta}=\mathbf{u}$, by an induction on \eqref{ffes1} and \eqref{Dnt}, we get
\begin{align}\label{eses11}
 &\ \sum_{j=1}^{n} \|\partial_t^{j} \mathbf{u} \|_{2n-2j+1}^2+  \sum_{j=1}^{n-1}\|\nabla \partial_t^{j}  p\|_{2n-2j-1}^2 \lesssim \sum_{j=1}^{n-1} \big( \|\partial_t^{j }\boldsymbol{\eta}\| _{2n-2 j +1 }^2 + \|\partial^j_{t}\mathbf{W}\|^2_{2n-2j-1}\big) +\bar{\mathcal{D}}_n+\mathcal{Y}_n \nonumber
\\
 \lesssim  &\ \sum_{j=0}^{n-2}\|\partial_t^{j } \mathbf{u}\|_{2n-2j-1 }^2+\sum^{n-1}_{j=1}\|\partial^j_{t}\mathbf{W}\|^2_{2n-2j-1}   +\bar{\mathcal{D}}_n + \mathcal{Y}_n.
\end{align}
For reader's convenience, let us recall \eqref{QV1-final} and \eqref{M12}:
\begin{align}
&\ \frac{\mathrm{d}}{\mathrm{d}t} \mathfrak{E}_n +\sum^{1}_{l= 0}\big(\|\partial^l_{t}\mathbf{W}\|^2_{2n-1}+\|\partial_t^{l}\mathbf{w}\|^2_{2n}+\|\partial _{t}^{l}\boldsymbol{\boldsymbol{\mathcal{Q}}}\|_{2n-1} ^{2}\big)+\|\boldsymbol{\eta}\|^2_{2n}+\|\mathbf{u}\|^2_{2n+1}+\|\nabla p\|^2_{2n-2}
\nonumber\\
\lesssim &\ \|\partial _{t}\mathbf{u}\|_{2n-1}^2 + \bar{\mathcal{D}}_n^\ast+\bar{\mathcal{D}}_n^\sharp+\mathcal{Y}_n ,\label{R1}
\end{align}
and
\begin{align}
&\ \frac{\mathrm{d}}{\mathrm{d}t} \bar{\mathfrak{E}}_{n}
 +\sum^{n-1}_{j=1}  \sum^{1}_{l=0}\big(\|\partial^   {j+l}_t\mathbf{W}\|^2_{2n-2j-1}+\|\partial^   {j+l}_t\mathbf{w}\|^2_{2n-2j}\big)+ \sum^{n-1}_{j=1}\| \partial _{t}^{j}\mathbf{u}\| _{2n-2j+1}^{2}\nonumber\\
 \lesssim &\ \bar{\mathcal{D}}_n+ \bar{\mathcal{D}}_n^\ast+\mathcal{Y}_{n}.\label{R2}
\end{align}
By manipulating \eqref{eses11} and \eqref{R2}, we can show that
\begin{align}
&\ \frac{\mathrm{d}}{\mathrm{d}t} \bar{\mathfrak{E}}_{n}
 +\sum_{j=1}^{n} \|\partial_t^{j} \mathbf{u} \|_{2n-2j+1}^2 + \sum_{j=1}^{n-1}\|\nabla \partial_t^{j}  p\|_{2n-2j-1}^2 + \sum^{n-1}_{j=1} \sum^{1}_{l=0}\big(\|\partial^   {j+l}_t\mathbf{W}\|^2_{2n-2j-1}+\|\partial^   {j+l}_t\mathbf{w}\|^2_{2n-2j}\big)\nonumber\\
 \lesssim &\ \sum_{j=0}^{n-2}\|\partial_t^{j } \mathbf{u}\|_{2n-2j-1 }^2 + \bar{\mathcal{D}}_n+ \bar{\mathcal{D}}_n^\ast+\mathcal{Y}_{n}.\label{R3}
\end{align}
Taking the sum of \eqref{R1} and \eqref{R3} and letting $ \mathfrak{E}_{n}^{\ast} := \mathfrak{E}_{n}+\bar{\mathfrak{E}}_{n} $, we obtain
\begin{align}\label{fes5}
 &\frac{\mathrm{d}}{\mathrm{d}t}\mathfrak{E}_{n}^{\ast} + \sum_{j=0}^{n} \|\partial_t^{j} \mathbf{u} \|_{2n-2j+1}^2 + \sum_{j=1}^{n-1}\|\nabla \partial_t^{j}  p\|_{2n-2j-1}^2 + \sum^{n-1}_{j=0} \sum^{1}_{l=0}\big(\|\partial^   {j+l}_t\mathbf{W}\|^2_{2n-2j-1}+\|\partial^   {j+l}_t\mathbf{w}\|^2_{2n-2j}\big) \notag\\
 &\qquad\ +\sum^{1}_{l= 0} \|\partial _{t}^{l}\boldsymbol{\boldsymbol{\mathcal{Q}}}\|_{2n-1} ^{2} +\|\boldsymbol{\eta}\|^2_{2n} +\|\nabla p\|^2_{2n-2} \notag\\
 \lesssim & \sum_{j=0}^{n-2}\|\partial_t^{j } \mathbf{u}\|_{2n-2j-1 }^2 + \bar{\mathcal{D}}_n+ \bar{\mathcal{D}}_n^\ast + \bar{\mathcal{D}}_n^\sharp+\mathcal{Y}_n.
\end{align}
Recalling \eqref{p_dissipation_defc} and $\boldsymbol{\boldsymbol{\mathcal{Q}}}=\mathcal{P} (\partial_3^2\boldsymbol{\eta})$, we see that the dissipation on the right of \eqref{fes5} contains the quantities in $\mathcal{D}_n$, except $\|\partial_t^n \mathbf{w}\|_0^2$, $\|\partial_t^{n+1} \mathbf{w}\|_0^2$ and $\|\partial_3\boldsymbol{\eta}\|_{0,2n}^2$. By adding these terms to the left of \eqref{fes5} and noticing that $\|\partial_t^n \mathbf{w}\|_0^2$ and $\|\partial_t^{n+1} \mathbf{w}\|_0^2$ are contained in $\bar{\mathcal{D}}_n$ (see \eqref{Dnt}) and $\|\partial_3\boldsymbol{\eta}\|_{0,2n}^2$ is contained in $\bar{\mathcal{D}}_n^\sharp$ (see \eqref{Esharp}), we obtain
\begin{align}\label{R4}
 \frac{\mathrm{d}}{\mathrm{d}t}\mathfrak{E}_{n}^{\ast} + \mathcal{D}_n \lesssim \sum_{j=0}^{n-2}\|\partial_t^{j } \mathbf{u}\|_{2n-2j-1 }^2 + \bar{\mathcal{D}}_n+ \bar{\mathcal{D}}_n^\ast + \bar{\mathcal{D}}_n^\sharp+\mathcal{Y}_n.
\end{align}
Since $\mathcal{D}_n$ contains $\|\partial_t^{j} \mathbf{u} \|_{2n-2j+1}^2$, which has two more spatial derivatives than $\|\partial_t^{j} \mathbf{u} \|_{2n-2j-1}^2$, by interpolation and Young's inequalities, we can improve \eqref{fes5} to be
\begin{gather}\label{fes6}
 \frac{\mathrm{d}}{\mathrm{d}t}\mathfrak{E}_n ^{\ast}  + \mathcal{D}_n
 \lesssim \sum_{j=0}^{n-2}\|\partial_t^{j } \mathbf{u}\|_0^2+\bar{\mathcal{D}}_{n}+\bar{\mathcal{D}}_{n}^{\ast}+\bar{\mathcal{D}}_{n}^{\sharp} +\mathcal{Y}_{n} \lesssim \bar{\mathcal{D}}_{n}+\bar{\mathcal{D}}_{n}^{\ast}+\bar{\mathcal{D}}_{n}^{\sharp}+\mathcal{Y}_{n},
\end{gather}
where we used \eqref{Dnt} when deriving the last relationship.

\vskip .1in
{\bf Step 2.} For simplicity, we denote $\mathcal{Z}_{n} $ for $n=2N$ or $n=N+2$ by
\begin{equation}\label{R5}
\mathcal{Z}_{2N}:=\sqrt{ \mathcal{E}_{N+2}} (\mathcal{D}_{2N}+\mathcal{J}_{2N}+\mathcal{F}_{2N}) \quad \text{ and } \quad \mathcal{Z}_{N+2}:=\sqrt{\mathcal{E}_{2N} }\mathcal{D}_{N+2}.
\end{equation}
Note that since $N\ge4$,
$$
\sqrt{\mathcal{E}_{N+2}} \mathcal{D}_n = \left\{\begin{aligned}
&\sqrt{\mathcal{E}_{N+2}} \mathcal{D}_{2N} \\
&\sqrt{\mathcal{E}_{N+2}} \mathcal{D}_{N+2}
\end{aligned}
\right. \quad
\le \quad \left. \begin{aligned}
& \mathcal{Z}_{2N}, &n&=2N\\
& \mathcal{Z}_{N+2}, &n&=2+N
\end{aligned}
\right\} = \mathcal{Z}_n.
$$
Then we write \eqref{barEN-differ-esti} as
\begin{align}\label{R6}
 \frac{\mathrm{d}}{\mathrm{d}t}\Big(\bar{\mathcal{E}}_{n}+\int_{\Omega}\nabla\partial^{n-1}_tp \cdot \mathbf{Q}^{3,n}\mathrm{d}\mathbf{x}\Big)+\bar{\mathcal{D}}_{n} \lesssim \mathcal{Z}_{n}.
\end{align}
Using \eqref{R5}, we write \eqref{p_u_e_00c}--\eqref{p_u_e_00c1} and \eqref{p_u_e_00'132c}--\eqref{p_u_e_00'132c2} in the unified fashion as
\begin{align}\label{R7}
\frac{\mathrm{d}}{\mathrm{d}t}\bar{\mathcal{E}}_{n}^{\ast}+\bar{\mathcal{D}}_{n}^{\ast} \lesssim  \mathcal{Z}_n,
\end{align}
and
\begin{align}\label{R8}
\frac{\mathrm{d}}{\mathrm{d}t} \widetilde{\bar{\mathcal{E}}^\sharp_{n}} +\bar{\mathcal{D}}_{n}^\sharp \lesssim \mathcal{Z}_n +\bar{\mathcal{D}}_{n}^{\ast},
\end{align}
where
\begin{align}\label{R9}
\widetilde{\bar{\mathcal{E}}^\sharp_{n}} := \bar{\mathcal{E}}^\sharp_{n}+\sum_{\alpha\in \mathbb{Z}_+^2, \, |\alpha| \le 2n}\int_\Omega \big( 2\partial^\alpha \mathbf{u} \cdot \partial^\alpha \boldsymbol{\eta} + \partial^{\alpha}\mathbf{w}\cdot(\nabla\times\partial^{\alpha}\boldsymbol{\eta})\big)\mathrm{d}\mathbf{x}.
\end{align}
For any $ \epsilon >0 $, multiplying \eqref{R8} by $\epsilon$, \eqref{fes6} by $\epsilon^2$, then taking the summation of the results with \eqref{R6} and \eqref{R7}, we obtain
\begin{align}\label{R10}
\frac{\mathrm{d}}{\mathrm{d}t}\tilde{\mathcal{E}}_{n} +\bar{\mathcal{D}}_{n}+\bar{\mathcal{D}}_{n}^{\ast}+\epsilon \bar{\mathcal{D}}_{n}^{\sharp}+\epsilon ^{2}\mathcal{D}_{n} \lesssim \mathcal{Z}_{n}+\epsilon\bar{\mathcal{D}}_{n}^{\ast}+\epsilon ^{2}(\bar{\mathcal{D}}_{n}+\bar{\mathcal{D}}_{n}^{\ast}+\bar{\mathcal{D}}_{n}^{\sharp} + \mathcal{Y}_n),
\end{align}
where
\begin{align*}
\tilde{\mathcal{E}}_{n} :=  \bar{\mathcal{E}}_{n} + \int_{\Omega}\nabla\partial^{n-1}_tp\cdot \mathbf{Q}^{3,n}\mathrm{d}\mathbf{x} +\bar{\mathcal{E}}_{n}^{\ast}+\epsilon\widetilde{\bar{\mathcal{E}}^\sharp_{n}} + \epsilon ^{2} \mathfrak{E}_n ^{\ast}.
\end{align*}
Note that according to \eqref{esti-Y-2N} and \eqref{R5}, under the {\it a priori} assumption \eqref{a-priori}, it holds that $\mathcal{Y}_n \lesssim \mathcal{Z}_n$. By choosing $ \epsilon>0 $ small enough, we get from \eqref{R10} that
\begin{align}\label{combined-diff}
\frac{\mathrm{d}}{\mathrm{d}t}\tilde{\mathcal{E}}_{n} + \bar{\mathcal{D}}_{n}+\bar{\mathcal{D}}_{n}^{\ast}+\epsilon \bar{\mathcal{D}}_{n}^{\sharp}+\epsilon^{2}\mathcal{D}_{n}\lesssim  \mathcal{Z}_{n}.
\end{align}
Note that according to H\"older's inequality, \eqref{p_energy_defc} and \eqref{p_F_e_02}, we have
\begin{equation*}
\int_\Omega  \nabla \dt^{n-1} p\cdot \mathbf{Q}^{3,n}\mathrm{d}\mathbf{x}\ls \sqrt{\mathcal{E}_n}\sqrt{\mathcal{E}_{N+2}\mathcal{E}_n}=\sqrt{\mathcal{E}_{N+2}}\mathcal{E}_n,
\end{equation*}
which implies
\begin{align}\label{R11}
\tilde{\mathcal{E}}_{n} \gtrsim  \bar{\mathcal{E}}_{n} + \bar{\mathcal{E}}_{n}^{\ast} +\epsilon\widetilde{\bar{\mathcal{E}}^\sharp_{n}} + \epsilon ^{2} \mathfrak{E}_n ^{\ast} - \sqrt{\mathcal{E}_{N+2}}\mathcal{E}_n.
\end{align}
In addition, by H\"older's and Poincar\'e's (applied to $\eta$) inequalities, we have
\begin{align}\label{R12}
\big| \sum_{\alpha\in \mathbb{Z}_+^2, \, |\alpha| \le 2n}\int_\Omega \big( 2\partial^\alpha \mathbf{u} \cdot \partial^\alpha \boldsymbol{\eta} &+ \partial^{\alpha}\mathbf{w}\cdot(\nabla\times\partial^{\alpha}\boldsymbol{\eta})\big)\mathrm{d}\mathbf{x} \big| \le 2\|\mathbf{u}\|_{0,2n}\|\boldsymbol{\eta}\|_{0,2n} + \|\mathbf{w}\|_{0,2n} \|\nabla\boldsymbol{\eta}\|_{0,2n} \notag\\
&\le \frac{\mu}{2}\|\nabla\boldsymbol{\eta}\|_{0,2n}^2 + C_\mu\big(\|\mathbf{u}\|_{0,2n}^2+\|\mathbf{w}\|_{0,2n}^2\big) \le \frac{1}{2} \bar{\mathcal{E}}^\sharp_{n} + C_\mu \bar{\mathcal{E}}_n^*,
\end{align}
where the definitions \eqref{Esharp} and \eqref{En*} are applied. By \eqref{R12}, we get from \eqref{R9}:
\begin{align}\label{R13}
\widetilde{\bar{\mathcal{E}}^\sharp_{n}} \ge \frac{1}{2} \bar{\mathcal{E}}^\sharp_{n} - C_\mu \bar{\mathcal{E}}_n^*.
\end{align}
Substituting \eqref{R13} into \eqref{R11}, fixing a sufficiently small $\epsilon$ and recalling $\mathfrak{E}_{n}^{\ast} = \mathfrak{E}_{n}+\bar{\mathfrak{E}}_{n} $, we obtain
\begin{align}\label{R14}
\tilde{\mathcal{E}}_{n} \gtrsim  \bar{\mathcal{E}}_{n} + \bar{\mathcal{E}}_{n}^{\ast} + \bar{\mathcal{E}}^\sharp_{n} + \mathfrak{E}_{n}+\bar{\mathfrak{E}}_{n}  - \sqrt{\mathcal{E}_{N+2}}\mathcal{E}_n.
\end{align}
Combining \eqref{energy-control} and \eqref{R14}, we have
\begin{align}\label{R15}
\mathcal{E}_{n}  \lesssim \tilde{\mathcal{E}}_{n} + \sqrt{\mathcal{E}_{N+2}}\mathcal{E}_n+ (\mathcal{E}_{n} )^{2}.
\end{align}
Since $\mathcal{E}_{N+2} \le \mathscr{G}_{2N}$, under the {\it a priori} assumption \eqref{a-priori}, we obtain
\begin{align}\label{R17}
\mathcal{E}_{n}  \lesssim \tilde{\mathcal{E}}_{n} + (\mathcal{E}_{n} )^{2},
\end{align}
which, along with the smallness of $\delta$, implies $\mathcal{E}_{n} \lesssim  \tilde{\mathcal{E}}_{n} $. Since apparently $ \tilde{\mathcal{E}}_{n}\lesssim \mathcal{E}_{n} $, we have $\tilde{\mathcal{E}}_{n} \sim \mathcal{E}_{n}$. Since $\mathcal{Z}_{2N} = \sqrt{ \mathcal{E}_{N+2}} \mathcal{D}_{2N}+\sqrt{ \mathcal{E}_{N+2}} (\mathcal{J}_{2N}+\mathcal{F}_{2N})$, $\mathcal{Z}_{N+2} = \sqrt{\mathcal{E}_{2N}} \mathcal{D}_{N+2}$ and $\mathcal{E}_{N+2} \le \mathcal{E}_{2N} \le \mathscr{G}_{2N}$, we get \eqref{sys2nc} and \eqref{sysn+2c} from \eqref{combined-diff} when $ \delta $ is sufficiently small. The proof is complete.
\end{proof}

The next two lemmas are the last part of the entire proof of Theorem \ref{thm-main-result}, which provide the estimates of $ \mathcal{F}_{2N} $, $\mathcal{J}_{2N}$ and $\mathscr{G}_{2N}$. The proofs of these lemmas are in the spirit of \cite{Tan-Wang-SIMA}. However, since the problem under consideration is more complicated than the previous case, we still present the technical details for self-containment.

\begin{lemma}\label{Lemm-E-2N}
Let $ (\boldsymbol{\eta},\mathbf{u}, p,\mathbf{w}) $ be the local solution to \eqref{reformulationc} satisfying \eqref{a-priori} for some sufficiently small constant $\delta$. There exists a generic constant $ 0<\delta\ll 1 $ such that if $ \mathscr{G}_{2N}(T)\leq \delta $, then
\begin{align}\label{con-E-N-2-lem}
\mathcal{F}_{2N}(t)\lesssim \mathcal{F}_{2N}(0)+\sup_{0\leq \tau\leq t}\mathcal{E}_{2N}(\tau)+\int^t_{0}\mathcal{D}_{2N}(\tau)\mathrm{d}\tau,
\end{align}
and for any $\theta>0$,
\begin{align}\label{grow2}
\int^t_{0}\frac{\mathcal{F}_{2N}+\mathcal{J}_{2N}}{(1+\tau)^{1+\theta}}\mathrm{d} \tau\lesssim\mathcal{F}_{2N}(0)+\sup_{0\leq \tau\leq t}\mathcal{E}_{2N}(\tau)+\int^t_{0}\mathcal{D}_{2N}(\tau)\mathrm{d}\tau.
\end{align}
\end{lemma}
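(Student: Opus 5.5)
The plan follows the strategy of Tan--Wang for the non-resistive MHD system: obtain a transport-type energy identity for the highest-order derivatives of $\boldsymbol{\eta}$ from the momentum equation written in terms of $\boldsymbol{\Theta}$, and then get $\mathcal{J}_{2N}$ from the Stokes and elliptic estimates already available. The starting point is the observation that $\mathcal{F}_{2N}=\|\boldsymbol{\eta}\|_{4N+1}^2$ sits one derivative above the dissipation. Since $\partial_t\boldsymbol{\eta}=\mathbf{u}$, we have
\[
\frac{\mathrm{d}}{\mathrm{d}t}\mathcal{F}_{2N}\le 2\|\boldsymbol{\eta}\|_{4N+1}\|\mathbf{u}\|_{4N+1}.
\]
Here $\|\mathbf{u}\|_{4N+1}^2$ is contained in $\mathcal{J}_{2N}$, not in $\mathcal{D}_{2N}$, so this bound is useless on its own.

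Instead I would use \eqref{h} at top order. Write the momentum equation as $(\mu+\zeta)\Delta\boldsymbol{\Theta}-\nabla p=\Delta_*\boldsymbol{\eta}+\partial_t\mathbf{u}-\zeta\mathbf{W}-\mathbf{G}^1$, where $\boldsymbol{\Theta}=\mathbf{u}+\frac{1}{\mu+\zeta}\boldsymbol{\eta}$. Apply the Stokes estimate (Lemma \ref{i_linear_elliptic2}) with $r=4N+1$:
\[
\|\boldsymbol{\Theta}\|_{4N+1}^2+\|\nabla p\|_{4N-1}^2\lesssim \|\boldsymbol{\eta}\|_{1,4N}^2+\|\partial_t\mathbf{u}\|_{4N-1}^2+\|\mathbf{w}\|_{4N}^2+\|\mathbf{G}^1\|_{4N-1}^2+\|G^3\|_{4N}^2+\|\operatorname{div}\boldsymbol{\eta}\|_{4N}^2+\|\mathbf{u}\|_0^2 .
\]
The first term is controlled by $\mathcal{E}_{2N}$ (it equals $\|\boldsymbol{\eta}\|_{1,2n}^2$ with $n=2N$). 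The next two are controlled by $\mathcal{D}_{2N}$. The nonlinear terms are bounded by $\mathcal{E}_{N+2}(\mathcal{D}_{2N}+\mathcal{J}_{2N}+\mathcal{F}_{2N})$ via Lemma \ref{lem-G-TILDA-G} and \eqref{pgn1}.

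Expanding $\|\boldsymbol{\Theta}\|_{4N+1}^2$ exactly as in the derivation of \eqref{3.115} gives the damped ODE
\[
\frac{\mathrm{d}}{\mathrm{d}t}\mathcal{F}_{2N}+c\,\mathcal{F}_{2N}+c\,\|\mathbf{u}\|_{4N+1}^2+c\,\|\nabla p\|_{4N-1}^2\lesssim \mathcal{E}_{2N}+\mathcal{D}_{2N}+\delta(\mathcal{J}_{2N}+\mathcal{F}_{2N}).
\]
For $\delta$ small the last term is absorbed into the left side. To complete the left side to $\mathcal{J}_{2N}$, I would add $\|\mathbf{w}\|_{4N}^2\lesssim\mathcal{D}_{2N}$, so the dissipated quantity becomes $\mathcal{F}_{2N}+\mathcal{J}_{2N}$. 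Gronwall on this damped inequality, together with $\int_0^t e^{-c(t-\tau)}\mathcal{E}_{2N}(\tau)\,\mathrm{d}\tau\lesssim\sup_{0\le\tau\le t}\mathcal{E}_{2N}(\tau)$, gives \eqref{con-E-N-2-lem}.

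For \eqref{grow2}, I would multiply the damped inequality by $(1+t)^{-1-\theta}$ and integrate in time. The term $\int_0^t(1+\tau)^{-1-\theta}\mathcal{E}_{2N}\,\mathrm{d}\tau$ is bounded by $C_\theta\sup\mathcal{E}_{2N}$. The term coming from the time derivative of the weight has a favorable sign and can be dropped, and $\mathcal{D}_{2N}$ is integrated without any weight. An alternative route is to combine \eqref{con-E-N-2-lem} with integrability of the weight for the $\mathcal{F}_{2N}$ part, and to handle $\mathcal{J}_{2N}$ through the pointwise Stokes bound.

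The main obstacle is the Stokes step. I must check that $\|\boldsymbol{\eta}\|_{1,4N}$ really controls $\|\Delta_*\boldsymbol{\eta}\|_{4N-1}$, and that the nonlinear terms $\mathbf{G}^1$ and $G^3$ at order $4N$ involve at most $4N+1$ derivatives of $\boldsymbol{\eta}$, multiplied by factors small in $\mathcal{E}_{N+2}$. If the damped-ODE route fails because of a loss at top order, I would fall back on the cruder growth bound $\mathcal{F}_{2N}(t)\lesssim \mathcal{F}_{2N}(0)+t\int_0^t\mathcal{J}_{2N}\,\mathrm{d}\tau$ together with the weighted time integral. I expect the damping route to work, however.
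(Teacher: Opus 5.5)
Your overall architecture matches the paper's: a Stokes estimate for $\boldsymbol{\Theta}$, expansion of $\|\boldsymbol{\Theta}\|^2$ via $\partial_t\boldsymbol{\eta}=\mathbf{u}$ to get a damped inequality, absorption of $\delta(\mathcal{J}_{2N}+\mathcal{F}_{2N})$, Gronwall, and then the weight $(1+t)^{-1-\theta}$. However, the step you flagged does fail: $\|\Delta_*\boldsymbol{\eta}\|_{4N-1}$ is \emph{not} bounded by $\|\boldsymbol{\eta}\|_{1,4N}$.

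The isotropic norm $\|\Delta_*\boldsymbol{\eta}\|_{4N-1}$ contains $\|\partial_3^{4N-1}\Delta_*\boldsymbol{\eta}\|_0$. This is a top-order quantity ($4N+1$ derivatives, $4N-1$ of them vertical), while $\|\cdot\|_{1,4N}$ allows at most one vertical derivative. The term lies in neither $\mathcal{E}_{2N}$ nor $\mathcal{D}_{2N}$. It also cannot be absorbed by the damping term $(\mu+\zeta)^{-2}\|\boldsymbol{\eta}\|_{4N+1}^2$. It is comparable to $\mathcal{F}_{2N}$ itself (for horizontally oscillating $\boldsymbol{\eta}$, $\|\Delta_*\boldsymbol{\eta}\|_{4N-1}\approx\|\boldsymbol{\eta}\|_{4N+1}$), and it enters with the same factor $(\mu+\zeta)^{-2}$ times a Stokes constant, with no smallness. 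So a single isotropic Stokes estimate at $r=4N+1$ does not produce your damped inequality.

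The missing idea is the anisotropic iteration the paper uses, namely \eqref{est-eta}--\eqref{3.115} with $k+1$ replaced by $k+2$. Horizontal derivatives commute with the Stokes system on the strip, so you can estimate in the norms $\|\cdot\|_{k+2,4N-k-1}$. This gives, for $0\le k\le 4N-1$,
\[
\frac{\mathrm{d}}{\mathrm{d}t}\|\boldsymbol{\eta}\|^2_{k+2,4N-k-1}+\|\boldsymbol{\eta}\|^2_{k+2,4N-k-1}+\|\mathbf{u}\|^2_{k+2,4N-k-1}+\|\nabla p\|^2_{k,4N-k-1}\lesssim\|\boldsymbol{\eta}\|^2_{k,4N-k+1}+\|\partial_t\mathbf{u}\|^2_{4N-1}+\|\mathbf{w}\|^2_{4N}+(\text{nonlinear terms}).
\]
Each level thus trades two vertical derivatives of $\boldsymbol{\eta}$ for two horizontal ones.

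Sum these with weights $\epsilon^{k}$, $\epsilon$ small. The $\boldsymbol{\eta}$-term on the right at level $k$ is then absorbed by the left side at level $k-2$. Only the bottom terms survive: $\|\boldsymbol{\eta}\|_{1,4N}^2$ (from $k=1$) and $\|\boldsymbol{\eta}\|_{0,4N+1}^2\lesssim\|\boldsymbol{\eta}\|_{1,4N}^2$ (from $k=0$), both controlled by $\mathcal{E}_{2N}$. The top level $k=4N-1$ supplies $\mathcal{F}_{2N}$, $\|\mathbf{u}\|_{4N+1}^2$ and $\|\nabla p\|_{4N-1}^2$. The weighted sum is the functional $\tilde{\mathcal{F}}_{2N}\sim\mathcal{F}_{2N}$ of \eqref{hti-F-esit}. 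From there, your absorption, Gronwall and weighted-in-time steps go through exactly as you describe.

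A side remark: $\|\mathbf{u}\|_{4N+1}^2$ is the $j=0$ term of $\mathcal{D}_{2N}$ in \eqref{p_dissipation_defc}, not only part of $\mathcal{J}_{2N}$. The naive bound fails for a different reason than you give: it only yields $\frac{\mathrm{d}}{\mathrm{d}t}\mathcal{F}_{2N}\lesssim\sqrt{\mathcal{F}_{2N}\mathcal{D}_{2N}}$, which permits growth in time.
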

\begin{proof}
By the same arguments as in the derivation of \eqref{3.115} (basically, begin with replacing $k+1$ by $k+2$ in \eqref{est-eta}), we can show that for $k=1,\dots,2n-1$, there exists an energy functional $\tilde{\mathcal{F}}_{2N} \sim \mathcal{F}_{2N}$ such that
\begin{align}\label{hti-F-esit}
&\ \frac{\mathrm{d}}{\mathrm{d}t}\tilde{\mathcal{F}}_{2N}
+  \mathcal{F}_{2N} +\mathcal{J}_{2N} \notag\\
 \lesssim &\ \|\boldsymbol{\eta}\|_{1,4N}^2+ \|\partial_t \mathbf{u}\|_{4N-1}^2+\|\mathbf{W}\|^2_{4N-1}
 + \|\mathbf{G}^1\|_{4N-1}^2+\|G^3\|_{4N}^2+\|\operatorname{div}\boldsymbol{\eta} \|_{4N}^2,
\end{align}
where, thanks to \eqref{p_G_e_001c} and \eqref{pgn1}, we have
\begin{equation}
\|\mathbf{G}^1\|_{4N-1}^2+\|G^3\|_{4N}^2+\|\operatorname{div}\boldsymbol{\eta} \|_{4N}^2\lesssim  { \mathcal{E}_{N+2}  }(\mathcal{D}_{2N} +  \mathcal{J}_{2N} +\mathcal{F}_{2N}).
\end{equation}
Then we get from \eqref{hti-F-esit} that
\begin{equation}\label{109}
 \frac{\mathrm{d}}{\mathrm{d}t}\tilde{\mathcal{F}}_{2N}
+  \mathcal{F}_{2N} +\mathcal{J}_{2N}  \lesssim  \mathcal{E}_{2N}+ \mathcal{D}_{2N}
 + { \mathcal{E}_{N+2}  } \mathcal{D}_{2N}\lesssim\mathcal{E}_{2N}+ \mathcal{D}_{2N},
\end{equation}
where we also used the fact that $\mathcal{E}_{N+2}(T) \le \mathscr{G}_{2N}(T) \leq \delta$ and $\delta$ is sufficiently small. Applying Gro\"nwall's inequality to \eqref{109}, we obtain
\begin{align}\label{1001}
  \f  & \ls  \f(0)     {\mathop{\mathrm{e}}}^{-t}+\int_0^t {\mathop{\mathrm{e}}}^{-(t- \tau)}\big(  \se{2N}( \tau)+ \sd{2N}( \tau) \big)\mathrm{d} \tau\nonumber
  \\& \ls \f(0) {\mathop{\mathrm{e}}}^{-t}+\sup_{0\le \tau\le t}\mathcal{E}_{2N}(\tau) \int_0^t {\mathop{\mathrm{e}}}^{-(t- \tau)}\mathrm{d}  \tau+  \int_0^t \sd{2N}( \tau)  \mathrm{d}  \tau,
\end{align}
which gives \eqref{con-E-N-2-lem}. Next, multiplying \eqref{109} by $(1+t)^{-1-\theta}$ for any $\theta>0$, we get
\begin{equation}\label{1110}
\frac{\mathrm{d}}{\mathrm{d}t} \Big( \frac{\tilde{\mathcal{F}}_{2N}}{(1+t)^{1+\theta}}\Big)+(1+\theta)\frac{\tilde{\mathcal{F}}_{2N}}{(1+t)^{2+\theta}}
+  \frac{\f+\mathcal{J}_{2N}}{(1+t)^{1+\theta}} \ls   \frac{\se{2N}}{(1+t)^{1+\theta}}+  \frac{\sd{2N}}{(1+t)^{1+\theta}} .
\end{equation}
 Integrating \eqref{1110} in time yields \eqref{grow2}. This completes the proof of the lemma.
\end{proof}

The last lemma closes the entire \emph{a priori} estimates of the local solution.

\begin{lemma} \label{Dgle}
Let $ (\boldsymbol{\eta},\mathbf{u}, p,\mathbf{w}) $ be the local solution to \eqref{reformulationc} satisfying \eqref{a-priori} for some sufficiently small constant $\delta$. There exists a generic constant $0<\delta \ll1$ such that if $ \mathscr{G}_{2N}(T)\leq \delta $, then
\begin{align}
\bullet&\quad \mathcal{E}_{2N} (t)+\int_0^{t}\mathcal{D}_{2N}(\tau)\mathrm{d}\tau \lesssim
\mathcal{E}_{2N} (0) + \mathcal{F}_{2N}(0), \quad \forall\ 0\leq t\leq T, \label{Dg}\\
\bullet&\quad (1+t)^{2N-4} \mathcal{E}_{N+2} (t)\lesssim
\mathcal{E}_{2N} (0),  \quad \forall\ 0\le t\le T. \label{n+2c}
\end{align}
\end{lemma}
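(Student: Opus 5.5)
Both bounds come from integrating the differential inequalities of Lemma \ref{lem-combined-esti} and feeding in Lemma \ref{Lemm-E-2N}.

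\textbf{Proof of \eqref{Dg}.} Integrate \eqref{sys2nc} over $[0,t]$ and use $\tilde{\mathcal{E}}_{2N}\sim\mathcal{E}_{2N}$ to get
\begin{align*}
\mathcal{E}_{2N}(t)+\int_0^t\mathcal{D}_{2N}\,\mathrm{d}\tau\lesssim \mathcal{E}_{2N}(0)+\int_0^t\sqrt{\mathcal{E}_{N+2}}\,(\mathcal{J}_{2N}+\mathcal{F}_{2N})\,\mathrm{d}\tau .
\end{align*}
The difficulty is that $\mathcal{J}_{2N}$ and $\mathcal{F}_{2N}$ carry no time decay and are controlled only with the weight $(1+\tau)^{-1-\theta}$. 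The decay of $\mathcal{E}_{N+2}$ built into $\mathscr{G}_{2N}$ supplies this weight. By \eqref{a-priori}, $\mathcal{E}_{N+2}(\tau)\le\delta(1+\tau)^{-(2N-4)}$, so
\begin{align*}
\sqrt{\mathcal{E}_{N+2}}\lesssim\sqrt{\delta}\,(1+\tau)^{-(N-2)}\le\sqrt{\delta}\,(1+\tau)^{-1-\theta}
\end{align*}
for $N\ge4$ and any $\theta\le N-3$; fix $\theta=1$. Lemma \ref{Lemm-E-2N}, estimate \eqref{grow2}, then bounds the remainder by
\begin{align*}
\sqrt{\delta}\Big(\mathcal{F}_{2N}(0)+\sup_{0\le\tau\le t}\mathcal{E}_{2N}(\tau)+\int_0^t\mathcal{D}_{2N}\,\mathrm{d}\tau\Big).
\end{align*}
Take the supremum over $[0,t]$ on the left-hand side. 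For $\delta$ small, the terms $\sqrt{\delta}\sup\mathcal{E}_{2N}$ and $\sqrt{\delta}\int_0^t\mathcal{D}_{2N}$ are absorbed, which yields \eqref{Dg}.

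\textbf{Proof of \eqref{n+2c}: interpolation.} Since $\mathcal{D}_{N+2}$ does not dominate $\mathcal{E}_{N+2}$, we interpolate between $\mathcal{D}_{N+2}$ and the bounded higher energy $\mathcal{E}_{2N}$, as in \cite{GT-per,Tan-Wang-SIMA}. The aim is
\begin{align*}
\mathcal{E}_{N+2}\lesssim \mathcal{D}_{N+2}^{\vartheta}\,\mathcal{E}_{2N}^{1-\vartheta},\qquad \vartheta=\frac{2N-4}{2N-3}.
\end{align*}
Most terms of $\mathcal{E}_{N+2}$ are bounded directly by $\mathcal{D}_{N+2}$: $\partial_t^j\mathbf{u}$ and $\partial_t^j\mathbf{w}$ at the relevant orders, $\mathcal{P}\partial_3^2\boldsymbol\eta$, $\|\boldsymbol\eta\|_{2(N+2)}$, and the pressure terms. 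The delicate term is $\|\boldsymbol\eta\|_{1,2(N+2)}^2$. Here $\mathcal{D}_{N+2}$ controls only $\|\partial_3\boldsymbol\eta\|_{0,2(N+2)}$, which with Poincar\'e gives $\|\boldsymbol\eta\|_{0,2(N+2)}$, while $\mathcal{E}_{2N}$ controls $\|\boldsymbol\eta\|_{1,4N}$. Interpolating in the horizontal Fourier variable between $\|\boldsymbol\eta\|_{0,2N+4}$ and $\|\boldsymbol\eta\|_{1,4N}$ gives a fractional power of exactly this form; the top vertical derivatives are handled similarly using $\|\boldsymbol\eta\|_{2(N+2)}\lesssim\mathcal{D}_{N+2}$. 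Checking that the exponents match is the main technical obstacle.

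\textbf{Proof of \eqref{n+2c}: ODE step.} With $\sup_\tau\mathcal{E}_{2N}\lesssim\mathcal{M}_0:=\mathcal{E}_{2N}(0)+\mathcal{F}_{2N}(0)$ from \eqref{Dg}, \eqref{sysn+2c} becomes
\begin{align*}
\frac{\mathrm{d}}{\mathrm{d}t}\tilde{\mathcal{E}}_{N+2}+C\,\mathcal{M}_0^{-1/(2N-4)}\,\tilde{\mathcal{E}}_{N+2}^{1+1/(2N-4)}\le0 .
\end{align*}
Solving this Bernoulli-type inequality gives
\begin{align*}
\tilde{\mathcal{E}}_{N+2}(t)\lesssim \Big(\tilde{\mathcal{E}}_{N+2}(0)^{-\frac{1}{2N-4}}+\frac{t}{\mathcal{M}_0^{1/(2N-4)}}\Big)^{-(2N-4)}\lesssim \mathcal{M}_0\,(1+t)^{-(2N-4)},
\end{align*}
using $\tilde{\mathcal{E}}_{N+2}(0)\lesssim\mathcal{M}_0$. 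This is \eqref{n+2c}, with the initial quantity read as $\mathcal{E}_{2N}(0)+\mathcal{F}_{2N}(0)$, which is what is needed to close $\mathscr{G}_{2N}$ anyway.
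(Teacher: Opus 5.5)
Your proposal is correct and follows the paper's proof step for step. You integrate \eqref{sys2nc} and turn the decay $\sqrt{\mathcal{E}_{N+2}}\lesssim\sqrt{\delta}(1+\tau)^{-(N-2)}$ built into $\mathscr{G}_{2N}$ into the weight required by \eqref{grow2}, then absorb. You then isolate $\|\nabla_\ast^{2N+5}\boldsymbol\eta\|_0$, the only piece of $\mathcal{E}_{N+2}$ (inside $\|\boldsymbol\eta\|_{1,2(N+2)}$) not dominated by $\mathcal{D}_{N+2}$. Interpolating it between horizontal orders $2N+4$ and $4N+1$ works: the exponent check you left open, $2N+5=\vartheta(2N+4)+(1-\vartheta)(4N+1)$, gives exactly $\vartheta=\frac{2N-4}{2N-3}$. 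Solving the resulting Bernoulli inequality completes the argument.

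Your choice $\mathcal{M}_0=\mathcal{E}_{2N}(0)+\mathcal{F}_{2N}(0)$ is actually the accurate one. The paper sets $\mathcal{M}_0=\mathcal{E}_{2N}(0)$, but \eqref{Dg} only gives $\sup_\tau\mathcal{E}_{2N}\lesssim\mathcal{E}_{2N}(0)+\mathcal{F}_{2N}(0)$; the two coincide when $\boldsymbol\eta_0=\mathbf{0}$. That weaker right-hand side is all that is needed to close $\mathscr{G}_{2N}$.
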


\begin{proof}
Integrating \eqref{sys2nc} over $(0,t)$, we find that
\begin{equation}
  {\mathcal{E}}_{2N}(t)
+ \int_0^t{\mathcal{D}}_{2N}(\tau) \mathrm{d}\tau
\ls {\mathcal{E}}_{2N}(0)+\int_0^t \sqrt{ \se{N+2}  }(  \mathcal{J}_{2N} +\mathcal{F}_{2N})\mathrm{d}\tau.
\end{equation}
By the assumption $ \mathscr{G}_{2N}(T)\leq \delta $ and \eqref{grow2}, we can show that
\begin{align}
  {\mathcal{E}}_{2N}(t)
+ \int_0^t{\mathcal{D}}_{2N}(\tau) \mathrm{d}\tau
&\lesssim {\mathcal{E}}_{2N}(0)+ \int_0^t \sqrt{\delta}(1+ \tau)^{-N+2} \big(\mathcal{J}_{2N} (\tau) + \mathcal{F}_{2N}(\tau)\big)\mathrm{d} \tau
\nonumber\\
& \lesssim {\mathcal{E}}_{2N}(0)+ \sqrt{ \delta } \Big(\mathcal{F}_{2N}(0)+  \sup_{0\leq \tau\leq t}\mathcal{E}_{2N}(\tau)+ \int_0^t \mathcal{D}_{2N}(\tau) \mathrm{d}\tau\Big),
\end{align}
where we have chosen $\theta>0$ such that $N-2\ge 1+\theta$. We remark that this is reason that we require $N\ge 4$ in our main results. Since $\delta$ is sufficiently small, the preceding estimate gives \eqref{Dg}.

\vskip .1in
Recalling \eqref{sysn+2c}, we have
\begin{align}\label{7-55-revised}
 \frac{\mathrm{d}}{\mathrm{d}t}\tilde{ \mathcal{E}}_{N+2}+ {\mathcal{D}}_{N+2}  \le 0.
\end{align}
Recalling the definitions of ${\mathcal{D}}_{N+2} $ and ${\mathcal{E}}_{N+2} $ and Lemma \ref{lem-anisotropi-1}, we know that every term in ${\mathcal{E}}_{N+2} $ can be controlled by the terms in ${\mathcal{D}}_{N+2} $ except $ \|\nabla_\ast^{2(N+2)+1}\boldsymbol\eta \|_{0}$. However, by interpolation, we have
\begin{align} \label{intep0c}
\|\nabla_\ast^{2(N+2)+1}\boldsymbol\eta \|_{0}^{2} \lesssim  \|\nabla_\ast^{2(N+2)}\boldsymbol\eta \|_{0} ^{2\theta} \|\nabla_\ast^{4N+1}\boldsymbol\eta \|_{0}^{2(1-\theta)} \lesssim ( {\mathcal{D}_{N+2} })^\theta({\mathcal{E}_{2N} })^{1-\theta},\text{
where }\theta=\frac{2N-4}{2N-3}.
\end{align}
Hence, we can show that
\begin{equation} \label{intepc}
{\mathcal{E}}_{N+2} \le({\mathcal{D}}_{N+2} )^\theta({\mathcal{E}}_{2N} )^{1-\theta}.
\end{equation}
Now by Lemma \ref{Lemm-E-2N}, we have
\begin{equation}
\sup_{0\le \tau\le t}\mathcal{E}_{2N} (\tau)\lesssim
\mathcal{E}_{2N} (0) :=\mathcal{M}_0.
\end{equation}
This, along with \eqref{intepc}, implies that
\begin{equation} \label{u2c}
\tilde{\mathcal{E}}_{N+2} \lesssim\mathcal{E}_{N+2} \lesssim (\mathcal{D}_{N+2} )^\theta \mathcal{M}_0^{1-\theta}.
\end{equation}
By \eqref{7-55-revised} and \eqref{u2c}, there exists some constant $C>0$
such that
\begin{equation}
\frac{\mathrm{d}}{\mathrm{d}t} \tilde{\mathcal{E}}_{N+2} +\frac{C}{\mathcal{M}_0^\gamma}
(\tilde{\mathcal{E}}_{N+2} )^{1+\mathfrak{m}}\le 0,\ \text{ where } \mathfrak{m} =
\frac{1}{\theta}-1 = \frac{1}{2N-4}.
\end{equation}
Solving this differential inequality, we obtain
\begin{equation} \label{u3c}
\mathcal{E}_{N+2} (t) \lesssim \tilde{\mathcal{E}}_{N+2} (t) \lesssim \frac{\mathcal{M}_0}{\big\{\mathcal{M}_0^\mathfrak{m}
+ \mathfrak{m} C[ \mathcal{E}_{N+2} (0)]^\mathfrak{m}\, t\big\}^{1/\mathfrak{m}} }
{\mathcal{E}}_{N+2} (0).
\end{equation}
With ${\mathcal{E}}_{N+2} (0)\lesssim\mathcal{M}_0 $ and
the fact $1/\mathfrak{m}=2N-4>1$, we obtain from \eqref{u3c} that
\begin{equation}
{\mathcal{E}}_{N+2} (t)\lesssim
\frac{\mathcal{M}_0}{(1+\mathfrak{m} C t)^{1/\mathfrak{m}} }\lesssim
\frac{\mathcal{M}_0}{(1+t)^{2N-4} },
\end{equation}
which implies \eqref{n+2c}. This completes the proof of the lemma.
\end{proof}

In view of Lemma \ref{Lemm-E-2N} and lemma \ref{Dgle} we see that all the five components in the expression of $\mathscr{G}_{2N}(T)$ are bounded by a constant multiple of $\mathcal{E}_{2N}(0) + \mathcal{F}_{2N}(0)$. Hence, the \emph{a priori} assumption \eqref{a-priori} can be fulfilled by choosing $\mathcal{E}_{2N}(0) + \mathcal{F}_{2N}(0)$ to be sufficiently small, and the global estimate \eqref{G-2N-esti-thm} follows. Based on the local well-posedness and \eqref{G-2N-esti-thm}, the proof of Theorem \ref{thm-main-result} is complete.

\section*{Acknowledgements}
Z. Feng was supported in part by the National Natural Science Foundation of China $\#$12101095, the Science and Technology Research Program of Chongqing Municipal Education Commission $\#$KJQN202100517,
the Research Project of Chongqing Education Commission  $\#$CXQT21014, the
 Natural Science Foundation of Chongqing $\#$cstc2021jcyj-msxmX0224, $\#$
cstb2022nscq-msx2878 and the grant of Chongqing Young Experts' Workshop. The
research of G. Hong was partially supported by the National Natural Science Foundation of China $\#$12522111 and $\#$12201221, the Guangdong Provincial Pearl River Talents Program $\#$2023QN10X436, the Guangdong Basic and Applied Basic Research Foundation $\#$2024A1515012306, the Guangzhou Municipal Science and Technology Project $\#$2024A04J3788, and the Fundamental Research Funds for the Central Universities $\#$2025ZYGXZR040. J. Wu was partially supported by the National Science Foundation of USA $\#$2104682, $\#$2309748 and
the AT\&T Foundation at Oklahoma State University.

\section*{Data availability statement} Data sharing is not applicable to this article as no datasets were generated or analyzed during the current study.

\section*{Conflict of interest statement} On behalf of all authors, the corresponding author states that there is no conflict of interest.


\end{document}